\renewcommand{\S}{\mathbb{S}}
\definecolor{green}{rgb}{0.0,0.6,0.0}
\definecolor{blue}{rgb}{0.0,0.0,1.0}
\definecolor{red}{rgb}{1.0,0.0,0.0}
\newcommand{\nablap}{{\nabla_+}}
\newcommand{\nablam}{{\nabla_-}}
\newcommand{\uk}{{\underline k}}
\newcommand{\KK}{{\opn{K}}}
\renewcommand{\CC}{{\opn{C}}}
\newcommand{\CPvar}{\CD}
\newcommand{\NablaP}{{\nabla_+}}
\newcommand{\NablaM}{{\nabla_-}}
\newcommand{\boxx}{{\scalebox{0.4}{$\blacksquare$}}}
\newcommand{\DeltaV}{{\Delta}}
\renewcommand{\H}{\mathbb{H}}
\begin{document}
\parindent0mm

\title[Toric sheaves and polyhedra]
{Toric sheaves and polyhedra}

\author[K.~Altmann]{Klaus Altmann
}
\address{Institut f\"ur Mathematik,
FU Berlin,
K\"onigin-Luise-Str.~24-26,
D-14195 Berlin
}
\email{altmann@math.fu-berlin.de}
\author[A.~Hochenegger]{Andreas Hochenegger}
\address{
Dipartimento di Matematica ``Francesco Brioschi'',
Politecnico di Milano,
via Bonardi 9,
20133 Milano 
}
\email{andreas.hochenegger@polimi.it}
\author[F.~Witt]{Frederik Witt}
\address{
Fachbereich Mathematik,
U Stuttgart,
Pfaffenwaldring 57,
D-70569 Stuttgart
}
\email{witt@mathematik.uni-stuttgart.de}

\thanks{{\bf MSC 2020:}
14C20, 
14F06, 
14F08,  
14M25, 
18G10, 
52C07, 
55N05  
\hfill\newline
\indent{\bf Key words:} \v Cech type cohomology, cohomology of sheaves, derived categories, extension problem, polyhedra, reflexive sheaves, right derived functors, sheaves on poset spaces, toric varieties}

\begin{abstract}
Over a smooth projective toric variety we study toric sheaves, that is, reflexive sheaves equivariant with respect to the acting torus, from a polyhedral point of view. One application is the explicit construction of the torus invariant universal extension of two nef line bundles via polyhedral inclusion/exclusion sequences. 

Second, we link the cohomology of toric sheaves to the cohomology of certain constructible sheaves explicitly built out of the associated polyhedra. For the latter we define a concrete double complex and a spectral sequence which computes the cohomology of toric sheaves from the reduced cohomology of polyhedral subsets living in the realification of the character lattice of the toric variety.
\end{abstract}

\maketitle
\setcounter{tocdepth}{1}
\tableofcontents

\section{Introduction}
\label{sec:Intro}
Let $\kk$ be an algebraically closed field of characteristic $0$ and $N$ be a free abelian group of finite rank. This gives rise to the algebraic torus
\[
\TX:=N\otimes_\Z\kk^\times;
\]
together with a fan $\Sigma$ inside the realification $N_\R=N\otimes_\Z\R$ of $N$, we obtain the toric variety $X=\toric(\Sigma)$. In principle, every invariant of $X$ can be expressed in combinatorial terms involving solely $N$ and $\Sigma$. In practice, however, it is often more convenient to work with the dual group $M=\Hom_\Z(N,\Z)$ and polyhedra inside $M_\R=M\otimes_\Z\R$. This applies for instance if $X$ is projective, that is, its defining fan arises as the normal fan $\mc N(\Delta)$ of a polytope $\Delta\subseteq M_\R$. In this case, we also write $X=\PP(\Delta)$ for emphasis.

\medskip

Let us illustrate this point by considering an invertible sheaf $\CL$. For simplicity, we assume here (and mostly throughout this paper) that $X$ is smooth. If $j_\TX\colon\TX\into X$ denotes the open embedding of the torus, we can think of $\CL$ as a subsheaf of $j_{\TX*}\CO_\TX$ where the latter corresponds to the semigroup algebra $\kk[M]$ defined by $M$. In particular, this yields a unique $\TX$-invariant divisor $D$ with $\CL=\CO_X(D)$. Moreover, $D$ is given by a finite sum of $\TX$-invariant prime divisors $D_\rho$ indexed by the one-dimensional cones $\rho\in\Sigma$, cf.\ Subsection~\ref{subsec:Div}. 

\medskip

On the other hand, we can write $D$ as a (non-unique) difference $D_+-D_-$ of two toric nef (or even ample) Cartier divisors. This corresponds to a pair of two lattice polytopes $\nabla_+$ and $\nabla_-$ sitting inside $M_\R$ and whose normal fans are refined by $\Sigma$. In fact, we think of such pairs as formal differences in the sense of Grothendieck groups and refer to $\nabla=\nabla_+-\nabla_-$ as a {\em virtual polytope}; see Subsection~\ref{subsec:VP} for details. Differences of polyhedra also appear for instance in the construction of the polytope algebra, see~\cite{brion1} and~\cite{brion2} for its connections to toric and convex geometry.

\medskip

Many properties of invertible sheaves can be easily read off from this point of view. For instance, by results from~\cite{immaculate} and~\cite{dop} we have
\begin{equation}
\label{eq:AP}
\gH^\ell(X,\CO_X(D))_0=\widetilde{\opn H}^{\raisebox{-3pt}{\scriptsize$\ell\!-\!1$}}\big(\nabla_-\setminus\nabla_+\big),
\end{equation}
where $\widetilde{\opn H}^{\raisebox{-3pt}{\scriptsize$\ell\!-\!1$}}$ 
denotes the 
reduced singular cohomology with coefficients in $\kk$ of the set-theoretic difference $\nabla_-\setminus\nabla_+$, and the subscript $0$ refers to the degree $0$ part of the grading induced by the torus action. Note that reduced cohomology can be indeed nontrivial in degree $-1$: Given a topological space $Z$, then $\widetilde{\opn H}^{\raisebox{-3pt}{\scriptsize$-1$}}(Z)=0$ if $Z\not=\varnothing$, and equals $\kk$ if $Z=\varnothing$.

\subsection{Toric sheaves and Weil decorations}
\label{subsec:TSWD}
One of the major themes of this article is to generalise the previous statements for line bundles to {\em toric sheaves} of arbitrary rank. By definition, these are reflexive sheaves that are
equivariant with respect to the acting torus $\TX$, see for instance~\cite[Chapter 1.3]{mumfordGIT}. 

\medskip

The restriction of a toric sheaf $\CE$ to $\TX$ is determined by a free graded $\kk[M]$-module whose torus invariant part is a $\kk$-vector space $E$. A {\em Weil decoration} for $\CE$ is a map $\CD\colon E\setminus\{0\}\to\Div_\TX(X)$ into the $\TX$-invariant divisors, which factorises over the projectivisation $\P(E)$ and satisfies
\[
\CD(e+e')\geq\CD(e)\wedge\CD(e')
\]
for all $e$, $e'\in E\setminus\{0\}$ provided $e+e'\not=0$. Here, $\wedge$ denotes the natural meet on $\Div_\TX(X)$ given by taking the minimum of the coefficients in $\CD(e)$ and $\CD(e')$. 
As we shall explain in Section~\ref{sec:WDTS}, with every toric sheaf $\CE$ we can associate a Weil decoration $\CD_\CE$ which completely characterises $\CE$ as a subsheaf of $E\otimes (j_\TX)_*\CO_\TX$; the divisors $\CD(e)$ come from the invertible sheaves $\CE\cap \big(e\cdot(j_\TX)_*\CO_\TX\big)$, see~\ssect{subsec:TS}.

\begin{example}
\label{exam:LB}
Let $\CE=\mc O(D)\subseteq(j_\TX)_*\CO_\TX=\kk[M]$ be a reflexive sheaf of rank $1$. Then $E=\kk$ with Weil decoration determined by $\CD_\CE(1)=D$. In this way, Weil decorations provide a natural reformulation of Weil divisors.

\smallskip

Similarly, the trivial higher rank sheaf $\CE=\mc O_X(D)\otimes E$ is associated with $E$ and $\CD_\CE(e)=D$ for all $e\in E\setminus\{0\}$.  
\end{example}

\begin{example}
\label{exam:SumLB}
Next, let $\CE=\CO_X(D_1)\oplus\CO_X(D_2)$ be the direct sum of reflexive sheaves of rank $1$. The torus invariant decomposition of $\CE$ yields $E=e_1\cdot\kk\oplus e_2\cdot\kk$, where $e_1$ and $e_2\in E$ correspond to the summands of $\CE$. The associated Weil decoration for $e \in E\setminus\{0\}$ is given by
\[
\CD_\CE(e)=\left\{\begin{array}{cl}D_1&\text{if }e\in \kk\cdot e_1\\[3pt]D_2&\text{if }e\in \kk\cdot e_2\\[3pt]D_1\wedge D_2&\text{if otherwise}\end{array}\right..
\]
\end{example}

\bigskip

By pooling together in the set $\S(D)$ the points of $E\setminus\{0\}$ giving rise to the same divisor $D$ and adding the origin, we actually obtain a stratification $\Strat_\CE$ of $E$ defining a join semilattice. Furthermore, the Weil decoration $\CD_\CE$ descends to an anti-semilattice morphism $\CD_\CE\colon\Strat_\CE\to\Div_\TX(X)$, which is best visualised in terms of a Hasse diagram. For instance, $\CO_X(D_1)\oplus\CO_X(D_2)$ yields
\[
\begin{tikzcd}
& \eta:=\S(D_1\wedge D_2)=\S(D_1)\vee\S(D_2)
& \\[3pt]
\S(D_1)=\kk^\times\cdot e_1\ar[ru, no head] & & \S(D_2)=\kk^\times\cdot e_2\ar[lu, no head]
\end{tikzcd}
\]
where $\eta=E\setminus(e_1\cdot\kk\,\cup\,e_2\cdot\kk)$.

\medskip

Besides providing a handy description of toric sheaves, we can also read off various global data directly from the Weil decoration. For instance, if $X$ is smooth and projective, the $\TX$-equivariant Euler characteristic of a toric sheaf $\CE$ is given by
\[
\chi^\TX(X,\CE)=\sum_{\S\leq\T}\dim(\S)\cdot\Inc_\CE^{-1}\cdot\chi^\TX\big(X,\CO_X(\CD_\CE(\T))\big),
\]
where $\Inc_\CE$ is the incidence matrix of the lattice $\Strat_\CE$. Furthermore, for a toric sheaf $\CE$ whose Weil divisors $\CD(\S)$ are ample, let $\S_{\mr{glob},m}$, $m\in M$, be maximal among the strata whose polytope associated with $\CD_\CE(\S)$ contains $m$. Then the space of global sections of degree $m$ is given by
\[
\gH^0(X,\CE)_m=\overline\S_{\mr{glob},m}\subseteq E.
\]
A similar formula holds for general $\CE$. See \ssect{subsec:EulerGlobSec} for details.

\medskip

Lattices in the order theoretic sense have played an important r\^ole in the theory of toric sheaves ever since the seminal work of Klyachko~\cite{klyachko}. In fact, we recover the Klyachko filtrations of $\CE$ from its Weil decoration $\CD_\CE$ via
\begin{equation}
\label{eq:KlyachkoFilt}
E^\ell_\rho=\{e\in E \setminus \{0\}\mid\,\text{the coefficient of }D_\rho\text{ in }\CD_\CE(e)\text{ is }\geq\ell\}\cup\{0\},
\end{equation}
where $D_\rho$ denotes the toric prime divisor associated with the ray $\rho\in\Sigma(1)$. Assuming that $X$ is smooth, it also induces a piecewise linear valuation in the sense of Kaveh and Manon~\cite{tropical2} after identifying divisors with piecewise linear functions. Finally, we recover the parliament of polytopes of di Rocco, Jabbusch and Smith~\cite{parliaments} by mapping the divisors $\CD_\CE(e)$ to their polytope of sections $\gH^0_\R(\CD_\CE(e))$, see~\eqref{eq:H0R} on page~\pageref{eq:H0R}.

\subsection{The universal extension}
\label{subsec:UnivExt}
With every lattice polytope $\nabla$ compatible with the fan $\Sigma$ we can associate a nef line bundle $\CO_X(\nabla)$. As a first application of Weil decorations, we revisit the problem of constructing the {\em toric universal extension} $\CE(\nablam,\nablap)$ of $\CO_X(\nablam)$ by $\CO_X(\nablap)$, cf.~\cite{ka48-dispExt}. Namely, for the $\TX$-equivariant extensions 
\[
\gExt:=\gExt^1_{\CO_X}\!\!\big(\CO_X(\nablam),\CO_X(\nablap)\big)_0 
\]
we wish to construct a short exact sequence
\[
\begin{tikzcd}
0\ar[r]&\gExt^\vee\otimes_\kk\CO_X(\nablap)\ar[r]&\CE(\nablam,\nablap)\ar[r]&\CO_X(\nablam)\ar[r]&0
\end{tikzcd}
\]
such that the extension induced by pushout along $t\in\gExt$ is actually the one specified by $t$.

\begin{example}{\cite[Section 1]{ka48-dispExt}} 
Consider the Hirzebruch surface $\F_1=\toric(\Sigma)$ and its nef polytopes $\nablap$ and $\nablam$ from Figure~\ref{fig:F1Nef}. 
\begin{figure}[ht]
\begin{tikzpicture}[scale=0.3]
\draw[thick, color=black]
  (0,0) -- (4,0) (0,-4) -- (0,4) (0,0) -- (-3.0,-3.0);
\draw[thick, color=black]
  (2,2) node{$\sigma_{12}$};
\draw[thick, color=black]
  (-2,1) node{$\sigma_{23}$};
\draw[thick, color=black]
  (-1.2,-2.7) node{$\sigma_{34}$};
\draw[thick, color=black]
  (2.5,-2) node{$\sigma_{41}$};
\draw[thick, color=black]
  (5,0) node{$\rho_1$} (0,5) node{$\rho_2$} (-3.8,-3.8) node{$\rho_3$}
  (1.0,-3.8) node{$\rho_4$};
\end{tikzpicture}
\qquad
\begin{tikzpicture}[scale=0.85]
\draw[color=oliwkowy!40] (-0.3,-0.3) grid (2.3,2.3);
\draw[thick, color=green]
  (0,0) -- (2,0) -- (0,2) -- cycle;
\draw[line width=1mm, color=blue]
  (0,1) -- (1,1);
\draw[very thick, color=blue]
  (1.2,1.3) node{$\nablap$};
\fill[color=red]
  (0,1) circle (4pt);
\draw[thick, color=green]
  (2.3,0.3) node{$\nablam$};
\end{tikzpicture}
\caption{The Hirzebruch surface $\F_1=\toric(\Sigma)$ given by the fan on the left-hand side. The red dot on the right-hand side indicates the origin in $M_\R$ and fixes the position of the polytopes.}
\label{fig:F1Nef}
\end{figure}
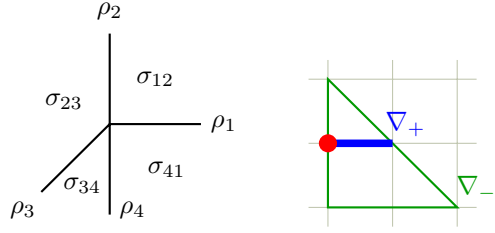

\medskip

The $\TX$-equivariant extensions are given by $\gExt=\widetilde{\opn H}^{\raisebox{-3pt}{\scriptsize$0$}}\big(\nablam\setminus\nablap\big)$, cf.~Equation~\eqref{eq:AP}. In our specific situation, $\gExt\cong\kk$, and the nontrivial extension is encapsulated in the inclusion/exclusion sequence of polyhedra displayed in Figure~\ref{fig:F1PolRes}. Here, $\nabla_0:=C_0\cup\nablap$ and $\nabla_1:=C_1\cup\nablap$, where $C_0$ and $C_1$ denote the connected components of $\nablam\setminus\nablap$. 
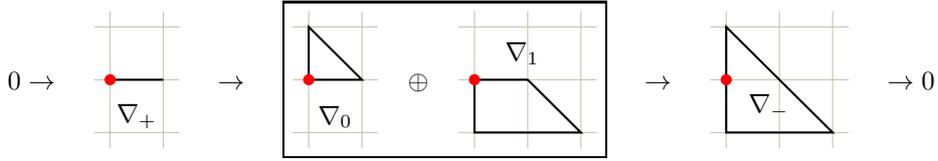
\begin{figure}[ht]
\newcommand{\scaleA}{0.7}
\newcommand{\scaleB}{1.0}
\newcommand{\raiseC}{5.2ex}
\newcommand{\spaceA}{\hspace*{1.1em}}
\raisebox{\raiseC}{$0 \rightarrow$}
\spaceA
\begin{tikzpicture}[scale=\scaleA]
\draw[color=oliwkowy!40] (-0.3,-0.3) grid (1.3,2.3);
\draw[thick, color=black]
  (0,1) -- (1,1);
\fill[thick, color=red]
  (0,1) circle (3pt);
\draw[thick, color=black]
  (0.5,0.3) node{$\nablap$};
\end{tikzpicture}
\spaceA
\raisebox{\raiseC}{$\rightarrow$}
\spaceA
\fbox{$
\begin{tikzpicture}[scale=\scaleA]
\draw[color=oliwkowy!40] (-0.3,-0.3) grid (1.3,2.3);
\draw[thick, color=black]
  (0,1) -- (1,1) -- (0,2) -- cycle;
\fill[thick, color=red]
  (0,1) circle (3pt);
\draw[thick, color=black]
  (0.5,0.3) node{$\NablaA$};
\end{tikzpicture}
\spaceA
\raisebox{\raiseC}{$\oplus$}
\spaceA
\begin{tikzpicture}[scale=\scaleA]
\draw[color=oliwkowy!40] (-0.3,-0.3) grid (2.3,2.3);
\draw[thick, color=black]
  (0,0) -- (2,0) -- (1,1) -- (0,1) -- cycle;
\fill[thick, color=red]
  (0,1) circle (3pt);
\draw[thick, color=black]
  (0.9,1.5) node{$\NablaB$};
\end{tikzpicture}
$}
\spaceA
\raisebox{\raiseC}{$\rightarrow$}
\spaceA
\begin{tikzpicture}[scale=\scaleA]
\draw[color=oliwkowy!40] (-0.3,-0.3) grid (2.3,2.3);
\draw[thick, color=black]
  (0,0) -- (2,0) -- (0,2) -- cycle;
\fill[thick, color=red]
  (0,1) circle (3pt);
\draw[thick, color=black]
  (0.8,0.5) node{$\nablam$};
\end{tikzpicture}
\spaceA
\raisebox{\raiseC}{$\rightarrow 0$}
\caption{The polyhedral resolution of $\nablam\setminus\nablap$}
\label{fig:F1PolRes}
\end{figure}
This sequence actually translates into the non-split extension (albeit defined by a split bundle)
\begin{equation}
\label{eq:ExacSeq}
0\to\CO_X(\nablap)\to\fbox{$\CE(\nablam,\nablap):=\CO_X(\NablaA)\oplus\CO_X(\NablaB)$}\to\CO_X(\nablam)\to0.
\end{equation}

\medskip

More generally, the same method applies for more than two components as long as $\nablap\subseteq\nablam$. This viewpoint, 
however, breaks down if $\nablap\cap\nablam$ is no longer a lattice polytope compatible with the fan (see Figure~\ref{fig:NonIntegralInter} on page~\pageref{fig:NonIntegralInter} for an example), and~\cite{ka48-dispExt} falls back to Klyachko filtrations. 
\end{example}

\bigskip

In our approach, we specify $\CE(\nablam,\nablap)$ by its Weil decoration. 
In the former situation where $\nablap\subseteq\nablam$, 
the sheaf $\CE(\nablam,\nablap)$ splits and we have $E=\kk e_1\oplus\kk e_2$. This yields the Hasse diagram
\[
\begin{tikzcd}
& \eta=
\S(\nabla_0\cap\nabla_1)=\S(\nablap) &\\[3pt]
\S(\nabla_0)=\kk^\times\cdot e_0\ar[ru, no head] & & \S(\nabla_1)=\kk^\times\cdot e_1\ar[lu, no head]
\end{tikzcd}
,
\]
cf.\ Example~\ref{exam:SumLB}. If, on the other hand, $\nablam\cap\nablap$ is not a lattice polytope, the intersection makes still sense as a {\em virtual intersection} $\nablam\vcap\nablap$. We therefore regard $\nabla_0$ and $\nabla_1$ as virtual polytopes with $\nabla_0\vcap\nabla_1=\nablam\vcap\nablap$ and add the stratum $\S(\nablap)$, that is,
\[
\begin{tikzcd}
& \eta=\S(\nabla_0\vcap\nabla_1)&\\[3pt]
&&\S(\nablap)=\kk^\times\cdot(e_1-e_0)\ar[lu,no head]\\[3pt]
\S(\nabla_0)=\kk^\times\cdot e_0\ar[ruu, no head] & \S(\nabla_1)=\kk^\times\cdot e_1\ar[uu, no head]&
\end{tikzcd}
.
\]
As we discuss in Section~\ref{sec:ExtNLB}, this yields an inclusion/exclusion 
exact sequence in the sense specified in Section~\ref{sec:TorMor}. The case of $n$ (virtual) connected components is treated similarly, which yields the

\begin{theorem}
For a smooth projective toric variety $X=\P(\Delta)$, the toric sheaf $\CE(\nablam,\nablap)$ is the $\TX$-invariant universal extension of $\CO_X(\nablam)$ by $\CO_X(\nablap)$. 
\end{theorem}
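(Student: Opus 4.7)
The plan is to proceed directly from the Weil decoration defining $\CE(\nablam,\nablap)$. The first step is to extract the short exact sequence
\[
0\to \gExt^\vee\otimes_\kk\CO_X(\nablap)\to \CE(\nablam,\nablap)\to \CO_X(\nablam)\to 0
\]
from the Hasse diagram. The top stratum $\eta$ yields the quotient $\CO_X(\nablam)$ because the meet of the decorations $\nabla_i$ over all (virtual) connected components of $\nablam\setminus\nablap$ equals $\nablam$ (respectively $\nablam\vcap\nablap$), and because the codimension in $E$ of the subspace spanned by the bottom strata $\S(\nablap)=\kk^\times(e_i-e_0)$ is one. These bottom strata, each decorated by $\nablap$, span a subspace of $E$ of dimension $|\pi_0(\nablam\setminus\nablap)|-1=\dim\gExt$, so they assemble into $\gExt^\vee\otimes\CO_X(\nablap)$. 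Exactness in the middle follows from the semilattice property of the decoration.

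The second step is universality, which is equivalent to showing that the class of this extension in
\[
\gExt^1_{\CO_X}\bigl(\CO_X(\nablam),\gExt^\vee\otimes\CO_X(\nablap)\bigr)_0\;\cong\;\gExt^\vee\otimes_\kk\gExt
\]
is the identity endomorphism of $\gExt$. Using~\eqref{eq:AP} to identify $\gExt=\widetilde H^0(\nablam\setminus\nablap)$, a basis of $\gExt$ is in canonical bijection with the (virtual) connected components modulo the base component. I would verify universality pointwise by showing that the pushout of our extension along each basis vector collapses the Hasse diagram to the corresponding two-term inclusion/exclusion extension
\[
0\to \CO_X(\nablap)\to \CO_X(\nabla_i)\oplus\CO_X(\nabla'_i)\to \CO_X(\nablam)\to 0,
\]
exactly as in the Hirzebruch example of Figure~\ref{fig:F1PolRes}. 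At the level of the Hasse diagram this pushout identifies all strata $\S(\nabla_j)$, $j\neq i$, into a single stratum decorated by $\nabla'_i:=\bigwedge_{j\neq i}\nabla_j$, while $\S(\nabla_i)$ and the surviving auxiliary stratum $\S(\nablap)$ remain; this is manifestly the decoration of the desired two-term extension.

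The main obstacle is the genuinely virtual case where $\nablam\cap\nablap$ is not a lattice polytope compatible with $\Sigma$. Here one cannot simply present the two-term inclusion/exclusion extension as a short exact sequence of honest line bundles, and I would rely on the framework of Section~\ref{sec:ExtNLB}, in which such sequences are set up to accommodate virtual polytopes precisely by adjoining the auxiliary stratum $\S(\nablap)$ to the Hasse diagram. The remaining verification is that the virtual intersection $\nablam\vcap\nablap$ transforms correctly under the pushout collapse, which reduces to a formal identity in the Grothendieck group of polytopes together with the exactness statements from Section~\ref{sec:TorMor}.
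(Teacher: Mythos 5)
Your extraction of the short exact sequence in Step~1 is in the right spirit, but contains confusions: the strata $\S(\nabla_\nu)$ spanned by the $[\nabla^\nu]$ are decorated by $\nabla_\nu$, not by $\nablap$; it is the single intermediate stratum $\S(\nablap)$ on $\ker\underline1 = \iota(\gExt^\vee)$, of codimension one in $V^\vee$, that carries the decoration $\nablap$. Exactness is obtained in the paper precisely as you suggest, via Propositions~\ref{prop:Inj} and~\ref{prop:Sur} together with the identity $\nablam=\bigcup_\nu\nabla_\nu$, so this step can be salvaged.

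Step~2, however, has a genuine gap: your verification of universality by pushing out along basis vectors of $\gExt$ is circular. You want to show that each such pushout is ``the corresponding two-term inclusion/exclusion extension'' and conclude universality from that; but the claim that this rank-two sequence represents the prescribed class in $\gExt^1$ is itself an instance of the theorem (for $\dim\gExt=1$), not something established independently. The paper avoids this by reducing to the case $\nablap\subseteq\nablam$ (equivalently $\CQ=\nablap$), where the universal extension is already known from~\cite[Theorem~19]{ka48-dispExt}, and then handling $\CQ\subsetneq\nablap$ not by pushing out along elements of $\gExt$ but by pushing out along the inclusion $\gExt^\vee\otimes\CO_X(\CQ)\into\gExt^\vee\otimes\CO_X(\nablap)$ in the sub-bundle direction, invoking~\cite[Section~4.2.2]{ka48-dispExt} to identify this pushout with the universal extension. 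These are two different pushout operations, and the one you propose does not close the argument. In addition, the decoration formula you assert, $\nabla'_i:=\bigwedge_{j\neq i}\nabla_j$, evaluates to $\CQ$ whenever there are three or more components (since $\nabla_j\cap\nabla_k=\CQ$ for all $j\neq k$), and the resulting rank-two sheaf $\CO_X(\nabla_i)\oplus\CO_X(\CQ)$ is not what you want; the structure of the pushout along $t\in\gExt$ is basis-dependent and requires an actual computation, which you have not supplied. To repair the proof you would need to replace the pointwise-pushout step with the reduction to the honest-intersection case and the pushout in the $\CO_X(\CQ)\to\CO_X(\nablap)$ direction, then cite the foundational results of~\cite{ka48-dispExt}.
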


See Section \ref{sec:ExtNLB} for details. This surprisingly catchy structure makes the universal toric extensions $\CE(\nablam,\nablap)$ akin to tangent bundles and more generally to Kaneyama bundles~\cite{kaneyama}.

\subsection{The cohomology of toric sheaves}
As exemplified in the previous example, the polyhedral language provides a handy tool to compute the cohomology of line bundles. We generalise this approach to a toric sheaf $\CE$ over a smooth projective variety $X=\P(\Delta)$ by using its $\Pol(\Sigma)$-valued Weil decoration $\CD$. 

\medskip

Replacing $\Delta$ by a suitable ample polytope if necessary, we assume the Weil decoration $\CD^+$ of
\[
\CE^+:=\CE(\Delta):=\CE\otimes\CO_X(\Delta)
\]
to consist of ample polytopes only. Then we can define a constructible $\kk$-linear sheaf $\CF(\CE)$ over $\Delta$ by
\begin{equation}
\label{eq:DefCFCE}
\CF(\CE)(U):=\{e\in E\mid U\subseteq\CPvar^+(e)\},\quad U\subseteq\Delta\text{ open},
\end{equation}
which is a subsheaf of the constant sheaf $\underline E_\Delta$.

\begin{theorem}
The $\TX$-invariant piece of the cohomology of $\CE$ is given by
\[
\gH^\ell(X,\CE)_0\cong\gH^\ell\hspace{-0.5ex}\big(\Delta,\CF(\CE)\big).
\]
\end{theorem}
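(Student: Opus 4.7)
The plan is to compute both sides through compatible \v Cech-type complexes and to match them term by term via the Weil decoration.

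First, I would cover $X$ by the affine toric opens $\{U_\sigma\}_{\sigma\in\Sigma(n)}$ attached to the maximal cones of $\Sigma$, so that $\gH^\ell(X,\CE)_0$ is computed by the degree-$0$ graded piece of the associated \v Cech complex. Under the standard bijection between maximal cones $\sigma\in\Sigma(n)$ and vertices $v_\sigma$ of $\Delta$, the intersection $U_{\bar\sigma}=U_{\sigma_0}\cap\cdots\cap U_{\sigma_p}$ corresponds to the open star $V_{\bar\sigma}\subseteq\Delta$ of the common face spanned by the $v_{\sigma_i}$, and the $V_\sigma$ form a cover of $\Delta$ with contractible finite intersections. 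After possibly refining both covers by a common polytopal subdivision compatible with all the polytopes $\CD^+(e)$, the cover on the $\Delta$-side is acyclic for the constructible sheaf $\CF(\CE)$, and cohomology on both sides is computed by the respective \v Cech complexes.

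The heart of the argument is a natural, \v Cech-functorial identification
\[
\CE(U_{\bar\sigma})_0\;\cong\;\CF(\CE)(V_{\bar\sigma}).
\]
By the Weil decoration of $\CE$, an element $e\in E\setminus\{0\}$ defines a degree-$0$ section over $U_{\bar\sigma}$ precisely when $\CD(e)$ is effective on $U_{\bar\sigma}$, i.e.\ when the defining inequalities attached to the rays of the common cone $\bigcap_i\sigma_i$ are satisfied. The ample shift $\CD\rightsquigarrow\CD^+=\CD+\Delta$ rephrases this effectivity in terms of polytopal containment: it becomes equivalent to $V_{\bar\sigma}\subseteq\CD^+(e)$, which is exactly the defining condition for $e\in\CF(\CE)(V_{\bar\sigma})$. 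Functoriality in $\bar\sigma$ is automatic since both sides inherit their restrictions from the face poset of $\Delta$, so the identifications assemble into an isomorphism of \v Cech complexes which passes to cohomology.

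The main obstacle is the combinatorial translation between divisor-theoretic effectivity on $U_{\bar\sigma}$ and polytopal containment $V_{\bar\sigma}\subseteq\CD^+(e)$; this relies on the precise interplay between the primitive generators of $\Sigma$, the vertex--cone duality for $X=\P(\Delta)$, and the piecewise linear support functions of the $\CD^+(e)$, together with the convexity needed to propagate vertex-wise containment to the interior. A secondary issue is the acyclicity of $\{V_\sigma\}$ for $\CF(\CE)$, which may force the refinement mentioned above; a more conceptual alternative, closer to the spectral sequence approach hinted at in the abstract, is to decompose $\CE$ and $\CF(\CE)$ in parallel along the Hasse diagram $\Strat_\CE$, reducing the theorem to the line bundle identity~\eqref{eq:AP} on each individual stratum.
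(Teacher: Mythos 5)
The central step of your proposal --- the ``natural, \v Cech-functorial identification'' $\CE(U_{\bar\sigma})_0\cong\CF(\CE)(V_{\bar\sigma})$ --- is false. The left-hand side depends only on the rays of the cone $\tau=\bigcap_i\sigma_i$: by~\eqref{eq:EvalSec}, $\gH^0(U_\tau,\CE)_0=\eval_\tau(0)=\{e:\,0\in\CPvar_\tau(e)\}=\{e:\,\Delta\subseteq\CPvar^+_\tau(e)\}$, and the local polyhedron $\CPvar^+_\tau(e)=r^+_\tau(e)+\tau^\vee$ is unbounded and imposes constraints only for rays $\rho\in\tau(1)$. The right-hand side, $\CF(\CE)(V_{\bar\sigma})=\{e:\,V_{\bar\sigma}\subseteq\CPvar^+(e)\}$, is a containment in the global \emph{compact} polytope $\CPvar^+(e)$, which also involves the facet inequalities at rays $\rho\notin\tau$. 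These two conditions genuinely differ. Concretely, on $X=\P^1$ with $\Delta=[0,n]$ and $\CE$ the line bundle with $\CPvar^+(1)=[a,b]$, $0<a<b<n$: the open stars are $V_{\sigma_1}=[0,n)$, $V_{\sigma_2}=(0,n]$ and $V_{\sigma_1}\cap V_{\sigma_2}=(0,n)$, and $\CF(\CE)$ vanishes on all of them, so your \v Cech complex is the zero complex; yet $\gH^1(X,\CE)_0=\widetilde{\opn H}^{\raisebox{-3pt}{\scriptsize$0$}}([0,a]\cup[b,n])=\kk$. In particular, $\CE(U_{\sigma_1}\cap U_{\sigma_2})_0=E=\kk$ while $\CF(\CE)(V_{\sigma_1}\cap V_{\sigma_2})=0$, so the mismatch occurs already at the level of individual \v Cech terms, not merely through a failure of the Leray condition that you flag as a secondary issue.

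The paper's proof routes around exactly this problem by matching locality on $X$ not with locality inside $\Delta$, but with a \emph{fan-indexed family of sheaves on all of} $\Delta$. The correct identity is the one above, $\gH^0(U_\sigma,\CE)_0=\Gamma(\Delta,\CF_\sigma)$, where $\CF_\sigma$ is cut out by the unbounded local polyhedra $\CPvar^+_\sigma(e)$ and lives over the \emph{whole} of $\Delta$. The Klyachko--\v Cech complex~\eqref{eq:KCComplex:two} (indexed by all cones, not just maximal ones) is then reinterpreted as computing $\R\varprojlim_\sigma\Gamma^\Delta(\CF_\sigma)$ for the Alexandrov topology on the fan poset; the local acyclicity of Proposition~\ref{prop:Acyclic} lets one interchange $\R\Gamma^\Delta$ and $\R\varprojlim$, and the heart of the argument is the purely combinatorial Proposition~\ref{prop:DerLimLoc}, proven by the strata double complex, which shows $\R\varprojlim\CF_\sigma=\varprojlim\CF_\sigma=\CF(\CE)$. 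Your closing suggestion --- decomposing $\CE$ and $\CF(\CE)$ in parallel along the Hasse diagram and invoking the line bundle identity~\eqref{eq:AP} --- is much closer in spirit to the paper's computational machinery (Corollary~\ref{coro:CanSeq}, Theorem~\ref{thm:FRes}, and Remark~\ref{rem:backToLB}), but it is not what the paper does to \emph{prove} the theorem, and making it into a proof would require establishing that the two spectral sequences built from these parallel resolutions agree not just on $E_1$-pages but including differentials, a point your sketch does not address.
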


\medskip

Shifting $\Delta$ by $m\in M$ also yields the $m$-graded piece of the cohomology, see Theorem~\ref{thm:CohomSheafE} and Remark~\ref{rem:GenDegree} for details. 

\medskip

The link between $\CE$ and $\CF$ seems to be an instance of a broader correspondence between coherent and constructible sheaves over toric varieties~\cite{zaslow},~\cite{treumann}. 

\medskip

Our goal, however, is to gain a concrete way of visualising and computing the cohomology of toric sheaves in polyhedral terms. Theorem \ref{thm:FRes} constructs a natural double complex 
from which we obtain the cohomology of $\CE$ as the limit of a spectral sequence. The $E_1$-terms are essentially given by the reduced cohomology of polyhedral albeit nonconvex subsets of $M_\R$, see Theorem~\ref{thm:SpecSeq} for details. 

\medskip

On the practical side, we note that the length of the complex of Theorem \ref{thm:FRes} is governed by the height of the stratification $\Strat_\CE$, while it is $\dim X$ for the usual Klyachko-\v Cech complex, 
cf.~\cite{klyachko}.
Expressing the cohomology of $\CE$ as a mapping cone in the bounded derived category, we can compute the cohomology for rank $2$ bundles or the universal extensions considered in \ssect{subsec:UnivExt}, see~\ssect{subsec:HeightOne}. For illustration, we apply this method in \ssect{subsubsec:cohomSheafTang} and  \ssect{subsec:TangBundle} to the twisted tangent sheaves $\CT_{\PP^2}(\ell)$ .

\subsection*{Acknowledgements}
It is a pleasure to thank David Ploog for his very valuable comments on a preliminary version of this article.

\section{Toric geometry}
\label{sec:TG}
We briefly recall some general features of toric geometry we shall use in the sequel. A more detailed introduction can be found in~\cite{fultonToric}.

\subsection{Toric varieties}
\label{subsec:TV}
Consider the perfect pairing $\langle\cdot\,,\cdot\rangle\colon M\times N\to\Z$ between $N\cong\Z^\kdimX$ and $M = \Hom(N,\Z)$. Using the $\kdimX$-dimensional algebraic torus $\TX=N\otimes_\Z\kk^\times$ we can think of $N$ as $\Hom_\Z(\kk^\times,\TX)$, the {\em $1$-parameter subgroups of $\TX$}. On the other hand, $M$ arises as the dual {\em character group} $\Hom_\Z(\TX,\kk^\times)$. 

\medskip

Let $\sigma$ be a {\em cone} inside $N_\R=N\otimes_\Z\R$, a finitely generated convex set $\sum_{\mr{finite}}\R_{\geq0}\cdot\rho_i$, which is rational, i.e., $\rho_i\in N$, and pointed, i.e., $\sigma\cap(-\sigma)=\{0\}$. Any such cone defines an {\em affine toric variety}
\[
\toric(\sigma):=\Spec\kk[\sigma^\vee\cap M]
\]
for the so-called {\em dual cone}
\[
\sigma^\vee:=\{u\in M_\R\mid\langle u,\sigma\rangle\geq0\}.  
\]
Each face $\tau\leq\sigma$ provides a natural open embedding $\toric(\tau)\into\toric(\sigma)$. In particular, $\toric(\sigma)$ contains the torus
\[
\TX=\Spec\kk[M]=\toric(0).
\]

\medskip

A finite set $\Sigma$ of cones is called a {\em fan} if it is closed under taking faces, and $\sigma\cap\sigma'$ is a common face for any two cones $\sigma$, $\sigma'\in\Sigma$. The subset $|\Sigma|=\bigcup_{\sigma\in\Sigma}\sigma$ of $N_\R$ is called the {\em support} of $\Sigma$. To avoid certain degenerate cases, we always assume that
\begin{equation}
\label{eq:GenAss}
|\Sigma|\text{ \bf is convex and full-dimensional.}
\end{equation}
For instance, any cone $\sigma\in\Sigma$ defines the fan $[\sigma]$ consisting of the faces of $\sigma$. As the face relation yields a natural partial order on $\Sigma$ we obtain the {\em toric variety}
\[
X=\toric(\Sigma):=\varinjlim_{\sigma\in\Sigma}\toric(\sigma)
\]
by glueing the family of affine toric varieties specified by the fan. In particular, $\toric([\sigma])=\toric(\sigma)$. The image of the open embedding $\toric(\sigma)\into\toric(\Sigma)$ will be denoted $U_\sigma$. In particular, the torus
\[
j_\TX\colon\TX\stackrel{\sim}{\longrightarrow}U_0\subseteq X
\]
embeds as a dense open subset, and the action on itself extends to all of $X$. Therefore, we have a natural $M$-grading on every algebraic structure functorially attached to $\toric(\Sigma)$. 

\medskip

An important class of fans arises by taking the {\em normal fan} $\mc N(\Delta)$ of a full dimensional lattice polyhedron inside $M_\R$. In this case, we write
\[
X=\PP(\Delta):=\toric\big(\mc N(\Delta)\big);
\]
a toric variety arising this way is automatically quasi-projective and polarised. Actually,~\eqref{eq:GenAss} implies that it is even {\em semi-projective}, namely, $X$ is projective over the affine scheme defined by its ring of global functions.

\subsection{Divisors}
\label{subsec:Div}
A salient feature of toric geometry is the explicit description of its class
group. Let $\Sigma(d)$, $d\in\N$, denote the subset of $d$-dimensional cones
of $\Sigma$. Cones in $\Sigma(1)$ will be called {\em rays}; we tacitly
identify these with their primitive generators. Rays define one-codimensional torus orbits $\opn{orb}(\rho):=\Spec\kk[\rho^\perp\cap M]$ that are closed inside $U_\rho\subseteq X$. Their closures in $X$, namely
\[
D_\rho:=\overline{\opn{orb}(\rho)},\quad\rho\in\Sigma(1),
\]
are precisely the $\TX$-invariant prime divisors of $X$. Furthermore, we have the celebrated exact sequence
\begin{equation}
\label{eq:MotherSeq}
\begin{tikzcd}
0\ar[r]&M\ar[r,"\div"]&
\fbox{$\Z^{\Sigma(1)}=\bigoplus_{\rho\in\Sigma(1)}\Z\cdot D_\rho$}
\ar[r,"{[\cdot]}"]&\opn{Cl}(X)\ar[r]&0  
\end{tikzcd}
\end{equation}
where $\div(m):=\div(x^m)=\sum\langle m,\rho\rangle D_\rho$ and $[\cdot]$ sends $D_\rho$ to its class $[D_\rho]$. In particular, the classes of $\TX$-invariant or {\em toric} divisors
\[
\Div(\Sigma):=\Div_\TX(X)=
\big\{\!\sum_{\rho\in\Sigma(1)}\!a_\rho D_\rho\mid a_\rho\in\Z\big\}
\]
generate the full class group.

\begin{remark}
Since we will consider toric divisors only, we usually drop the qualifier ``toric'' in the sequel and simply speak of Weil divisors in $\Div(\Sigma)$.  
\end{remark}

\subsection{Virtual polyhedra}
\label{subsec:VP}
In this section, we recall the basic correspondence between toric divisors and virtual polyhedra, assuming that $X$ is smooth of dimension $\kdimX$. Since we will also work locally, we consider a quasi-projective $X=\P(\Delta)$ given by a (not necessarily compact) polyhedron $\Delta$ giving rise to the ample sheaf $\CO_X(\Delta)$. 

\subsubsection{The Grothendieck group $\Pol(\Sigma)$}
Any polyhedron $\nabla\subseteq M_\R$ can be written as the Minkowski sum of a polytope and its tail cone
\[
\opn{tail}(\nabla):=\{u\in M_\R\mid u+\nabla\subseteq\nabla\}.
\]

\begin{definition}
A lattice polyhedron $\nabla$ is {\em compatible with} $\Sigma$ if its normal fan $\mc N(\nabla)$ is refined by $\Sigma$. In particular, $\opn{tail}(\nabla)=|\Sigma|^\vee$ is the dual of the support of $\Sigma$. We set 
\[
\Pol^+(\Sigma):=\text{ the set of compatible lattice polyhedra.}
\]
With Minkowski sum as addition, $\Pol^+(\Sigma)$ becomes a cancellative semigroup. The associated Grothendieck group will be denoted $\Pol(\Sigma)$ and referred to as the {\em group of virtual polyhedra}. We think of the latter as formal differences $\nabla=\nabla_+-\nabla_-$ with $\nabla_\pm\in\Pol^+(\Sigma)$.
\end{definition}

\medskip

Compatibility entails that for every cone $\sigma\in\Sigma$ there is a uniquely determined face $\nabla(\sigma)$ of $\nabla\in\Pol^+(\Sigma)$ satisfying
\begin{equation}
\label{eq:sigmaCorner}
\langle\nabla(\sigma),a\rangle=\min\langle\nabla,a\rangle\quad\text{for all }a\in\sigma;
\end{equation}
the notation is justified by the fact that the left-hand side is a constant value. If $\sigma$ is full-dimensional, then $\nabla(\sigma)$ is a vertex; 
distinct $\sigma$ may lead to the same vertex if $\nabla$ is not ample. At any rate, defining the set $\{\nabla_\sigma\}_{\sigma\in\Sigma}$ of {\em local polyhedra of} $\nabla$ through
\[
\nabla_\sigma:=\nabla(\sigma)+\sigma^\vee=\nabla+\sigma^\vee
\]
we obtain
\begin{equation}
\label{eq:NablaInter}
\nabla=\opn{conv}\{\nabla(\sigma)\mid\sigma\in\Sigma(\kdimX)\}+|\Sigma|^\vee=\bigcap_{\sigma\in\Sigma(\kdimX)}\nabla_\sigma.
\end{equation}
.

\medskip

The vertices $\nabla(\sigma)$ associated with $\sigma\in\Sigma(\kdimX)$ induce the monomials $x^{\nabla(\sigma)}\in\kk[M]$ which define an invertible sheaf $\CO_X(\nabla)$ by
\[
\CO_X(\nabla)|_{U_\sigma}:=x^{\nabla(\sigma)}\cdot\CO_{U_\sigma}=\kk[\nabla_\sigma\cap M]\subseteq\kk[M].
\]
We therefore think of $\CO_X(\nabla)$ as a subsheaf 
\begin{equation}
\label{eq:Subsheaf} 
\CO_X(\nabla)\subseteq(j_\TX)_*\CO_\TX=\kk[M],
\end{equation}
slightly abusing notation for the latter equality. Furthermore,
\[
x^{\nabla(\sigma)}\in\gH^0(X,\CO_X(\nabla))=\kk[\nabla\cap M],
\]
which implies that $\CO_X(\nabla)$ is globally generated. Equivalently, its associated divisor determined by the embedding~\eqref{eq:Subsheaf}, namely
\[
D_\nabla=-\sum_{\rho\in\Sigma(1)}\min\langle\nabla,\rho\rangle\cdot D_\rho,
\]
is basepoint free, which for toric varieties is the same as being nef. Finally,
we have
\begin{equation}
\label{eq:Tensor}
\CO_X(\nabla)\otimes\CO_X(\nabla')=\CO_X(\nabla+\nabla')
\end{equation}
for any pair of compatible lattice polyhedra. In particular, we may define for a virtual polyhedron $\nabla=\nablap-\nablam$ the line bundle $\CO_X(\nabla)=\CO_X(\nablap)\otimes\CO_X(\nablam)^{-1}$.

\medskip

Conversely, starting with a (toric) divisor $D$, we consider the {\em section polyhedron} 
\begin{equation}
\label{eq:H0R}
\gH^0_\R\!\big(\hspace{-3pt}\sum_{\rho\in\Sigma(1)}a_\rho D_\rho\big):=\big\{u\in M_\R\mid\langle u,\rho\rangle\geq-a_\rho,\,\rho\in\Sigma(1)\big\}.   
\end{equation}
The integral points $\gH^0_\R(D)\cap M$ correspond to a $\kk$-basis 
$\{x^m\}$ of global sections of $\CO_X(D)$ whence the notation. 
Furthermore, $\gH^0_\R(D_\nabla)=\nabla$ for an honest, i.e., non-virtual polyhedron. In particular, the restriction to nef divisors $\gH^0_\R\colon\opn{Nef}(\Sigma)\to\Pol^+(\Sigma)$ defines a semigroup isomorphism by~\eqref{eq:Tensor}. By quasi-projectivity, the Grothendieck group of $\opn{Nef}(\Sigma)$ is $\Div(\Sigma)$ so that $\gH^0_\R|_{\opn{Nef}}$ extends to a group isomorphism 
\[
\opn{P}\colon\Div(\Sigma)\to\Pol(\Sigma)
\]
distinct from $\gH^0_\R$ outside $\opn{Nef}(\Sigma)$. Writing a Cartier divisor $D=D_+-D_-$ as the difference of two nef Cartier divisors, we obtain the corresponding virtual polytope $\opn{P}(D)$ as the formal difference $\nabla_+-\nabla_-$ of the compatible polyhedra $\nabla_\pm=\opn{P}(D_\pm)=\gH^0_\R(D_\pm)$. Moreover, we recover the section polytope of $D$ by
\[
\gH^0_\R(D)=(\nabla_+:\nabla_-):=\{u\in M_\R\mid u+\nabla_-\subseteq\nabla_+\},
\]
which is simply a special case of~\eqref{eq:AP} for $\ell=0$.

\begin{remark}
Given two virtual polytopes $\nabla=\nabla_+-\nabla_-$ and $\nabla'=\nabla'_+-\nabla'_-$, we can always pass to a ``common denominator'' by writing $\nabla=(\nabla_++\nabla'_-)-(\nabla_-+\nabla'_-)$ and $\nabla'=(\nabla'_++\nabla_-)-(\nabla'_-+\nabla_-)$. 
\end{remark}

\begin{remark}
Virtual polyhedra are best visualised locally by fixing a $\sigma\in\Sigma(\kdimX)$. Namely, the function $\Pol^+(\Sigma)\to M$ given by assigning $\nabla$ to its vertex $\nabla(\sigma)$ is additive and thus extends to $\Pol(\Sigma)$. Writing $\nabla=\nabla_+-\nabla_-$, we can express the local polyhedra of $\nabla$ inside $M_\R$ by $\nabla_\sigma=\nabla(\sigma)+\sigma^\vee=\big(\nabla_+(\sigma)-\nabla_-(\sigma)\big)+\sigma^\vee$. 
\end{remark}

\subsubsection{The lattice structure}
\label{sec:lattice-structure}
The usual relation 
\[
D\leq D'\quad\text{if and only if}\quad D'-D\text{ is effective}
\]
turns $\Div(\Sigma)$ into a {\em meet semilattice} with meet
\[
\big(\sum a_\rho D_\rho\big)\wedge\big(\sum a'_\rho D_\rho\big):=\sum\min(a_\rho,a'_\rho)D_\rho.
\]
If we let $\Pol^+(M_\R)$ be the set of all polyhedra (which comprises also the empty set), intersection defines a meet for the partial order given by inclusion, and
\[
\gH^0_\R\colon\big(\Div(\Sigma),\leq,\wedge\big)\to
\big(\Pol^+(M_\R),\subseteq,\cap\big),
\]
is actually a {\em meet semilattice morphism}, that is,
\[
\gH^0_\R(D'\wedge D'')=\gH^0_\R(D')\cap\gH^0_\R(D''). 
\]

The identity
\[
(D'\wedge D'')+D=(D'+D)\wedge(D''+D)
\]
implies that upon adding a suitable ample divisor $D$, the divisors $D'_+=D'+D$ and $D''_+=D''+D$, as well as their meet $D'_+\wedge D''_+$, are nef. Therefore,
\[
\opn{P}(D'_+\wedge D''_+)=\gH^0_\R(D'_+\wedge D''_+)=\gH^0_\R(D'_+)\cap\gH^0_\R(D''_+)=\opn{P}(D'_+)\cap\opn{P}(D''_+).
\]
Since
\[
\opn{P}(D'\wedge D'')=\opn{P}\big((D'_+\wedge D''_+)-D\big)=\opn{P}(D'_+\wedge D''_+)-\opn{P}(D),
\]
a sufficiently ample $\Delta$ yields
\[
\opn{P}(D'\wedge D'')=(\nabla'_+\cap\nabla''_+)-\Delta=\big[(\nabla'+\Delta)\cap(\nabla''+\Delta)\big]-\Delta
\]
and defines thus the virtual intersection $\nabla'\cap\nabla''$. This was written $\vcap$ in the introduction but we shall slightly abuse notation and denote the virtual intersection also by $\cap$ in an effort to keep notation light. In particular, we always have
\[
\opn{P}(D'\wedge D'')=\opn{P}(D')\cap\opn{P}(D''). 
\]

\begin{remark}
\label{rem:Positivity}
For a given projective variety $X=\PP(\Delta)$ we only use the normal fan $\Sigma=\mc N(\Delta)$ and the existence of ample polyhedra, but not the polarisation. In particular, we are free to replace $\Delta$ by any other ample polyhedron and to assume that $\Delta$ is sufficiently ample for any finitely many given polyhedra at a time. 
\end{remark}

Table~\ref{table:DivPol} summarises the relationship between divisors, invertible sheaves and virtual polyhedra (with $\subseteq$ and $\cap$ on $\Pol(\Sigma)$ understood virtually).
\begin{table}[h]
\begin{tabular}{c|c|c}
$\opn{Div}(\Sigma)$ & invertible sheaves $\subseteq(j_\TX)_*\CO_\TX=\kk[M]$ & $\Pol(\Sigma)$\\\hline\vspace{-5pt}&&\\
$D\leq D'$&$\CO_X(D)\subseteq\CO_X(D')$&$\opn{P}(D)\subseteq\opn{P}(D')$\\[3pt]
$D+D'$&$\CO_X(D)\cdot\CO_X(D')=\CO_X(D)\otimes\CO_X(D')$&$\opn{P}(D)+\opn{P}(D')$\\[3pt]
$-D$&$\CO_X(D)^{-1}=\mc{H}om_{\CO_X}(\CO_X(D),\CO_X)$&$-\opn{P}(D)$\\[3pt]
$D\wedge D'$&$\CO_X(D)\cap\CO_X(D')$& $\opn{P}(D)\cap\opn{P}(D')$\\
\end{tabular}
\caption{}
\label{table:DivPol}
\end{table}
It follows that $(\Pol(\Sigma),\cap,\subseteq)$ is again a meet semilattice, and $\opn{P}\colon\Div(\Sigma)\to\Pol(\Sigma)$ preserves the semilattice structure. As a result, we tacitly identify divisors and their sheaves with virtual polyhedra.

\section{Weil decorations and toric sheaves}
\label{sec:WDTS}
We let $X=\toric(\Sigma)$ be an $n$-dimensional toric variety.
Vector spaces over $\kk$ will be endowed with the Zariski topology.

\subsection{Weil decorations}
We extend the meet semilattices $\Div(\Sigma)$ and $\Pol(\Sigma)$ by formal upper bounds $(\infty)$ and $M_\R$, respectively, and let
\[
\widehat\Div(\Sigma)=\Div(\Sigma)\cup\{(\infty)\},\quad\widehat\Pol(\Sigma)=\Pol(\Sigma)\cup\{M_\R\}.
\]
Extending the isomorphism $\opn{P}\colon\Div(\Sigma)\to\Pol(\Sigma)$ accordingly, we set $\opn{P}(\infty):=M_\R$.

\begin{definition}
\label{def:WeilDec}
(i) A {\em Weil decoration} on a $\kk$-vector space $E$ is a map
\[
\CD\colon E\to\widehat\Div(\Sigma) 
\]
such that
\begin{itemize}
\item $\CD(e)=(\infty)$ if and only if $e=0$, and $\CD|_{E\setminus\{0\}}$ factorises over the projectivisation $\P(E)$ of $E$;

\smallskip

\item for all $e$, $e'\in E$, the inequality
\begin{equation}
\label{eq:Basic} 
\CD(e+e')\geq\CD(e)\wedge\CD(e')
\end{equation}
holds true.
\end{itemize}

(ii) A {\em morphism $\CD\to\CD'$} of Weil decorations on vector spaces $E$ and $E'$ is a linear map $\lambda\colon E\to E'$ such that
\[
\CD(e)\leq\CD'(\lambda(e))
\]
for all $e\in E$.
\end{definition}

\subsubsection{Strata}
Recall from Section~\ref{sec:Intro} that 
\[
\S(D)=\CD^{-1}(D)=\{e\in E\mid\CD(e)=D\}
\]
for $D\in\widehat\Div(\Sigma)$. In particular, $\S(D)\not=\varnothing$ if and only if $D\in\CD(E)$.
Further, we set
\[
E_{\geq D}:=\{e\in E\mid\CD(e)\geq D\}\quad\text{and}\quad E_{>D}:=\{e\in E\mid\CD(e)>D\}.
\]
By Inequality~\ref{eq:Basic}, $E_{\geq D}$ is a vector subspace of $E$, while 
\[
E_{>D}=\bigcup_{\rho\in\Sigma(1)}E_{\geq D+D_\rho}
\]
is a finite union of vector subspaces.

\begin{proposition}
\label{prop:DE}
Let $\CD\colon E\to\widehat\Div(\Sigma)$ be a Weil decoration. Then the image $\CD(E)\subseteq\widehat\Div(\Sigma)$ 
is a finite sublattice.
\end{proposition}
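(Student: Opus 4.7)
The plan is to verify two things: that $\CD(E)$ is closed under the meet $\wedge$, and that $\CD(E)$ is finite. The top element $(\infty)=\CD(0)$ already lies in the image for free, so I would concentrate on the finite values. The sublattice property should follow formally from the defining inequality~\eqref{eq:Basic}, while finiteness will rely on $E$ being finite-dimensional, which is the relevant case for toric sheaves.

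For the sublattice property, given $D=\CD(e)$ and $D'=\CD(e')$ in $\CD(E)\setminus\{(\infty)\}$, the task is to produce a witness $f$ with $\CD(f)=D\wedge D'$. I would look for $f$ as a linear combination $\lambda e+\mu e'$: the basic inequality together with projectivisation guarantees $\CD(\lambda e+\mu e')\geq D\wedge D'$ automatically, so only equality needs to be arranged. Argue by contradiction: if no combination attains equality, then $\opn{span}\{e,e'\}$ lies inside $E_{>D\wedge D'}$, which the paper already identifies as $\bigcup_{\rho\in\Sigma(1)}E_{\geq D\wedge D'+D_\rho}$, a finite union of proper subspaces. Using that $\kk$ is infinite (so a two-dimensional $\kk$-vector space cannot be a finite union of proper subspaces), I would conclude that $e$ and $e'$ both live in a single $E_{\geq D\wedge D'+D_\rho}$. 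This forces $D,D'\geq D\wedge D'+D_\rho$, and therefore $D\wedge D'\geq D\wedge D'+D_\rho$, which is impossible. The degenerate case of proportional $e,e'$ gives $D=D'$ and is trivial.

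For finiteness, the idea is to control the image of $\CD$ ray by ray, using the Klyachko-style subspaces $E^\ell_\rho$ from~\eqref{eq:KlyachkoFilt}. Each $E^\ell_\rho$ is a subspace by the basic inequality, and as $\ell$ decreases these form a descending filtration of $E$. Since $E$ is finite-dimensional, this filtration has only finitely many distinct steps, so over nonzero $e$ the $D_\rho$-coefficient of $\CD(e)$ takes only finitely many values. Combining across the finite set $\Sigma(1)$, I would conclude that only finitely many divisors $D=\sum_\rho a_\rho D_\rho$ occur as values of $\CD$, proving finiteness.

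The main obstacle is the sublattice step: one really has to construct an explicit $f\in E$ attaining the meet, and producing it rests on the generic-linear-combination argument above, which in turn exploits that the ground field is infinite. The finiteness step is then essentially bookkeeping on top of the Klyachko-type filtrations.
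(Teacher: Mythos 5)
Your proof is correct; both halves of the claim go through. It is, however, organized rather differently from the paper's, and the comparison is worth noting.

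For closure under $\wedge$, the paper also works inside $E'=\opn{span}_\kk(e,e')$, but it does so by re-running its finiteness argument on $E'$ to exhibit a nonempty Zariski-open subset $U\subseteq E'$ on which $\CD$ attains its minimum value, and then checks that for $v\in U$ both $\CD(v)\leq\CD(e)\wedge\CD(e')$ (minimality) and $\CD(v)\geq\CD(e)\wedge\CD(e')$ (Inequality~\eqref{eq:Basic}) hold, so $\CD(v)=D\wedge D'$. Your version bypasses the minimum-attaining locus: you cover $\opn{span}\{e,e'\}$ by the finitely many subspaces $E_{\geq D\wedge D'+D_\rho}$ and conclude, using that $\kk$ is infinite, that if $D\wedge D'$ were never attained then one of these subspaces would contain both $e$ and $e'$, forcing $D\wedge D'\geq D\wedge D'+D_\rho$, which is absurd. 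Same genericity idea, packaged more economically.

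The finiteness argument is where you genuinely diverge. The paper proceeds by building a maximal $D$ with $E_{\geq D}=E$ (via a growing-subspace process), uses that a finite union of proper subspaces over an infinite field cannot fill $E$ to conclude $\S(D)\neq\varnothing$, and then recurses into the proper subspaces $E_{\geq D+D_\rho}$, doing an induction on $\dim E$. You instead observe that each Klyachko filtration $\{E^\ell_\rho\}_{\ell\in\Z}$ is a nested chain of subspaces with $\bigcup_\ell E^\ell_\rho=E$ and $\bigcap_\ell E^\ell_\rho=0$, hence stabilizes in both directions by finite-dimensionality; so each ray coefficient $a_\rho(\CD(e))$ is confined to a bounded interval, and $\CD(E\setminus\{0\})$ sits in a finite box of $\Z^{\Sigma(1)}$. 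This is a direct boundedness argument that dispenses with the induction entirely. (One tiny slip in wording: as $\ell$ decreases the $E^\ell_\rho$ grow, so the filtration ascends — but the content is right.) The paper's recursion produces a bit more structure along the way, exhibiting the generic stratum and peeling off chains through $E_{>D}$, whereas your route is shorter if all one wants is the proposition as stated.
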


\begin{proof}
First, we show that the image is finite. Let $D\in\Div(\Sigma)$. If $E_{\geq D}\subsetneq E$, we can pick $e'\in E\setminus E_{\geq D}$ and consider $E_{\geq D\wedge\CD(e')}$ which contains both $E_{\geq D}$ and $e'$. Since $E$ is finite dimensional, this process must eventually stop. Differently put, there exists a divisor $D$ with $E_{\geq D}=E$, that is, $\CD(e)\geq D$ for all $e\in E$.

\medskip

Furthermore, increasing in $D$ the coefficients of the prime divisors $D_\rho$ if necessary, we may assume that $D$ is maximal with the property that $E_{\geq D}=E$. In particular, $E_{>D}$ is a finite union of proper vector subspaces of $E$. The disjoint decomposition
\[
E=E_{\geq D}=\S(D)\sqcup E_{>D}
\]
implies $\S(D)\not=\varnothing$ whence $D\in\CD(E)$. An induction argument yields the claim. 

\medskip

Second, we show that $\CD(E)$ is closed under $\wedge$. Let $e$, $e'\in E\setminus\{0\}$. By the first part of the proof adapted to $E'=\mr{span}_\kk(e,e')$, we deduce that there is an open set $U$ in $E'$ for which $\CD(\cdot)$ is minimal. Let $v$ and $v'$ be two linearly independent vectors in $U$ so that 
\[
\CD(e),\,\CD(e')\geq\CD(v)=\CD(v').
\]
In particular, $\CD(e)\wedge\CD(e')\geq\CD(v)=\CD(v')$. On the other hand, $\CD(v)$, $\CD(v')\geq\CD(e)\wedge\CD(e')$ by Inequality~\eqref{eq:Basic}, whence the result.
\end{proof}

Actually, the proof yields more. Since the sets $\S(D)$ are locally closed,
\[
\Strat_\CD:=\{\S(D)\mid D\in\CD(E)\}
\]
defines a stratification, subsequently referred to as the {\em canonical stratification}. Further, $\Strat_\CD$ carries the natural partial order
\begin{equation}
\label{eq:StratPOS}
\S\leq\S'\quad\text{if and only if}\quad\S\subseteq\overline{\S'}.
\end{equation} 
By design, $\S(D)\leq\S(D')$ if and only if $D\geq D'$. Additionally, the {\em generic stratum}
\[
\eta_\CD:=\S(D)
\]
given by the maximal $D$ appearing in the proof of Proposition~\ref{prop:DE}, defines an upper bound for $\Strat_\CD$. The meet of $\widehat\Div(\CD)$ carries over to a join 
\[
\S(D)\vee\S(D'):=\S(D\wedge D') 
\]
on $(\Strat_\CD,\leq)$ as it is straightforward to check. Summarising, we obtain the

\begin{proposition}
\label{prop:Strat2Div}
The canonical stratification $(\Strat_\CD,\leq,\vee)$ defines a join semilattice with the generic stratum $\eta_\CD$ as upper bound. Furthermore, the Weil decoration $\CD$ descends to an anti-semilattice isomorphism onto the image of 
\[
\CD\colon(\Strat_\CD,\leq,\vee)\to(\widehat\Div(\Sigma),\leq,\wedge),\quad\S(D)\mapsto D, 
\]
that is,
\[
\CD(\S\vee\S')=\CD(\S)\wedge\CD(\S')
\]
for all strata $\S$, $\S'\in\Strat$. 
\end{proposition}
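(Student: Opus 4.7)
The plan is to transfer the finite meet semilattice structure on $\CD(E)\subseteq\widehat\Div(\Sigma)$, which was established in Proposition~\ref{prop:DE}, to $\Strat_\CD$ via the tautological bijection $D\mapsto\S(D)$, and then check that this bijection reverses the order so that meets become joins. Everything boils down to identifying the closure of a stratum and to reusing the fact, already proved, that $\CD(E)$ is closed under $\wedge$.

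First I would compute the Zariski closure of a stratum. Since $E_{\geq D}$ is a $\kk$-linear subspace of $E$, it is irreducible in the Zariski topology. From the proof of Proposition~\ref{prop:DE} we have the disjoint decomposition
\[
E_{\geq D}=\S(D)\sqcup E_{>D},
\]
where $E_{>D}=\bigcup_{\rho}E_{\geq D+D_\rho}$ is a finite union of proper linear subspaces, hence a proper closed subset of $E_{\geq D}$. Consequently $\S(D)$ is a nonempty Zariski open (and dense) subset of the irreducible variety $E_{\geq D}$, and therefore $\overline{\S(D)}=E_{\geq D}$. Using the defining order~\eqref{eq:StratPOS}, this yields the equivalence
\[
\S(D')\leq\S(D)\iff \S(D')\subseteq E_{\geq D}\iff D'\geq D,
\]
so $D\mapsto\S(D)$ is an order-reversing bijection from $\CD(E)$ onto $\Strat_\CD$.

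Next I would transport the meet. By Proposition~\ref{prop:DE}, for $D,D'\in\CD(E)$ the element $D\wedge D'$ lies again in $\CD(E)$. Since $D\wedge D'\leq D,D'$, the corresponding stratum $\S(D\wedge D')$ is, by the order-reversal established above, an upper bound for $\S(D)$ and $\S(D')$; and if $\S(D'')$ is any upper bound, then $D''\leq D\wedge D'$ and thus $\S(D'')\geq\S(D\wedge D')$. Hence $\S(D)\vee\S(D')=\S(D\wedge D')$, which simultaneously proves the existence of the join and the relation $\CD(\S\vee\S')=\CD(\S)\wedge\CD(\S')$. The generic stratum $\eta_\CD=\S(D_{\min})$, with $D_{\min}$ the minimum of $\CD(E)$ produced in the first part of the proof of Proposition~\ref{prop:DE} (the maximal $D$ with $E_{\geq D}=E$), is then an upper bound for $\Strat_\CD$ under $\leq$.

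The main conceptual point is really the closure calculation $\overline{\S(D)}=E_{\geq D}$: once this is in hand, everything else is formal bookkeeping on the already-established lattice $\CD(E)$. The only potential pitfall is to ensure that the Zariski density of $\S(D)$ in $E_{\geq D}$ is being used correctly, which is why the irreducibility of a linear subspace together with the finiteness of the collection $\{E_{\geq D+D_\rho}\}_{\rho\in\Sigma(1)}$ is crucial.
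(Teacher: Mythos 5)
Your proof is correct and essentially the same approach that the paper takes; the paper does not give a formal proof but instead embeds the argument in the discussion preceding the proposition (observing that ``by design $\S(D)\leq\S(D')$ if and only if $D\geq D'$'' and declaring the join structure ``straightforward to check''), relying on exactly the decomposition $E_{\geq D}=\S(D)\sqcup E_{>D}$ from the proof of Proposition~\ref{prop:DE}. What you add is the explicit closure computation $\overline{\S(D)}=E_{\geq D}$, via irreducibility of the linear subspace $E_{\geq D}$ and density of the complement of the finite union $\bigcup_\rho E_{\geq D+D_\rho}$ of proper subspaces; this is precisely the content packed into the paper's ``by design,'' so the two arguments coincide once that detail is unpacked.
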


\subsubsection{The Homomorphiesatz}
A converse implication also holds, which we state as follows. Call a finite stratification $\Strat$ of a given $\kk$-vector space $E$ {\em admissible}, if
\begin{itemize}
\item $\{0\}\in\Strat$; it is referred to as the {\em trivial stratum} and simply denoted $0$.

\smallskip

\item $\overline\S\subseteq E$ is a vector subspace for all strata $\S\in\Strat$.

\smallskip

\item $\Strat$ is a join semilattice with respect to the natural partial order~\eqref{eq:StratPOS}.
\end{itemize}

The proof of the following proposition is straightforward and left to the reader.

\begin{proposition}
\label{prop:HomSatz}
Let $(\Strat,\leq,\vee)$ be an admissible stratification. For $e\in E$, let $\S(e)$ be the unique stratum containing $e$. Then every anti-semilattice morphism $\CD_\Strat\colon\Strat\to\widehat{\Div}(\Sigma)$ with the property that $\CD_\Strat(\S)=(\infty)$ if and only if $\S=0$ induces a Weil decoration
\[
\CD\colon E\to\widehat\Div(\Sigma),\quad\CD(e)=\CD_\Strat(\S(e)).
\]
In particular, $\Strat$ is a refinement of the
canonical stratification $\Strat_\CD$ and
\[
\CD_\Strat=\CD\circ[\Strat\to\Strat_\CD]. 
\]
\end{proposition}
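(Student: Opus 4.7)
The plan is to verify the three defining properties of a Weil decoration from Definition~\ref{def:WeilDec}(i) for the map $\CD(e):=\CD_\Strat(\S(e))$ one by one, using only admissibility of $\Strat$ and the anti-semilattice hypothesis on $\CD_\Strat$. Well-definedness is immediate since every $e\in E$ lies in exactly one stratum, and the equivalence $\CD(e)=(\infty)\iff e=0$ follows from the assumption that $\{0\}$ is the unique stratum on which $\CD_\Strat$ attains $(\infty)$. The substantive content is therefore the projective factorisation, Inequality~\eqref{eq:Basic}, and the refinement of $\Strat_\CD$ by $\Strat$.

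I expect the projective factorisation to be the most delicate step, since admissibility only guarantees that the Zariski closure $\overline\S$ of each stratum is a vector subspace, not $\S$ itself. To show that each stratum is $\kk^\times$-stable, I would argue as follows: given $e\in\S$ and $\lambda\in\kk^\times$, the closure $\overline\S$ is $\kk^\times$-invariant, so $\lambda e\in\overline\S$. Conversely, if $\lambda e$ lies in some stratum $\S'$, then $\overline{\S'}$ is a vector subspace containing $\lambda e$ and thus also $e=\lambda^{-1}(\lambda e)$, which yields $\S\leq\S'$; the symmetric argument applied to $\lambda^{-1}$ gives $\S'\leq\S$ and hence $\S=\S'$. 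Consequently $\CD$ descends to $\P(E\setminus\{0\})$.

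For Inequality~\eqref{eq:Basic}, set $\S=\S(e)$ and $\S'=\S(e')$. From $\S,\S'\leq\S\vee\S'$ we have $\S\cup\S'\subseteq\overline{\S\vee\S'}$, and since the latter is a vector subspace by admissibility, it also contains $e+e'$, forcing $\S(e+e')\leq\S\vee\S'$. Any anti-semilattice morphism is automatically order-reversing: from $\S\leq\S'$ one has $\CD_\Strat(\S')=\CD_\Strat(\S\vee\S')=\CD_\Strat(\S)\wedge\CD_\Strat(\S')\leq\CD_\Strat(\S)$. Therefore
\[
\CD(e+e')=\CD_\Strat(\S(e+e'))\geq\CD_\Strat(\S\vee\S')=\CD_\Strat(\S)\wedge\CD_\Strat(\S')=\CD(e)\wedge\CD(e').
\]
The degenerate cases $e=0$, $e'=0$ or $e+e'=0$ are immediate, since $\CD(0)=(\infty)$ is the top of $\widehat\Div(\Sigma)$.

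Finally, because $\CD$ takes the single value $\CD_\Strat(\S)$ on each stratum $\S\in\Strat$ by construction, the set $\S$ is contained in the canonical stratum $\CD^{-1}(\CD_\Strat(\S))\in\Strat_\CD$; this exhibits $\Strat$ as a refinement of $\Strat_\CD$ and renders the factorisation $\CD_\Strat=\CD\circ[\Strat\to\Strat_\CD]$ automatic. The sole subtle point in the whole argument is thus the $\kk^\times$-stability of strata, which is precisely why admissibility imposes the vector-subspace condition on the closures $\overline\S$ rather than a weaker cone hypothesis.
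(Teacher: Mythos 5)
Your proof is correct; the paper declares this proposition ``straightforward and left to the reader'' and supplies no argument, so there is no reference proof to compare against. You correctly isolate the one non-trivial step, namely $\kk^\times$-stability of each stratum, and handle it by the natural double-inclusion argument exploiting that $\overline\S$ is a vector subspace. The only tacit point is that passing from $e\in\S\cap\overline{\S'}$ to $\S\leq\S'$ uses the frontier condition on the stratification (that the closure of a stratum is a union of strata), which the paper's definition of admissibility does not spell out but is clearly intended; the remaining steps, including the order-reversal deduced from the anti-semilattice property and the refinement claim, are all routine and correctly argued.
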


\subsection{Toric sheaves}
\label{subsec:TS}
The category of toric sheaves consists of 
$\TX$-equivariant, reflexive $\CO_X$-sheaves with $\TX$-equivariant sheaf morphisms. 
By torsionfreeness of reflexive sheaves, the natural morphism $\CE \to (j_\TX)_*j_\TX^* \CE = (j_\TX)_*\CE|_\TX$ is actually injective. Let 
\[
E:=\text{ the torus invariant subspace of }\CE(\TX)=\Gamma(\TX,\CE)
\]
subsequently referred to as the $\kk$-vector space {\em associated with} $\CE$; in particular, $\CE|_\TX$ is the sheafification of $E\otimes\kk[M]$. Slightly abusing notation we therefore consider as for rank $1$ sheaves 
\[
\CE\subseteq(j_\TX)_*\CE|_\TX=E\otimes(j_\TX)_*\CO_\TX=E\otimes\kk[M]
\]
as a subsheaf of $E\otimes\kk[M]$, cf.\ Equation~\eqref{eq:Subsheaf} on page~\pageref{eq:Subsheaf}.

\begin{remark}
\label{rem:Satur}
A $\TX$-equivariant subsheaf $\CE'$ of a toric sheaf $\CE$ is again reflexive if $\CE'$ is {\em saturated}, that is, $\CE/\CE'$ is torsionfree, see~\cite[II.1.1.16]{oss}.  
\end{remark}

\medskip

For $e\in E\setminus\{0\}$ we define the subsheaf
\begin{equation}
\label{eq:CEe}
\CE(e):=\CE\cap\big(\kk\cdot e \otimes(j_\TX)_*\CO_\TX\big)
\end{equation}
which means that $\CE(e)(U_\sigma)=\CE(U_\sigma)\cap(\kk\cdot e\otimes\kk[M])$. It is reflexive (being saturated in $\CE$) and of rank $1$. The isomorphic subsheaf $\CL(e)$ of $(j_\TX)_*\CO_\TX=\kk[M]$ resulting via
\[
\begin{tikzcd}
\CE(e)\ar[r,hookrightarrow]\ar[d,"\cdot 1/e"',"\cong"]& e\cdot\kk[M]\ar[d,"\cdot 1/e","\cong"']\\
\CL(e)\ar[r,hookrightarrow]&\kk[M]
\end{tikzcd}
\]
induces a well-defined Weil divisor $D(e)$ with $\CO_X(D(e))=\CL(e)$.

\begin{lemma}
\label{lem:Basic}
For $e$, $e'\in E\setminus\{0\}$ with $e+e'\not=0$ we have $\CL(e)\cap\CL(e')\subseteq\CL(e+e')$ as subsheaves of $\kk[M]$. 
\end{lemma}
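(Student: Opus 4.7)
The plan is to unwind the definitions and verify the inclusion on sections over an arbitrary open $U\subseteq X$, reducing the claim to a statement about elements of $E\otimes\kk[M]$. First, I would record the tautological reformulation of $\CL(e)$ that falls out of the commutative square defining it: the isomorphism ``$\cdot 1/e$'' identifies $\CE(e)=\CE\cap(\kk\cdot e\otimes\kk[M])$ with $\CL(e)$ via $e\otimes f\mapsto f$, so that as subsheaves of $\kk[M]$,
\[
\CL(e)(U)=\{f\in\kk[M]\mid e\otimes f\in\CE(U)\},
\]
and analogously for $\CL(e')$ and $\CL(e+e')$.

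Next, I would pick $f\in\CL(e)(U)\cap\CL(e')(U)$, which by this description means $e\otimes f,\,e'\otimes f\in\CE(U)$. Since $\CE$ is a sheaf of $\CO_X$-modules, hence in particular of $\kk$-vector spaces, $\CE(U)$ is closed under addition; therefore
\[
(e+e')\otimes f=e\otimes f+e'\otimes f\in\CE(U).
\]
The hypothesis $e+e'\neq 0$ then guarantees that this element lies in $\kk\cdot(e+e')\otimes\kk[M]$, so it belongs to $\CE(e+e')(U)$, and applying the above description of $\CL(e+e')$ one more time yields $f\in\CL(e+e')(U)$, as desired.

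There is essentially no obstacle here: the lemma is a direct consequence of the additive structure of $\CE(U)$ combined with the definition of $\CL(\cdot)$ via intersection with cyclic submodules of $E\otimes\kk[M]$. The only point that deserves some care is that the three identifications $\CE(e)\cong\CL(e)$, $\CE(e')\cong\CL(e')$ and $\CE(e+e')\cong\CL(e+e')$ are given by division by different elements of $E$, so the argument must be carried out inside the ambient space $E\otimes\kk[M]$ and not inside a single copy of $\kk[M]$. The sectionwise description displayed above provides precisely the common frame in which the sum can be formed and re-interpreted unambiguously.
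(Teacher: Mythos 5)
Your proof is correct and is essentially the same argument as the paper's (which writes it in one line: $f\in\CL(e)\cap\CL(e')$ gives $ef, e'f\in\CE$, hence $(e+e')f\in\CE$, i.e.\ $f\in\CL(e+e')$); you have merely made the sectionwise bookkeeping and the role of the hypothesis $e+e'\neq 0$ explicit.
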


\begin{proof}
Let $f\in\CL(e)\cap\CL(e')$. Then $ef\in\CE(e)\subseteq\CE$ and $e'f\in\CE(e')\subseteq\CE$ whence $(e+e')f\in\CE$, or equivalently, $f\in\CL(e+e')$.
\end{proof}

In other words, $D(e)\wedge D(e')\leq D(e+e')$ so that
\[
\CD_\CE(e):=D(e),\,e\in E\setminus\{0\},\quad\text{and}\quad\CD_\CE(0):=(\infty)
\]
defines the Weil decoration {\em associated with $\CE$}. In fact, we can recover $\CE$ from $\CD_\CE$, for instance via the Klyachko filtrations~\eqref{eq:KlyachkoFilt} on page~\pageref{eq:KlyachkoFilt}. Since the filtrations are preserved by morphisms of Weil decorations, the latter correspond to morphisms of toric sheaves.

\subsection{Examples}
\label{subsec:Exam}
Further examples of Weil decorations are in order, cf.\ also Examples~\ref{exam:LB} and~\ref{exam:SumLB} from Section~\ref{sec:Intro}.

\begin{example}
The trivial toric sheaf $0$ is associated with $E=0$ and Weil decoration $\CD_0(0)=(\infty)$.
\end{example}

\begin{example}
\label{exam:TPLB}
If $\CE$ is reflexive and $\CO_X(D)$ invertible, then 
\[
\CE(D):=\CE\otimes\CO_X(D)
\]
is again reflexive \cite[Proposition 1.1]{hartreflexiv}. The associated vector space is the one of $\CE$, and its associated Weil decoration is $\CD_{\CE(D)}=\CD_\CE+D$.
\end{example}

\begin{example}
\label{exam:SumTRS}
Consider the direct sum $\CE=\CE'\oplus\CE''$ of two toric sheaves $\CE'$ and $\CE''$ associated with the vector spaces $E'$ and $E''$, respectively. Then $E=E'\oplus E''$ is associated with $\CE$. We obtain nontrivial maps $\CL(e'\oplus e'')\to\CL(e')$ and $\CL(e'\oplus e'')\to\CL(e'')$ whence $D(e'\oplus e'')\leq D(e')$, $D(e'')$ whenever $e'\in E'$ and $e''\in E''$ are nontrivial. From Lemma~\ref{lem:Basic}, we conclude that 
\[
\CD_\CE(e'\oplus e'')=\CD_{\CE'}(e')\wedge\CD_{\CE''}(e'')
\]
defines the Weil decoration associated with $\CE$.
\end{example}

\begin{remark}
\label{rem:Factor}
For $\CE=\CO_X(D_1)\oplus\CO_X(D_2)$, the stratification given by $0$, $\S(D_1)=\kk^\times\cdot e_1$, $\S(D_2)=\kk^\times\cdot e_2$ and $\eta_\CE=\S(D_1\wedge D_2)$ yields the canonical stratification if $D_1\not=D_2$. In case of equality, this stratification is not the canonical one. The latter merely consists of the trivial and the generic stratum. 
\end{remark}

\begin{example}
\label{exam:HomEF} 
Let $\CE$, $\CF$ be toric sheaves with Weil decorations $\CD_\CE$ and $\CD_\CF$. The sheaf hom 
\[
\mc H:={\mc{H}om}_{\mc O_X}(\CE,\CF) 
\]
is reflexive, for so is $\CF$. Namely, $\CF=\mc G^\vee$, e.g., $\mc G=\mc F^\vee$, so that
\[
\mc H={\mc{H}om}_{\mc O_X}(\CE,\mc G^\vee)={\mc{H}om}_{\CO_X}(\CE\otimes_{\CO_X}\mc G,\CO_X)
\]
which is reflexive by definition. Let $E$ and $F$ be the vector spaces associated with $\CE$ and $\CF$, respectively. Then
\[
{\mc{H}om}_{\CO_X}(\CE,\CF)|_\TX={\mc{H}om}_{\CO_\TX}(\CE|_\TX,\CF|_\TX)
\]
which corresponds to the free $\kk[M]$-module 
\[
\gHom_{\kk[M]}(E\otimes\kk[M],F\otimes\kk[M])=\gHom_\kk(E,F)\otimes_\kk\kk[M].
\]
Therefore, $H:=\gHom_\kk(E,F)$ is the vector space associated with $\mc H$. To compute the associated Weil decoration $\CD_\CH$, we let $\varphi\colon E\to F$ in $H$ and consider 
$\mc H(\varphi)=\mc H\cap(\kk\varphi\otimes\kk[M])$. 
Then $\mc H(\varphi)$ contains $\varphi\cdot\CO_X(D)$ for a divisor $D\in\Div(\Sigma)$, that is,
$\varphi\cdot\mc O_X(D)\subseteq\mc H(\varphi)$, if and only if
\[
\varphi\cdot\CO_X(D)\big(\mc E(e)\big)=\varphi(e)\cdot\CO_X(D)\cdot\CO_X(\CD_\CE(e))\subseteq\CF(\varphi(e))=\varphi(e)\cdot\CO_X\big(\CD_\CF(\varphi(e))\big).
\]
This, in turn, is equivalent to $D+\CD_\CE(e)\leq\CD_\CF(\varphi(e))$ or $D\leq\CD_\CF(\varphi(e))-\CD_\CE(e)$ for all $e\in E$. It follows that 
\[
\CD_\mc H(\varphi)=\bigwedge_{e\in E}\big(\CD_\mc F(\varphi(e))-\CD_\CE(e)\big).
\]
In particular, the decoration of the dual sheaf $\CE^\vee$ is given by
\[
\CD_{\CE^\vee}(\varphi)=\bigwedge_{\varphi(e)\not=0}-\CD_\CE(e).
\]
We shall recover this result in Remark~\ref{rem:WDDual} by different methods.
\end{example}

\begin{example}
\label{exam:LocFree}
For any toric sheaf $\CE$ we can consider the {\em local Weil decorations} 
\[
\CD_\sigma:=\CD_{\CE,\sigma}:=\CD_{\CE|_{U_\sigma}},\quad\sigma\in\Sigma
\]
obtained from the divisors $\{D(e)|_{U_\sigma}\mid D(e)\in\CD_\CE(E)\}$. Since globally distinct divisors $D(e')\not=D(e'')$ may coincide locally the resulting canonical $\sigma$-stratifications might be coarser than the one of $\CE$. By~\cite{gub86}, $\CE$ is locally free if and only if the local Weil decorations of $\CE$ induce coordinate space stratifications, cf.\ Example~\ref{exam:SumTRS}. 

\medskip

As illustration, consider the tangent sheaf of $\P^2$ whose fan is given in Figure~\ref{fig:1stFanP2}. 
\begin{figure}[ht]
\newcommand{\scaleA}{0.3}
\begin{tikzpicture}[scale=\scaleA]
\draw[thick, color=black]
  (0,0) -- (4,0) (0,0) -- (0,4) (0,0) -- (-2.5,-2.5);
\draw[thick, color=green]
  (2,2) node{$\sigma_0$};
\draw[thick, color=blue]
  (-2,1) node{$\sigma_1$};
\draw[thick, color=red]
  (1,-2) node{$\sigma_2$};
\draw[thick, color=black]
  (5,0) node{$\rho_1$} (0,5) node{$\rho_2$} (-3.3,-3.3) node{$\rho_0$};
\end{tikzpicture}
\caption{The fan of $\PP^2$.}
\label{fig:1stFanP2}
\end{figure} 

As we show in Example~\ref{exam:TB} below, its global canonical stratification is
\[
\begin{tikzcd}
& \eta=\S(0)& \\
\S(D_{\rho_0})=\kk^\times\cdot\rho_0\ar[ru, no head] & \S(D_{\rho_1})=\kk^\times\cdot\rho_1 \ar[u, no head] & \S(D_{\rho_2})=\kk^\times\cdot\rho_2\ar[lu, no head].
\end{tikzcd}
\]
Locally, say over $U_{\sigma_0}$, we find the canonical stratification
\[
\begin{tikzcd}
& \eta=\S(0)& \\
\S(\div(x^{(1,0)}))=\kk^\times\cdot\rho_1\ar[ru, no head] & & \S(\div(x^{(0,1)}))=\kk^\times\cdot\rho_2\ar[lu, no head]
\end{tikzcd}
\]
for $D_{\rho_0}|_{U_{\sigma_0}}=0$; similarly for $U_{\sigma_1}$ and $U_{\sigma_2}$. 
\end{example}

\section{Morphisms of toric sheaves}
\label{sec:TorMor}
From now on we assume that 
\[
X=\PP(\Delta)\text{ \bf is a smooth and semi-projective toric variety}.  
\]
We recall from Remark~\ref{rem:Positivity} that we are free to replace $\Delta$ by any sufficiently ample polyhedron if necessary. 
Moreover, we tacitly identify $\Div(\Sigma)$ with the virtual polyhedra in $\Pol(\Sigma)$ and consider Weil decorations as $\Pol(\Sigma)$-valued. We continue to use the letter $\CD$; in particular $\CD(e)$ can be a divisor or a (virtual) polyhedron.

\medskip

We refer to a Weil decoration taking values in $\Pol^+(\Sigma)$ as {\em positive} and to the corresponding toric sheaf as {\em positively decorated}. Differently put, the image of the Weil decoration consists of nef divisors and $(\infty)$. Passing to a common denominator as in Section~\ref{subsec:VP} allows us to write
\[
\CD_\CE=\CD^+_\CE-\Delta
\]
for a positive Weil decoration
\[
\CD^+_\CE\colon E\to\widehat{\Pol^+}(\Sigma)
\]
associated with the toric sheaf
\[
\CE^+=\CE(\Delta)=\CE\otimes\CO_X(\Delta).
\]
We call $\CD^+_\CE:=\CD_{\CE^+}$ a {\em materialisation} of $\CD_\CE$. We think of a materialisation as passing to numerators of virtual polyhedra; we choose a common denominator $\Delta$ with corresponding materialisations when working with finitely many Weil decorations at a time. Observe that $\cap$ and $\subseteq$ acquire their usual meaning for sufficiently positive Weil decorations.

\medskip

For $\sigma\in\Sigma(n)$, the local Weil decorations $\CD_\sigma=\CD_{\CE|_{U_\sigma}}$ of $\CE$ are always positive as
\begin{equation}
\label{eq:LocParl}
\CPvar_\sigma(e)=\CPvar(e)+\sigma^\vee=r_\sigma(e)+\sigma^\vee\in\widehat{\Pol^+}(\sigma)
\end{equation}
where $\div(x^{-r_\sigma(e)})=\CD_\CE(e)|_{U_\sigma}$ and $\sigma$ is regarded as the fan given by its faces. The polyhedra of the local Weil decorations $\CPvar^+_\sigma(e)=r^+_\sigma(e)+\sigma^\vee$ coming from a materialisation are simply obtained by shifting $\CPvar_\sigma(e)$ by $r^+_\sigma(e)-r_\sigma(e)$. Note, however, that while $\CPvar(e)=\bigcap_{\sigma\in\Sigma}\CPvar_\sigma(e)$ only holds virtually,  
\begin{equation}
\label{eq:CP+}
\CPvar^+(e)=\bigcap_{\sigma\in\Sigma}\CPvar^+_\sigma(e)=\bigcap_{\sigma\in\Sigma(\kdimX)}\CPvar^+_\sigma(e)
\end{equation}
is an actual intersection for any materialisation, cf.~\eqref{eq:NablaInter}. In passing we remark that $r^+_\sigma(e)=\CD^+(e)(\sigma)$ and $r^+_\sigma(e)-r_\sigma(e)=\Delta(\sigma)$, following the notation of~\eqref{eq:sigmaCorner}. As a consequence, the local Weil decorations are positive for any $\sigma\in\Sigma$.

\medskip

Let $\CE$ be a toric sheaf and $\CE^+=\CE(\Delta)$ be associated with a materialisation. For $u\in M_\R$ and $\sigma\in\Sigma$ we define the {\em local evaluation spaces} by
\[
\eval_\sigma(u):=\{e\in E\mid u\in\CPvar_\sigma(e)\}\quad\text{and}\quad\eval^+_\sigma(u):=\{e\in E\mid u\in\CPvar^+_\sigma(e)\}.
\]
In particular, $\eval^+_\sigma(u)=\eval_\sigma(u+r_\sigma(e)-r^+_\sigma(e))$. Moreover, we define the {\em global evaluation spaces} 
\begin{equation}
\label{eq:GlobEval}
\eval(u):=\bigcap_{\sigma\in\Sigma}\eval_\sigma(u)\quad\text{and}\quad\eval^+(u):=\bigcap_{\sigma\in\Sigma}\eval^+_\sigma(u)=\{e\in E\mid u\in\CPvar^+(e)\}.
\end{equation}
Note that the latter set makes only sense for a positively decorated sheaf. If $\CE$ was already positively decorated, we may simply put $\CE=\CE^+$ so that $\eval(u)=\eval^+(u)$.

\medskip

For instance, the local evaluation spaces of $\CE=\CO_X(\nabla)$, $\nabla\in\Pol(\Sigma)$, are
\begin{equation}
\label{eq:EvalLB}
\eval_\sigma(u)=\left\{\begin{array}{cc}\kk&u\in\nabla_\sigma\\0&u\not\in\nabla_\sigma\end{array}\right..
\end{equation}
If, in addition, $\nabla\in\Pol^+(\Sigma)$, then $\eval(u)=\bigcap_{\sigma\in\Sigma}\eval_\sigma(u)=\kk$ if and only if $u\in\nabla$. 

\medskip

In fact, for integral $m\in M$ we have $m\in\CPvar_\sigma(e)$ if and only if $e\otimes x^m$ is a section of $\CE(e)|_{U_\sigma}$, cf.~\eqref{eq:CEe} on page~\pageref{eq:CEe}. Therefore, 
\begin{equation}
\label{eq:EvalSec}
\eval_\sigma(m)=\gH^0(U_\sigma,\CE)_m
\end{equation}
(the subscript $m$ indicating the graded piece of degree $m$); if, in addition, $\CE$ is positively decorated, then
\begin{equation}
\label{eq:GlobSec}
\eval(m)=\gH^0(X,\CE)_m.
\end{equation}

\begin{remark}
\label{rem:VS}
The evaluation spaces are vector spaces for all $u\in M_\R$: Any local Weil decoration yields $\eval_\sigma(u)=\overline\S$ where $\S$ is the maximal stratum for $u\in\CPvar_\sigma(\S)$. Similarly for the global evaluation spaces, see also~\ssect{subsec:EulerGlobSec}.
\end{remark}

As we have already noticed in \ssect{subsec:TS}, a toric morphism $\lambda\colon\CE'\to\CE$ corresponds to a morphism of Weil decorations. As a result we get an induced linear map $E'\to E$ we keep on denoting $\lambda$, such that $\CPvar'(e')\subseteq\CPvar(\lambda(e'))$. In particular, we obtain a map between the corresponding local evaluation spaces $\lambda\colon\big(\eval'_\sigma(m)\subseteq E'\big)\to\big(\eval_\sigma(m)\subseteq E\big)$.

\begin{proposition}
\label{prop:Iso}
The following are equivalent:
\begin{enumerate}
\item[{\rm(i)}] The sequence of toric sheaves
\begin{equation}
\label{eq:ExSeq}
\begin{tikzcd}
0\ar[r]&\CE'\ar[r,"\lambda"]&\CE\ar[r,"\mu"]&\CE''\ar[r]&0
\end{tikzcd}
\end{equation}
is exact.

\smallskip

\item[{\rm(ii)}] For all $m\in M$ and $\sigma\in\Sigma$, the induced sequences of $\kk$-vector spaces
\[
0\to\eval'_\sigma(m)\stackrel{\lambda}{\longrightarrow}\eval_\sigma(m)\stackrel{\mu}{\longrightarrow}\eval''_\sigma(m)\to0 
\]
are exact.

\smallskip

\item[{\rm(iii)}] For every simultaneous materialisation of $\CE'$, $\CE$ and $\CE''$, and all $m\in M$, the induced sequences of $\kk$-vector spaces 
\[
\begin{tikzcd}
0\ar[r]&\eval'^+(m)\ar[r,"\lambda"]&\eval^+(m)\ar[r,"\mu"]&\eval''^+(m)\ar[r]&0  
\end{tikzcd}
\]
are exact.
\end{enumerate}
\end{proposition}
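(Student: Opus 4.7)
The plan is to work locally on the affine cover $\{U_\sigma\}_{\sigma\in\Sigma}$ and to exploit the $M$-grading coming from the torus action. First I would show (i) $\Leftrightarrow$ (ii): exactness of a sheaf sequence is a local property, so (i) is equivalent to exactness after restriction to each $U_\sigma$. Since $U_\sigma$ is affine and all sheaves involved are quasi-coherent and $\TX$-equivariant, this in turn is equivalent to exactness of the corresponding sequence of $M$-graded $\kk[\sigma^\vee\cap M]$-modules. Exactness of graded modules is tested degreewise, and by~\eqref{eq:EvalSec} the $m$-graded piece of $\gH^0(U_\sigma,\CE)$ is precisely $\eval_\sigma(m)$, yielding the equivalence.

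Next I would establish (i) $\Leftrightarrow$ (iii). Since $\CO_X(\Delta)$ is invertible, tensoring by it is an exact functor, so (i) is equivalent to exactness of the materialised sequence $0\to\CE'^+\to\CE^+\to\CE''^+\to 0$. Taking the torus-invariant degree-$m$ part of global sections and using~\eqref{eq:GlobSec} turns the latter into the sequence in (iii), which is automatically left exact; its right exactness amounts to the vanishing $\gH^1(X,\CE'^+)_m=0$. To establish this I would exploit that $\CE'^+$ is positively decorated: filtering $\CE'^+$ by toric subsheaves compatible with a linear refinement of $\Strat_{\CE'^+}$ produces successive quotients built from the nef toric line bundles $\CO_X(\CD^+(e))$, for which the classical higher cohomology vanishing for nef toric line bundles (a special case of~\eqref{eq:AP}) applies, and the associated long exact sequences propagate the vanishing to the whole of $\CE'^+$. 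Conversely, assuming (iii), evaluating at an $m$ in the common interior of all polytopes $\CPvar^+(e)$ forces $E'\to E$ (hence $\CE'^+\to\CE^+$) to be injective, while for sufficiently ample $\Delta$ the materialised sheaves are globally generated so that exactness on each $\gH^0(X,-)_m$ assembled across all $m$ transports to exactness at the sheaf level.

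The main obstacle is the cohomology vanishing $\gH^1(X,\CE'^+)_m=0$ for a positively decorated toric sheaf. Turning the finite join-semilattice $\Strat_{\CE'^+}$ into a concrete increasing filtration of $\CE'^+$ by toric subsheaves whose successive quotients split as direct sums of nef line bundles requires care: one has to pick a linear order refining the canonical partial order on strata, verify that the induced sub-Weil-decorations remain positive, and match the quotient Weil decorations with~\eqref{eq:Basic}. Once this filtration is in hand, the long exact cohomology argument is routine and the vanishing for $\CE'^+$ reduces to the line bundle case.
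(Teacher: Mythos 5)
Your equivalence (i) $\Leftrightarrow$ (ii) matches the paper's: exactness of quasi-coherent toric sheaves is checked over the affine cover $\{U_\sigma\}$, and the $m$-graded piece of $\gH^0(U_\sigma,\CE)$ is $\eval_\sigma(m)$ by~\eqref{eq:EvalSec}. Your (iii) $\Rightarrow$ (i) via global generation also matches the paper's.

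The genuine gap is in your proposed proof of (i) $\Rightarrow$ (iii). You want $\gH^1(X,\CE'^+)_m=0$ for a positively decorated $\CE'^+$ and plan to obtain this by an \emph{increasing filtration} of $\CE'^+$ by toric subsheaves whose successive quotients are direct sums of nef line bundles. Such a filtration does not exist in general. Take $\CE=\CT_{\PP^2}$, which is positively decorated: a saturated toric line subsheaf $\CL\subset\CT_{\PP^2}$ would have a reflexive rank-one, hence invertible, quotient $\CQ$. Writing $\CL=\CO_{\PP^2}(a)$ and $\CQ=\CO_{\PP^2}(3-a)$, one finds $\gExt^1(\CQ,\CL)=\gH^1(\CO_{\PP^2}(2a-3))=0$, so the extension would split, contradicting the indecomposability of $\CT_{\PP^2}$. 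So the filtration step collapses already in rank $2$.

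The vanishing you need is in fact true, but the correct mechanism is a \emph{left resolution}, not a filtration: Lemma~\ref{lem:CanRes} and Corollary~\ref{coro:CanSeq} produce a finite acyclic resolution of $\CE'^+$ by direct sums $\CE_{\S,\T}=\overline\S\otimes\CO_X(\T)$ of nef line bundles, and chasing the long exact sequences of its short exact pieces gives $\gH^{\geq 1}(X,\CE'^+)_m=0$. But that resolution is established only in Section~\ref{sec:CohomTS}, so invoking it here is a forward reference; and the paper's own proof bypasses any cohomology vanishing entirely: it proves (ii) $\Rightarrow$ (iii) directly from the intersection description $\eval^+(m)=\bigcap_\sigma\eval^+_\sigma(m)$ in~\eqref{eq:GlobEval} together with the injectivity of $\lambda$, and then (iii) $\Rightarrow$ (i) from global generation and~\eqref{eq:GlobSec}. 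So even if you replace the filtration by a resolution, your route is a considerably heavier argument than the one intended.
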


\begin{proof}
(i) $\Leftrightarrow$ (ii) This is just a consequence of~\eqref{eq:EvalSec}.

\smallskip

(ii) $\Rightarrow$ (iii) follows from~\eqref{eq:GlobEval} and the injectivity of $\lambda$.

\smallskip

(iii) $\Rightarrow$ (i): Since $\CE'^+$, $\CE^+$ and $\CE''^+$ are globally generated, exactness of the global evaluation spaces in conjunction with~\eqref{eq:GlobSec} yields stalkwise exactness of~\eqref{eq:ExSeq}.
\end{proof}

\begin{remark}
\label{rem:EntireSpace}
In particular, we can speak about short exact sequences of Weil decorations though the category is not abelian. Also note that (ii) applied to $\sigma=0$ means exactness of the underlying vector space sequence, that is, $0\to E'\to E\to E''\to0$ is exact. 
\end{remark}

\begin{remark}
By (i) $\Rightarrow$ (iii), exactness of $0\to\CE'\to\CE\to\CE''\to0$ implies exactness of the global sections {\em whenever the toric sheaves $\CE'$, $\CE$ and $\CE''$ are positively decorated}, but the implication might fail for exact sequences without zeroes to the left and the right, that is, $\CE'\to\CE\to\CE''$. 

\smallskip

However, taking a {\em sufficiently positive} twist of the exact sequence $\CE'\to\CE\to\CE''$, the standard short exact sequences associated with the morphisms $\lambda$ and $\mu$ immediately imply exactness of the sequences of global sections $\eval'^+(m)\to\eval^+(m)\to\eval''^+(m)$. The equivalence (i) $\Leftrightarrow$ (ii) and the implication (iii) $\Rightarrow$ (i) hold at any rate.
\end{remark}

\begin{proposition}
\label{prop:Sur}
A toric morphism $\mu\colon\CE\to\CE''$ between positively decorated toric sheaves is surjective if and only if for all $e''\in E''$, we have 
\[
\CPvar''(e'')=\bigcup_{\mu(e)=e''}\CPvar(e). 
\]
\end{proposition}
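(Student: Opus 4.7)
The inclusion $\bigcup_{\mu(e)=e''}\CPvar(e)\subseteq\CPvar''(e'')$ is automatic: the morphism condition $\CD(e)\leq\CD''(\mu(e))$ from Definition~\ref{def:WeilDec}(ii) translates via Table~\ref{table:DivPol} into $\CPvar(e)\subseteq\CPvar''(\mu(e))$. Only the reverse inclusion requires work, and throughout both directions I exploit that positive decoration identifies $\eval(m)$ with $\gH^0(X,\CE)_m$ by~\eqref{eq:GlobSec}.

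For $(\Leftarrow)$, fix $m\in M$ and $e''\in\gH^0(X,\CE'')_m=\eval(m)$. The hypothesised equality forces $m\in\CPvar(e)$ for some $e\in\mu^{-1}(e'')$, so $e\in\gH^0(X,\CE)_m$ maps to $e''$. Thus $\mu$ is surjective on global sections in every $M$-degree, and since $\CE''$ is globally generated by its positive decoration, $\mu$ is surjective as a sheaf morphism.

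For $(\Rightarrow)$, assume $\mu$ is surjective. As $\CE''\cong\CE/\ker\mu$ is torsionfree, $\ker\mu$ is saturated in $\CE$ and hence reflexive by Remark~\ref{rem:Satur}, giving a short exact sequence $0\to\ker\mu\to\CE\to\CE''\to 0$ of toric sheaves. Proposition~\ref{prop:Iso}(i)$\Rightarrow$(iii), applied to a simultaneous materialization, yields $\mu\colon\eval(m)\twoheadrightarrow\eval''(m)$ for every $m\in M$. Via~\eqref{eq:GlobSec} this translates into the inclusion of lattice points $\CPvar''(e'')\cap M\subseteq\bigcup_{\mu(e)=e''}\CPvar(e)\cap M$.

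The main obstacle is upgrading this lattice-point inclusion to a polyhedral inclusion in $M_\R$. For each $\sigma\in\Sigma(\kdimX)$, the local polyhedron $\CPvar_\sigma(e'')=r_\sigma(e'')+\sigma^\vee$ from~\eqref{eq:LocParl} is a pointed cone translated by its integer apex. Affine surjectivity of $\mu|_{U_\sigma}$ applied at $m=r_\sigma(e'')$ produces some $e_\sigma\in\mu^{-1}(e'')$ with $r_\sigma(e'')\in\CPvar_\sigma(e_\sigma)$, hence $r_\sigma(e_\sigma)-r_\sigma(e'')\in -\sigma^\vee$; combined with the morphism inclusion $\CPvar_\sigma(e_\sigma)\subseteq\CPvar_\sigma(e'')$ and pointedness of $\sigma^\vee$, this forces $r_\sigma(e_\sigma)=r_\sigma(e'')$, so $\CPvar_\sigma(e_\sigma)=\CPvar_\sigma(e'')$ as polyhedra. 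The global polyhedral equality is then assembled from these local equalities via $\CPvar(e)=\bigcap_{\sigma\in\Sigma(\kdimX)}\CPvar_\sigma(e)$ (Equation~\eqref{eq:CP+}), invoking super-additivity~\eqref{eq:Basic} to combine the family $\{e_\sigma\}$ through generic linear combinations inside the affine subspace $\mu^{-1}(e'')$ so that their polyhedra collectively cover $\CPvar''(e'')$.
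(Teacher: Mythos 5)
Your argument follows the paper's proof essentially step for step: both establish the automatic inclusion from functoriality, run the $(\Leftarrow)$ direction via~\eqref{eq:GlobSec} and global generation, and in $(\Rightarrow)$ reduce to lattice points via Proposition~\ref{prop:Iso} and then use pointedness of $\sigma^\vee$ to force the $\sigma$-vertex (equivalently, the local polyhedron) of some $\CPvar(e_\sigma)$ to coincide with that of $\CPvar''(e'')$ --- the paper phrases this through the vertex $\CPvar''(e'')(\sigma)\in M$, you through the apex $r_\sigma(e'')$, which is the same datum. Your final sentence ``invoking super-additivity \ldots\ through generic linear combinations'' is a heuristic pointer in the same spirit as the paper's ``taking the convex hull'': the clean version of this step is that, by supermodularity~\eqref{eq:Basic}, a generic $\tilde e\in\mu^{-1}(e'')$ satisfies $\CPvar(\tilde e)\supseteq\CPvar(e)$ for every $e\in\mu^{-1}(e'')$, so the union is in fact the single polyhedron $\CPvar(\tilde e)$ whose local pieces all agree with $\CPvar''(e'')_\sigma$, giving $\CPvar(\tilde e)=\bigcap_\sigma\CPvar''(e'')_\sigma=\CPvar''(e'')$.
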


\begin{proof}
We first observe that functoriality implies $\CPvar(e)\subseteq\CPvar''(\mu(e))$ whence
\[
\bigcup_{\mu(e)=e''}\CPvar(e)\subseteq\CPvar''(e''). 
\]
Further, surjectivity of $\mu$ is equivalent to surjectivity of $\mu\colon\eval^+(m)\to\eval''^+(m)$ for all $m\in M$, cf.\ Proposition~\ref{prop:Iso}.

\smallskip

For the implication $(\Rightarrow)$, take $m\in\CPvar''(e'')$ so that $e''\in\eval''^+(m)$. Then there exists $e\in\mu^{-1}(e'')\cap\eval^+(m)$ which implies $m\in\CPvar(e)$. Consequently,
\[
\CPvar''(e'')\cap M=\bigcup_{\mu(e)=e''}\CPvar(e)\cap M\subseteq\bigcup_{\mu(e)=e''}\CPvar(e)\subseteq\CPvar''(e'').
\]
Every vertex $\CPvar''(e'')(\sigma)$ of the polyhedron $\CPvar''(e'')$ is contained in $M$ and therefore must be a vertex of some $\CPvar(e)$, that is,
$\CPvar(e)_\sigma=\CPvar''(e'')(\sigma)+\sigma^\vee=\CPvar''(e'')_\sigma$ with $\mu(e)=e''$. Taking the convex hull on both sides yields equality. 

\medskip

For the converse $(\Leftarrow)$, assume that $\CPvar''(e'')=\bigcup_{\mu(e)=e''}\CPvar(e)$. If $e''\in\eval''^+(m)$, then $m\in\CPvar''(e'')\cap M$. Hence there exists $e\in E$ with $\mu(e)=e''$ and $m\in\CPvar(e)$, that is, $\eval^+(m)\to\eval''^+(m)$ is surjective.
\end{proof}

\begin{example}
\label{exam:TB}
Consider the dual toric Euler sequence
\begin{equation}
\label{eq:DES}
\begin{tikzcd}
0\ar[r]&\opn{Cl}(X)^\vee\otimes\CO_X\ar[r]&\fbox{$\CE:=\bigoplus_{\rho\in\Sigma(1)}\CO_X(D_\rho)$}\ar[r,"\mu"]&\mc T_X\ar[r]&0.
\end{tikzcd}
\end{equation}
At the level of associated vector spaces, we get
\[
\begin{tikzcd}
0\ar[r]&\opn{Cl}(X)^\vee\otimes_\Z\kk\ar[r]&\bigoplus_{\rho\in\Sigma(1)}\kk\cdot e_\rho\ar[r,"\mu"]&N_\kk\ar[r]&0,
\end{tikzcd}
\]
where $\mu$ is given by linear extension of $e_\rho\mapsto\rho$; both sequences are induced by~\eqref{eq:MotherSeq}. Since $D_\rho\wedge D_{\rho'}=0$ for all $\rho\not=\rho'\in\Sigma(1)$, the Weil decoration of $\CE$ is 
\[
\renewcommand{\arraystretch}{1.3}
\CD_\CE(e)=\begin{cases}D_\rho&\text{if }e\in\opn{span}(\rho)\\0&\text{if otherwise}\end{cases}
\]
for $e\not=0$. By Proposition~\ref{prop:Sur}, we find for $v\in N_\kk\setminus\{0\}$
\[
\renewcommand{\arraystretch}{1.3}
\CD_{\mc T_X}(v)=\begin{cases}D_\rho&\text{if }v\in\opn{span}(\rho)\text{ and }-\rho\not\in\Sigma(1),\\D_\rho+D_{-\rho}&\text{if }v\in\opn{span}(\rho)\text{ and }-\rho\in\Sigma(1),\\0&\text{if otherwise}.\end{cases}
\]
Provided $\Delta$ has no parallel facets, this yields the Hasse diagram
\[
\begin{tikzcd}
& \mathllap{\eta_{\mc T_X}=\ } \S(D_{\rho_i})\vee\S(D_{\rho_j})\mathrlap{,\,i\not=j}& \\
\S(D_{\rho_1})=\kk^\times\cdot\rho_1\ar[ru, no head] & \ldots & \S(D_{\rho_m})=\kk^\times\cdot\rho_m\ar[lu, no head]
\end{tikzcd}
\]
for the canonical stratification. At any rate, via~\eqref{eq:KlyachkoFilt} we recover the filtration 
\[
\renewcommand{\arraystretch}{1.3}
(N_\kk)_\rho^\ell=\begin{cases}N_{\kk}&\mbox{if }\ell\leq0\\
\opn{span}(\rho)&\mbox{if }\ell=1\\
0&\mbox{if }\ell\geq2
\end{cases}
\]
arising in Klyachko's formalism.
\end{example}

\begin{remark}
\label{rem:CanSeq}
For a toric sheaf $\CE$ and strata $\S$, $\T\in\Strat_\CE\setminus\{0\}$, we let
\begin{equation}
\label{eq:rem:CanSeq}
\CO_X(\S):=\CO_X\big(\CD_\CE(\S)\big)\quad\text{and}\quad\CE_{\S,\T}=\overline\S\otimes_\kk\CO_X(\T).
\end{equation}
Materialising if necessary, we may assume that $\CE$ and thus
\[
\widehat{\CE}:=\bigoplus\limits_{0\not=\S\in\Strat_\CE}\CE_{\S,\S}=\bigoplus\limits_{0\not=\S\in\Strat_\CE}\overline\S\otimes\CO_X(\S)  
\]
are positively decorated. Proposition~\ref{prop:Sur} entails the canonical surjection
\begin{equation}
\label{eq:CanSur}
\begin{tikzcd}
\widehat\CE\ar[r,"\pi"]&\CE\ar[r]&0\,, 
\end{tikzcd}
\end{equation}
where the toric morphism $\pi$ is induced by the natural linear map $\pi\colon\bigoplus_{\S\in\Strat_\CE}\overline\S\to E$. Indeed, if $e\in E$, then there exists a unique stratum $\S$ with $e\in\S$ and
\[
\CD_{\CE_{\S,\S}}(e)=\CD_{\CE}(e)=\CD_{\CE}(\S). 
\]
In principle, the Weil decoration of $\CE$ can be thus computed from the Weil decoration of $\widehat\CE$ which was determined in Example~\ref{exam:SumTRS}. The subtle part is to determine the kernel of $\pi$ -- this is precisely where the matroid introduced in~\cite{parliaments} enters the stage.
\end{remark}

On the other hand, injectivity of the toric morphism $\lambda\colon\CE'\to\CE$ is tantamount to the injectivity of the associated linear map $\lambda\colon E'\to E$, cf.\ Remark~\ref{rem:EntireSpace}. In particular, $\CD'(e')\subseteq\CD(\lambda(e'))$. A priori, we can say hardly more. However, if $\lambda$ is saturated (cf.\ Remark~\ref{rem:Satur}), we have the

\begin{proposition}
\label{prop:Inj}
Let $\CE'=\ker\mu$ be the kernel of a morphism $\mu\colon\CE\to\CE''$ of positively decorated toric sheaves. If $E'$ is the associated vector space of $\CE'$
, then
\[
\CPvar_{\CE'}(e')=\CPvar_\CE(\lambda(e'))
\]
for all $e'\in E'$, that is, $\CD_{\CE'}=\CD_\CE|_{E'}$.
\end{proposition}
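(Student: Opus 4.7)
The plan is to establish the stronger sheaf-level identity $\CE'(e')=\CE(\lambda(e'))$, using the defining formula $\CE(e)=\CE\cap\bigl(\kk\cdot e\otimes(j_\TX)_*\CO_\TX\bigr)$ from~\eqref{eq:CEe}. Once this is proved, dividing through by $\lambda(e')$ identifies the two resulting invertible subsheaves of $(j_\TX)_*\CO_\TX=\kk[M]$, so the associated Weil divisors, and thus the virtual polyhedra $\CPvar_{\CE'}(e')$ and $\CPvar_\CE(\lambda(e'))$, must agree.

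First I would restrict the exact sequence $0\to\CE'\to\CE\to\CE''$ to the torus, where all three reflexive sheaves trivialise to free $\CO_\TX$-modules. Taking torus-invariants yields the identification $\lambda(E')=\ker(E\to E'')$; in particular $\lambda\colon E'\to E$ is injective and $\mu(\lambda(e'))=0$ for every $e'\in E'$. By Remark~\ref{rem:Satur}, since $\CE/\CE'$ injects into the torsion-free sheaf $\CE''$, the subsheaf $\CE'$ is automatically saturated, hence reflexive, so the Weil decoration formalism legitimately applies to it. The inclusion $\CE'(e')\subseteq\CE(\lambda(e'))$ is then immediate from $\CE'\subseteq\CE$ after identifying $e'$ with $\lambda(e')$. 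For the reverse inclusion, any local section of $\CE(\lambda(e'))$ has the form $\lambda(e')\cdot f$ for some $f\in(j_\TX)_*\CO_\TX$; applying the $\CO_X$-linear morphism $\mu$ yields $\mu(\lambda(e'))\cdot f=0$, so the section lies in $\ker(\mu)=\CE'$ and therefore in $\CE'\cap\bigl(\kk\cdot\lambda(e')\otimes(j_\TX)_*\CO_\TX\bigr)=\CE'(e')$.

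The key conceptual observation is that $\mu$ annihilates the entire rank-one subsheaf $\kk\cdot\lambda(e')\otimes(j_\TX)_*\CO_\TX$, which forces every $\lambda(e')$-proportional section of $\CE$ to lie automatically inside $\CE'$. Accordingly, intersecting with $\kk\cdot\lambda(e')\otimes(j_\TX)_*\CO_\TX$ cannot shrink when one passes from $\CE$ to its subsheaf $\CE'$. I do not foresee any serious obstacle: the positive decoration hypothesis and the surjectivity of $\mu$ are not actually used, and the only technical care required is to keep track of which sheaf each section is viewed inside, so that the torus-restriction identification of $E'$ with $\ker(E\to E'')$ is correctly invoked.
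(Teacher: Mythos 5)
Your proof is correct, and it takes a genuinely different and arguably more fundamental route than the paper's. The paper's proof proceeds through Proposition~\ref{prop:Iso}: it replaces $\CE''$ by $\im\mu$ to obtain a short exact sequence, deduces $\eval_\CE(m)\cap\ker\mu=\lambda(\eval_{\CE'}(m))$ for every $m\in M$, and then concludes since compatible lattice polyhedra are determined by their lattice points. You instead establish the stronger sheaf-level identity $\CE'(e')=\CE(\lambda(e'))$ directly from the defining intersection~\eqref{eq:CEe}, using the observation that $\mu$ restricted to the torus acts as $\mu\otimes\id$ on $E\otimes\kk[M]$, so it annihilates the entire line $\kk\lambda(e')\otimes\kk[M]$; hence every $\lambda(e')$-proportional section of $\CE$ already lies in $\ker\mu=\CE'$. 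Dividing by $\lambda(e')$ then identifies the rank-one subsheaves of $\kk[M]$ and therefore the Weil divisors, with no need to pass to lattice points. Your observation that neither surjectivity of $\mu$ nor positivity of the decorations is actually needed is correct: the positive-decoration hypothesis is only there so that the conclusion may be phrased with actual polyhedra $\CPvar$ rather than divisors or virtual polyhedra, and the paper circumvents non-surjectivity of $\mu$ by the same $\im\mu$ replacement you allude to. The trade-off is that the paper's argument is shorter once Proposition~\ref{prop:Iso} is available, whereas yours is self-contained, works at the subsheaf level, and isolates the exact mechanism — that intersecting with $\kk\lambda(e')\otimes\kk[M]$ cannot shrink when passing from $\CE$ to $\CE'=\ker\mu$ — that makes the Weil decoration restrict along a kernel inclusion.
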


\begin{proof}
We have an exact sequence $0\to\CE'\stackrel{\lambda}{\to}\CE\stackrel{\mu}{\to}\CE''$, which becomes a short exact sequence by replacing $\mu$ with $\CE \twoheadrightarrow \im(\mu)$. It follows from Proposition~\ref{prop:Iso} that $\eval_\CE(m)\cap\ker\mu=\lambda(\eval_{\CE'}(m))$ which means that for $e'\in E'$, we have $m\in\CPvar_\CE(\lambda(e'))$ if and only if $m\in\CPvar_{\CE'}(e')$.
\end{proof}

\begin{example}
Consider the original toric Euler sequence
\[
\begin{tikzcd}
0\ar[r]&\Omega^1_X\ar[r,"\lambda"]&\fbox{$\CE^\vee=\bigoplus_{\rho\in\Sigma(1)}\CO_X(-D_\rho)$}\ar[r]&\opn{Cl}(X)\otimes\CO_X\ar[r]&0
\end{tikzcd}
\]
and regard Weil decorations as $\Div(\Sigma)$-valued. The exact sequence of associated vector spaces is
\[
\begin{tikzcd}
0\ar[r]&M_\kk\ar[r,"\lambda"]&\bigoplus_{\rho\in\Sigma(1)}\kk\cdot e^\rho\ar[r]&\opn{Cl}(X)\otimes_\Z\kk\ar[r]&0,
\end{tikzcd}
\]
where $\lambda(u)=\sum\langle u,\rho\rangle e^\rho$. By Proposition~\ref{prop:Inj}, the Weil decoration of $\Omega^1_X$ is determined by
\[
\CD_{\Omega^1_X}(u)=\CD_{\CE^\vee}\big(\sum\langle u,\rho\rangle e_\rho\big)=\bigwedge_{\langle u,\rho\rangle\not=0}-D_\rho=-\sum_{\langle u,\rho\rangle\not=0}D_\rho
\]
where $u\in M_\kk\setminus\{0\}$, which induces the filtration
\[
\renewcommand{\arraystretch}{1.3}
(M_\kk)_\rho^\ell:=\begin{cases}
M_{\kk}&\mbox{if }\ell\leq-1\\
\rho^\perp&\mbox{if }\ell=0\\
0&\mbox{if }\ell\geq1
\end{cases}
\]
via~\eqref{eq:KlyachkoFilt}. 
\end{example}

\begin{remark}
\label{rem:WDDual}
From the previous example we get a second way to compute the Weil decoration of the dual $\CE^\vee$ of $\CE$, cf.\ also Example~\ref{exam:HomEF}. Dualising the sequence~\eqref{eq:CanSur} we obtain
\[
\begin{tikzcd}
0\ar[r]&\CE^\vee\ar[r]&\bigoplus_{0\not=\S\in\Strat_\CE}\CO_X(-D(\S))\otimes\overline\S^\vee\to(\ker\pi)^\vee.
\end{tikzcd}
\]
The dual $(\ker\pi)^\vee$ is reflexive by definition. Upon twisting by some $\mc O(\Delta)$ we may assume without loss of generality that all the sheaves are positively decorated so that Proposition~\ref{prop:Inj} applies. The beginning of the induced sequence of associated vector spaces reads
\[
\begin{tikzcd}
0\ar[r]&E^\vee\ar[r]&\bigoplus_{0\not=\S\in\Strat_\CE}\overline\S^\vee, 
\end{tikzcd}
\]
where $\varphi\in E^\vee$ is mapped to the linear functional on $\bigoplus_{0\not=\S\in\Strat_\CE}\overline\S^\vee$ given by $\bigoplus e_\S\mapsto\sum\varphi(e_\S)$, where $e_\S\in\overline\S$. Hence
\[
\CD_{\CE^\vee}(\varphi)=\bigwedge_{\varphi(e_\S)\not=0}\hspace{-5pt}-D(\S)=\bigwedge_{\varphi(e_\S)\not=0}\hspace{-5pt}-\CD_\CE(\S).
\]
\end{remark}

\begin{example}
\label{exam:F1PolRes}
As a further illustration we revisit the exact sequence~\eqref{eq:ExacSeq} from the introduction, namely
\[
0\to\CO_X(\nablap)\to\fbox{$\CE:=\CO_X(\NablaA)\oplus\CO_X(\NablaB)$}\to \CO_X(\nablam)\to0.
\]
This corresponds to the exact sequence of associated vector spaces
\[
\begin{tikzcd}[column sep=huge]
0\ar[r]&\kk e_+\ar[r,"{\lambda=(1,-1)^\top}"]&\kk e_0\oplus\kk e_1\ar[r,"{\mu=(1,1)}"]&\kk e_-\ar[r]&0
\end{tikzcd}
\]
and positive Weil decorations
\[
\CD_+(e_+)=\nablap,\quad\CD_\CE(e_i)=\nabla_i,\,i=0,\,1\quad\text{and}\quad\CD_-(e_-)=\nablam,  
\]
cf.\ Examples~\ref{exam:LB} and~\ref{exam:SumTRS}. We can check exactness by invoking Propositions~\ref{prop:Sur} and Proposition~\ref{prop:Inj}, for we have
\[
\CD_-(e_-)=\nablam=\nabla_0\cup\nabla_1=\CD_\CE(e_0)\cup\CD_\CE(e_1)
\]
and
\[
\CD_+(e_+)=\nablap=\nabla_0\cap\nabla_1=\CD_\CE(e_0)\cap\CD_\CE(e_1)=\CD_\CE(\lambda(e_+))
\]
respectively. See also Figure~\ref{fig:IncExc} which translates Figure~\ref{fig:F1PolRes} to our context.

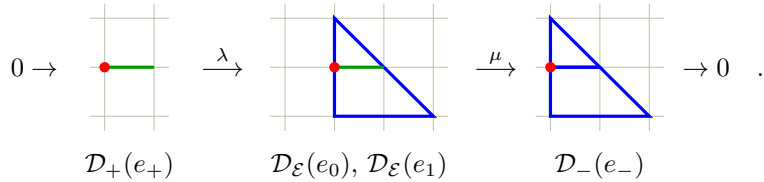
\begin{figure}[ht]
\newcommand{\scaleA}{0.65}
\newcommand{\raiseC}{42pt}
\raisebox{\raiseC}{$0\rightarrow$ }
\begin{tikzpicture}[scale=\scaleA]
\draw[color=oliwkowy!40] (-0.3,-0.3) grid (1.3,2.3);
\draw[very thick, color=green]
  (0,1) -- (1,1);
\fill[thick, color=red]
  (0,1) circle (3pt);
\draw[thick, color=black]
  (0.5,-1) node{$\CD_+(e_+)$};
\end{tikzpicture}
\raisebox{\raiseC}{ $\stackrel{\lambda}{\longrightarrow}$ }
\begin{tikzpicture}[scale=\scaleA]
\draw[color=oliwkowy!40] (-1.3,-0.3) grid (2.3,2.3);
\draw[very thick, color=blue]
  (0,1) -- (0,0) -- (2,0) -- (1,1);
\draw[very thick, color=blue]
  (0,1) -- (0,2) -- (1,1);
\draw[very thick, color=green]
  (0,1) -- (1,1);
\fill[very thick, color=red]
  (0,1) circle (3pt);
\draw[thick, color=black]
  (0.5,-1) node{$\CD_\CE(e_0),\,\CD_\CE(e_1)$};
\end{tikzpicture}
\raisebox{\raiseC}{ $\stackrel{\mu}{\longrightarrow}$ }
\begin{tikzpicture}[scale=\scaleA]
\draw[color=oliwkowy!40] (-0.3,-0.3) grid (2.3,2.3);
\draw[very thick, color=blue]
  (0,0) -- (0,1) -- (1,1) -- (2,0) -- cycle;
\draw[very thick, color=blue]
  (0,1) -- (0,2) -- (1,1) -- cycle;
\fill[thick, color=red]
  (0,1) circle (3pt);
\draw[thick, color=black]
  (1,-1) node{$\CD_-(e_-)$};
\end{tikzpicture}
\raisebox{\raiseC}{ $\rightarrow 0\quad$.}
\caption{The sequence~\eqref{eq:ExacSeq} from a Weil decoration point of view.}
\label{fig:IncExc}
\end{figure}
\end{example}

\section{Extensions of nef line bundles}
\label{sec:ExtNLB}
We keep on assuming that $X$ is smooth and semi-projective. Further, consider two $\CO_X$-modules $\mc F$ and $\mc G$ and their space of extensions 
\[
\gExt(\mc F,\mc G):=\gExt^1_{\CO_X}(\mc F,\mc G).
\]
A {\em universal extension} is a short exact sequence of the form 
\begin{equation}
\label{eq:ExSeqExt}
\begin{tikzcd}
0\ar[r]&\mc G\otimes_\kk\gExt(\mc F,\mc G)^\vee\ar[r]&\CE\ar[r]&\CF\ar[r]&0
\end{tikzcd}
\end{equation}
such that the extension $\CE_t$ induced by pushout along $t\in\gExt(\mc F,\mc G)$, namely the lower sequence in
\[
\begin{tikzcd}
0\ar[r]\ar[d,equal]&\mc G\otimes_\kk\gExt(\mc F,\mc G)^\vee\ar[r]\ar[d,"t"] &
\CE\ar[r]\ar[d]&\CF\ar[r]\ar[d,equal]&0\ar[d,equal]\\
0\ar[r]&\CG\ar[r]&\CE_t\ar[r]&\CF\ar[r]&0
\end{tikzcd}
\quad,
\]
is actually the one specified by $t$. If $\mc F$ and $\mc G$ are toric sheaves, then there is a natural $M$-grading on $\gExt(\mc F,\mc G)$ and we can consider the {\em $\TX$-equivariant} universal extensions obtained by replacing $\gExt(\mc F,\mc G)$ in~\eqref{eq:ExSeqExt} with the degree zero piece $\gExt(\mc F,\mc G)_0$. 

\medskip

For two nef line bundles given by lattice polytopes $\NablaM$, $\NablaP\in\Pol^+(\Sigma)$, the universal toric extension
\[
\Ext(\NablaM,\NablaP)_0:=\Ext\big(\CO_X(\NablaM),\CO_X(\NablaP)\big)_0 
\]
was determined in~\cite{ka48-dispExt}. The basic idea is to use the isomorphism
\begin{equation}
\label{eq:DimExt}
\Ext(\NablaM,\NablaP)_0\cong\widetilde{\opn{H}}^{\raisebox{-3pt}{\scriptsize$0$}}(\NablaM\setminus\NablaP)\cong\kk^s
\end{equation}
from~\cite{immaculate} and~\cite{dop}, where $\widetilde{\opn{H}}^{\raisebox{-3pt}{\scriptsize$0$}}$ denotes reduced cohomology and $s+1$ the number of connected components of $\NablaM\setminus\NablaP$. In particular, they handle the case $\NablaP\subseteq\NablaM$ using inclusion/exclusion sequences, cf.\ Figure \ref{fig:F1PolRes}.

\medskip

We are going to show that this approach can be generalised to any pair of nef polytopes $(\NablaP,\NablaM)$ by using Weil decorations. First, a technical result is in order.

\subsection{Inclusion of polyhedra}
\label{subsec:IncPol}
Let $\Delta$ be an ample lattice polyhedron in $M_\R$, that is, $\mc N(\Delta)=\Sigma$. Further, let $\CQ\in\Pol^+(\Sigma)$ be inside $\Delta$. Their corresponding set of vertices will be written $\opn{vert}(\Delta)$ and $\opn{vert}(\CQ)$, respectively. For $0\not=b\in N$ denote
\[
F(\Delta,b)=\{u\in\Delta\mid\langle u,b\rangle=\min\langle\Delta,b\rangle\}
\]
the face of $\Delta$ associated with $b$. For $v\in M_\R$, $0\not=b\in N_\R$ and $\nabla\in\Pol^+(\Sigma)$ let
\begin{equation}
\label{eq:Mad}
[b<v]:=\{u\in\Delta\mid\langle u,b\rangle<\langle v,b\rangle\}\text{ and }[b<\nabla]:=\{u\in\Delta\mid\langle u,b\rangle<\min\langle\nabla,b\rangle\};
\end{equation}
see also Figure~\ref{fig:IncPol} for illustration. 
\begin{figure}[ht]
\begin{tikzpicture}[scale=0.7]
\draw[color=oliwkowy!40] (-0.2,-2.2) grid (4.4,2.2);
\fill[pattern color=green!30, pattern=north west lines]
  (0,2.0) -- (4.4,-0.2) -- (4.4,-2.2) -- (0,0) -- cycle;
\draw[thick, dashed, green]
  (0,2.0) -- (4.4,-0.2);
\draw
   (0,2.2) -- (0,0) -- (4.4,-2.2);
\draw 
   (0.4,-0.6) node {$\Delta$};
\fill
   (2,1) circle (3pt);
\draw[->] 
   (2,1) -- (2.5,2);
\draw 
   (1.7,0.7) node {$v$}
   (2.8,1.8) node {$b$};
\draw[green]
   (2,0) node {$[b<v]$};
\end{tikzpicture}
\hspace{30pt}
\begin{tikzpicture}[scale=0.7]
\draw[color=oliwkowy!40] (-0.2,-2.2) grid (4.4,2.2);
\fill[pattern color=green!30, pattern=north west lines]
  (0,2.2) -- (0.8,2.2) -- (4.4,-1.4) -- (4.4,-2.2) -- (0,0) -- cycle;
\draw[thick, dashed, green]
  (0.8,2.2) -- (4.4,-1.4);
\draw
   (0,2.2) -- (0,0) -- (4.4,-2.2);
\draw 
   (0.4,-0.6) node {$\Delta$};
\fill
   (2,1) circle (3pt);
\draw[->] 
   (2,1) -- (3,2);
\draw 
   (3.2,1.8) node {$b$};
\draw[very thick]
   (2,2.2) -- (2,1) -- (4.4,1);
\draw
   (4.0,1.5) node {$\nabla$};
\draw[green]
   (2,0) node {$[b<\nabla]$};
\end{tikzpicture}
\caption{The polyhedra sit inside $M_\R$ while $b\in N_\R$.}
\label{fig:IncPol}
\end{figure}
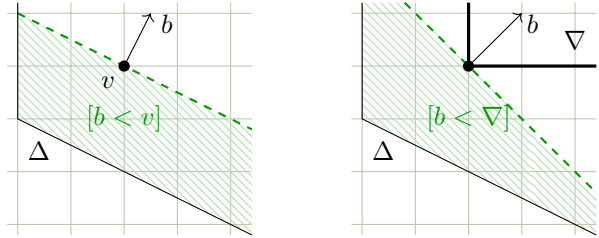

\begin{lemma}
\label{lem:PolLem}
Let $C$ be a connected component of $\Delta\setminus\CQ$ and $\nabla:=\CQ\cup C$. Then
\begin{enumerate}
\item[{\rm (i)}] $\nabla$ is a lattice polyhedron with $\opn{vert}(\nabla)\subseteq\opn{vert}(\CQ)\cup\opn{vert}(\Delta)$.

\smallskip

\item[{\rm (ii)}] $\Sigma\leq\mc N(\nabla)$, that is, $\nabla\in\Pol^+(\Sigma)$.

\smallskip

\item[{\rm (iii)}] for each $\rho\in\Sigma(1)$ we have
\[
\min\langle\Delta,\rho\rangle\leq \min\langle\nabla,\rho\rangle\leq\min\langle\CQ,\rho\rangle,  
\]
and at least one equality holds.
\end{enumerate}
\end{lemma}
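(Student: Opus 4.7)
The plan is to describe $\nabla=\CQ\cup C$ explicitly as a convex polyhedron cut out by half-spaces indexed by $\Sigma(1)$, from which (i)--(iii) will follow mechanically.

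\textbf{Half-space description.} Writing $\Delta=\bigcap_\rho\{u\mid\langle u,\rho\rangle\ge -a_\rho\}$ and $\CQ=\bigcap_\rho\{u\mid\langle u,\rho\rangle\ge -b_\rho\}$ with $a_\rho\ge b_\rho$ (forced by $\CQ\subseteq\Delta$), each slice $H_\rho:=[\rho<\CQ]=\Delta\cap\{\langle u,\rho\rangle<-b_\rho\}$ is convex, hence connected, and $\Delta\setminus\CQ=\bigcup_\rho H_\rho$. Therefore the component $C$ equals $\bigcup_{\rho\in S_C}H_\rho$ for $S_C:=\{\rho\mid H_\rho\subseteq C\}$. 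A direct check of both inclusions then yields
\[
\nabla=\bigcap_{\rho\in\Sigma(1)}\{u\mid\langle u,\rho\rangle\ge -c_\rho\},\qquad c_\rho:=\begin{cases}a_\rho&\text{if }\rho\in S_C,\\ b_\rho&\text{if }\rho\notin S_C.\end{cases}
\]
In particular $\nabla$ is convex with tail cone $|\Sigma|^\vee$, and part~(iii) is already visible: $\min\langle\nabla,\rho\rangle=-c_\rho\in\{-a_\rho,-b_\rho\}$.

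\textbf{The vertex $v_\sigma$.} For each $\sigma\in\Sigma(n)$, the system $\langle v,\rho\rangle=-c_\rho$ over $\rho\le\sigma$ has a unique lattice solution $v_\sigma$, since the rays of $\sigma$ form a $\Z$-basis of $N$ by smoothness of $\Sigma$. I claim $v_\sigma\in\nabla$ and $v_\sigma\in\{\Delta(\sigma),\CQ(\sigma)\}$, via a case distinction on the location of $\Delta(\sigma)$. If $\Delta(\sigma)\in\CQ$, then $\Delta(\sigma)=\CQ(\sigma)$ and $v_\sigma$ is their common value. If $\Delta(\sigma)\in C$, then for each $\rho\le\sigma$ with $a_\rho>b_\rho$ the slice $H_\rho$ contains $\Delta(\sigma)\in C$ and is connected, so $H_\rho\subseteq C$, i.e.\ $\rho\in S_C$ and $c_\rho=a_\rho$; rays with $a_\rho=b_\rho$ are trivial. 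Hence $v_\sigma=\Delta(\sigma)\in\nabla$. If instead $\Delta(\sigma)\in C'$ for a component $C'\ne C$, the same argument forces $H_\rho\subseteq C'$, so $\rho\notin S_C$ and $c_\rho=b_\rho$, yielding $v_\sigma=\CQ(\sigma)\in\nabla$. In every case $v_\sigma$ is a vertex of $\nabla$ (it tights $\dim N$ linearly independent facet inequalities) whose normal cone contains $\sigma$.

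\textbf{Conclusions and the main obstacle.} The assignment $\sigma\mapsto v_\sigma$ proves $\Sigma\le\mc N(\nabla)$, i.e.\ (ii). Conversely, every vertex of $\nabla$ corresponds to a full-dimensional cone of $\mc N(\nabla)$ that contains some $\sigma\in\Sigma(n)$, so it equals $v_\sigma\in\opn{vert}(\Delta)\cup\opn{vert}(\CQ)\subseteq M$, which is~(i). The only delicate ingredient is the case distinction of the previous paragraph; its entire geometric content is that each $H_\rho$ is connected and distinct components of $\Delta\setminus\CQ$ are disjoint, so each $H_\rho$ lies in at most one component. Everything else is routine bookkeeping of the half-space inequalities.
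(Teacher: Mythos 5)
Your proof is correct, and it takes a genuinely different route from the paper. The paper starts from the set $\nabla=\CQ\cup C$, proves (iii) first by analysing $[\rho<\CQ]\cap C$, and then establishes (ii) by a case distinction at an arbitrary vertex $v$ of $\nabla$ (whether $v\in\opn{vert}(\CQ)$ or not), comparing $\mc N(v,\nabla)$ with normal cones of $\Delta$ and $\CQ$. You instead derive up front the closed-form description $\nabla=\bigcap_\rho\{\langle u,\rho\rangle\ge-c_\rho\}$ with $c_\rho\in\{a_\rho,b_\rho\}$, from which convexity, the tail cone, and the rest become nearly mechanical, and you then produce the vertex $v_\sigma$ by directly solving the linear system for each $\sigma\in\Sigma(n)$. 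The common geometric engine in both arguments is that each slice $H_\rho=[\rho<\CQ]$ is convex, hence connected, hence lies in exactly one component of $\Delta\setminus\CQ$. What your approach buys is explicitness: you exhibit the polyhedron by its inequalities and identify all vertices constructively, whereas the paper argues more indirectly about normal cones. What the paper's route buys is slightly less bookkeeping, since it never needs to pin down a global half-space presentation of $\nabla$.

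One small imprecision worth fixing: after establishing $\nabla=\bigcap_\rho\{\langle u,\rho\rangle\ge-c_\rho\}$, you assert that (iii) is ``already visible'' via $\min\langle\nabla,\rho\rangle=-c_\rho$. That presentation only yields $\min\langle\nabla,\rho\rangle\ge-c_\rho$; the constraint could a priori be redundant. The tightness does hold, but it needs a sentence: when $c_\rho=b_\rho$, use $\CQ\subseteq\nabla$; when $c_\rho=a_\rho$ (so $a_\rho>b_\rho$ and $\rho\in S_C$), note $F(\Delta,\rho)\subseteq H_\rho\subseteq C\subseteq\nabla$ gives a point achieving $-a_\rho$. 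Alternatively, since your vertex $v_\sigma$ for any $\sigma\in\Sigma(n)$ containing $\rho$ satisfies $\langle v_\sigma,\rho\rangle=-c_\rho$ and lies in $\nabla$, (iii) follows from your vertex analysis; you would just need to reorder, proving the vertex claim before (iii).
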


\begin{proof}
First, we note that $\nabla$ is a closed and convex subset of $M_\R$, cf.~\cite[Proposition 13]{ka48-dispExt}; Figure~\ref{fig:PolLem} displays an example of our setup.
\begin{figure}[ht]
\begin{tikzpicture}[scale=0.5]
\draw[color=oliwkowy!40] (-1.3,-2.3) grid (5.3,6.3);
\fill[color=blue]
  (0,0) circle (7pt) (0,4) circle (5pt) (2,4) circle (7pt) (4,0) circle (7pt);
\fill[color=black]
  (0,-1) circle (5pt) (0,5) circle (5pt) (1,5) circle (5pt) (2,4) circle (5pt) (4,0) circle (5pt) (4,-1) circle (5pt);
\draw[thick, color=black]
  (0,-1) -- (0,5) -- (1,5) -- (2,4) -- (4,0) -- (4,-1) -- cycle;
\draw[very thick, dotted, color=blue]
  (0,0) -- (0,4) -- (2,4) -- (4,0) -- cycle;
\fill[color=red]
  (0,0) circle (5pt);
\draw[thick, color=blue]
  (1.5,2.5) node{$\CQ$};
\draw[thick]
  (0.5,-1.5) node{$\Delta$};
\fill[pattern color=green!30, pattern=north west lines]
  (0,4) -- (0,5) -- (1,5) -- (2,4) -- cycle;
\draw[thick,green]
  (0.5,4.5) node{$C$};
\end{tikzpicture}
\caption{An example of a configuration $\CQ\subseteq\Delta$. The area shaded in green represents a connected component $C$. The origin of $M_\R$ is marked in red.}
\label{fig:PolLem}
\end{figure}
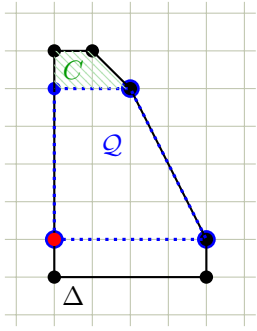

\medskip

Let $\sigma$ be the tail cone of $\Delta$ and $Q$. Next, let $0\not=b\in\sigma^\vee \subseteq N_\R$. Then $\min\langle\Delta,b\rangle\leq\min\langle\nabla,b\rangle\leq\min\langle\CQ,b\rangle$, for $\Delta\supseteq\nabla\supseteq\CQ$. Furthermore, $\min\langle\nabla,b\rangle<\min\langle\CQ,b\rangle$ if and only if $[b<\CQ]\cap C\not=\varnothing$. Since $[b<\CQ]$ is connected and therefore lies in a connected component of $\Delta\setminus\CQ$, the assumption $\min\langle\nabla,b\rangle<\min\langle\CQ,b\rangle$ implies $[b<\CQ]\subseteq C$ whence $\min\langle\nabla,b\rangle=\min\langle\Delta,b\rangle$. This proves (iii).

\medskip

Next consider $v\in\opn{vert}(\nabla)$, which gives the full dimensional normal cone 
\[
\mc N(v,\nabla):=\{b\in\sigma^\vee\mid\langle v,b\rangle=\min\langle\nabla,b\rangle\}\in\mc N(\nabla).
\]
We are going to show (ii), namely that $\mc N(v,\nabla)$ is a union of cones in $\Sigma$. 
Indeed, (iii) implies either $\langle v,\rho\rangle=\min\langle\nabla,\rho\rangle=\min\langle\CQ,\rho\rangle$, or $v\in F(\Delta,\rho)\subseteq C$. On the other hand, we can pick $p\in\partial\Delta$ -- not necessarily a vertex -- such that the interiors of $\mc N(p,\Delta)$ and $\mc N(v,\nabla)$ intersect nontrivially. Let $h\in\innt\mc N(p,\Delta)\cap\innt\mc N(v,\nabla)\subseteq \sigma^\vee$. We distinguish two cases.

\medskip

{\em Case 1:} Assume that $v\not\in\opn{vert}(\CQ)$. Then $\langle p,h\rangle\leq\langle v,h\rangle<\min\langle\CQ,h\rangle$ so that $p\in[h<\CQ]$ and $[h<\CQ]\cap C\not=\varnothing$ as it contains $v$. Hence $p\in[h<\CQ]\subseteq C\subseteq\nabla$ and $v=p$ by unicity of the minimum. Furthermore, $\nabla$ and $\Delta$ agree in $[h<\CQ]$. Consequently, $\mc N(v,\nabla)=\mc N(v,\Delta)\in\Sigma$ and $v\in\opn{vert}(\Delta)$, which entails also (i). 

\medskip

{\em Case 2:} If $v\in\opn{vert}(\CQ)$, then $\mc N(p,\Delta)\subseteq\mc N(v,\nabla)$. Indeed, let $y\in\mc N(p,\Delta)$. As $v$ is a vertex of $\nabla$ by our assumption above, $\mc N(v,\nabla)\subseteq\mc N(v,\CQ)$. In particular, $h\in\innt\mc N(p,\Delta)\cap\innt\mc N(v,\CQ)$ whence $\mc N(p,\Delta)\subseteq\mc N(v,\CQ)$ as $\Sigma\leq\mc N(\CQ)$. Therefore, $y\in\mc N(p,\Delta)$ also satisfies
\[
y\in\mc N(v,\CQ). 
\]
Assume that $y\not\in\mc N(v,\nabla)$, and let $w$ be a vertex of $\nabla$ with $y\in\mc N(w,\nabla)$. Then
\[
\langle p,y\rangle\leq\langle w,y\rangle<\langle v,y\rangle,  
\]
that is, $w\in C$, since $v\in\opn{vert}(\CQ)$ and thus $p\in[y<v]\subseteq C$. But $h\in\innt\mc N(p,\Delta)\cap\innt\mc N(v,\nabla)$, which gives $\langle p,h\rangle<\langle v,h\rangle$ for $p\in C$ and $v\in\CQ$ so that $p\not=v$. On the other hand, $\langle v,h\rangle=\min\langle\nabla, h\rangle\leq\langle p,h\rangle$ as $p\in C\subseteq\nabla$ -- contradiction!
\end{proof}

\subsection{The Weil decoration of the universal extension}
\label{subsec:ExtParl}
Let $\NablaP$ and $\NablaM$ be in $\Pol^+(\Sigma)$ with corresponding Weil divisors $D_+$ and $D_-$. Consider the intersection
\[
\CQ:=\NablaM\cap\NablaP\in\Pol(\Sigma)
\]
as a virtual polyhedron, cf. Section \ref{sec:lattice-structure}, which might or might not be in $\Pol^+(\Sigma)$. 

\medskip

Since $\Ext(\mc F\otimes\mc L,\mc G\otimes\mc L)_0=\Ext(\mc F,\mc G)_0$ 
we are free to twist the sequence~\eqref{eq:ExSeqExt} by a suitable 
ample line bundle. In particular, we may assume straightaway 
that $\NablaM$ is ample and $\CQ\in\Pol^+(\Sigma)$. From Lemma~\ref{lem:PolLem} we obtain polytopes
\[
\nabla_\nu:=\CQ\cup C_\nu\in\Pol^+(\Sigma),\quad\nu=0,\ldots,s, 
\]
where $s=\dim_\kk\gExt(\NablaM,\NablaP)_0$ by~\eqref{eq:DimExt} and $C_\nu$ are the connected components of $\NablaM\setminus\NablaP$. We consider the polytopes $\nabla_\nu$ as free generators $[\nabla_\nu]$ of the vector space
\[
V=\bigoplus_{\nu=0}^s\kk[\nabla_\nu]
\]
with dual basis $[\nabla^0],\ldots,[\nabla^s]$. Then
\[
\textstyle\gExt:=\gExt(\NablaM,\NablaP)_0=\gH^1\!\!\big(X,\CO_X(\NablaP-\NablaM)\big)_0=\widetilde{\gH}^{\raisebox{-3pt}{\scriptsize$0$}}(\NablaM\setminus\NablaP)=V{\Big/}\sum_\nu[\nabla_\nu].
\]
This yields the exact sequence
\begin{equation}
\label{eq:ExtSeqVS}
\begin{tikzcd}[column sep=40pt]
0\ar[r]&\gExt^\vee\ar[r,"\iota"]&\fbox{$\displaystyle V^\vee=\bigoplus_{\nu=0}^s\kk[\nabla^k]$}\ar[r,"\underline1:=(1\ldots1)"]&\kk\cdot e_-\ar[r]&0
\end{tikzcd}  
\end{equation}
from which we build a sequence of toric sheaves
\begin{equation}
\label{eq:ExtSeq}
\begin{tikzcd}
0\ar[r]&\gExt^\vee\otimes\ \CO_X(\NablaP)\ar[r,"\iota"]&\CE(\NablaM,\NablaP)\ar[r,"\underline1"]&\CO_X(\NablaM)\ar[r]&0
\end{tikzcd}  
\end{equation}
as follows.

\medskip

The Weil decoration $\CD_+$ of $\gExt^\vee\!\otimes\,\CO_X(\NablaP)$ assigns the generic stratum $\eta_+=\gExt^\vee\setminus\{0\}$ to $\NablaP$; the Weil decoration $\CD_-$ of $\CO_X(\NablaM)$ sends $e_-$ to $\NablaM$. Finally, $\nabla_\mu\cap\nabla_\nu=\CQ$ for $\mu\not=\nu$ and $\nabla_\nu\cap\NablaP=\CQ$ (as actual intersections!), so we can define $\CE:=\CE(\NablaM,\NablaP)$ via the Weil decoration sending $0\not=\varphi\in V^\vee$ to
\[
\CD_\CE(\varphi)=\left\{\begin{array}{ll}\nabla_\nu&\text{if }\varphi\in[\nabla^\nu]\cdot\kk\\\NablaP&\text{if }\varphi\in\iota(\eta_+)=\ker\underline1\\
\CQ&\text{if otherwise}\end{array}\right.
\]
with Hasse diagram
\[
\begin{tikzcd}
& \eta_\CE & & \\
& & & \S(\NablaP)\ar[llu, no head]\\
\S(\nabla_0) \ar[ruu, no head] & \cdots & \S(\nabla_n)\ar[luu, no head] & 
\end{tikzcd}
.
\]
In particular, $\CE(\NablaM,\NablaP)$ splits into a direct sum of line bundles if $\NablaP\subseteq\NablaM$. By design, $\iota$ and $\underline1$ induce morphisms of Weil decorations and therefore toric sheaf morphisms.

\begin{theorem}
\label{thm:UnivExt}
If $\NablaM$ is ample and $\CQ=\NablaM\cap\NablaP\in\Pol^+(\Sigma)$, 
then $\CE(\NablaM,\NablaP)$ is the universal extension of $\CO_X(\NablaM)$ by $\CO_X(\NablaP)$.
\end{theorem}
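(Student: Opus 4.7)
The plan is to prove the theorem in two stages: first establish that~\eqref{eq:ExtSeq} is a short exact sequence of toric sheaves, and second show that its extension class, viewed as an element of
\[
\gExt^1(\CO_X(\NablaM),\gExt^\vee\otimes\CO_X(\NablaP))_0\cong\gExt^\vee\otimes\gExt\cong\opn{End}(\gExt),
\]
is the identity, which is equivalent to universality.

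Exactness is a direct polyhedral check using Propositions~\ref{prop:Inj} and~\ref{prop:Sur}. For injectivity of $\iota$, Proposition~\ref{prop:Inj} applies once one notes that $\CD_\CE(\iota(\varphi))=\NablaP=\CD_+(\varphi)$ for every $0\neq\varphi\in\gExt^\vee=\ker\underline 1$, which is precisely the second case in the definition of the Weil decoration of $\CE$. For surjectivity of $\underline 1$, Proposition~\ref{prop:Sur} reduces the claim to $\bigcup_{\underline 1(\varphi)=e_-}\CD_\CE(\varphi)=\NablaM$; since the generators $[\nabla^\nu]$ of $V^\vee$ all map to $e_-$ and carry $\CD_\CE([\nabla^\nu])=\nabla_\nu$, this follows from $\NablaM=\bigcup_\nu\nabla_\nu$, itself an immediate consequence of $\NablaM\setminus\NablaP=\bigsqcup_\nu C_\nu$ together with $\nabla_\nu=\CQ\cup C_\nu$. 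Middle exactness follows from Proposition~\ref{prop:Iso} applied to the exact sequence~\eqref{eq:ExtSeqVS} of associated vector spaces, since the Weil decorations of all three sheaves take the value $\NablaP$ on $\ker\underline 1$.

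To verify universality I would analyse the connecting homomorphism
\[
\delta\colon\gHom_{\CO_X}(\CO_X(\NablaM),\CO_X(\NablaM))_0\to\gExt^1(\CO_X(\NablaM),\gExt^\vee\otimes\CO_X(\NablaP))_0=\gExt^\vee\otimes\gExt
\]
arising from~\eqref{eq:ExtSeq} and show that $\delta(\opn{id})=\opn{id}_{\gExt}$. Fixing the basis $t_\nu:=[\nabla_\nu]-[\nabla_0]$ of $\gExt=V/\!\sum[\nabla_\rho]\cdot\kk$ for $\nu=1,\ldots,s$, the $\kk$-linearity of the pushout construction reduces the task to showing that the pushout of~\eqref{eq:ExtSeq} along the functional $\gExt^\vee\to\kk$ dual to $t_\nu$ has extension class corresponding to $[C_\nu]-[C_0]\in\widetilde{\gH}^0(\NablaM\setminus\NablaP)=\gExt$ under the isomorphism~\eqref{eq:AP}. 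By the explicit form of the Weil decoration of $\CE$, only the summands labelled by $[\nabla^0]$ and $[\nabla^\nu]$ survive this pushout, reducing it to the rank-two inclusion/exclusion extension associated with the pair $(\nabla_0,\nabla_\nu)$ whose meet equals $\NablaP$; this rank-two building block is computed in~\cite{ka48-dispExt} and carries the desired class.

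The main obstacle is the explicit identification of the abstract connecting homomorphism with the combinatorial \v Cech-type pairing on $\widetilde{\gH}^0(\NablaM\setminus\NablaP)$. In the regime $\NablaP\subseteq\NablaM$ the sheaf $\CE$ splits as $\bigoplus_\nu\CO_X(\nabla_\nu)$ and the verification is the classical one of~\cite{ka48-dispExt}. In the general virtual situation the intersection $\CQ=\NablaM\cap\NablaP$ exists only virtually, and one should first pass to a sufficiently positive twist by $\Delta$ as permitted by Remark~\ref{rem:Positivity}; this preserves both the group $\gExt(\NablaM,\NablaP)_0$ and the universality condition while rendering all polyhedral operations honest, thus reducing to the previous case.
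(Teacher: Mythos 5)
Your exactness argument matches the paper's and is correct. The universality argument, however, diverges from the paper's route and has a genuine gap.

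The paper does not attempt a direct computation of the connecting homomorphism. Instead, it observes that if $\CQ=\NablaP$ (equivalently $\NablaP\subseteq\NablaM$), the sheaf $\CE(\NablaM,\NablaP)=\CE(\NablaM,\CQ)$ splits, and universality in this case is \cite[Theorem 19]{ka48-dispExt}. For $\CQ\subsetneq\NablaP$ it then builds the commutative ladder of Weil decorations exhibiting $\CE(\NablaM,\NablaP)$ as the pushout of the split universal extension $\CE(\NablaM,\CQ)$ along $\gExt^\vee\otimes\CO_X(\CQ)\hookrightarrow\gExt^\vee\otimes\CO_X(\NablaP)$, and invokes \cite[Section 4.2.2]{ka48-dispExt} that pushing out the universal extension along an injection returns the universal extension. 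This sidesteps any identification of an abstract connecting map with the combinatorial pairing.

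Your proposal, by contrast, aims at showing $\delta(\opn{id})=\opn{id}_\gExt$ directly. That criterion is correct, but you explicitly flag the identification of $\delta$ with the \v Cech-type pairing as ``the main obstacle'' and then do not supply the argument; this is exactly the step that the paper's pushout manoeuvre is designed to avoid, so the gap is not cosmetic. Moreover, the rank-two reduction contains a slip: for the pushout along the functional dual to $t_\nu$, the relevant pair $(\nabla_0,\nabla_\nu)$ has $\nabla_0\cap\nabla_\nu=\CQ$, not $\NablaP$; the line bundle appearing as the kernel after this pushout is $\CO_X(\NablaP)$, but the ``meet'' of the building block is $\CQ$. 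Finally, the closing twist argument is incorrect as a reduction to the split case: replacing $(\NablaP,\NablaM)$ by $(\NablaP+\Delta,\NablaM+\Delta)$ does not change whether $\NablaP\subseteq\NablaM$, so twisting cannot reduce $\CQ\subsetneq\NablaP$ to $\CQ=\NablaP$. Twisting only serves to make $\CQ$ an honest compatible polytope (the hypothesis of the theorem), and the non-split situation still has to be handled separately, as the paper does with the pushout.
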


\begin{proof}
First of all, the sequence~\eqref{eq:ExtSeq} is an exact sequence of toric sheaves. Indeed, it follows by design that $\iota$ is a morphism of Weil decorations. Moreover, $\underline1$ sends the strata $\S(\nabla_\nu)$ to $\kk^\times\cdot e_-$ and $\S(\Delta_+)$ to $0$. Their associated polytopes are contained in $\NablaM$ and $M_\R$, respectively. Exactness follows then directly from Proposition~\ref{prop:Inj} and Proposition~\ref{prop:Sur}, since $\NablaM=\bigcup_{\nu=0}^n\nabla_\nu$. 

\medskip

If $\NablaP\subseteq\NablaM$, then $\CQ=\NablaP$ and the split bundle $\CE(\NablaM,\NablaP)=\CE(\NablaM,\CQ)$ is the universal extension by \cite[Theorem 19]{ka48-dispExt}. 

\medskip

It remains to treat the case $\CQ\subsetneq\NablaP$. The Weil decorations
\[
\begin{tikzcd}
\fbox{$\gExt^\vee\setminus\{0\}\mapsto\CQ$}\ar[r,"\iota_\CQ=\iota"]\ar[d,hookrightarrow] &\fbox{$\begin{array}{l}\S_\nu\mapsto\nabla_\nu\\\S_+,\,\eta\mapsto\CQ\end{array}$}\ar[r,"\underline1"]\ar[d,hookrightarrow]&\fbox{$1\mapsto\NablaM$}\ar[d,equal]\\[3pt]
\fbox{$\gExt^\vee\setminus\{0\}\mapsto\NablaP$}\ar[r,"\iota"]&\fbox{$\begin{array}{l}\S_\nu\mapsto\nabla_\nu\\\S_+\mapsto\NablaP,\,\eta\mapsto\CQ\end{array}$}\ar[r,"\underline1"]&\fbox{$1\mapsto\NablaM$}
\end{tikzcd}
\]
induce the exact sequences sequences 
\[
\begin{tikzcd}
0\ar[r]&\gExt^\vee\otimes\;\CO_X(\CQ)\ar[r,"\iota_\CQ"]\ar[d,hookrightarrow] &
\CE(\NablaM,\CQ)\ar[r,"\underline1"]\ar[d,hookrightarrow]&
\CO_X(\NablaM)\ar[r]\ar[d,equal]&0\\[3pt]
0\ar[r]&\gExt^\vee\otimes\;\CO_X(\NablaP)\ar[r,"\iota"]&\CE(\NablaM,\NablaP)\ar[r,"\underline1"]&\CO_X(\NablaM)\ar[r]&0
\end{tikzcd}
.
\]
Since both $\iota_\CQ$ and $\iota$ have the same cokernel, $\CE(\NablaM,\NablaP)$ is the pushout of $\iota_\CQ$ and the inclusion $\gExt^\vee\otimes\,\CO_X(\CQ)\into\gExt^\vee\otimes\,\CO_X(\NablaP)$. By \cite[Section 4.2.2]{ka48-dispExt}, this pushout is just the torus invariant universal extension.
\end{proof}

Undoing the twist by tensoring with $\CO_X(\Delta)^{-1}$ yields the toric universal extension of the original line bundles $\CO_X(\NablaM)$ by $\CO_X(\NablaP)$, cf.\ also Example~\ref{exam:TPLB}.

\begin{example}
We revisit~\cite[Example 34]{ka48-dispExt} and consider the smooth projective toric variety $X=X(\Delta)$ given in Figure~\ref{fig:NonIntegralInter} together with the polytopes $\NablaM$ and $\NablaP$. 
\begin{figure}[ht]
\begin{tikzpicture}[scale=0.5]
\draw[color=oliwkowy!40] (-1.3,-1.3) grid (4.3,5.3);
\draw[very thick, color=blue]
  (0,0) -- (3,0) -- (3,1) -- (2,3) -- (1,4) -- (0,4) -- cycle;
\draw[thick, color=black]
  (1.5,-0.5) node{$2$} (-0.5,2) node{$1$} (3.5,0.5) node{$3$}
  (2.7,2.5) node{$4$} (1.8,3.8) node{$5$} (0.5,4.5) node{$6$};
\fill[thick, color=red]
  (0,0) circle (4pt);
\draw[thick, color=black]
  (3.5,4.5) node{$M_\R$};
\draw[thick, color=blue]
  (1.5,1.5) node{$\Delta$};
\end{tikzpicture}
\qquad
\begin{tikzpicture}[scale=0.5]
\draw[color=oliwkowy!40] (-1.3,-1.3) grid (3.3,3.3);
\draw[thick, color=black]
  (-1,1) -- (3,1) (1,-1) -- (1,3) (1,1) -- (-1,0) (1,1) -- (-1,-1);
\draw[thick, color=black]
  (2,0.5) node{$\rho_1$} (0.5,2) node{$\rho_2$} (-1.5,0) node{$\rho_4$}
  (-1.5,1) node{$\rho_3$} (-1.5,-1.25) node{$\rho_5$} (1,-1.5) node{$\rho_6$};
\fill[thick, color=red]
  (1,1) circle (4pt);
\draw[thick, color=black]
  (2.5,2.5) node{$N_\R$};
\end{tikzpicture}
\qquad
\begin{tikzpicture}[scale=0.7]
\draw[color=oliwkowy!40] (-0.3,1.7) grid (2.3,5.3);
\draw[thick, color=black]
  (0,2) -- (1,2) -- (0,4) -- cycle;
\draw[very thick, color=green]
  (0,3) -- (1,3);
\draw[thick, color=green]
  (1.6,3.3) node{$\NablaP$};
\draw[thick, color=black]
  (1.6,1.7) node{$\NablaM$};
\fill[thick, color=red]
  (0,3) circle (3pt);
\draw[thick, color=black]
  (1.5,4.5) node{$M_\R$};
\end{tikzpicture}
\caption{The polytope $\Delta$, the normal fan $\Sigma=\mc N(\Delta)$, and the polytopes $\NablaM$ and $\NablaP$. The red dots mark the origins.}
\label{fig:NonIntegralInter}
\end{figure}
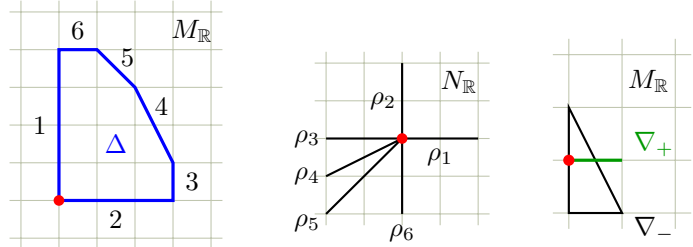

\medskip

To ease notation we let $D_i=D_{\rho_i}$ and $D_\pm=D_{\nabla_\pm}$. The divisor of $\Delta$ is given by
\[
D_\Delta=3D_3+7D_4+5D_5+4D_6,
\]
while $\NablaM$ and $\NablaP$ have divisors
\[
D_-=D_2+D_3+D_4+D_5+D_6\quad\text{and}\quad D_+=D_3+2D_4+D_5.
\]
Finally, the divisor of $\CQ=\NablaP\cap\NablaM$ is
\[
D_\CQ=D_3+D_4+D_5. 
\]
We need to make $\NablaM$ ample and $\CQ$ a compatible lattice polytope. This can be achieved by twisting with $\Delta$ which yields polytopes denoted $\nabla^+_-$, $\nabla^+_+$ and $\CQ^+$ with respective divisors
\begin{align*}
D_{\nabla^+_-}&=D_2+4D_3+8D_4+6D_5+5D_6\\[3pt]
D_{\nabla^+_+}&=4D_3+9D_4+6D_5+4D_6\\[3pt]
D_{\CQ^+}&=4D_3+8D_4+6D_5+4D_6.
\end{align*}
Further, $\nabla^+_-\setminus\CQ^+$ has two connected components $\nabla_0$ and $\nabla_1$, cf.\ Figure~\ref{fig:LattPol}, in accordance with the two connected components of $\DeltaM\setminus\DeltaP$, cf.\ Figure~\ref{fig:NonIntegralInter}.
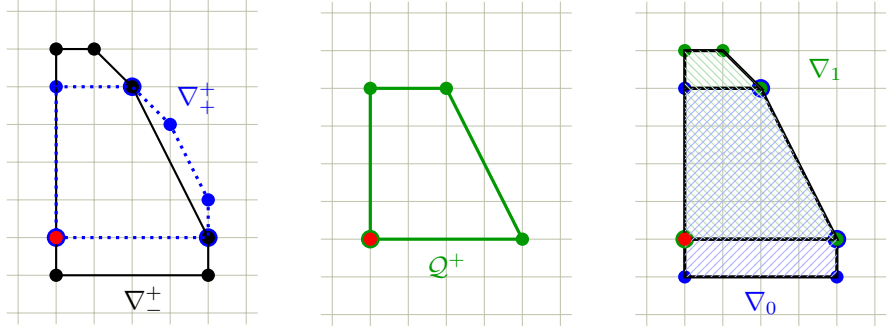
\begin{figure}[ht]
\begin{tikzpicture}[scale=0.5]
\draw[color=oliwkowy!40] (-1.3,-2.3) grid (5.3,6.3);
\fill[color=blue]
  (0,0) circle (7pt) (0,4) circle (5pt) (2,4) circle (7pt) (3,3) circle (5pt) (4,1) circle (5pt) (4,0) circle (7pt);
\fill[color=black]
  (0,-1) circle (5pt) (0,5) circle (5pt) (1,5) circle (5pt) (2,4) circle (5pt) (4,0) circle (5pt) (4,-1) circle (5pt);
\draw[thick, color=black]
  (0,-1) -- (0,5) -- (1,5) -- (2,4) -- (4,0) -- (4,-1) -- cycle;
\draw[very thick, dotted, color=blue]
  (0,0) -- (4,0) -- (4,1) -- (3,3) -- (2,4) -- (0,4) -- cycle;
\fill[color=red]
  (0,0) circle (5pt);
\draw[thick, color=blue]
  (3.7,3.7) node{$\nabla^+_+$};
\draw[thick]
  (2.3,-1.7) node{$\nabla^+_-$};
\end{tikzpicture}
\qquad
\begin{tikzpicture}[scale=0.5]
\draw[color=oliwkowy!40] (-1.3,-2.3) grid (5.3,6.3);
\fill[color=green]
  (0,0) circle (7pt) (0,4) circle (5pt) (2,4) circle (5pt) (4,0) circle (5pt);
\draw[very thick, color=green]
  (0,0) -- (0,4) -- (2,4) -- (4,0) -- cycle;
\fill[color=red]
  (0,0) circle (5pt);
\draw[thick, color=green]
  (2.0,-0.7) node{$\CQ^+$};
\end{tikzpicture}
\qquad
\begin{tikzpicture}[scale=0.5]
\draw[color=oliwkowy!40] (-1.3,-2.3) grid (5.3,6.3);
\fill[color=blue]
  (4,0) circle (7pt) (4,-1) circle (5pt) (0,-1) circle (5pt) (0,4) circle (5pt) (2,4) circle (7pt);
\draw[very thick, color=black]
  (0,0) -- (4,0) -- (4,-1) -- (0,-1) -- cycle;
\fill[color=green]
  (0,0) circle (7pt) (0,5) circle (5pt) (1,5) circle (5pt) (2,4) circle (5pt) (4,0) circle (5pt);
\draw[very thick, color=black]
  (0,4) -- (0,5) -- (1,5) -- (2,4) -- cycle;
\draw[very thick, color=black]
  (0,0) -- (0,4) -- (2,4) -- (4,0) -- cycle;
\fill[pattern color=green!30, pattern=north west lines]
 (0,0) -- (4,0) -- (2,4) -- (1,5) -- (0,5) -- cycle;
\fill[pattern color=blue!30, pattern=north east lines]
 (0,-1) -- (4,-1) -- (4,0) -- (2,4) -- (0,4) -- cycle;
\fill[color=red]
  (0,0) circle (5pt);
\draw[thick, color=blue]
  (2,-1.7) node{$\nabla_0$};
\draw[thick, color=green]
  (3.7,4.5) node{$\nabla_1$};
\end{tikzpicture}
\caption{The lattice polytopes of $\nabla^+_+$ and $\nabla^+_-$, the intersection $\CQ^+$ and the polytopes $\nabla_0$ and $\nabla_1$.}
\label{fig:LattPol}
\end{figure}
The Weil decoration of $\CE^+:=\CE(\nabla^+_-,\nabla^+_+)$ is therefore given on $0\not=v\in V$ by
\[
\renewcommand{\arraystretch}{1.3}
\CD_{\CE^+}(v)=\begin{cases}
\nabla_0&\text{if }v\in[\nabla^0]\cdot\kk\\[3pt]
\nabla_1&\text{if }v\in[\nabla^1]\cdot\kk\\[3pt]
\nabla^+_+&\text{if }v\in([\nabla^1]-[\nabla^0])\cdot\kk\\[3pt]
\CQ^+&\text{if otherwise}
\end{cases}
.
\]
In terms of divisors, we have $D_{\nabla_0}=4D_3+8D_4+6D_5+5D_6$ and $D_{\nabla_1}=D_2+4D_3+8D_4+5D_5+4D_6$. Untwisting with $\Delta$ yields for $\CE=\CE(\NablaM,\NablaP)$ the Weil decoration 
\[
\renewcommand{\arraystretch}{1.3}
\CD_\CE(v)=\begin{cases}
D_3+D_4+D_5+D_6&\text{if }v\in[\nabla^0]\cdot\kk\\[3pt]
D_2+D_3+D_4&\text{if }v\in[\nabla^1]\cdot\kk\\[3pt]
D_3+2D_4+D_5=\nabla^+&\text{if }v\in([\nabla^1]-[\nabla^0])\cdot\kk\\[3pt]
D_3+D_4+D_5&\text{if otherwise}
\end{cases}
\]
in accordance with the Klyachko filtrations of~\cite[Example 34]{ka48-dispExt}. 
\end{example}

\section{The cohomology of toric sheaves}
\label{sec:CohomTS}
From now on and for the rest of the paper we assume that 
\[
X\text{ \bf is a smooth and projective toric variety}
\]
and turn to the study of the cohomology of a toric sheaf $\CE$ on $X$ with Weil decoration $\CD=\CD_\CE$. Further, we assume that
\[
\CE^+:=\CE(\Delta)=\CE\otimes\CO_X(\Delta) 
\]
is {\em amply decorated} for some $\Delta\in\Pol^+(\Sigma)$, that is, its Weil decoration $\CD^+$ yields an ample line bundle $\CO_X(\CPvar^+(e))$ for every $e\in E\setminus\{0\}$. If $\Delta$ itself is ample, we write $X=\PP(\Delta)$, cf.\ also Remark~\ref{rem:Positivity}.

\subsection{The associated constructible $\kk$-sheaf on $\Delta$}
\label{subsec:ConstSheav} 
In~\eqref{eq:DefCFCE} we defined a $\kk$-sheaf (= $\kk$-vector space valued sheaf) over $\Delta$ by
\[
\CF(\CE)(U)=\{e\in E\mid U\subseteq\CPvar^+(e)\}=\bigcap_{u\in U}\eval^+(u)
\]
for any open subset $U$ of $\Delta$. It is a subsheaf of the constant sheaf $\underline E$ on $\Delta$. The restriction maps for $V\subseteq U$ are therefore 
just inclusions, and the sheaf is completely determined by its stalks. 
At $u\in\Delta$, the stalk is given by
\[
\CF(\CE)_u=\{e\in E\mid u\in\innt_\Delta\CPvar^+(e)\}\subseteq \eval^+(u).
\]
For $A$, $B\subseteq M_\R$, we denote by $\innt_A(B)$ the interior of $A\cap B$ with respect to induced topology on $A$, i.e.,  
\[
\innt_A(B)=\{u\in A\cap B\mid\exists\, U\text{ open in $M_\R$ with $u\in U$ and }U\cap A\subseteq B\}. 
\]
The relevance of $\CF(\CE)$ stems from the following theorem whose proof will occupy us in Section \ref{sec:Proof}.

\begin{theorem}
\label{thm:CohomSheafE}
Let $X$ be a smooth projective toric variety. If $\CE^+=\CE(\Delta)$ is amply decorated with $\Delta \in \Pol^+(\Sigma)$, then
\[
\gH^\ell(X,\CE)_0\cong\gH^\ell\hspace{-0.5ex}\big(\Delta,\CF(\CE)\big),
\]
where the left-hand side denotes the sheaf cohomology of $\CE$ in degree $0$.
\end{theorem}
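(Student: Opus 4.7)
My plan is to match Čech cohomology on $X$ with a Čech-style computation of $\CF(\CE)$ on $\Delta$ via compatible open covers indexed by the maximal cones of $\Sigma$. The case $\ell=0$ is immediate: $\gH^0(X,\CE)_0=\{e\in E\mid 0\in\CPvar(e)\}$, and since $\CPvar^+(e)=\CPvar(e)+\Delta$ the condition $0\in\CPvar(e)$ is equivalent to $\Delta\subseteq\CPvar^+(e)$, that is to $e\in\CF(\CE)(\Delta)=\gH^0(\Delta,\CF(\CE))$.

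For general $\ell$ I use the standard affine cover $\{U_\sigma\}_{\sigma\in\Sigma(n)}$ of $X$; all finite intersections are affine, so $\gH^\ell(X,\CE)=\check H^\ell(\{U_\sigma\},\CE)$. The $M$-grading splits the Čech complex, and its degree-zero piece reads
\[
\check C^p(X,\CE)_0=\bigoplus_{\sigma_0<\cdots<\sigma_p}\eval_{\sigma_0\cap\cdots\cap\sigma_p}(0).
\]
Using the materialisation identity $\CPvar^+_\tau(e)=\CPvar_\tau(e)+\Delta(\tau)$, the local evaluation spaces rewrite as $\eval_\tau(0)=\{e\in E\mid\Delta(\tau)\subseteq\CPvar^+_\tau(e)\}$.

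On $\Delta$ I would construct a matching open cover $\{V_\sigma\}_{\sigma\in\Sigma(n)}$ by neighborhoods of the vertices $\Delta(\sigma)$—for instance by the open dual cells of a sufficiently fine barycentric-type subdivision—arranged so that $V_{\sigma_0}\cap\cdots\cap V_{\sigma_p}=V_\tau$ is a contractible open neighborhood of $\Delta(\tau)$ when $\tau=\sigma_0\cap\cdots\cap\sigma_p\in\Sigma$, and empty otherwise. The geometric heart of the argument is the equivalence
\[
V_\tau\subseteq\CPvar^+(e)\iff\Delta(\tau)\subseteq\CPvar^+_\tau(e)\qquad\text{for all }e\in E,
\]
which yields $\CF(\CE)(V_\tau)=\eval_\tau(0)$. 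For the rays $\rho$ of $\tau$ the facet inequality of $\CPvar^+(e)$ on $V_\tau\subseteq\Delta$ follows from $V_\tau\subseteq\Delta\subseteq\Delta(\tau)+\tau^\vee$ together with the fact that $\Delta(\tau)\subseteq\CPvar^+_\tau(e)$ forces the facet constant at $\rho$ for $\CPvar^+(e)$ to dominate the one for $\Delta$; for the other rays, the finitely many polytopes $\CPvar^+(\S)$ have only finitely many facet values, so a sufficiently small $V_\tau$ can be arranged to lie strictly inside all relevant half-spaces.

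Each $V_\tau$ is contractible and $\CF(\CE)|_{V_\tau}$ is locally constant by the above reduction, so the cover is $\CF(\CE)$-acyclic and Leray yields $\gH^\ell(\Delta,\CF(\CE))=\check H^\ell(\{V_\sigma\},\CF(\CE))$. The bijection $\sigma\leftrightarrow V_\sigma$ then identifies this complex termwise and compatibly with the Čech differentials to $\check C^\bullet(X,\CE)_0$, producing the desired isomorphism. The main technical obstacle I anticipate is the precise construction of the cover $\{V_\sigma\}$ and the uniform verification of the local-to-global reduction $V_\tau\subseteq\CPvar^+(e)\iff V_\tau\subseteq\CPvar^+_\tau(e)$ across the finitely many strata of $\CE$: this is where the finiteness of $\Strat_\CE$, the smooth projective toric geometry of $X$, and the polyhedral structure of $\Delta$ all come into play.
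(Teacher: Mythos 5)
Your proposal takes a genuinely different route from the paper: the paper avoids covers of $\Delta$ altogether and instead builds a sheaf $\CF_\kbb\colon\sigma\mapsto\CF_\sigma(\CE)$ on the fan poset $\Sigma$ with its Alexandrov topology, identifies the Klyachko--\v Cech complex as a model of $\R\varprojlim\circ\Gamma^\Delta$, and then proves the hard identity $\R\varprojlim\CF_\kbb=\varprojlim\CF_\kbb=\CF(\CE)$ by a stalkwise argument over $\Delta$ (Proposition~\ref{prop:DerLimLoc}), via the subfan complexes $\CC^{\S\bullet}$ of Lemma~\ref{lem:Subcomplex} and the strata double complex of Lemma~\ref{lem:FinalLem}. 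Your plan tries to package all of this into one open cover $\{V_\sigma\}$ of $\Delta$ whose \v Cech complex for $\CF(\CE)$ matches the Klyachko--\v Cech complex of $\CE$ term by term.

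The gap sits in the claimed equivalence $V_\tau\subseteq\CPvar^+(e)\iff\Delta(\tau)\subseteq\CPvar^+_\tau(e)$. The right-hand side is a \emph{local} condition at $\tau$: it only constrains the facets of $\CPvar^+(e)$ dual to rays in $\tau(1)$, and says nothing about the facets dual to other rays of $\Sigma$. In particular it does not imply $\Delta(\tau)\in\CPvar^+(e)$, let alone that a neighbourhood of $\Delta(\tau)$ in $\Delta$ fits inside $\CPvar^+(e)$. The cleanest failure is at $\tau=\{0\}$, which occurs for large tuples since $\sigma_0\cap\cdots\cap\sigma_p=\{0\}$ for a fan. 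There $\CPvar^+_{\{0\}}(e)=M_\R$, so $\eval_{\{0\}}(0)=E$ always; but on the other side your $V_{\{0\}}$ is required to be a neighbourhood of $\Delta(\{0\})=\Delta$, so $\overline{V_{\{0\}}}=\Delta$, forcing $\Delta\subseteq\CPvar^+(e)$ for \emph{every} $0\neq e\in E$. That is the statement $\gH^0(X,\CE)_0=E$, which fails for essentially every interesting $\CE$; already $\CE=\CO_{\PP^2}(-3)$ with $\Delta=4\Delta_1$ (the worked example in \ssect{ssec:cohom:O-three}, or its shift to degree $[-1,-1]$ where the cohomology actually lives) has $\CPvar^+(1)=\Delta_1\subsetneq\Delta$ and no such dense $V_{\{0\}}$. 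The same tension persists in every non-maximal $\tau$: the identity pushes $V_\tau$ to be small and close to $\Delta(\tau)$, while the \v Cech structure forces $V_\tau$ to be a neighbourhood of the entire face $\Delta(\tau)$, and these two requirements are mutually exclusive. Separately, the assertion that $\CF(\CE)\vert_{V_\tau}$ is locally constant (hence that the cover is $\CF(\CE)$-acyclic) does not follow from the section-level equality $\CF(\CE)(V_\tau)=\eval_\tau(0)$; $\CF(\CE)$ is a constructible subsheaf of $\underline E$ whose stalks jump on $\partial(\innt_\Delta\CPvar^+(\S))$, and acyclicity on the $V_\tau$ is an additional vanishing that your plan leaves unproved. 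In the paper this is exactly the content of Proposition~\ref{prop:Acyclic}, Lemma~\ref{lem:Subcomplex} and the double-complex argument of Lemma~\ref{lem:FinalLem}: the complex with terms $\eval_\tau(0)$ does not literally arise from an open cover of $\Delta$, and the reconciliation with $\gH^\bullet(\Delta,\CF(\CE))$ has to pass through the derived inverse limit over the fan poset.
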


\begin{remark}
\label{rem:GenDegree}
To compute the degree $m\in M$ piece of cohomology, we define the sheaf
\[
\CF_m(\CE)(U)=\{e\in E\mid U\subseteq\CPvar^+(e)\}
\]
on $\Delta+m$, i.e.\ $U\subseteq\Delta+m$. Then $\gH^\ell(X,\CE)_m\cong\gH^\ell\hspace{-0.5ex}\big(\Delta+m,\CF_m(\CE)\big)$, see also Examples~\ref{subsubsec:cohomSheafL} and~\ref{subsubsec:cohomSheafTang} below.
\end{remark}

If $\CE$ is already amply decorated, Theorem~\ref{thm:CohomSheafE} also caters for the case $\CO_X(\Delta)=\CO_X$, i.e., $\CE^+=\CE$, even though $\Delta=\{0\}$ is not ample but merely nef.

\begin{corollary}
\label{coro:Acyclic}
An amply decorated toric sheaf $\CE$ is acyclic, that is, $\gH^\ell(X,\CE)=0$ for $\ell\geq1$.
\end{corollary}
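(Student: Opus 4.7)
The plan is to derive the corollary as a direct consequence of Theorem~\ref{thm:CohomSheafE}, specialising to the degenerate ``polytope'' $\Delta=\{0\}$ that reduces the computation to the cohomology of a sheaf on a single point. By the remark immediately preceding the statement, when $\CE$ is itself amply decorated we may legitimately take $\Delta=\{0\}$ (a one-point lattice polytope whose tail cone is $\{0\}=|\Sigma|^\vee$ and whose normal fan is trivially refined by $\Sigma$), so that $\CO_X(\Delta)=\CO_X$ and $\CE^+=\CE$.

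Applied to this choice of $\Delta$, Theorem~\ref{thm:CohomSheafE} yields
\[
\gH^\ell(X,\CE)_0\cong\gH^\ell\big(\{0\},\CF(\CE)\big).
\]
Since $\{0\}$ is a single point, any $\kk$-sheaf on it has cohomology concentrated in degree zero (equal to its stalk), so the right-hand side vanishes for $\ell\geq 1$. Consequently $\gH^\ell(X,\CE)_0=0$ in all positive degrees.

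To propagate this vanishing to every $\TX$-weight, I would invoke Remark~\ref{rem:GenDegree} with the same choice of $\Delta$: the shifted polytope $\Delta+m=\{m\}$ is again a single point, so
\[
\gH^\ell(X,\CE)_m\cong\gH^\ell\big(\{m\},\CF_m(\CE)\big)=0
\]
for every $\ell\geq 1$ and every $m\in M$. Since $\CE$ is $\TX$-equivariant, its cohomology carries an $M$-grading $\gH^\ell(X,\CE)=\bigoplus_{m\in M}\gH^\ell(X,\CE)_m$, and each graded piece vanishes for $\ell\geq 1$, proving the corollary.

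The statement is essentially formal from Theorem~\ref{thm:CohomSheafE} and Remark~\ref{rem:GenDegree}; the only point requiring care is confirming that the degenerate nef polytope $\Delta=\{0\}$ is admissible, which is exactly what the remark preceding the corollary certifies. There is no substantive geometric obstacle, merely a bookkeeping check that the constructible sheaf $\CF(\CE)$ on a point trivially has vanishing higher cohomology.
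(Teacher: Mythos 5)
Your proof is correct and matches the paper's intended argument exactly: the remark preceding the corollary licenses taking $\Delta=\{0\}$ in Theorem~\ref{thm:CohomSheafE}, whereupon the right-hand side is the cohomology of a sheaf on a single point, which vanishes for $\ell\geq 1$; Remark~\ref{rem:GenDegree} then handles every weight $m\in M$. The only thing worth flagging is that you spell out the extension to nonzero weights, which the paper leaves implicit, but this is the same route, not a different one.
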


\begin{remark}
\label{rem:AmpleNec}
Ampleness of the line bundles $\CO_X(\CD^+(e))=\CO_X(\CD(e))\otimes\CO_X(\Delta)$ is actually necessary as the example in \ssect{subsubsec:cohomSheafL} shows. Note that this is actually stronger than ampleness of $\CE(\Delta)$. For example, consider the Kaneyama bundle~\cite{kaneyama} on $X=\P^2$ which arises from the short exact sequence
\[
0\to\CO_X(-D_0)\to\bigoplus_{i=0}^2\CO_X(D_i)\to\CE\to0.
\]
It is not amply decorated since its generic stratum carries the divisor $-D_0$. Still, it is an ample toric vector bundle for the middle term is obviously ample, and quotients of ample vector bundles are again ample. Actually, making the first term of the exact sequence even more negative by considering
\[
0\to\CO_X(-D_0-D_1-D_2)\to\bigoplus_{i=0}^2\CO_X(D_i)\to\CE_2\to0
\]
we get an ample toric vector bundle that has non-vanishing higher cohomology.
\end{remark}

\begin{remark}
The condition of ample decoration also appears implicitly in \cite[Thm.\ 6.10]{tropical3} where the authors show a special case of Corollary~\ref{coro:Acyclic}, namely that the equivariant Euler characteristic of an amply decorated toric vector bundle equals the dimension of its global sections. 
\end{remark}

\subsection{Examples}
\label{subsec:Examples}
We illustrate Theorem~\ref{thm:CohomSheafE} by several examples.

\subsubsection{The cohomology of line bundles}
\label{subsubsec:cohomSheafL}
We revisit Example~\ref{exam:LB} and consider a line bundle $\CL$. Then Theorem~\ref{thm:CohomSheafE} is a consequence of~~\cite{immaculate} and~\cite{dop}.

\medskip

Indeed, let $\CL^+=\CL(\Delta)$ be ample. Its associated Weil decoration $\CD^+$ is defined on $E=\kk$ and thus specified by $\CD^+(1)$. Write $\CF$ for $\CF(\CL)$ and let 
\[
U:=\innt_\Delta\CPvar^+(1).
\]
The stalk of $\mc F$ at $u\in\Delta$ is given by
\[
\renewcommand{\arraystretch}{1.3}
\CF_u=\begin{cases}
\kk & \text{if }u\in U,\\
0 & \text{if } u\in Z:=\Delta\setminus U.
\end{cases}
\]
We denote $j\colon U\hookrightarrow\Delta$ and $i\colon Z\hookrightarrow\Delta$ the resulting open and closed embedding. The associated  exact Gysin sequence
\[
0\to j_!\,j^*\uk\to\uk\to i_*\,i^*\uk\to0
\]
yields the long exact cohomology sequence
\[
0\to\gH^0(\Delta,j_!\,j^*\uk)\to\fbox{$\gH^0(\Delta,\uk)=\kk$}\to\gH^0(\Delta,i_*\,i^*\uk)\to\gH^1(\Delta,j_!\,j^*\uk)\to\ldots.
\]
Now $j_!\,j^*\uk=j_!\,\uk=\CF$ while $\gH^\ell(\Delta,\uk)=\gH^\ell(\Delta,\kk)$ is trivial for $\ell>0$ as $\Delta$ is contractible. Since
\[
\gH^\ell(\Delta,i_*\,i^*\uk)=\gH^\ell(Z,i^*\uk)=\gH^\ell(Z,\kk)
\]
we deduce for the reduced cohomology $\widetilde{\opn H}$ with coefficients in $\kk$ that
\begin{equation}
\label{eq:CohomLB}
\gH^\ell(\Delta,\CF)=\gH^\ell(\Delta,j_!\,j^*\uk)=\widetilde{\gH}^{\raisebox{-3pt}{\scriptsize$\ell\!-\!1$}}(Z)\quad\text{for }\ell\geq0.
\end{equation}
Since $\CD^+(1)$ is ample, $Z=\Delta\setminus U$ is homotopy equivalent to $\Delta\setminus\overline U$ so that $\widetilde{\opn H}^{\raisebox{-2pt}{\scriptsize$\ell-1$}}(Z)\cong\widetilde{\opn H}^{\raisebox{-2pt}{\scriptsize$\ell-1$}}(\Delta\setminus\CD^+(1))$ which is just $\gH^{\ell}(X,\mc L)_0$.

\subsubsection{The cohomology of $\CO_{\P^2}(-3)$}
\label{ssec:cohom:O-three}
It is instructive to consider an explicit example, say $\CL=\CO_{\P^2}(-3)$. For this we let $\P^2$ be the projective plane $\PP(\Delta_1)$ given by the standard simplex
\begin{equation}
\label{eq:StandDelta}
\Delta_1:=\conv\{[0,0],\; [1,0],\; [0,1]\},
\end{equation}
whose fan is labelled as in Figure~\ref{fig:FanP2}. 
\begin{figure}[ht]
\newcommand{\scaleA}{0.3}
\begin{tikzpicture}[scale=\scaleA]
\draw[thick, color=black]
  (0,0) -- (4,0) (0,0) -- (0,4) (0,0) -- (-2.5,-2.5);
\draw[thick, color=green]
  (2,2) node{$\sigma_0$};
\draw[thick, color=blue]
  (-2,1) node{$\sigma_1$};
\draw[thick, color=red]
  (1,-2) node{$\sigma_2$};
\draw[thick, color=black]
  (5,0) node{$\rho_1$} (0,5) node{$\rho_2$} (-3.3,-3.3) node{$\rho_0$};
\end{tikzpicture}
\caption{The fan $\Sigma(\PP^2)$.}
\label{fig:FanP2}
\end{figure}
It is well-known that $\gH^\ell(\CO_{\P^2}(-3))=0$ unless $\ell=2$. If $\CO_{\P^2}(-3)$ is realised as $\CO_{\P^2}(-3D_{\rho_0})\subseteq(j_{\kk^\times})_*\mc O_{\kk^\times}$, then $\gH^2$ is located in $[-1,-1]\in M$. 

\medskip

In our formalism, we start with the Weil decoration $\CPvar$ of $\CL$ on $E=\kk$ determined by $\CPvar(1)=-3D_{\rho_0}$. Rescaling the polytope $\Delta_1$ from~\ref{eq:StandDelta} we now take
\[ 
\Delta=\Delta_4=4\Delta_1=\conv\{[0,0],\; [4,0],\; [0,4]\}
\]
and consider the materialised Weil decoration $\CPvar^+(1)=\Delta$ of $\CL^+=\CL(\Delta)$, cf.\ Figure~\ref{fig:O-3}.
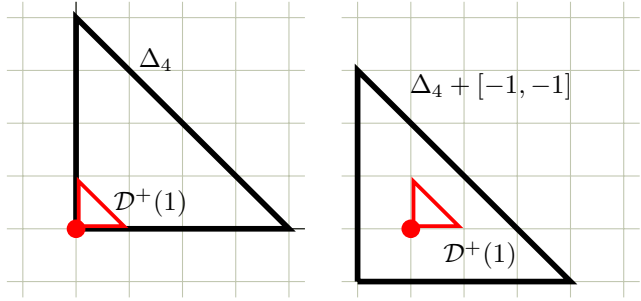
\begin{figure}[ht]
\newcommand{\scaleB}{0.7}
\begin{tikzpicture}[scale=\scaleB]
\draw[color=oliwkowy!40] (-1.3,-1.3) grid (4.3,4.3);
\draw[thin, black] 
  (0,0) -- (4.3,0) (0,0) -- (0,4.3);
\draw[line width=0.75mm, black]
  (0,0) -- (4,0) -- (0,4) -- (0,0);
\draw[thick, color=black] 
  (1.5,3.2) node{$\Delta_4$};
\draw[line width=0.5mm, red]
  (0.05,0.05) -- (0.90,0.05) -- (0.05,0.90) -- (0.05,0.05);
\draw[thick, color=black] 
  (1.4,0.5) node{$\CPvar^+(1)$};
\fill[thick, color=red]
  (0,0) circle (5pt);
\end{tikzpicture}
\quad
\begin{tikzpicture}[scale=\scaleB]
\draw[color=oliwkowy!40] (-1.3,-1.3) grid (4.3,4.3);
\draw[line width=0.75mm, black]
  (-1,-1) -- (3,-1) -- (-1,3) -- (-1,-1);
\draw[thick, color=black] 
  (1.5,2.7) node{$\Delta_4+[-1,-1]$};
\draw[line width=0.5mm, red]
  (0.05,0.05) -- (0.90,0.05) -- (0.05,0.90) -- (0.05,0.05);
\draw[thick, color=black] 
  (1.3,-0.5) node{$\CPvar^+(1)$};
\fill[thick, color=red]
  (0,0) circle (5pt); 
\end{tikzpicture}
\caption{The materialised Weil decoration of $\CL=\CO_{\P^2}(\Delta_1-\Delta_4)$ with $\Delta=\Delta_4$ (left-hand side) and $\Delta=\Delta_4+[-1,-1]$ (right-hand side). We indicate the origin by a red dot.}
\label{fig:O-3}
\end{figure}
Using Equation~\eqref{eq:CohomLB} we compute $\widetilde{\opn H}^{\raisebox{-2pt}{\scriptsize$\ell$}}((\Delta+m)\setminus U)$ for $m\in M$ and $U=\innt_\Delta\CPvar^+(1)$. 
The space $(\Delta+m)\setminus U$ is contractible unless $m=[-1,-1]$. In the latter case we obtain a space homotopy equivalent to a $1$-sphere which yields indeed $\gH^2(\CO_{\P^2}(-3))_{[-1,-1]}\cong\kk$.

\subsubsection{The cohomology of the twisted tangent sheaf $\CT_{\PP^2}(\ell)$}
\label{subsubsec:cohomSheafTang}
Next we discuss the twisted tangent sheaf $\CE=\CT_{\PP^2}(\ell)$, $\ell\in\Z$, with Weil decoration $\CD=\CD_\CE$. Recall that its Euler characteristic is
\begin{equation}
\label{eq:EulerChar}
\chi\big(\PP^2,\,\CT_{\PP^2}(\ell)\big) \;=\; (\ell+3)^2-1=(\ell+2)(\ell+4).
\end{equation}
Further, the tangent sheaf has {\em natural} cohomology, i.e., it is 
concentrated in one $\gH^i$. 
Namely, the cohomology of $\CT_{\PP^2}(\ell)$ lies in 
$\gH^0$ for $\ell\geq -1$, 
in $\gH^1$ for $\ell=-3$, 
and in $\gH^2$ for $\ell\leq -5$. For $\ell+3=\pm 1$ 
we obtain the immaculate bundles $\CT_{\PP^2}(-2)$ and $\CT_{\PP^2}(-4)$, 
that is, there is no cohomology at all.

\medskip

We first consider the untwisted case $\ell=0$. 
As we have just remarked, $\gH^\ell(\CT_{\PP^2})$ vanishes unless $\ell=0$ where we find $\kk^8$. It is well-known that we have nontrivial $M$-homogeneous pieces for $m=[0,0]$ contributing $\kk^2$, and for $\pm[1,0]$, $\pm[0,1]$ and $\pm[1,-1]$, each piece contributing $\kk$. 

\medskip

To recover this result in our formalism we recall that the associated vector space is $E=N_\R$. The Weil decoration $\CD$ of $\CT_{\PP^2}$ is given by
\begin{equation}
\label{eq:WDzero}
\CPvar(\rho_0)=D_{\rho_0}=\Delta_1,\;
\CPvar(\rho_1)=D_{\rho_1}=\Delta_1-[1,0],\;
\CPvar(\rho_2)=D_{\rho_2}=\Delta_1-[0,1],
\end{equation}
cf.\ also Figure~\ref{fig:TangP2}. To get an amply decorated bundle we twist by $\Delta=\Delta_1$, cf.~\eqref{eq:StandDelta}, whence 
\[
\CPvar^+(\rho_0)=2\Delta_1,\quad
\CPvar^+(\rho_1)=2\Delta_1-[1,0],\quad
\CPvar^+(\rho_2)=2\Delta_1-[0,1]
\]
is the materialised Weil decoration of $\CT_{\PP^2}^+=\CT_{\PP^2}(\Delta_1)$. 
Note that 
\[
\CPvar^+(\eta)=\CPvar^+(\rho_0)\cap\CPvar^+(\rho_1)\cap\CPvar^+(\rho_2)=\Delta_1;
\]
cf.\ also the left hand side of Figure~\ref{fig:MatTangP2}. 
\begin{figure}[ht]
\newcommand{\scaleB}{0.8}
\begin{tikzpicture}[scale=\scaleB]
\draw[color=oliwkowy!40] (-1.3,-1.3) grid (1.3,1.3);
\draw[thin, black]
  (0,0) -- (1.0,0) -- (0,1.0) -- (0,0);
\draw[thin, black]
  (0,0) -- (-1.0,0) -- (-1.0,1.0) -- (0,0);
\draw[thin, black]
  (0,0) -- (1.0,-1.0) -- (0,-1.0) -- (0,0);
\fill[pattern color=green!30, pattern=north east lines]
  (0,0) -- (1.0,0) -- (0,1.0) -- (0,0);
\fill[pattern color=blue!30, pattern=horizontal lines]
  (0,0) -- (-1.0,0) -- (-1.0,1.0) -- (0,0);
\fill[pattern color=red!30, pattern=north west lines]
  (0,0) -- (1.0,-1.0) -- (0,-1.0) -- (0,0);
\fill[thick, red] 
  (0,0) circle (3pt);
\draw[thick, color=cConeA]
  (2.0,0.5) node{$\CPvar(\rho_0)$} (-2.0,0.5) node{$\CPvar(\rho_1)$} (0.5,-2.0) node{$\CPvar(\rho_2)$};
\end{tikzpicture}
\caption{The positive Weil decoration of $\CT_{\PP^2}$.}
\label{fig:TangP2}
\end{figure}
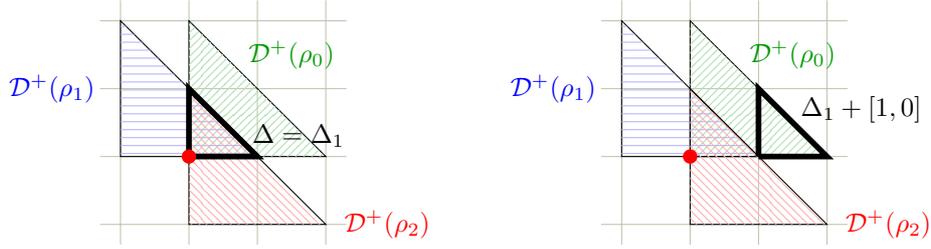
\begin{figure}[ht]
\newcommand{\scaleA}{0.3} 
\newcommand{\scaleB}{0.9}
\newcommand{\spaceA}{\hspace*{2em}}
\begin{tikzpicture}[scale=\scaleB]
\draw[color=oliwkowy!40] (-1.3,-1.3) grid (2.3,2.3);
\draw[thin, black] 
  (0,0) -- (2.0,0) -- (0,2.0) -- (0,0);
\fill[pattern color=green!30, pattern=north east lines]
  (0,0) -- (2.0,0) -- (0,2.0) -- (0,0);
\draw[thin, black]
  (1,0) -- (-1.0,0) -- (-1.0,2.0) -- (1,0);
\fill[pattern color=blue!30, pattern=horizontal lines]
   (1,0) -- (-1.0,0) -- (-1.0,2.0) -- (1,0);
\draw[thin, black]
  (0,1) -- (2.0,-1.0) -- (0,-1.0) -- (0,1);
\fill[pattern color=red!30, pattern=north west lines]
  (0,1) -- (2.0,-1.0) -- (0,-1.0) -- (0,1);
\draw[thick, color=green]
  (1.5,1.5) node{$\CPvar^+(\rho_0)$};
\draw[thick, color=blue]
  (-2.0,1.0) node{$\CPvar^+(\rho_1)$};
\draw[thick, color=red]
  (2.9,-1.0) node{$\CPvar^+(\rho_2)$};
\draw[line width=0.75mm, black]
  (0,0) -- (1,0) -- (0,1) -- (0,0);
\draw[thick, color=black]
  (1.6,0.3) node{$\Delta=\Delta_1$};
\fill[thick, red] 
  (0,0) circle (3pt);
\end{tikzpicture}
\spaceA
\begin{tikzpicture}[scale=\scaleB]
\draw[color=oliwkowy!40] (-1.3,-1.3) grid (2.3,2.3);
\draw[thin, black] 
  (0,0) -- (2.0,0) -- (0,2.0) -- (0,0);
\fill[pattern color=green!30, pattern=north east lines]
  (0,0) -- (2.0,0) -- (0,2.0) -- (0,0);
\draw[thin, black]
  (1,0) -- (-1.0,0) -- (-1.0,2.0) -- (1,0);
\fill[pattern color=blue!30, pattern=horizontal lines]
  (1,0) -- (-1.0,0) -- (-1.0,2.0) -- (1,0);
\draw[thin, black]
  (0,1) -- (2.0,-1.0) -- (0,-1.0) -- (0,1);
\fill[pattern color=red!30, pattern=north west lines]
  (0,1) -- (2.0,-1.0) -- (0,-1.0) -- (0,1);
\draw[thick, color=green]
  (1.5,1.5) node{$\CPvar^+(\rho_0)$};
\draw[thick, color=blue]
  (-2.0,1.0) node{$\CPvar^+(\rho_1)$};
\draw[thick, color=red]
  (2.9,-1.0) node{$\CPvar^+(\rho_2)$};
\draw[line width=0.75mm, black]
  (1,0) -- (2,0) -- (1,1) -- (1,0);
\draw[thick, color=black]
  (2.5,0.7) node{$\Delta_1+[1,0]$};
\fill[thick, red] 
  (0,0) circle (3pt);
\end{tikzpicture}
\caption{The ample materialised Weil decoration 
of $\CT_{\PP^2}$. In addition, the black triangle displays $\Delta=\Delta_1$ 
on the left and $\Delta=\Delta_1+[1,0]$ on the right hand side.}
\label{fig:MatTangP2}
\end{figure}

\medskip

To capture the degree $m\in M$ we consider $\CF_m=\CF_m(\CT_{\PP^2})$. For $m=0$ we have $\CF=\underline N_\kk$ whence $\gH^0(\Delta,\CF)=N_\kk$ is the only non-trivial cohomology of $\CF$.

\medskip

To compute the cohomology of $\CE=\CT_{\PP^2}$ in degree $m\in M\setminus\{0\}$, we shift $\Delta$ by $m$ while keeping the ample materialised Weil decoration $\CPvar^+$ unchanged. See the right hand side of Figure~\ref{fig:MatTangP2} for the case $m=[1,0]$. Here, $\CF=\rho_0\cdot\underline{\kk}\subsetneq\constE$ whence $\gH^0(\Delta,\CF)=\rho_0\cdot\underline{\kk}$ etc. Altogether, our Theorem~\ref{thm:CohomSheafE} obtains the right answer.

\medskip

Next we turn to the case of the twisted tangent sheaves $\CT_{\PP^2}(\ell)=\CT_{\PP^2}(\ell D_{\rho_0})$ with $\ell\geq 1$. The Weil decoration $\CD_\ell$ of $\CT_{\PP^2}(\ell)$ on $E=N_\R$ is given by
\[
\CPvar(\rho_0)=(\ell+1)\cdot\Delta_1,\quad
\CPvar(\rho_1)=(\ell+1)\cdot\Delta_1-[1,0],\quad
\CPvar(\rho_2)=(\ell+1)\cdot\Delta_1-[0,1].
\]
This is amply decorated as
\[
\CPvar(\eta)=\CPvar(\rho_0)\cap\CPvar(\rho_1)\cap\CPvar(\rho_2)=\ell\cdot\Delta_1=\Delta_\ell\,;
\]
consequently, $\CT_{\PP^2}(\ell)$ is acyclic, cf.\ Corollary~\ref{coro:Acyclic}. 

\medskip

It remains to understand $\gH^0(X,\CT_{\PP^2}(\ell))$. Since we can take $\Delta=\Delta_0=\{0\}$, the relative interior of $\Delta$ just records membership. Moreover, the sheaf $\CF_m$ is supported on this point. 
It equals the entire space $N_\kk$, if $m$ belongs to the triple 
intersection $\CPvar(\eta)$; further, 
$\rho_\indStrat\cdot\kk$ for $m\in\CPvar(\rho_\indStrat)\setminus\CPvar(\eta)$, 
and $0$ otherwise. 

\medskip

The cases $\ell=1$ and $\ell=2$ are displayed in Figure~\ref{fig:TBGlobSec}. For $\ell=1$ we find $3$ points each contributing $\kk^2$ and nine points each contributing $\kk$. Hence $
\chi(\CT_{\PP^2}(1))=15$ in accordance with~\eqref{eq:EulerChar}. Similarly, we find $
\chi(\CT_{\PP^2}(2))=3\cdot 4+6\cdot2=24$. 

\medskip

In general, we find $(\ell+1)(\ell+2)/2$ points in $\CPvar(\eta)$ each contributing $\kk^2$, and $3(\ell+2)$ points from the sets $\CPvar(\rho_i)\setminus\CPvar(\eta)$ each contributing $\kk$. Altogether, this yields
\[
(\ell+1)(\ell+2)+3(\ell+2)=(\ell+3)^2-1=\chi\big(\CT_{\PP^2}(\ell)\big).
\]
\begin{figure}[ht]
\newcommand{\scaleA}{0.3} 
\newcommand{\scaleB}{0.9}
\newcommand{\spaceA}{\hspace*{2em}}
\begin{tikzpicture}[scale=\scaleB]
\draw[color=oliwkowy!40] (-2.3,-1.3) grid (3.3,3.3);
\draw[thin, black] 
  (0,0) -- (2.0,0) -- (0,2.0) -- (0,0);
\fill[pattern color=green!30, pattern=north east lines]
  (0,0) -- (2.0,0) -- (0,2.0) -- (0,0);
\draw[thin, black]
  (1,0) -- (-1.0,0) -- (-1.0,2.0) -- (1,0);
\fill[pattern color=blue!30, pattern=horizontal lines]
   (1,0) -- (-1.0,0) -- (-1.0,2.0) -- (1,0);
\draw[thin, black]
  (0,1) -- (2.0,-1.0) -- (0,-1.0) -- (0,1);
\fill[pattern color=red!30, pattern=north west lines]
  (0,1) -- (2.0,-1.0) -- (0,-1.0) -- (0,1);
\draw[thick, color=green]
  (1.5,1.5) node{$\CPvar(\rho_0)$};
\draw[thick, color=blue]
  (-1.8,1.5) node{$\CPvar(\rho_1)$};
\draw[thick, color=red]
  (1.5,-1.5) node{$\CPvar(\rho_2)$};
\fill[thick, red] 
  (0,0) circle (3pt);
\end{tikzpicture}
\spaceA
\begin{tikzpicture}[scale=\scaleB]
\draw[color=oliwkowy!40] (-2.3,-1.3) grid (3.3,3.3);
\draw[thin, black] 
  (0,0) -- (3.0,0) -- (0,3.0) -- (0,0);
\fill[pattern color=green!30, pattern=north east lines]
  (0,0) -- (3.0,0) -- (0,3.0) -- (0,0);
\draw[thin, black]
  (2,0) -- (-1.0,0) -- (-1.0,3.0) -- (2,0);
\fill[pattern color=blue!30, pattern=horizontal lines]
   (2,0) -- (-1.0,0) -- (-1.0,3.0) -- (2,0);
\draw[thin, black]
  (0,2) -- (3.0,-1.0) -- (0,-1.0) -- (0,2);
\fill[pattern color=red!30, pattern=north west lines]
  (0,2) -- (3.0,-1.0) -- (0,-1.0) -- (0,2);
\draw[thick, color=green]
  (2.5,1.5) node{$\CPvar(\rho_0)$};
\draw[thick, color=blue]
  (-1.8,1.5) node{$\CPvar(\rho_1)$};
\draw[thick, color=red]
  (2.5,-1.5) node{$\CPvar(\rho_2)$};
\fill[thick, red] 
  (0,0) circle (3pt);
\end{tikzpicture}
\caption{$\ell=1$ (left hand side) and $\ell=2$ (right hand side).}
\label{fig:TBGlobSec}
\end{figure}
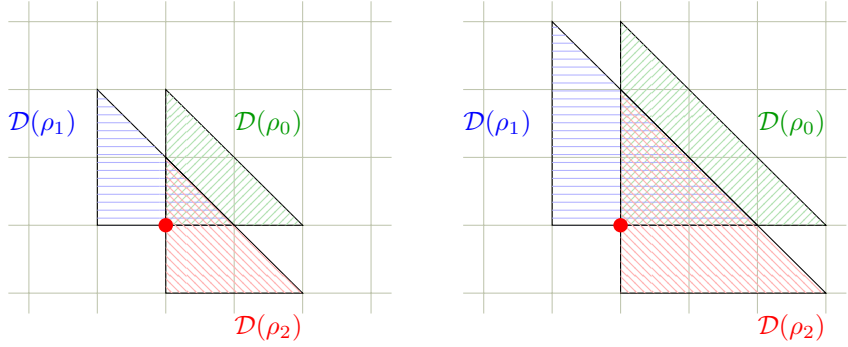
We deal with negative $\ell$ in \ssect{subsec:TangBundle}.

\subsection{The local sheaves $\CF_\sigma$}
\label{subsec:LocalSheaf}
Mutatis mutandis the definition for the $\kk$-sheaf $\CG$ in \ssect{subsec:ConstSheav} also works for the affine toric varieties $U_\sigma$, $\sigma\in\Sigma$. We define the {\em local $\kk$-sheaves of $\CE$} by
\[
\CE_\sigma:=\CE|_{U_\sigma}\quad\text{and}\quad\CF_\sigma(\CE)=\CF(\CE_\sigma).
\]

\begin{lemma}
\label{lem:ExacFunc}
$\mc F_\sigma$ is additive and exact in the sense that if $0\to\CE'\to\CE\to\CE''\to0$ is exact and $\CE'(\Delta)$, $\CE(\Delta)$, $\CE''(\Delta)$ are positively decorated, then 
\[
\begin{tikzcd}
0\ar[r]&\fbox{$\CF'_\sigma=\CF_\sigma(\CE')$}\ar[r,"\lambda"]&\fbox{$\CF_\sigma=\CF_\sigma(\CE)$}\ar[r,"\mu"]&\fbox{$\CF''_\sigma=\CF_\sigma(\CE'')$}\ar[r]&0 
\end{tikzcd}
\]
is exact for all $\sigma\in\Sigma$.
\end{lemma}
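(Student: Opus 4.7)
The plan is to verify the claim stalk-wise. Since $\CF_\sigma(\mc G)$ is a subsheaf of the constant sheaf $\underline G$ on the base and every restriction map is an inclusion, the stalk at a point $u$ of the base is simply
\[
\CF_\sigma(\mc G)_u = \{g\in G\mid u\in\innt\CPvar^+_\sigma(g)\},
\]
and the sheaf sequence is exact if and only if the induced sequences of stalks are exact at every $u$. The maps $\lambda$ and $\mu$ on stalks are the restrictions of the underlying linear maps $E'\to E$ and $E\to E''$, so injectivity at $\CF'_{\sigma,u}$ is immediate from injectivity of $\lambda\colon E'\to E$.

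For exactness in the middle, I would invoke Proposition~\ref{prop:Inj} applied to $\CE'=\ker\mu$ (after passing, if necessary, to a common materialisation, which the hypothesis ensures): it gives $\CD_{\CE'}(e')=\CD_\CE(\lambda(e'))$, so locally $\CPvar'^+_\sigma(e')=\CPvar^+_\sigma(\lambda(e'))$ as polyhedra in $M_\R$. Consequently, for every $e'\in E'$ we have $u\in\innt\CPvar'^+_\sigma(e')$ if and only if $u\in\innt\CPvar^+_\sigma(\lambda(e'))$. Combined with the exactness $0\to E'\to E\to E''$ of underlying vector spaces (Remark~\ref{rem:EntireSpace}), this yields $\ker(\mu_u)=\operatorname{im}(\lambda_u)$ as subspaces of $E$.

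For surjectivity of $\mu_u$, the tool is Proposition~\ref{prop:Sur}, whose local version reads $\CPvar''^+_\sigma(e'')=\bigcup_{\mu(e)=e''}\CPvar^+_\sigma(e)$. The decisive observation is that \emph{locally} all polyhedra in sight are translates of the same tail cone: by~\eqref{eq:LocParl} we have $\CPvar^+_\sigma(e)=r^+_\sigma(e)+\sigma^\vee$ and $\CPvar''^+_\sigma(e'')=r''^+_\sigma(e'')+\sigma^\vee$, with $r^+_\sigma(e)\geq r''^+_\sigma(e'')$ for each contributing $e$. Since the vertex $r''^+_\sigma(e'')$ must lie in the union, some $e$ with $\mu(e)=e''$ must satisfy $r^+_\sigma(e)=r''^+_\sigma(e'')$; for such an $e$ the two local polyhedra coincide, and hence so do their interiors. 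Consequently, any $e''\in\CF''_{\sigma,u}$ lifts to such an $e\in\CF_{\sigma,u}$.

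The main subtlety is the surjectivity step: a priori Proposition~\ref{prop:Sur} only gives a set-theoretic cover of $\CPvar''^+_\sigma(e'')$, so the interior condition characterising the stalks of $\CF_\sigma$ need not be preserved by an arbitrary preimage. What rescues this is the rigidity of local polyhedra over a fixed $\sigma$: they are all of the form $r+\sigma^\vee$, so once two of them share a vertex they are equal, at which point the interior condition transfers verbatim. All other items (injectivity on the left, containment in the middle, the vector space aspect of the argument) are formal consequences of Propositions~\ref{prop:Iso},~\ref{prop:Sur} and~\ref{prop:Inj} together with the translation from divisors to their local polyhedral pictures.
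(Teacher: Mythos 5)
Your proof is correct and follows essentially the same stalkwise strategy as the paper's, with the same crucial observation rescuing surjectivity: since all local polyhedra $\CPvar^+_\sigma(e)$ are translates $r+\sigma^\vee$ of a fixed cone (cf.~\eqref{eq:LocParl}), the set-theoretic cover $\CPvar''^+_\sigma(e'')=\bigcup_{\mu(e)=e''}\CPvar^+_\sigma(e)$ forces some single $e$ to give an equality of local polyhedra, which then transfers to the interiors. Two minor remarks: you do not address the additivity claim, though the paper disposes of it in one line via $\CPvar_\sigma(e\oplus e')=\CPvar_\sigma(e)\cap\CPvar_\sigma(e')$; and the phrase ``$r^+_\sigma(e)\geq r''^+_\sigma(e'')$'' should really be read as the containment $r^+_\sigma(e)+\sigma^\vee\subseteq r''^+_\sigma(e'')+\sigma^\vee$, i.e.\ $r^+_\sigma(e)-r''^+_\sigma(e'')\in\sigma^\vee$, rather than a coordinatewise inequality of vectors.
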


\begin{proof}
For additivity, we note that $U\subseteq\CPvar_\sigma(e\oplus e')$ if and only if $U\subseteq\CPvar_\sigma(e)\cap\CPvar_\sigma(e')$. This shows $\CF_\sigma(\CE\oplus\CE')=\CF_\sigma(\CE)\oplus\CF_\sigma(\CE')$. 

\medskip

For exactness, we proceed stalkwise and let $u\in\Delta$. The stalks are subspaces of the associated vector spaces for which we have the exact sequence
\[
\begin{tikzcd}
0\ar[r]&E'\ar[r,"\lambda"]&E\ar[r,"\mu"]&E''\ar[r]&0. 
\end{tikzcd} 
\]
Hence, regarding $E'$ as a subspace of $E$, we have $(\CF'_\sigma)_u=(\CF_\sigma)_u\cap E'$. For surjectivity of $(\CF_\sigma)_u\to(\CF''_\sigma)_u$, let $e''\in(\CF''_\sigma)_u$, i.e., $u\in\innt_\Delta\CD''_\sigma(e'')$. Surjectivity of $\CE\to\CE''$ implies that
\[
\CPvar''_\sigma(e'')=\bigcup_{\mu(e)=e''}\CPvar_\sigma(e). 
\]
Since the divisors $\CPvar''(e'')$ and $\CPvar(e)$ are Cartier, both $\CPvar''_\sigma(e'')$, $\CPvar_\sigma(e)\subseteq M_\R$ are translated copies of $\sigma^\vee$, cf.~\eqref{eq:LocParl}. In particular, there exists a single $e\in E$ with $\mu(e)=e''$ such that $\CPvar''_\sigma(e'')=\CPvar_\sigma(e)$. Thus $\innt_\Delta\CD''_\sigma(e'')=\innt_\Delta\CD_\sigma(e)$ and finally
\[
\innt_\Delta\CD''_\sigma(e'')=\bigcup_{\mu(e)=e''}\innt_\Delta\CD_\sigma(e) 
\]
which proves the surjectivity.
\end{proof}

By definition, $e\in\Gamma(\Delta,\CF_\sigma(\CE))$ if and only if $\Delta\subseteq\CPvar^+_\sigma(e)$. Since $\CD_\sigma(e)=r_\sigma(e)+\sigma^\vee$ and $\CD^+_\sigma(e)=r^+_\sigma(e)-r_\sigma(e)+\CD_\sigma(e)$, \cite[Claim 3.3.2 and Section 3.4]{dop} imply
\[
\Delta\setminus\CPvar^+_\sigma(e)=\varnothing\stackrel{\text{\cite{dop}}}{\Longleftrightarrow}-r_\sigma\in\sigma^\vee\Longleftrightarrow0\in\CPvar_\sigma(e)\Longleftrightarrow e\in\eval_\sigma(0).
\]
Hence~\eqref{eq:EvalSec} on page~\pageref{eq:EvalSec} yields
\begin{equation}
\label{eq:H00}
\Gamma(\Delta,\CF_\sigma)=\eval_\sigma(0)=\gH^0(U_\sigma,\CE)_0.
\end{equation}

\medskip

Furthermore, $\CPvar^+(e)=\bigcap_{\sigma\in\Sigma(\kdimX)}\CPvar^+_\sigma(e)$, cf.\ \eqref{eq:CP+} on page~\pageref{eq:CP+}, entails that $\CF(\CE)=\bigcap_{\sigma\in\Sigma}\CF_\sigma(\CE)$, that is,
\begin{equation}
\label{eq:InvLim}
\CF(\CE)=\varprojlim_{\sigma\in\Sigma}\CF_\sigma(\CE).
\end{equation}
Here, the inverse limit is taken with respect to the directed poset $\Sigma$ with $\tau\leq\sigma$ if and only if $\tau$ is a face of $\sigma$.

\medskip

The following proposition will be used in \ssect{subsubsec:Functors}.

\begin{proposition}
\label{prop:Acyclic}
We have 
\[
\gH^\ell\hspace{-0.5ex}\big(\Delta,\CF_\sigma(\CE)\big)=0\quad\text{for}\quad\ell\geq1.
\]
\end{proposition}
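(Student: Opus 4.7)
My strategy is to reduce to a single toric line bundle and then apply a Gysin-type long exact sequence, exploiting that every $\CPvar^+_\sigma(e)$ is a single translate of the cone $\sigma^\vee$. Since $X$ is smooth, $\CE$ is locally free; its restriction to the smooth affine toric variety $U_\sigma$ corresponds to a finitely generated $M$-graded projective module over $\kk[\sigma^\vee\cap M]$ and therefore decomposes into rank-one graded summands (graded Quillen--Suslin, or Klyachko's classification applied to a single cone). Thus $\CE|_{U_\sigma}\cong\bigoplus_i\CL_i$ for equivariant line bundles $\CL_i$. The direct sum rule for Weil decorations from Example~\ref{exam:SumTRS} gives $\CPvar^+_\sigma(\bigoplus_i e_i)=\bigcap_i\CPvar^+_\sigma(e_i)$, from which the definition of $\CF_\sigma$ immediately yields $\CF_\sigma(\CE)=\bigoplus_i\CF_\sigma(\CL_i)$, reducing the proof to the case of a single toric line bundle~$\CL$.

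\medskip

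For such a line bundle the associated vector space is $E=\kk$ and $\CPvar^+_\sigma(\CL)=r^+_\sigma+\sigma^\vee$ for some $r^+_\sigma\in M$, so $\CF_\sigma(\CL)=j_!\underline\kk$ where $j$ is the open embedding of $U:=\innt_\Delta(r^+_\sigma+\sigma^\vee)$. Setting $Z:=\Delta\setminus U$ and letting $i\colon Z\hookrightarrow\Delta$ be the complementary closed inclusion, the short exact sequence
\[
0\to j_!\underline\kk\to\underline\kk_\Delta\to i_*\underline\kk_Z\to 0
\]
together with contractibility of $\Delta$ gives, exactly as in \ssect{subsubsec:cohomSheafL}, an isomorphism $\gH^\ell(\Delta,\CF_\sigma(\CL))\cong\widetilde\gH^{\ell-1}(Z,\kk)$ for $\ell\geq 1$. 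The task therefore reduces to showing that $Z$ is either empty or contractible.

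\medskip

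Reading off the defining inequalities of $r^+_\sigma+\sigma^\vee$ using the primitive generators of the rays of $\sigma$, one finds $Z=\bigcup_{\tau\in\sigma(1)}Z_\tau$ with $Z_\tau:=\{u\in\Delta\mid\langle u,\tau\rangle\leq\langle r^+_\sigma,\tau\rangle\}$. Assume $Z\neq\varnothing$ and pick any $p$ in the face $\Delta(\sigma)\subseteq\Delta$ from \eqref{eq:sigmaCorner}. Since $\tau\in\sigma$, the very definition of $\Delta(\sigma)$ gives $\langle p,\tau\rangle=\min\langle\Delta,\tau\rangle\leq\langle u,\tau\rangle$ for every $u\in\Delta$; picking $u_0\in Z_{\tau_0}\subseteq Z$ yields $p\in Z_{\tau_0}\subseteq Z$, and for any $u\in Z_\tau$ the segment $[p,u]$ remains in $Z_\tau$ because along it $\langle\cdot,\tau\rangle$ is a convex combination of two values bounded above by $\langle r^+_\sigma,\tau\rangle$. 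Hence $Z$ is star-shaped about $p$ and therefore contractible. The main obstacle rests in the splitting at the very beginning; granted this classical fact, the remainder of the argument is essentially polyhedral.
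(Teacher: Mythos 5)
There is a genuine gap in your first step: the claim that smoothness of $X$ forces $\CE$ to be locally free is false. The paper's toric sheaves are only assumed \emph{reflexive}, and on a smooth toric variety of dimension $\geq 3$ a reflexive toric sheaf of rank $\geq 2$ need not be locally free — this is precisely the content of Example~\ref{exam:LocFree}, where (citing \cite{gub86}) local freeness is shown to be equivalent to the local Weil decorations inducing coordinate-space stratifications, a condition that can fail. Consequently $\CE|_{U_\sigma}$ need not correspond to a projective graded module, the appeal to graded Quillen--Suslin does not apply, and the decomposition $\CE|_{U_\sigma}\cong\bigoplus_i\CL_i$ (and with it $\CF_\sigma(\CE)=\bigoplus_i\CF_\sigma(\CL_i)$) is unavailable in the generality needed. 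Your reduction is valid only under the extra hypothesis that $\CE$ is locally free, which the proposition does not assume.

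The paper avoids this by \emph{resolving} $\CE$, not decomposing it: Corollary~\ref{coro:CanSeq} gives, for any toric reflexive sheaf, a finite exact complex whose terms $\CE_{\S,\T}=\overline\S\otimes_\kk\CO_X(\T)$ are direct sums of line bundles, and Lemma~\ref{lem:ExacFunc} shows $\CF_\sigma$ carries this to an exact resolution of $\CF_\sigma(\CE)$; the long exact sequences then reduce acyclicity of $\CF_\sigma(\CE)$ to the line bundle case. Your remaining steps — the Gysin sequence $\gH^\ell(\Delta,\CF_\sigma(\CL))\cong\widetilde{\opn H}^{\ell-1}(Z,\kk)$ and contractibility of $Z$ by star-shapedness about a point of $\Delta(\sigma)$ — are essentially Lemma~\ref{lem:LocalVan} of the paper and are fine (modulo the same informality about relative-interior boundary behaviour that the paper also elides here but treats carefully later via the relations $\gneqR,\geqR$). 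So your endgame is sound, and your argument becomes a correct, slicker alternative under the additional hypothesis of local freeness; what is missing is a replacement for the splitting step that works for reflexive sheaves, which is exactly what the resolution of Corollary~\ref{coro:CanSeq} supplies.
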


\begin{remark}
By quasi-coherence, $\CE_\sigma=\CE|_{U_\sigma}$ is acyclic over the open affines $U_\sigma$, $\sigma\in\Sigma$. Hence Proposition~\ref{prop:Acyclic} is a special case of Theorem~\ref{thm:CohomSheafE}.
\end{remark}

We break up the proof of Proposition~\ref{prop:Acyclic} into several intermediate results; it will be completed with Lemma~\ref{lem:LocalVan} below.

\medskip

By results of Klyachko~\cite{klyachko} a toric sheaf over a smooth, projective variety admits a finite resolution by locally free toric sheaves of finite rank which split into a direct sum of line bundles. Using Weil decorations we obtain a more refined result. Recall 
\[
\CE_{\S,\T}=\overline\S\otimes_\kk\CO_X(\T)
\]
from~\eqref{eq:rem:CanSeq} of Remark~\ref{rem:CanSeq}. If $\S=\T$ we simply write $\CE_\S$ for $\CE_{\S,\S}$. These sheaves are the building blocks for a complex associated with $\CE$. To define it we introduce the following notation. For $\S$, $\T\in\Strat$ with $\S\leq\T$ we let
\begin{equation}
\label{eq:DefCH}
\opn{ch}_\ell(\S,\T):=
\{\S=\S_0<\S_1<\ldots<\S_{\ell-1}<\S_\ell=\T\mid \S_1,\ldots,\S_{\ell-1}
\in\Strat\}
\end{equation}
be the set of strict chains of length $\ell$ in $\Strat$ starting at $\S_0=\S$ and terminating at $\S_\ell=\T$. For example, we find
\begin{equation}
\label{eq:ExamCh}
\opn{ch}_0(\S,\T)=\begin{cases}
\{\S\}&\text{if }\S=\T\\
\varnothing&\text{if }\S<\T
\end{cases}
\quad\text{and}\quad
\opn{ch}_1(\S,\T)=\begin{cases}
\varnothing&\text{if }\S=\T\\
\{\S<\T\}&\text{if }\S<\T
\end{cases}
.
\end{equation}
If $\CE_{\S_0,\S_\ell}\subseteq\bigoplus_{\S<\T}\bigoplus_{\opn{ch}_\ell(\S,\T)}\CE_{\S,\T}$ is the summand corresponding to the chain $\S_0<\ldots<\S_\ell$, we also write $\CE_{\S_0,\ldots,\S_\ell}$ for $\CE_{\S_0,\S_\ell}$.

\begin{lemma}
\label{lem:CanRes}
Let $\CE$ be a toric reflexive sheaf of rank $r$. Then for all $\sigma\in\Sigma$ we have the long exact sequence
\begin{equation}
\label{eq:CanRes}
0\to\bigoplus_{\S\leq\T}\bigoplus_{\opn{ch}_r(\S,\T)}\!\CE_{\S,\T}(U_\sigma)\to\cdots\to\bigoplus_{\S\leq\T}\bigoplus_{\opn{ch}_0(\S,\T)}\!\CE_{\S,\T}(U_\sigma)\to\CE(U_\sigma)\to0,
\end{equation}
where for $\ell\geq1$ the differentials are induced by the direct sum of sheaf maps
\[
\begin{tikzcd}[ampersand replacement=\&]
\CE_{\S_0,\ldots,\S_\ell} = \CE_{\S_0,\S_\ell}\ar[rr, "{\left(\begin{smallmatrix}1 & -1 & \cdots & \pm 1\end{smallmatrix}\right)}"] \&\&\displaystyle\bigoplus_{i=0}^\ell\CE_{\S_0,\ldots \widehat\S_i\ldots,\S_{\ell-1}}= \CE_{\S_1,\S_\ell}\oplus\CE_{\S_0,\S_\ell}^{\oplus(\ell-1)}\oplus \CE_{\S_0,\S_{\ell-1}},
\end{tikzcd}
\]
and $\bigoplus_{\S\leq\T}\bigoplus_{\opn{ch}_0(\S,\T)}\!\CE_{\S,\T}=\bigoplus_\S\CE_\S\stackrel{(1\ldots1)}{\longrightarrow}\CE$.
\end{lemma}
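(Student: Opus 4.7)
My plan is to reduce the claimed exactness of~\eqref{eq:CanRes} to a purely combinatorial contractibility statement for finite posets with a maximum element, by decomposing everything according to the natural $M$-grading on $U_\sigma$.

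First, I will break~\eqref{eq:CanRes} into its $M$-homogeneous pieces. Since $\CE$ is $\TX$-equivariant and $U_\sigma$ is affine, the whole sequence splits as a direct sum of sequences of $\kk$-vector spaces indexed by $m\in M$. By~\eqref{eq:EvalSec} we have $\CE(U_\sigma)_m=\eval_\sigma(m)$, while $\CE_{\S,\T}(U_\sigma)_m$ equals $\overline\S$ if $m\in\CPvar_\sigma(\T)$ and $0$ otherwise. Setting
\[
I_m:=\{\S\in\Strat_\CE\mid m\in\CPvar_\sigma(\S)\},
\]
the relation $\S\leq\T$ in $\Strat_\CE$ forces $\CPvar_\sigma(\T)\subseteq\CPvar_\sigma(\S)$, so $I_m$ is downward closed and only chains $\S_0<\cdots<\S_\ell$ lying entirely in $I_m$ contribute in degree $m$. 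Locally over $U_\sigma$ both polyhedra $\CPvar_\sigma(\S)$ and $\CPvar_\sigma(\T)$ are translated copies of $\sigma^\vee$, so Proposition~\ref{prop:Strat2Div} upgrades to the actual identity $\CPvar_\sigma(\S\vee\T)=\CPvar_\sigma(\S)\cap\CPvar_\sigma(\T)$. Hence $I_m$ is closed under join, admits a maximum $\S_m^\ast:=\bigvee_{\S\in I_m}\S$, and Remark~\ref{rem:VS} combined with downward closedness gives $\eval_\sigma(m)=\overline{\S_m^\ast}$.

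Second, I will establish the following general fact: for a finite poset $P$ with maximum $p_\ast$ and any functor $V\colon P\to\opn{Vec}_\kk$ whose transition maps are inclusions of subspaces of a common ambient vector space, the augmented complex
\[
\cdots\to\bigoplus_{p_0<\cdots<p_\ell\text{ in }P}V_{p_0}\to\cdots\to\bigoplus_{p\in P}V_p\to V_{p_\ast}\to 0,
\]
with alternating-sum differential (removing the bottom $p_0$ uses the inclusion $V_{p_0}\hookrightarrow V_{p_1}$, removing any other $p_i$ uses the identity on $V_{p_0}$) and augmentation summing the inclusions $V_p\hookrightarrow V_{p_\ast}$, is exact. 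The proof is a cone contraction: the homotopy $h$ appends $p_\ast$ to a chain whose top is strictly below $p_\ast$ and vanishes on chains already ending at $p_\ast$, with the obvious section $V_{p_\ast}\hookrightarrow C_0$ on the augmentation end. Applied to $P=I_m$, $p_\ast=\S_m^\ast$ and $V_\S=\overline\S$ (the inclusions $\overline\S\subseteq\overline{\S'}$ for $\S\leq\S'$ coming from~\eqref{eq:StratPOS}), this yields exactness in each degree $m$; the case $I_m=\varnothing$ is trivial.

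Finally, the length bound $\ell\leq r$ is automatic: a strict chain $\S_0<\cdots<\S_\ell$ in $\Strat_\CE$ produces a strict flag $\overline{\S_0}\subsetneq\cdots\subsetneq\overline{\S_\ell}$ of subspaces of $E$, so $\ell\leq\dim_\kk E=r$. I expect the main technical obstacle to be the sign bookkeeping of the contracting homotopy in the combinatorial lemma: conceptually this is the cone contraction of the order complex of $P$ with the local coefficient system $V$, but verifying $dh+hd=\pm\opn{id}$ in the presence of the bottom-element inclusions $V_{p_0}\hookrightarrow V_{p_1}$ requires checking that they interact correctly with all the other face maps. Everything else is either a definitional unwinding or a direct consequence of the join structure on $\Strat_\CE$.
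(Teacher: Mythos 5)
Your argument is correct and follows essentially the same route as the paper's: reduce to the degree-$m$ piece over $U_\sigma$, identify the maximal contributing stratum $\S(m)=\S^\ast_m$ (via Remark~\ref{rem:VS} together with the local identity $\CPvar_\sigma(\S\vee\T)=\CPvar_\sigma(\S)\cap\CPvar_\sigma(\T)$), and observe that the resulting complex is the augmented order complex of the poset $I_m$, which is contractible because it is a cone with apex $\S(m)$. Your explicit contracting homotopy and the paper's iterative cancellation of the pairs $\CC^{=}_{j+1}\leftrightarrow\CC^{<}_{j}$ are two presentations of the same cone contraction; isolating it as a general lemma about poset-indexed inclusion systems is a clean modularization but not a different proof.
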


\begin{proof}
For $0\leq i\leq\ell$, the sum of the signed identities 
\[
\begin{tikzcd}
\CE_{\S_0,\ldots,\S_\ell}\ar[r,"(-1)^i"]&\CE_{\S_0,\ldots \widehat\S_i\ldots, \S_\ell}
\end{tikzcd}
\]
defines a usual \v Cech-type differential so that we have indeed a complex. To check exactness we first note that 
\[
\begin{split}
\CE_{\S_0,\ldots,\S_\ell}(U_\sigma) = \CE_{\S_0,\S_\ell}(U_\sigma) &=
\overline\S_0\otimes\CO_X(\S_\ell)(U_\sigma)\\
&=\overline\S_0\otimes\kk[(r_\sigma(\S_\ell)+\sigma^\vee)\cap M] \subseteq E\otimes\kk[M]
\end{split}
\]
where $x^{r_\sigma(\S_\ell)}$ is the local generator of $\CO(\S_\ell)|_{U_\sigma}$.

\medskip

As everything is $M$-graded, we can check exactness degreewise. We therefore fix $m\in M$ and note that
\[
\renewcommand{\arraystretch}{1.3}
\CE_{\S_0,\ldots,\S_\ell}(U_\sigma)_m = 
\begin{cases}
\overline\S_0\cdot x^m & \text{if $m\in r_\sigma(\S_\ell)+\sigma^\vee$}, \\
0 & \text{if otherwise}.
\end{cases}
\]
Taking the joins over the strata $\S$ with $m\in r_\sigma(\S)+\sigma^\vee$ yields a maximal stratum $\S(m)$. The resulting complex is given by 
\begin{equation}
\label{eq:CanSeq}
\begin{tikzcd}[column sep=7pt]
0\ar[r]&\bigoplus\limits_{\S_0<\cdots < \S_{\ell(m)} \leq \S(m)}\ \CE_{\S_0,\ldots,\S_{\ell(m)}}(U_\sigma)_m\ar[r]\ar[d,equal]&\cdots\ar[r]&\bigoplus\limits_{\S \leq \S(m)}\CE_\S(U_\sigma)_m\ar[r]\ar[d,equal]&\CE(U_\sigma)_m\ar[r]\ar[d,equal]&0\\
0\ar[r]&\bigoplus\limits_{\S_0<\cdots < \S_{\ell(m)} \leq \S(m)}\ \overline\S_0\ar[r]&\cdots\ar[r]&\bigoplus\limits_{\S \leq \S(m)}\overline\S\ar[r]&\CE(U_\sigma)_m\ar[r]&0
\end{tikzcd}
\end{equation}
where $\ell(m)$ is the maximal length among all chains $\S_0 < \cdots < \S_{\ell}$ such that $\CE_{\S_0,\ldots,\S_\ell}(U_\sigma)_m \neq 0$. Since $\CE(U_\sigma)=\sum_\S\overline\S\otimes\CO(D(\S))(U_\sigma)$, the sum on the right hand side being taken in $E \otimes \kk[M]$, we get
\[
\CE(U_\sigma)_m=\overline{\S(m)}\cdot x^m.
\]
We call this complex $\CC_\bullet$ and decompose its terms $C_j$ into
\[ 
\begin{split}
\CC_j&=\quad\bigoplus_{\mathclap{\S_0<\cdots<\S_j\leq\S(m)}}\,\CE_{\S_0,\ldots,\S_j}(U_\sigma)_m=:\CC^=_j \oplus\CC^<_j,\quad 0\leq j\leq\ell(m),
\end{split}
\]
where $\CC^=_j$ means taking the sum with $\S_j=\S(m)$ and $\CC^<_j$ means taking the sum with $\S_j<\S(m)$. The differential restricts to an isomorphism $\CC^=_{j+1}\xrightarrow{\sim}\CC^<_j$ being the direct sum of
\[
\CE_{\S_0,\ldots,\S_j,\S_{j+1}=\S(m)}(U_\sigma)_m\xrightarrow{\pm1}
\CE_{\S_0,\ldots,\S_j<\S(m)}(U_\sigma)_m.
\]
So starting with the chains of maximal length $\ell(m)$, we may split off the direct summand $\CC^=_{\ell(m)}=\CC^<_{\ell(m)-1}$ and consider the homotopic complex
\[
\begin{tikzcd}[row sep=0ex]
&\CC^=_{\ell(m)-1}&\CC^=_{\ell(m)-2}\\
0\to\cancel{\CC^=_{\ell(m)}}\ar[rd,"\sim"']\ar[ru]&\oplus\ar[r]&\oplus\ar[r]&\cdots\\
&\cancel{\CC^<_{\ell(m)-1}}&\CC^<_{\ell(m)-2}
\end{tikzcd}
.
\]
By repeating this procedure $(\ell(m)-1)$-times, we are left with the terms
\[
\begin{tikzcd}
0\to\fbox{$\CC^=_1=\CE_{\S(m)}(U_\sigma)_m=\overline{\S(m)}$}\ar[r,"\sim"]&\CE(U_\sigma)_m\to0. 
\end{tikzcd}
\]
Consequently, the original complex was exact, too.
\end{proof}

\begin{corollary}
\label{coro:CanSeq}
For any toric sheaf $\CE$ of rank $r$, the complex
\begin{equation}
\label{eq:ResLB}
0\to\bigoplus_{\S\leq\T}\bigoplus_{\opn{ch}_r(\S,\T)}\!\CE_{\S,\T}\to\cdots\to\bigoplus_{\S\leq\T}\bigoplus_{\opn{ch}_0(\S,\T)}\!\CE_{\S,\T}\to\CE\to0
\end{equation}
is exact. Furthermore, all line bundles occurring in this complex are nef if $\CE$ is positively decorated, and are ample, if $\CE$ is amply decorated.
\end{corollary}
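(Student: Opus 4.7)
The plan is to obtain the corollary essentially as an immediate consequence of Lemma~\ref{lem:CanRes} together with the basic correspondence between virtual polyhedra and line bundles from Section~\ref{subsec:VP}. First I would observe that the corollary has two independent claims: exactness of the complex, and the nef/ample properties of the line bundles that appear.

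For the exactness, I would argue that Lemma~\ref{lem:CanRes} gives exactness of the complex of sections over every affine chart $U_\sigma$, $\sigma\in\Sigma$. Since the $U_\sigma$ form an open cover of $X$, this local exactness suffices: at each stalk of a point $x\in X$, pick any $\sigma$ with $x\in U_\sigma$ and take the direct limit of the exact complex of sections over open subsets $V\subseteq U_\sigma$ containing $x$. In particular, the sequence is exact as a complex of sheaves. The length bound $r$ in the index range $\opn{ch}_r(\S,\T)$ is automatic: a strict chain $\S_0<\S_1<\cdots<\S_\ell$ in $\Strat_\CE$ gives, via \eqref{eq:StratPOS} and admissibility, a strict chain of vector subspaces $\overline{\S}_0\subsetneq\overline{\S}_1\subsetneq\cdots\subsetneq\overline{\S}_\ell$ of $E$, so necessarily $\ell\leq\dim_\kk E=r$. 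Hence no chains of length $>r$ contribute, and Lemma~\ref{lem:CanRes} does deliver a complex of the stated length.

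For the statement on line bundles, I would unpack the definition of the summands. Each $\CE_{\S,\T}=\overline{\S}\otimes_\kk\CO_X\!\big(\CD_\CE(\T)\big)$ is isomorphic to a direct sum of $\dim_\kk\overline{\S}$ copies of the single line bundle $\CO_X\!\big(\CD_\CE(\T)\big)$. If $\CE$ is positively decorated, then by definition $\CD_\CE(\T)\in\Pol^+(\Sigma)$ for every stratum $\T\in\Strat_\CE\setminus\{0\}$, and the discussion following~\eqref{eq:Subsheaf} shows that compatible lattice polyhedra correspond precisely to basepoint-free, hence nef, line bundles. If $\CE$ is amply decorated, then by definition $\CO_X\!\big(\CD_\CE(\T)\big)$ is ample for each such $\T$.

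I do not foresee any serious obstacle: the main point is organisational, namely recognising that Lemma~\ref{lem:CanRes} already performs the essential bookkeeping by checking exactness degree by degree in $M$ on each affine chart, and that the index set $\opn{ch}_r(\S,\T)$ is genuinely saturating because of the vector-space dimension bound. The only mild subtlety is being explicit about why exactness on an affine open cover lifts to sheaf exactness, but this follows from the standard fact that exactness can be tested on stalks.
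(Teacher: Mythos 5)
Your proof is correct and takes essentially the same route the paper intends: exactness of a sequence of sheaves is a stalkwise (hence local) condition, so Lemma~\ref{lem:CanRes}, which establishes exactness of the sections over every affine chart $U_\sigma$, immediately gives sheaf exactness; the nef/ample assertion follows directly from $\CE_{\S,\T}\cong\CO_X(\CD_\CE(\T))^{\oplus\dim\overline{\S}}$ and the dictionary between compatible polyhedra and nef line bundles. (One small remark: since the nonzero summands require both $\S\neq0$ and $\T\neq0$, a strict chain of nonzero strata actually has length at most $r-1$, so the $\opn{ch}_r$ term is automatically zero; your bound $\ell\leq r$ is correct but slightly loose.)
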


We break up the sequence~\eqref{eq:ResLB} into short exact sequences and apply the exact functor $\CF_\sigma$, cf.\ Lemma~\ref{lem:ExacFunc}. The associated long exact sequences in cohomology show that Proposition~\ref{prop:Acyclic} boils down to the

\begin{lemma}
\label{lem:LocalVan}
Let $\CE=\CO_X(\nablap-\nablam)$ be a line bundle so that $\CE^+=\CO_X(\nablap)$ is ample and $\Delta=\nablam$ is nef. Further, put $\CF_\sigma:=\CF_\sigma(\CE)$ for $\sigma\in\Sigma$. Then $\gH^\ell(\Delta,\CF_\sigma)=0$ for all $\ell\geq1$. 
\end{lemma}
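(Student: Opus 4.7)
The plan is to adapt the argument from \ssect{subsubsec:cohomSheafL}. Since $\CE$ is a line bundle, $E=\kk$, and $\CF_\sigma$ is the extension by zero $j_!\uk$ along the inclusion $j\colon U\hookrightarrow\nablam$ of the open set $U:=\innt_\nablam(\nablap(\sigma)+\sigma^\vee)$. Exactly as in \ssect{subsubsec:cohomSheafL}, the Gysin sequence then yields
\[
\gH^\ell(\nablam,\CF_\sigma)\;\cong\;\widetilde{\gH}^{\,\ell-1}(Z_\sigma),\quad Z_\sigma:=\nablam\setminus U,
\]
for $\ell\geq 1$, reducing the task to showing that $Z_\sigma$ has trivial reduced cohomology. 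I would in fact prove the stronger statement that $Z_\sigma$ is either empty or contractible.

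The key geometric step is to express $Z_\sigma$ in terms of the supporting half-spaces of $A:=\nablap(\sigma)+\sigma^\vee$. Let $a_1,\dots,a_k$ be the primitive generators of the rays of $\sigma$ and set $c_i:=\langle\nablap(\sigma),a_i\rangle$, so that $A=\bigcap_i\{u\mid\langle u,a_i\rangle\geq c_i\}$. Introduce the closed slabs $Z_i:=\{u\in\nablam\mid\langle u,a_i\rangle\leq c_i\}$ together with the set of ``active'' indices $I:=\{i\mid\langle\nablam(\sigma),a_i\rangle<c_i\}$. The main claim is
\[
Z_\sigma\;=\;\bigcup_{i\in I}Z_i.
\]
The inclusion $\supseteq$ is immediate when $\langle u,a_i\rangle<c_i$, and in the borderline case $\langle u,a_i\rangle=c_i$ with $i\in I$ it uses that every $\nablam$-neighbourhood of $u$ still contains points with $\langle\cdot,a_i\rangle<c_i$, because $\nablam$ attains a strictly smaller value at $\nablam(\sigma)$. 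For $\subseteq$ one verifies that a point $u\in A$ lies in $\innt_\nablam A$ iff at every tight index $j$ (i.e.\ $\langle u,a_j\rangle=c_j$) one also has $\langle\nablam(\sigma),a_j\rangle=c_j$; this is the convex-analytic statement that $\nablam$ cannot locally cross a supporting hyperplane on which it already attains its minimum.

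Once this characterisation is in hand, the proof finishes swiftly. If $I=\varnothing$, then $\langle\nablam(\sigma),a_i\rangle\geq c_i$ for every ray, whence $\nablam\subseteq A$ and $Z_\sigma=\varnothing$. Otherwise, pick any point $p$ of the face $\nablam(\sigma)$; then $p\in\bigcap_{i\in I}Z_i\subseteq Z_\sigma$, and $Z_\sigma$ is star-convex from $p$. Indeed, for $u\in Z_\sigma$ choose $i\in I$ with $u\in Z_i$, and observe that
\[
\langle(1-t)p+tu,\,a_i\rangle\;=\;(1-t)\langle p,a_i\rangle+t\langle u,a_i\rangle\;\leq\;c_i
\]
as a convex combination of two values $\leq c_i$, so the entire segment lies in $Z_i\subseteq Z_\sigma$. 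The main obstacle is the characterisation of $Z_\sigma$ itself, and specifically the careful treatment of $\innt_\nablam A$ as the interior in the subspace topology of $\nablam$ rather than in $M_\R$: boundary points of $\nablam$ can belong to $\innt_\nablam A$ (or not) in ways dictated by how the facets of $\nablam$ align with those of $A$, and both situations must be tracked through the half-space description of $A$.
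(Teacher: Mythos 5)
Your proof is correct and follows the same route as the paper's: reduce via the Gysin sequence to showing that $Z_\sigma=\nablam\setminus\innt_\nablam\!\big(\nablap(\sigma)+\sigma^\vee\big)$ is empty or contractible, and then contract it radially to a point of the face $\nablam(\sigma)$. The paper delegates the convex-geometric step to~\cite[3.3.2]{dop} and phrases it as a retraction of $S(\sigma)$ to $r_\sigma$; you instead work directly with the closure, give the explicit half-space decomposition $Z_\sigma=\bigcup_{i\in I}Z_i$, and conclude by star-convexity --- essentially the same argument, written out in full.
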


\begin{proof}
The proof can be carried out along the lines of Example~\ref{subsubsec:cohomSheafL} 
and is a direct consequence of~\cite[3.3.2]{dop}. We sketch a proof for the convenience of the reader.  

\medskip

The local Weil decorations of $\CO_X(\nablam)$ and $\CO_X(\nablap)$ are given by $r^-_\sigma+\sigma^\vee=\nablam+\sigma^\vee$ and $r^+_\sigma+\sigma^\vee=\nablap+\sigma^\vee$. Then
\[
\renewcommand{\arraystretch}{1.3}
\CF(U)=\begin{cases}\kk&\text{if }U\subseteq\nablap+\sigma\dual,
\\0&\text{if otherwise}.
\end{cases}
\]
To compute the stalks we set
\[
S(\sigma):=\nablam\setminus(\nablap+\sigma\dual) 
\]
and obtain
\[
\renewcommand{\arraystretch}{1.3}
(\CF)_u=\begin{cases}
\kk&\text{if }u\notin\overline{S(\sigma)},\\
0&\text{if }u\in\overline{S(\sigma)},
\end{cases}
\]
cf.\ \ssect{subsec:ConstSheav}. 
Our claim boils down to the observation that $S(\sigma)$ is either empty or retractible to the point $r_\sigma$, and so is therefore $\overline{S(\sigma)}$.
\end{proof}

\section{Proof of Theorem~\ref{thm:CohomSheafE}}
\label{sec:Proof}
Let $\CE$ be a toric sheaf such that $\CE^+=\CE(\Delta)$ is amply decorated, and $\CF=\CF(\CE)$ its associated constructible sheaf over $\Delta$. In this section we set out to prove the isomorphism 
\[
\gH^\bullet(X,\CE)_m\cong\gH^\bullet\big(\Delta+m,\CF_m(\CE)\big).  
\]
To simplify notation we shall restrict without loss of generality to the case $m=0$. 

\medskip

The crucial link between $\CE$ and the so-called local sheaves $\CF_\sigma=\CF_\sigma(\CE)$ from Section~\ref{sec:CohomTS} is given by
\[
\gH^0(U_\sigma,\CE)_0=\Gamma(\Delta,\CF_\sigma),  
\]
cf.\ \eqref{eq:H00} on page~\pageref{eq:H00}. The proof proceeds essentially in two steps. As in \ssect{subsec:LocalSheaf} we consider the inverse limit $\varprojlim=\varprojlim_{\sigma\in\Sigma}$. Abbreviating $\R\Gamma$ to $\H$, the first step then consists in showing that
\[
\H(X,\CE)_0=\R\varprojlim\big(\gH^0(U_\sigma,\CE)_0\big)=\R\varprojlim\big(\Gamma(\Delta,\CF_\sigma)\big)=\H(\Delta,\R\varprojlim(\CF_\sigma)\big),
\]
cf.\ \ssect{subsec:SheavesFanSpaces} and in particular~\eqref{eq:Step1}. Since we are dealing with vector spaces, $\H$ is quasi-isomorphic to its cohomology $\gH^\bullet$; we shall occasionally use $\H$ to stress that we are speaking about a derived functor. The second step consists in proving
\[
\R\varprojlim(\CF_\sigma)=\varprojlim(\CF_\sigma)\stackrel{\eqref{eq:InvLim}}{=}\CF, 
\]
cf.\ \ssect{subsec:CCT}.

\subsection{Sheaves on fan spaces}
\label{subsec:SheavesFanSpaces}
A good reference for this subsection is~\cite{ladkani}.

\subsubsection{Sheaves}
\label{subsubsec:Sheaves}
Any fan $\Sigma$ can be regarded as a finite category via its poset structure, namely
\[
\renewcommand{\arraystretch}{1.3}
\opn{Ob}(\Sigma)=\{\sigma\mid\sigma\in\Sigma\},\quad\Hom_\Sigma(\tau,\sigma)=\begin{cases}\text{inclusion }\tau\to\sigma&\text{if }\tau\leq\sigma,\\\varnothing&\text{if otherwise}.\end{cases}
\]
Further, we endow $\Sigma$ with the Alexandrov topology which declares a subset $\Sigma'$ of $\Sigma$ to be open if and only if $\Sigma'$ defines a subfan of $\Sigma$. The subfans $[\sigma]$ generated by $\sigma\in\Sigma$ give rise to irreducible open subsets, and every irreducible open set is of this form.

\begin{remark}
This topology makes $\Sigma$ a finite $T_0$-space. Conversely, any finite $T_0$ space arises from a finite poset, see~\cite{ladkani}. Be aware that the partial order used there is inverse to ours, that is, $\sigma\leq\tau$ holds in [ibid.] if and only if $\tau$ is a face of $\sigma$.  
\end{remark}

Let $\mb A$ be an abelian category and
\[
\Sh(\Sigma,\mb A):=\,\text{the category of $\mb A$-valued sheaves over $\Sigma$}.
\]
Since $[\sigma]$ is the minimal open set containing $\sigma$, any sheaf $\mc G$ in $\Sh(\Sigma,\mb A)$ is determined by its stalks $\mc G_\sigma=\Gamma([\sigma],\mc G)$ at $\sigma\in\Sigma$. Consequently, we may regard $\mc G$ as a covariant functor $\Sigma^\opp\to\mb A$, that is, we can identify $\Sh(\Sigma,\mb A)$ with the functor category $\Func(\Sigma^\opp,\mb A)$. In particular, the global section functor
\[
\Gamma_\Sigma\colon\Sh(\Sigma,\mb A)\to\mb A,\quad\Gamma_\Sigma(\mc G):=\Gamma(\Sigma,\mc G)=\varprojlim_{\sigma\in\Sigma}\mc G_\sigma 
\]
is just the natural inverse limit $\varprojlim\colon\Func(\Sigma^\opp,\mb A)\to\mb A$. 

\medskip

In the sequel, we shall restrict to the abelian categories $\mb A=\VecDelta$, the $\kk$-sheaves over $\Delta$, and $\mb A=\VecK$, the category of $\kk$-vector spaces. We write
\[
\Sh(\Sigma,\Delta):=\Sh(\Sigma,\VecDelta)\quad\text{and}\quad\Sh(\Sigma):=\Sh(\Sigma,\VecK)
\]
and view elements of $\Sh(\Sigma,\Delta)$ as ``double sheaves'' on $\Sigma$ and $\Delta$. Since both $\VecDelta$ and $\VecK$ are abelian with enough injectives, so are $\Sh(\Sigma,\Delta)$ and $\Sh(\Sigma)$, cf.~\cite[Corollary 2.2]{ladkani}. We regard the system of $\kk$-linear sheaves $\CF_\sigma(\CE)$ associated with $\CE$ as an object in $\Sh(\Sigma,\Delta)$ via
\begin{equation}
\label{eq:FFunctor}
\CF_\kbb\colon\Sigma^\opp\to\VecDelta,\quad\sigma\mapsto\CF_\sigma. 
\end{equation}
Indeed, we have an inclusion $\CF_\sigma(\CE)\into\CF_\tau(\CE)$ whenever $\tau\leq\sigma$ so that 
\[
\CF_\kbb\in\Func(\Sigma^\opp,\VecDelta)=\Sh(\Sigma,\Delta).  
\]
In particular, 
\[
\Gamma(\Sigma,\CF_\kbb)=\CF_\kbb(\Sigma)=\varprojlim\CF_\sigma(\CE)=\CF(\CE).
\]

\subsubsection{Functors}
\label{subsubsec:Functors}
A map $f\colon\Sigma\to\Sigma'$ between fans is continuous if and only if it is order preserving, that is, $\tau\leq\sigma$ implies $f(\tau)\leq f(\sigma)$. A continuous map gives rise to the usual functors $f_*$, $f_!\colon\Sh(\Sigma,\mb A)\to\Sh(\Sigma',\mb A)$ and $f^{-1}\colon\Sh(\Sigma',\mb A)\to\Sh(\Sigma,\mb A)$. Namely, we have 
\begin{equation}
\label{eq:NatMaps}
(f^{-1}\mc G')_\sigma=\mc G'_{f(\sigma)},\quad(f_*\mc G)_{\sigma'}=\varprojlim_{f(\sigma)\leq\sigma'}\mc G_\sigma,\quad
(f_!\mc G)_{\sigma'}=\varprojlim_{f(\sigma)\geq\sigma'}\mc G_\sigma,
\end{equation}
for which the adjunctions $f_!\adjF f^{-1}\adjF f_*$ hold true, that is
\begin{align}
\Hom_{\Sh(\Sigma,\mb A)}(f^{-1}\mc G',\mc G)&\cong\Hom_{\Sh(\Sigma',\mb A)}(\mc G',f_*\mc G)\nonumber\\[-1.8ex]
&\label{eq:Adjunc}\\[-1.8ex]
\Hom_{\Sh(\Sigma,\mb A)}(\mc G,f^{-1}\mc G')&\cong\Hom_{\Sh(\Sigma',\mb A)}(f_!\mc G,\mc G'),\nonumber
\end{align}
see~\cite[(2.1)]{ladkani}. In particular, $f_*$ is left- and $f_!$ is right-exact. Furthermore, $f^{-1}$ is exact.

\medskip

Next, let $\bullet$ denote an abstract singleton which we consider as a 
trivial poset, and let $f\colon\Sigma\to\bullet$ be the constant map. 
For $\mc G\in\Sh(\Sigma,\mb A)$ and $A\in\mb A$ we have
\[
f_*(\mc G)=\Gamma(\Sigma,\mc G)\quad\text{and}\quad f^{-1}(A)=\text{ constant sheaf on $\Sigma$ with value $A$.}
\]
On the other hand, the locally closed embedding $\iota_\sigma\colon\bullet\to\Sigma$ whose image is $\{\sigma\}=[\sigma]\setminus\partial\sigma$, where $\partial\sigma$ is the fan defined by the faces strictly contained in $\sigma$, yields
\[
\renewcommand{\arraystretch}{1.3}
\iota_\sigma^{-1}(\mc G)=\mc G_\sigma,\quad(\iota_{\sigma*}A)_{\sigma'}=\begin{cases}A&\text{if }\sigma'\geq\sigma\\0&\text{if otherwise}\end{cases},\quad(\iota_{\sigma!}A)_{\sigma'}=\begin{cases}A&\text{if }\sigma'\leq\sigma\\0&\text{if otherwise}\end{cases}.
\]

\begin{lemma}
\label{lem:BasicsFanCat}
The functors $\iota^{-1}_\sigma$ and $\Gamma(\Sigma,\cdot)=\varprojlim$ map injectives to injectives.
\end{lemma}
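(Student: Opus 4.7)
The plan is to invoke the standard fact that a right adjoint whose left adjoint is exact sends injectives to injectives, and then exhibit such left adjoints for both functors in question.

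For $\iota_\sigma^{-1}$, the adjunctions in~\eqref{eq:Adjunc} give $\iota_{\sigma!}\dashv\iota_\sigma^{-1}$, so it suffices to check that $\iota_{\sigma!}\colon\VecK\to\Sh(\Sigma,\VecK)$ (respectively $\VecDelta\to\Sh(\Sigma,\VecDelta)$) is exact. From the stalk formula $(\iota_{\sigma!}A)_{\sigma'}=A$ if $\sigma'\leq\sigma$ and $0$ otherwise, exactness can be checked stalkwise: a short exact sequence in $\VecK$ or $\VecDelta$ is sent to the same sequence (possibly padded by zeros) at each stalk, which is again short exact. Hence $\iota_{\sigma!}$ is exact and $\iota_\sigma^{-1}$ preserves injectives.

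For $\Gamma(\Sigma,\cdot)=f_*$ with $f\colon\Sigma\to\bullet$ the constant map, the adjunction $f^{-1}\dashv f_*$ identifies the left adjoint with the constant sheaf functor $A\mapsto\underline{A}_\Sigma$. This functor is manifestly exact: stalkwise it is the identity, so it preserves short exact sequences. Therefore $\Gamma(\Sigma,\cdot)$, being right adjoint to an exact functor, sends injectives to injectives.

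The main (in fact only) subtlety is making sure the two abelian categories in play, $\VecK$ and $\VecDelta$, both admit enough injectives and that the preservation argument goes through; this was already recorded via~\cite[Corollary~2.2]{ladkani} just before~\eqref{eq:FFunctor}, so the rest is the purely formal adjunction argument sketched above. No combinatorial input about the fan $\Sigma$ is needed beyond the explicit stalk descriptions in~\eqref{eq:NatMaps}.
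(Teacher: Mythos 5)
Your proof takes essentially the same route as the paper: both exhibit $\iota_{\sigma!}$ and $f^{-1}$ as exact left adjoints and invoke the standard fact that a right adjoint to an exact functor preserves injectives. The only deviation is that you verify exactness of $\iota_{\sigma!}$ directly from the stalk formula in~\eqref{eq:NatMaps}, whereas the paper cites~\cite[II.6.3]{iversen} for exactness of $\iota_{\sigma!}$ along a locally closed embedding; this is a harmless substitution.
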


\begin{proof}
Both $\iota^{-1}_\sigma$ and $f_*=\Gamma(\Sigma,\cdot)$ are right-adjoint to exact functors. For the functor $f_*=\Gamma(\Sigma,\cdot)=\varprojlim$ this follows from the first adjunction formula~\eqref{eq:Adjunc}. For $\iota^{-1}$, this follows from the second adjunction formula~\eqref{eq:Adjunc} and the fact that $\iota_{\sigma!}$ is exact as $\iota_\sigma$ is a locally closed embedding~\cite[II.6.3]{iversen}. 
\end{proof}

\begin{lemma}
\label{lem:DeltaTop}
The functor
\begin{equation}
\label{eq:DeltaTop}
\Gamma^\Delta\colon\Sh(\Sigma,\Delta)\to\Sh(\Sigma),\quad\Gamma^\Delta(\mc G)_\sigma:=\Gamma(\Delta,\mc G_\sigma)
\end{equation}
is left-exact and maps injectives to injectives. Furthermore, its right-derived functor satisfies
\[
\R\Gamma^\Delta(\mc F_\kbb)=\Gamma^\Delta(\mc F_\kbb) 
\]
for the sheaf $\mc F_\kbb$ defined in~\eqref{eq:FFunctor}. 
\end{lemma}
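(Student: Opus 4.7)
I would dispatch the three assertions by stalkwise arguments on $\Sigma$, each time reducing to a statement about sheaves on $\Delta$.

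\emph{Left-exactness.} Since exactness in $\Sh(\Sigma)$ is detected on stalks (as $\Sigma$ is a finite $T_0$-space), it suffices to check that each functor $\mc G\mapsto \Gamma^\Delta(\mc G)_\sigma = \Gamma(\Delta,\mc G_\sigma)$ is left-exact. This is the composition of the exact stalk functor $\mc G\mapsto \mc G_\sigma$ on $\Sh(\Sigma,\Delta)$ with the classical left-exact global-section functor $\Gamma(\Delta,-)$ on $\Sh(\Delta,\VecK)$, hence left-exact.

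\emph{Preservation of injectives.} I would exhibit an exact left adjoint $c^\Delta\colon\Sh(\Sigma)\to\Sh(\Sigma,\Delta)$. Sending $\mc H\in\Sh(\Sigma)$ to the functor $\sigma\mapsto (\underline{\mc H_\sigma})_\Delta$, the constant sheaf on $\Delta$ with value $\mc H_\sigma$, and using the restriction maps of $\mc H$ to assemble these into an object of $\Sh(\Sigma,\Delta)$, the classical adjunction $\underline{(\cdot)}_\Delta \adjF \Gamma(\Delta,-)$ at each stalk is natural in $\sigma$ and yields an adjunction $c^\Delta \adjF \Gamma^\Delta$. The functor $c^\Delta$ is exact because the constant-sheaf functor on $\Delta$ is exact on stalks and exactness in both $\Sh(\Sigma)$ and $\Sh(\Sigma,\Delta)$ is detected stalkwise. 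As the right adjoint of an exact functor, $\Gamma^\Delta$ therefore sends injectives to injectives.

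\emph{Acyclicity on $\mc F_\kbb$.} Pick an injective resolution $\mc F_\kbb\to\mc I^\bullet$ in $\Sh(\Sigma,\Delta)$, and apply $\iota_\sigma^{-1}$. By Lemma~\ref{lem:BasicsFanCat}, $\iota_\sigma^{-1}$ is exact and preserves injectives, so $\mc F_\sigma = (\mc F_\kbb)_\sigma \to \mc I^\bullet_\sigma$ is a resolution by objects injective in $\Sh(\Delta)$. Consequently $\Gamma(\Delta,\mc I^\bullet_\sigma)$ computes $\R\Gamma(\Delta,\mc F_\sigma)$, which by Proposition~\ref{prop:Acyclic} is concentrated in degree zero and equals $\Gamma(\Delta,\mc F_\sigma) = \Gamma^\Delta(\mc F_\kbb)_\sigma$. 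Reading this stalkwise gives $(\R^i\Gamma^\Delta\mc F_\kbb)_\sigma = 0$ for every $i\geq 1$ and every $\sigma\in\Sigma$; since vanishing in $\Sh(\Sigma)$ is a stalkwise property, $\R\Gamma^\Delta(\mc F_\kbb)=\Gamma^\Delta(\mc F_\kbb)$.

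No serious obstacle arises; the whole lemma is essentially a bookkeeping exercise that packages Lemma~\ref{lem:BasicsFanCat} and Proposition~\ref{prop:Acyclic}. The only delicate point is verifying that $\iota_\sigma^{-1}$ carries an injective resolution to a resolution by injectives, which is precisely what Lemma~\ref{lem:BasicsFanCat} guarantees; the acyclicity input then comes entirely from Proposition~\ref{prop:Acyclic}, which is in turn the bulk of the work done in Section~\ref{sec:CohomTS}.
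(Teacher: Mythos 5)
Your proof is correct and takes essentially the same route as the paper: the exact left adjoint $c^\Delta$ is precisely the paper's $d^{-1}$, and for the acyclicity of $\CF_\kbb$ you unroll the same composition-of-derived-functors argument (via Lemma~\ref{lem:BasicsFanCat} and Proposition~\ref{prop:Acyclic}) into an explicit injective resolution rather than invoking the Grothendieck $\R(F\circ G)=\R F\circ\R G$ identity directly. The two presentations are equivalent, since the latter is proved by exactly the resolution argument you write out.
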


\begin{proof}
Since $\Gamma(\Delta,\cdot) \colon \VecDelta\to\VecK$ has a left adjoint which is exact, so does $\Gamma^\Delta\colon\Sh(\Sigma,\Delta)\to\Sh(\Sigma)$. Namely, $d^{-1}\colon\Sh(\Sigma)\to\Sh(\Sigma,\Delta)$ sends $\CG \in \Sh(\Sigma)$ to the functor $\sigma \mapsto \underline{\CG_\sigma}$ where $\underline{\CG_\sigma}$ denotes the constant sheaf on $\Delta$ associated to the vector space $\CG_\sigma$. Consequently, $\Gamma^\Delta$ maps injectives to injectives.

\medskip

Next, we prove $\R\Gamma^\Delta(\mc F_\kbb)=\Gamma^\Delta(\mc F_\kbb)$ by proceeding again stalkwise. Since $\iota^{-1}_\sigma$ is exact its right-derived functor exists and satisfies $\R\iota^{-1}_\sigma=\iota^{-1}_\sigma$. Moreover, Lemma~\ref{lem:BasicsFanCat} asserts that $\iota^{-1}_\sigma$ maps injectives to injectives. As $\iota^{-1}_\sigma\circ\Gamma^\Delta(\mc G)=\Gamma(\Delta,\mc G_\sigma)$ for all $\mc G \in \Sh(\Sigma,\Delta)$, we deduce that
\begin{align*}
\big(\R\Gamma^\Delta(\CF_\kbb)\big)_\sigma=\R\iota^{-1}_\sigma\circ\R\Gamma^\Delta(\CF_\kbb)&=\R(\iota^{-1}_\sigma\circ\Gamma^\Delta)(\CF_\kbb)\\
&=\R(\Gamma^\Delta\circ\iota^{-1}_\sigma)(\CF_\kbb)=\gH^\bullet(\Delta,\CF_\sigma).
\end{align*}
But Proposition~\ref{prop:Acyclic} implies that $\gH^\bullet(\Delta,\CF_\sigma)=\gH^0(\Delta,\mc F_\sigma)$ 
for every $\sigma\in\Sigma$ whence the claim.
\end{proof}

The next result is a simple consequence of~\cite[Proposition II.9.2]{Hartshorne}.

\begin{lemma}
\label{lem:LocalGlobalCommutes}
The following diagram commutes
\begin{equation}
\begin{tikzcd}
\Sh(\Sigma,\Delta) \ar[r, "\Gamma^\Delta"] \ar[d, "\Gamma_\Sigma"] & \Sh(\Sigma) \ar[d, "\Gamma_\Sigma"] \\
\VecDelta \ar[r, "\Gamma^\Delta"] & \VecK
\end{tikzcd}
\end{equation}
where $\Gamma^\Delta$ denotes in both cases the functor $\Gamma(\Delta,\cdot)$.
\end{lemma}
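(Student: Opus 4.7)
The plan is to unpack the two compositions and recognise the identity as the general fact that limits commute with limits.

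First, I would write out both compositions explicitly. For $\mc G \in \Sh(\Sigma,\Delta) = \Func(\Sigma^\opp,\VecDelta)$, the left-then-bottom path produces
\[
\Gamma^\Delta \circ \Gamma_\Sigma(\mc G) \;=\; \Gamma\Bigl(\Delta,\; \varprojlim_{\sigma\in\Sigma} \mc G_\sigma\Bigr),
\]
while the top-then-right path produces
\[
\Gamma_\Sigma \circ \Gamma^\Delta(\mc G) \;=\; \varprojlim_{\sigma\in\Sigma} \Gamma(\Delta, \mc G_\sigma).
\]
So the content of the lemma is the natural isomorphism between these two $\kk$-vector spaces.

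Second, I would invoke the general principle that inverse limits in $\VecDelta$ are computed sectionwise. Concretely, the forgetful functor from $\VecDelta$ to presheaves is a right adjoint (to sheafification) and therefore preserves limits; limits of presheaves are obviously computed sectionwise. Hence for every open $U\subseteq\Delta$,
\[
\bigl(\varprojlim_\sigma \mc G_\sigma\bigr)(U) \;=\; \varprojlim_\sigma \mc G_\sigma(U).
\]
Taking $U=\Delta$ yields the required identification, and naturality in $\mc G$ is immediate from the universal property. Equivalently, one can simply remark that $\Gamma(\Delta,-)\colon\VecDelta\to\VecK$ is itself a right adjoint (to the constant-sheaf functor), so it preserves the inverse limit indexed by $\Sigma^\opp$; this is the content of the cited \cite[Proposition II.9.2]{Hartshorne}.

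Third, I would check that the isomorphism is compatible with the structure maps on both sides, i.e.\ that it intertwines the canonical projections to $\Gamma(\Delta,\mc G_\tau)$ for each $\tau\in\Sigma$. This is purely tautological: both maps are induced by the restriction morphism $\varprojlim_\sigma\mc G_\sigma\to\mc G_\tau$ evaluated on $\Delta$. I do not anticipate any real obstacle; the only point worth spelling out is the sectionwise computation of limits of sheaves, and once that is in place, the whole statement reduces to the associativity of limits.
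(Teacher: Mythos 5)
Your proposal is correct, and it is the same argument the paper gestures at: the paper's entire proof is a citation of Hartshorne, Proposition II.9.2, which is precisely the fact that inverse limits of sheaves are computed sectionwise, so $\Gamma(\Delta,\varprojlim_\sigma \mc G_\sigma)=\varprojlim_\sigma\Gamma(\Delta,\mc G_\sigma)$. Your write-up simply unpacks the cited result (via the right-adjointness of either the forgetful functor to presheaves or of $\Gamma(\Delta,\cdot)$ itself), which is the intended content.
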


\subsubsection{The Klyachko-\v Cech complex}
\label{subsec:KCSC}
With $\Sigma$ we associate the complex
\begin{equation}
\label{eq:KCComplex}\CC(\Sigma)^\bullet\colon\quad0\to\bigoplus_{\sigma\in\Sigma(\kdimX)}\kk_\sigma
\to\bigoplus_{\tau\in\Sigma(\kdimX-1)}\kk_\tau\to\ldots\to\bigoplus_{\rho\in\Sigma(1)}\kk_\rho\to\kk_0\to 0,
\end{equation}
where in each direct sum we index the base field $\kk_\sigma:=\kk$ over the set of $d$-dimensional cones $\Sigma(d)$. The nontrivial arrows $\kk_\sigma\to\kk_\tau$ are $\pm\Id_\kk$ for facets $\tau$ of $\sigma$, the sign being determined by the natural orientation of the pair $(\tau,\sigma)$.

\medskip

For a toric sheaf $\CE$ over $X$, Klyachko~\cite{klyachko} built a subcomplex 
\begin{equation}
\label{eq:KCComplex:two}
\KK(\CE)^\bullet_m\colon\quad0\to\bigoplus_{\sigma\in\Sigma(\kdimX)}\!\gH^0(U_\sigma,\CE)_m\to\ldots\to\bigoplus_{\rho\in\Sigma(1)}\!\gH^0(U_\rho,\CE)_m\to E\to 0
\end{equation}
of $E\otimes_\kk\kk[M]\otimes\CC(\Sigma)^\bullet$ by replacing the subspace $E\otimes_\kk\kk_\sigma\cdot x^m$ with $\gH^0(U_\sigma,\CE)_m=\eval_\sigma(m)\subseteq E$, cf.~\eqref{eq:EvalSec} on page~\pageref{eq:EvalSec}. The cohomology of this complex computes the piece of degree $m$ of the sheaf cohomology of $\CE$ on $X$, that is,
\[
\gH^\bullet(\KK(\CE)^\bullet_m)\cong\gH^\bullet(X,\CE)_m. 
\]
As we focus on the case $m=0$ we set
\[
\KK(\CE)^\bullet:=\KK(\CE)^\bullet_0.
\]

\medskip

This links into the sheaf $\mc F=\varprojlim\mc F_\kbb$ as follows. Since $H^0(U_\sigma,\CE)_0 = \Gamma^\Delta(\CF_\sigma)$ by \eqref{eq:H00}, we can write the complex of~\eqref{eq:KCComplex:two} for $m=0$ as
\begin{equation}
\label{eq:BL}
\begin{tikzcd}[column sep=2ex, row sep=2ex]
0\ar[r] &
\displaystyle \bigoplus_{\mathclap{\sigma\in\Sigma(\kdimX)}} \Gamma^\Delta(\mc F_\kbb)_\sigma \ar[d, equal]
\ar[r] &
\displaystyle \bigoplus_{\mathclap{\tau\in\Sigma(\kdimX-1)}} \Gamma^\Delta(\mc F_\kbb)_\tau \ar[d, equal]
\ar[r] & \ldots \ar[r] &
\displaystyle \bigoplus_{\mathclap{\rho\in\Sigma(1)}} \Gamma^\Delta(\mc F_\kbb)_\rho \ar[d, equal]
\ar[r] &
\Gamma^\Delta(\mc F_\kbb)_0 \ar[d, equal]
\ar[r] & 
0\\
& \Gamma^\Delta(\mc F_\sigma) & \Gamma^\Delta(\mc F_\tau) & & \Gamma^\Delta(\mc F_\rho) & \Gamma^\Delta(\mc F_0) \mathrlap{=E}
\end{tikzcd}
\end{equation}
where 
we used $\mc F_0 = \underline E$ on $\Delta$. By~\cite[Section 3.5]{bl03}, this complex represents $\gH^\bullet\!\big(\Sigma,\Gamma^\Delta(\mc F_\kbb)\big)$ in the bounded derived category $\CD^b(\VecK)$. Since $\R\Gamma_\Sigma=\R\varprojlim$, 
we deduce
\begin{equation}
\label{eq:Step1}
\gH^\bullet(X,\CE)_0=(\R\varprojlim\circ\Gamma^\Delta)(\CF_\kbb)
=\H(\Delta,\R\varprojlim\mc F_\kbb).
\end{equation}
The latter equality is just the derived version of Lemma~\ref{lem:LocalGlobalCommutes}.
To conclude, it remains to prove the

\begin{proposition}
\label{prop:DerLimLoc}
The functor $\mc F$ from~\eqref{eq:FFunctor} satisfies
\[
\R\varprojlim\mc F_\kbb=\varprojlim\mc F_\kbb \left(=\mc F(\CE) \right).
\] 
\end{proposition}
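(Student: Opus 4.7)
The plan is to represent $\R\varprojlim\CF_\kbb$ by an explicit complex in $\VecDelta$ and verify that its cohomology is concentrated in degree zero. On the finite $T_0$-space $\Sigma$ the Klyachko-\v Cech type complex
\[
\KK^\bullet(\CF_\kbb):\quad 0\to\bigoplus_{\sigma\in\Sigma(n)}\CF_\sigma\to\bigoplus_{\tau\in\Sigma(n-1)}\CF_\tau\to\cdots\to\bigoplus_{\rho\in\Sigma(1)}\CF_\rho\to\CF_0\to 0
\]
represents $\R\varprojlim\CF_\kbb$ in the bounded derived category $\CD^b(\VecDelta)$, by the same formal argument from~\cite[Sect.~3.5]{bl03} already invoked at~\eqref{eq:BL}, now carried out in the $\VecDelta$-valued setting. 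Its degree-zero cohomology is $\varprojlim\CF_\kbb=\CF$, so the proof reduces to showing that $\KK^\bullet(\CF_\kbb)$ has vanishing cohomology in positive degrees, a property that may be verified stalkwise on $\Delta$.

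To simplify further, I would invoke the canonical resolution of Corollary~\ref{coro:CanSeq}. Twisting by $\CO_X(\Delta)$ if necessary so that every $\CE_{\S,\T}=\overline\S\otimes\CO_X(\T)$ appearing in the resolution is amply decorated, and using the exactness of $\CF_\kbb$ from Lemma~\ref{lem:ExacFunc}, one obtains a finite exact resolution of $\CF_\kbb(\CE)$ by direct sums of $\overline\S\otimes\CF_\kbb(\CO_X(\T))$ in $\Sh(\Sigma,\Delta)$. A standard resolution argument then reduces the vanishing of $\R^i\varprojlim\CF_\kbb(\CE)$ for $i\geq 1$ to the corresponding vanishing for each ample line bundle $\CO_X(\nabla^+)$, since the factor $\overline\S$ enters only as a scalar coefficient.

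For $\CE=\CO_X(\nabla^+)$ with $\nabla^+$ ample, the stalks $(\CF_\sigma)_u$ equal $\kk$ exactly when $\sigma\in S_u:=\{\sigma\in\Sigma\mid u\in\innt_\Delta(\nabla^++\sigma^\vee)\}$ and vanish otherwise. The set $S_u$ is downward-closed in $\Sigma$ (since $\tau^\vee\supseteq\sigma^\vee$ whenever $\tau\leq\sigma$), so its complement $Z_u=\Sigma\setminus S_u$ is upward-closed, i.e.\ closed in the Alexandrov topology on $\Sigma$. Using the short exact sequence $0\to j_!\underline{\kk}_{S_u}\to\underline{\kk}_\Sigma\to i_*\underline{\kk}_{Z_u}\to 0$ together with the contractibility of the order complex of $\Sigma$ (which has $\{0\}$ as unique minimum), the positive-degree cohomology of $\KK^\bullet(\CF_\kbb)_u$ becomes isomorphic to the reduced cohomology of the order complex of $Z_u$. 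The crucial polyhedral input, parallel to the retractibility argument in the proof of Lemma~\ref{lem:LocalVan}, is that ampleness of $\nabla^+$ forces $Z_u$ to admit a unique minimal element---namely the smallest $\sigma\in\Sigma$ for which $\nabla^+(\sigma)+\sigma^\vee$ fails to contain a neighbourhood of $u$ in $\Delta$---so that the order complex of $Z_u$ is a cone and hence contractible. Verifying this property of $Z_u$ for every $u\in\Delta$ is the main technical obstacle.
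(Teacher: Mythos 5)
Your overall architecture is reasonable and partly parallels the paper's: reduce to stalks at $u\in\Delta$, then reduce the general toric sheaf to ample line bundles via the canonical resolution of Corollary~\ref{coro:CanSeq} together with Lemma~\ref{lem:ExacFunc}, and finally argue contractibility of a simplicial complex attached to the subset $Z_u\subseteq\Sigma$. The reduction to line bundles via dimension shifting along the exact complex $\CF_\kbb(\CE_{\S,\T}^{\raisebox{1pt}{\scalebox{0.7}{$\bullet$}}})\to\CF_\kbb(\CE)$ is in fact a genuine simplification over the paper's strata double complex of Lemma~\ref{lem:FinalLem}; the paper chooses to keep the full stratification and pays for it with an extra \v Cech-over-strata layer, whereas your route trades that for a diagram chase with $\varprojlim$-acyclics, which is perfectly legitimate.

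The gap is in your final, ``crucial polyhedral input''. The claim that $Z_u$ admits a unique minimal element (so that its order complex is a cone) is false. Take $X=\PP^1\times\PP^1$ with rays $\pm e_1,\pm e_2$, $\nabla^+=[0,1]^2$, $\Delta=[-1,2]^2$ and $u=(2,2)$. Then $\nabla^++(-e_1)^\vee=\{x\leq 1\}$ and $\nabla^++(-e_2)^\vee=\{y\leq 1\}$, so both rays $-e_1$ and $-e_2$ belong to $Z_u$, while their unique common face $\{0\}$ satisfies $\nabla^++0^\vee=M_\R$ and hence does \emph{not} lie in $Z_u$; thus $Z_u$ has two incomparable minimal elements. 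The order complex here still happens to be contractible (it is a path), but not for the reason you give, and your argument offers no replacement. What actually makes the complex contractible is a geometric fact, not an order-theoretic one: under the face-cone bijection of the normal fan, $Z_u$ corresponds to a subcomplex $\Delta'$ of $\partial\Delta$ which is precisely the part of $\partial\Delta$ ``visible from $u$ relative to $\nabla^+$'', i.e.\ the lower boundary of $\nabla^+$ as seen from the cone $\conv(\nabla^+\cup\{u\})$. Such visible boundaries are contractible by radial retraction towards $u$. This is exactly what Lemma~\ref{lem:Subcomplex} of the paper proves, including the boundary cases $\langle u,\rho\rangle=\min\langle\Delta,\rho\rangle$ that require the ad hoc relations~\eqref{eq:EqRa}--\eqref{eq:EqRb}. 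You need to replace your poset-theoretic claim by this geometric contraction (or by some equivalent, e.g.\ a shelling or nerve argument on the visible facets); as stated the key step of the proof does not go through.
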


This will occupy us next.

\subsection{Complexes of \v Cech type}
\label{subsec:CCT}
To compute $\R\varprojlim\mc F_\kbb$ we consider several complexes of \v Cech type. First, we prove that we can argue stalkwise to set up the initial complex.

\subsubsection{The initial complex}
For $u\in\Delta$ we consider the functor
\[
(\cdot)_u\colon\VecDelta\to\VecK 
\]
obtained by taking stalks at $u$, as well as the induced functor
\[
(\cdot)_u\colon\Sh(\Sigma,\Delta)\to\Sh(\Sigma).
\]
Note that $(\cdot)_u$ is exact on both $\VecDelta$ and $\Sh(\Sigma,\Delta)$.

\begin{lemma}
\label{lem:reductionStalks}
For all $u\in\Delta$, we have
\[
(\cdot)_u\circ\varprojlim=\varprojlim\circ(\cdot)_u
\]
as functors from $\Sh(\Sigma,\Delta)\to\VecK$. Furthermore, $(\cdot)_u$ maps injectives to $\varprojlim$-acyclics.
\end{lemma}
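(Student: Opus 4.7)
\quad The lemma has two independent claims. For the equality $(\cdot)_u\circ\varprojlim = \varprojlim\circ(\cdot)_u$, the plan is to exploit finiteness: $\Sigma$ is a finite category, so $\varprojlim_{\sigma\in\Sigma}$ is a finite limit in $\VecDelta$. Since the stalk functor $(\cdot)_u\colon\VecDelta\to\VecK$ is a filtered colimit, it commutes with finite limits, immediately giving $\bigl(\varprojlim_\sigma\mc G_\sigma\bigr)_u = \varprojlim_\sigma(\mc G_\sigma)_u$ for every $\mc G\in\Sh(\Sigma,\Delta)$.

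For the second assertion, my plan is to reduce to a convenient generating class of injectives. For each $\sigma\in\Sigma$ the functor $\iota_\sigma^{-1}\colon\Sh(\Sigma,\Delta)\to\VecDelta$ is exact (being stalk extraction at $\sigma$, cf.~\eqref{eq:NatMaps}), so by the same argument as in Lemma~\ref{lem:BasicsFanCat} its right adjoint $\iota_{\sigma*}$ preserves injectives. A standard functor-category argument then embeds every $\mc G\in\Sh(\Sigma,\Delta)$, via the units of these adjunctions, into the \emph{finite} product $\prod_{\sigma\in\Sigma}\iota_{\sigma*}\mc J_\sigma$, where $\mc J_\sigma\in\VecDelta$ is an injective hull of $\mc G_\sigma$; this product is itself injective. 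Consequently any injective $\mc I\in\Sh(\Sigma,\Delta)$ is a direct summand of such a product.

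Applying the additive functor $(\cdot)_u$ realises $\mc I_u$ as a summand of $\prod_\sigma(\iota_{\sigma*}\mc J_\sigma)_u$, and~\eqref{eq:NatMaps} yields $(\iota_{\sigma*}\mc J_\sigma)_u = \iota_{\sigma*}\bigl((\mc J_\sigma)_u\bigr)$ in $\Sh(\Sigma)$: both sides have stalk $(\mc J_\sigma)_u$ at $\sigma'\geq\sigma$ and $0$ elsewhere. Since $(\mc J_\sigma)_u$ is automatically injective in $\VecK$, and since $\iota_{\sigma*}\colon\VecK\to\Sh(\Sigma)$ preserves injectives by the same adjunction argument, each $\iota_{\sigma*}\bigl((\mc J_\sigma)_u\bigr)$ is injective in $\Sh(\Sigma)$. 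Products and summands of injectives being injective, $\mc I_u$ is injective and in particular $\varprojlim$-acyclic. I expect the main obstacle to be the embedding of an arbitrary $\mc G$ into a finite product of $\iota_{\sigma*}\mc J_\sigma$'s; this is formal, but it is exactly the step where the finiteness of $\Sigma$ becomes essential, since it guarantees that the product remains injective.
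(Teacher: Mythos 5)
Your proof is correct, and both halves take a genuinely different route from the paper's. For the commutation $(\cdot)_u\circ\varprojlim=\varprojlim\circ(\cdot)_u$, the paper presents $\varprojlim_\sigma\mc G_\sigma$ as the kernel of the first Čech map $\bigoplus_{\sigma\in\Sigma(n)}\mc G_\sigma\to\bigoplus_{\tau\in\Sigma(n-1)}\mc G_\tau$ and then takes stalks, using exactness of $(\cdot)_u$; you instead invoke the categorical fact that filtered colimits (the stalk functor) commute with finite limits ($\varprojlim$ over the finite poset $\Sigma$) in $\VecK$. Both are essentially the ``$\varprojlim$ is a finite limit, stalks are exact'' argument, one explicit and one abstract. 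For the second assertion the difference is more substantial: the paper argues that injective $\Rightarrow$ flasque, that flasqueness descends to the stalk sheaf $\mc G_u$ because $(\cdot)_u$ commutes with $\varprojlim$ over all subfans, and that flasque $\Rightarrow$ $\varprojlim$-acyclic. You instead prove the stronger statement that $(\cdot)_u$ carries injectives in $\Sh(\Sigma,\Delta)$ to injectives in $\Sh(\Sigma)$, by exhibiting every injective as a direct summand of a finite product $\prod_\sigma\iota_{\sigma*}\mc J_\sigma$ and checking that $(\cdot)_u$ sends each factor $\iota_{\sigma*}\mc J_\sigma$ to $\iota_{\sigma*}\bigl((\mc J_\sigma)_u\bigr)$, which is injective because every $\kk$-vector space is injective and $\iota_{\sigma*}$ preserves injectives (being right adjoint to the exact functor $\iota_\sigma^{-1}$). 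The embedding into $\prod_\sigma\iota_{\sigma*}\mc J_\sigma$ and the stalk computation are correct, and the argument is sound. Your route buys a cleaner conclusion (injective $\mapsto$ injective rather than merely injective $\mapsto$ acyclic) and sidesteps any appeal to flasqueness of $\mb A$-valued sheaves, at the modest cost of the extra structural input on injectives in the functor category $\Func(\Sigma^\opp,\VecDelta)$.
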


\begin{proof}
The first statement follows from
\[
0\to\varprojlim_{\sigma\in\Sigma}\mc G_\sigma\to\bigoplus_{\sigma\in\Sigma(n)}\mc G_\sigma\to\bigoplus_{\tau\in\Sigma(n-1)}\mc G_\tau
\]
and talking stalks. Next assume that $\mc G\in\Sh(\Sigma,\Delta)$ is injective. In particular, $\mc G$ is flasque so that $\varprojlim\mc G_\sigma=\mc G(\Sigma)\to\mc G(\Sigma')=\varprojlim_{\Sigma'}\mc G$ is surjective for all subfans $\Sigma'$. Since $(\cdot)_u$ commutes with $\varprojlim$, this implies that $\mc G_u(\Sigma)\to\mc G_u(\Sigma')$ is surjective, that is, $\mc G_u$ is flasque. Hence it is acyclic for $\varprojlim=\Gamma(\Sigma,\cdot)$.
\end{proof}

We now turn to the proof of Proposition~\ref{prop:DerLimLoc}. By the previous lemma, 
\[
(\R\varprojlim\mc F_\kbb)_u=\R\varprojlim\big(\mc F_{\kbb u}\big)
\] 
and we need to show that the right hand side equals $\varprojlim\big(\mc F_{\kbb u}\big)=\mc F(\CE)_u$ for all $u\in\Delta$. For this we put
\begin{align}
F(\sigma):=\CF_{\sigma,u}&=\{e\in E\mid u\in\innt_{\Delta}\!\CPvar^+_\sigma(e)\}\nonumber\\
&=\bigcup\{\S\in\Strat_\sigma\mid u\in\innt_{\Delta}\!\CPvar^+_\sigma(\S)\}.\label{eq:Fsig}
\end{align}
Consider the complex
\begin{equation}
\label{eq:BLComplex}
\bigoplus_{\sigma\in\Sigma(\kdimX)}F(\sigma)\to\bigoplus_{\tau\in\Sigma(\kdimX-1)}F(\tau)\to\ldots\to
\bigoplus_{\rho\in\Sigma(1)}F(\rho)\to F(0)=E\to0,
\end{equation}
As for~\eqref{eq:BL}, this complex is isomorphic to $\R\varprojlim(\mc F_{\kbb u})$ in the bounded derived category $\CD^b(\VecK)$. The kernel of $\bigoplus_{\sigma\in\Sigma(\kdimX)}F(\sigma)\to\bigoplus_{\tau\in\Sigma(\kdimX-1)}F(\tau)$ is just 
\[
\varprojlim(\CF_{\kbb u})=\CF(\CE)_u=\bigcap_{\rho\in\Sigma(1)}F(\rho); 
\]
the latter equality follows from $\CD^+(e)=\bigcap_{\rho\in\Sigma(1)}\CD^+_\rho(e)$. Thus, showing exactness of the augmented complex
\begin{equation}
\label{eq:FComp}
\opn{K}(F)^\bullet\colon\;0 \to\hspace{-0.2em}\bigcap_{\rho\in\Sigma(1)}\hspace{-0.4em} F(\rho)\to\bigoplus_{\sigma\in\Sigma(\kdimX)}\hspace{-0.2em}F(\sigma)\to\ldots\to\bigoplus_{\rho\in\Sigma(1)}\hspace{-0.2em}F(\rho)\to E\to0
\end{equation}
will immediately imply Proposition~\ref{prop:DerLimLoc}. For this it will be convenient to pass from~\eqref{eq:KCComplex} to the augmented complex
\begin{equation}
\label{eq:KCNew}
\CC(\Sigma)^\bullet\colon\quad0\to\kk\to\bigoplus_{\sigma\in\Sigma(\kdimX)}\hspace{-0.2em}\kk_\sigma\to\hspace{-0.4em}\bigoplus_{\tau\in\Sigma(\kdimX-1)}\hspace{-0.6em}\kk_\tau\to\ldots\to\bigoplus_{\rho\in\Sigma(1)}\hspace{-0.2em}\kk_\rho\to\kk_0\to0
\end{equation}
which we continue to denote $\CC(\Sigma)^\bullet$ to keep notation tight. In particular, we can regard $\opn{K}(F)^\bullet$ as a subcomplex of $E\otimes\CC(\Sigma)^\bullet$. Since $\tau\leq\sigma$ implies $\CPvar^+_\tau(e)=\CPvar^+_\sigma(e)+\tau^\vee$ we have $F(\sigma)=\bigcap_{\rho\in\sigma(1)}F(\rho)$. It remains to understand $F(\rho)$ for $\rho\in\Sigma(1)$. 

\medskip

Recall that $\innt_{\Delta}\CPvar^+_\rho(e)$ in Definition~\eqref{eq:Fsig} refers to the interior points of $\CPvar^+_\rho(e)\cap\Delta$ relative to $\Delta$. This can lead to a different behaviour depending on whether $\CPvar^+_\rho(e)$ intersects $\Delta$ in a face or not.

\begin{example}
\label{exam:SpecRel}
We revisit~\eqref{ssec:cohom:O-three} and consider the line bundle $\CO_{\PP^2}(-3D_{\rho_0})=\CO_{\PP^2}(D_{\rho_0}-4D_{\rho_0})$. Here, $\CPvar^+(e)=\Delta_1$ is the standard simplex spanned by $[0,0]$, $[1,0]$ and $[0,1]$. For $\Delta=4\Delta_1$ we obtain
\[
\innt_\Delta\CPvar^+_{\rho_0}=\Delta\cap\{\langle u,\rho_0\rangle<1\}\quad\text{and}\quad\innt_\Delta\CPvar^+_{\rho_i}=\Delta\cap\{\langle u,\rho_i\rangle\geq0\},\quad i=1,2.
\]
On the other hand, $\Delta=4\Delta_1-[1,0]$ gives $\innt_\Delta\CPvar^+_{\rho_1}=\Delta\cap\{\langle u,\rho_1\rangle>0\}$, see also Figure~\ref{fig:LeqRho}.
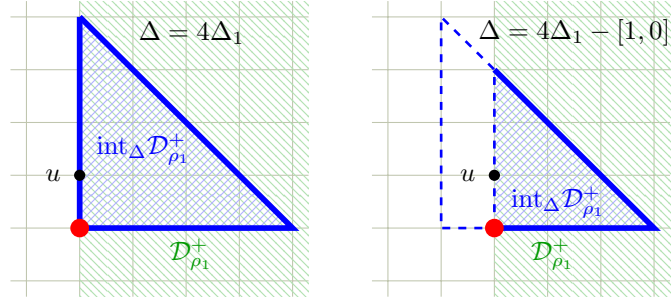
\begin{figure}[ht]
\newcommand{\scaleB}{0.7}
\begin{tikzpicture}[scale=\scaleB]
\draw[color=oliwkowy!40] (-1.3,-1.3) grid (4.3,4.3);
\fill[pattern color=green!30, pattern=north west lines]
  (0,-1.3) -- (4.3,-1.3) -- (4.3,4.3) -- (0,4.3);
\fill[pattern color=blue!30, pattern=north east lines]
  (0,0) -- (4,0) -- (0,4) -- (0,0);
\draw[thick, color=blue]
  (1.2,1.5) node{$\innt_\Delta\!\CPvar^+_{\rho_1}$};
\draw[thick, color=green] 
  (2.1,-0.5) node{$\CPvar^+_{\rho_1}$};
\draw[line width=0.75mm, blue]
  (0,0) -- (4,0) -- (0,4);
\draw[line width=0.75mm,blue]
  (0,0) -- (0,4);
\draw[thick, color=black] 
  (2.1,3.7) node{$\Delta=4\Delta_1$};
\fill[thick, color=red]
  (0,0) circle (5pt);
\fill
  (0,1) circle (3pt);
\draw[thick, color=black] 
  (-0.5,1) node{$u$};
\end{tikzpicture}
\qquad
\begin{tikzpicture}[scale=\scaleB]
\draw[color=oliwkowy!40] (-1.3,-1.3) grid (4.3,4.3);
\fill[pattern color=green!30, pattern=north west lines]
  (1,-1.3) -- (4.3,-1.3) -- (4.3,4.3) -- (1,4.3);
\fill[pattern color=blue!30, pattern=north east lines]
  (1,0) -- (4,0) -- (1,3) -- (1,0);
\draw[thick, color=blue]
  (2.2,0.5) node{$\innt_\Delta\!\CPvar^+_{\rho_1}$};
\draw[thick, color=green] 
  (2.1,-0.5) node{$\CPvar^+_{\rho_1}$};
\draw[line width=0.75mm, blue]
  (1,0) -- (4,0) -- (1,3);
\draw[line width=0.35mm, dashed, blue]
  (1,0) -- (1,3) -- (0,4) -- (0,0) -- (1,0);
\draw[thick, color=black] 
  (2.5,3.7) node{$\Delta=4\Delta_1-[1,0]$};
\fill[thick, color=red]
  (1,0) circle (5pt);
\fill
  (1,1) circle (3pt);
\draw[thick, color=black] 
  (0.5,1) node{$u$};
\end{tikzpicture}
\caption{$\innt_\Delta\CPvar^+_{\rho_1}$ for $4\Delta_1$ (left hand side) and $4\Delta_1-[1,0]$ (right hand side). In the latter case, it comprises the blue triangle except the dashed border which also includes the origin and $[0,3]$.}
\label{fig:LeqRho}
\end{figure}
\end{example}

In order to cover these different cases effectively, we pose the following relations on the reals depending on the $u\in\Delta$ fixed above:
\begin{equation}
\label{eq:EqRa}
\renewcommand{\arraystretch}{1.3}
a\gneqR b\quad\stackrel{\text{Def}}{\iff}\quad\begin{cases}a > b &\text{if }\langle u,\rho\rangle>\min\langle\Delta,\rho\rangle\,(\text{the {\em standard case}, e.g., }u\in\innt\Delta)\\
a\geq b &\text{if }\langle u,\rho\rangle=\min\langle\Delta,\rho\rangle
\end{cases}
\end{equation}
and
\begin{equation}
\label{eq:EqRb}
\renewcommand{\arraystretch}{1.3}
a\geqR b\quad\stackrel{\text{Def}}{\iff}\quad\begin{cases}
a\geq b & \text{if }\langle u,\rho\rangle>\min\langle\Delta,\rho\rangle\,(\text{the {\em standard case}})\\
a>b&\text{if }\langle u,\rho\rangle=\min\langle\Delta,\rho\rangle
\end{cases}
\end{equation}

\medskip

{\bf Example~\ref{exam:SpecRel} cont.} 
Using the notation above we have
\[
\innt_\Delta\CPvar^+_{\rho_1}=\{u\in\Delta\mid\langle u,\rho_1\rangle\gneqRa0\}. 
\]
Concretely, we pick $u=[0,1]$. If $\Delta=4\Delta_1$, we have $\min\langle\Delta,\rho_1\rangle=0$, that is, we are not in the standard case. Hence $\langle u,\rho_1\rangle\gneqRa0$ now means $\langle u,\rho_1\rangle\geq0$. If, on the other hand, $\Delta=4\Delta_1-[1,0]$, then $\min\langle\Delta,\rho_1\rangle=-1$ so that $\langle u,\rho_1\rangle\gneqRa0$ actually means $\langle u,\rho_1\rangle>0$. See also Figure~\ref{fig:LeqRho}.

\begin{remark}
The notation is inspired by the standard case. This comes at the price that if $\langle u,\rho\rangle=\min\langle\Delta,\rho\rangle$, $a\gneqR b$ can happen \emph{without} $a\geqR b$. Be that as it may, our notation avoids treating the nonstandard case separately; if confused the reader may just wish to assume the standard case, e.g., $u\in\Delta\setminus\partial\Delta$.
\end{remark}

Coming back to the proof of Proposition~\ref{prop:DerLimLoc}. we can rewrite~\eqref{eq:Fsig} as
\[
F(\rho)=\{e\in E\mid \langle u,\rho\rangle\gneqR\min\langle \CPvar^+_\rho(e),\rho\rangle\}= \{e\in E\mid \CPvar^+_\rho(e)\not\subseteq[\rho\geqR u]\},
\]
where, according to~\eqref{eq:Mad},
\[
[\rho\geqR u]=\{v\in M_\R\mid\langle v,\rho\rangle\geqR\langle u,\rho\rangle\}. 
\]

\begin{example}
\label{exam:LBCase}
Considering the case of a line bundle of~\eqref{subsubsec:cohomSheafL}, that is, $E=\kk$ and $\DeltaP=\CPvar^+(1)$, we have
\[
F(\rho)=\kk\iff\DeltaP\not\subseteq[\rho\geqR u]\iff\DeltaP\cap([\rho\lneqR u])\neq\emptyset.
\]
In particular, $u\in\innt_{\Delta}(\DeltaP)$ implies $\DeltaP\not\subseteq[\rho\geqR u]$ for all $\rho\in\Sigma(1)$, whence $F(\sigma)=\kk$ for all $\sigma\in\Sigma$. As a result, $\opn{K}(F)^\bullet$ is in this case just the (augmented) complex $\CC(\Sigma)^\bullet$, that is,
\begin{equation}
\label{eq:Case1}
0\to\fbox{$\bigcap_{\rho\in\Sigma(1)}F(\rho)=\kk$}\to\bigoplus_{\sigma\in\Sigma(\kdimX)}\hspace{-0.2em}\kk_\sigma\to\hspace{-0.4em}\bigoplus_{\tau\in\Sigma(\kdimX-1)}\hspace{-0.6em}\kk_\tau\to\ldots\to\bigoplus_{\rho\in\Sigma(1)}\hspace{-0.2em}\kk_\rho\to\kk_0\to0
\end{equation}
(recall that the indices record the corresponding copy of $\kk$). Since $\Sigma$ is the normal fan of the ample polytope $\DeltaV$, each $\sigma\in\Sigma$ corresponds to a face $\phi=\DeltaV(\sigma)$ of $\DeltaV$, cf.~\eqref{eq:sigmaCorner}. Accordingly, we also write $\sigma=\sigma(\phi)$ and $F(\sigma)=F(\phi)$. Further, we let $\DeltaV_i$ be the faces of dimension $i$. Our complex can thus be rewritten as
\begin{equation}
\label{eq:DeltaComplex}
\CC(\Delta)^\bullet\colon\quad0 \to\kk_\emptyset\to\bigoplus_{v\in\DeltaV_0}\hspace{-0.2em}\kk_v\to\hspace{-0.2em}\bigoplus_{e\in\DeltaV_1}\hspace{-0.2em}
\kk_e\to\ldots\to\hspace{-0.4em}\bigoplus_{f\in\DeltaV_{\kdimX-1}}\hspace{-0.6em}\kk_f\to\kk_\DeltaV\to0.
\end{equation}
Consequently, $\opn{K}(F)^\bullet$ is the complex of the reduced cohomology of the contractible polytope $\Delta$ and is therefore exact.
\end{example}

\subsubsection{The complex of a fan}
Returning to a general toric sheaf $\CE^+=\CE(\Delta)$ with associated Weil decoration $\CD^+$ and stratification $\Strat$, e.g.\ the canonical one, we take a stratum $\S\subseteq E$ and consider the full subfan $\Sigma^\S$ of $\Sigma$ defined by the rays
\[
\Sigma^\S(1):=\{\rho\in\Sigma(1)\mid \min\langle\CPvar^+(\S),\rho\rangle\lneqR\langle u,\rho\rangle\}.
\]
In the case of a line bundle, this is just the set $\{\rho\in\Sigma(1)\mid F(\rho)=\kk\}$; Example~\ref{exam:LBCase} dealt with $\DeltaP=\CPvar^+(\eta)$ and $\S=\eta=\kk^\times$.

\medskip

In general, the relation $F(\sigma)=\bigcap_{\rho\in\sigma(1)}F(\rho)$ implies that
\begin{equation}
\label{eq:FanChar}
\sigma\in\Sigma^\S\iff\S\subseteq F(\sigma)\iff\overline\S\subseteq F(\sigma).
\end{equation}

\begin{lemma}
\label{lem:Subcomplex}
The subcomplex
\begin{equation}
\label{eq:Subcomplex}
\CC^{\S\bullet}:=\CC(\Sigma^\S)^\bullet
\end{equation}
of $\CC(\Sigma)^\bullet$ in~\eqref{eq:KCNew} is exact. 
\end{lemma}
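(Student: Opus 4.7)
The plan is to identify $\CC^{\S\bullet}$ with a cellular cochain complex of the contractible polytope $\Delta$ (in absolute or relative form) and then conclude exactness from a topological vanishing.

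First I would split into two cases according to whether $\Sigma^\S=\Sigma$ or not. Since $\Sigma$ is complete and $\Sigma^\S$ is the full subfan on $\Sigma^\S(1)$, the equality $\Sigma^\S=\Sigma$ is equivalent to $\Sigma^\S(1)=\Sigma(1)$. In that case the subcomplex $\CC^{\S\bullet}$ inherits the augmentation $\kk$ on the left (its image $1\mapsto\sum_{\sigma\in\Sigma(n)}1_\sigma$ does lie in $\bigoplus_{\sigma\in\Sigma^\S(n)}\kk_\sigma$), and $\CC^{\S\bullet}=\CC(\Sigma)^\bullet$ is the augmented cellular cochain complex of $\Delta$ with its face decomposition $\{\Delta(\sigma)\}_{\sigma\in\Sigma}$. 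Exactness is then the vanishing $\tilde H^\bullet(\Delta,\kk)=0$, which holds by contractibility of $\Delta$.

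If $\Sigma^\S\subsetneq\Sigma$, the augmentation is not part of the subcomplex, and $\CC^{\S\bullet}$ consists of the unaugmented terms indexed by $\Sigma^\S$. A cone $\sigma\in\Sigma$ fails to lie in $\Sigma^\S$ precisely when it contains some ray $\rho\notin\Sigma^\S(1)$, equivalently when the face $\Delta(\sigma)$ is contained in
\[
K:=\bigcup_{\rho\in\Sigma(1)\setminus\Sigma^\S(1)}\Delta(\rho),
\]
a closed CW-subcomplex of $\partial\Delta$. Under this dual correspondence $\CC^{\S\bullet}$ is naturally identified with the relative cellular cochain complex $C^\bullet(\Delta,K)$. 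The long exact sequence of the pair $(\Delta,K)$ together with the contractibility of $\Delta$ gives $H^i(C^\bullet(\Delta,K))\cong\tilde H^{i-1}(K,\kk)$, so exactness of $\CC^{\S\bullet}$ reduces to acyclicity (in fact contractibility) of $K$.

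For the contractibility of $K$, the key observation is that the excluded rays $\Sigma(1)\setminus\Sigma^\S(1)$ are exactly the rays of $\Sigma$ contained in the polar cone $\mathcal K:=(u-\CPvar^+(\S))^\vee\subseteq N_\R$, which is a pointed convex cone because $\CPvar^+(\S)$ is full-dimensional (ample). Each facet $\Delta(\rho)$ is convex, and any nonempty intersection $\Delta(\rho_1)\cap\cdots\cap\Delta(\rho_k)$ equals the face $\Delta(\sigma)$ of $\Delta$ for $\sigma$ the cone spanned by $\rho_1,\dots,\rho_k$ in $\Sigma$. Hence by the nerve theorem $K$ is homotopy equivalent to the induced subcomplex of $\Sigma$ on the vertex set $\Sigma(1)\cap\mathcal K$, and one uses the convexity of $\mathcal K$ to contract this subcomplex toward an interior direction of $\mathcal K$.

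The main obstacle I expect is precisely this last step. When the excluded rays happen to span a single cone of $\Sigma$, then $K$ is literally the star in $\partial\Delta$ of the corresponding face of $\Delta$ and contractibility is immediate. In general the excluded rays need not span one cone of $\Sigma$ (as one already sees in a blowup of $\PP^2$), so a uniform contractibility argument must exploit the convex geometry of $\mathcal K$ more carefully — for instance by reducing to the connected components of $\Delta\setminus\CPvar^+(\S)$ via Lemma~\ref{lem:PolLem}, or by a shelling of the induced subcomplex adapted to the polytope $\CPvar^+(\S)$.
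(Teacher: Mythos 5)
Your reduction is exactly the paper's: you split off the case $\Sigma^\S=\Sigma$, identify the quotient of $\CC(\Sigma)^\bullet$ by $\CC^{\S\bullet}$ with the (augmented) cellular complex of the closed subcomplex $K=\Delta'=\bigcup_{\rho\notin\Sigma^\S(1)}\Delta(\rho)\subseteq\partial\Delta$, and invoke contractibility of $\Delta$ to reduce exactness of $\CC^{\S\bullet}$ to acyclicity of $K$. (Small sign slip: the condition $\min\langle\CPvar^+(\S),\rho\rangle\geqR\langle u,\rho\rangle$ places $\rho$ in $(\CPvar^+(\S)-u)^\vee$, not $(u-\CPvar^+(\S))^\vee$; with the paper's convention for duals this is the closure of the normal cone $\opn{N}(u)$ of $u$ in $\nabla(u)=\conv\!\big(\CPvar^+(\S)\cup\{u\}\big)$.) Your detour through the nerve theorem to the induced subcomplex of $\Sigma$ on $\Sigma(1)\cap\overline{\opn{N}(u)}$ is sound for the simple polytope $\Delta$, but it is an unnecessary translation: the paper works with $\Delta'$ directly inside $M_\R$, which makes the contraction visible.

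The genuine gap is the one you flag: you do not prove $K$ is acyclic, and neither of your two suggested routes is carried out. The paper closes this by observing that $K=\Delta'$ is precisely the subset of $\partial\Delta$ lying on the lower boundary of $\CPvar^+(\S)$ as seen from $u$ inside $\nabla(u)=\conv\!\big(\CPvar^+(\S)\cup\{u\}\big)$; the $\leqR/\geqR$ conventions \eqref{eq:EqRa}--\eqref{eq:EqRb} are designed exactly so that the facets $\Delta(\rho)$ with $\langle u,\rho\rangle=\min\langle\Delta,\rho\rangle$ land on the correct side when $u\in\partial\Delta$. With this description, $\Delta'$ is star-shaped with respect to $u$ and retracts to $u$; in particular it is contractible, and exactness follows. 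So your expectation that ``a uniform contractibility argument must exploit the convex geometry of $\mathcal K$'' is right, but the workable geometry is on the $M_\R$-side via $\nabla(u)$ and visibility from $u$, not via Lemma~\ref{lem:PolLem} or a shelling. To turn your write-up into a complete proof you would need to supply this last retraction.
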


\begin{proof}
If $\Sigma^\S(1)=\Sigma(1)$, then $\CC^{\S\bullet}=\CC(\Sigma)^\bullet$ and the claim follows from the discussion above. We therefore assume that $\Sigma^\S(1)$ is a proper, nonempty subset of $\Sigma(1)$.

\medskip

The complement $\Sigma'(1)$ of $\Sigma^\S(1)$ in $\Sigma(1)$ can be described as follows. Let 
\[
\DeltaP(u):=\conv\big(\CPvar^+(\S)\cup\{u\}\big)
\]
be the convex hull of $u$ and $\CPvar^+(\S)$. Further, let 
\[
\opn{N}(u):=\{a\in N_\R\mid\langle u,a\rangle\leqR\min\langle\CPvar^+(\S),a\rangle\},
\]
that is, the closure $\overline{\opn{N}(u)}$ is the cone $\normal\big(u,\DeltaP(u)\big)$ of the normal fan $\normal\big(\DeltaP(u)\big)$ dual to the smallest face of $\DeltaP(u)$ containing $u$. Then
\[
\Sigma'(1)=\{\rho\in\Sigma(1)\mid\min\langle\CPvar^+(\S),\rho\rangle\geq_\rho\langle u,\rho\rangle\}=\Sigma(1)\cap\opn{N}(u),
\]
the intersection taking place in $N_\R$. The rays of $\Sigma'(1)$ correspond to the set of facets
\[
\Delta':=\{\phi\leq\Delta\text{ facet}\mid\sigma(\phi)\notin\Sigma^\S\}=\{\phi(\sigma)\leq\Delta\mid\S\not\subseteq F(\sigma)\}
\]
of which we think as a geometric subcomplex of $\partial\Delta$.
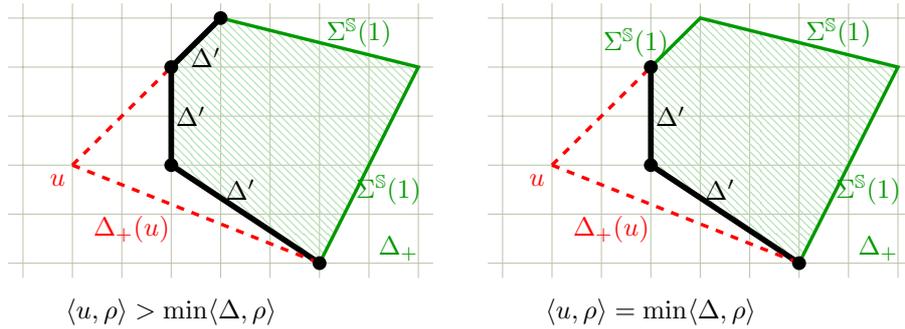
\begin{figure}[ht]
\[
\newcommand{\scaleA}{0.3} 
\newcommand{\scaleB}{0.65}
\newcommand{\spaceA}{\hspace*{5em}}
\begin{tikzpicture}[scale=\scaleB]
\draw[color=oliwkowy!40] (-1.3,-2.3) grid (7.3,3.3);
\fill[pattern color=green!30, pattern=north west lines]
  (5,-2) -- (7,2) -- (3,3) -- (2,2) -- (2,0) -- cycle;
\draw[line width=0.75mm, black] 
   (2,2) -- (2,0) -- (5,-2);
\draw[line width=0.45mm, green] 
  (5,-2) -- (7,2) -- (3,3) -- (2,2);
\draw[line width=0.75mm, black] 
   (2,2) -- (3,3);
\draw[dashed, line width=0.45mm, red]
  (0,0) -- (2,2) (0,0) -- (5,-2);
\draw[thick, color=red]
     (-0.3,-0.3) node{$u$};
\draw[thick, color=green]
     (5.8,2.7) node{$\Sigma^\S(1)$} 
     (6.4,-0.5) node{$\Sigma^\S(1)$} 
     (6.6,-1.7) node{$\DeltaP$};
\draw[thick, color=black]
     
     (2.4,1.0) node{$\Delta'$}
     (3.4,-0.5) node{$\Delta'$}
     (2.7,2.2) node{$\Delta'$};
\draw[thick, color=red]
     (1.2,-1.3) node{$\DeltaP(u)$};
\fill[thick, color=black]
     (5,-2) circle (4pt)
     (2,2) circle (4pt)
     (2,0) circle (4pt);
\fill[very thick, color=black] 
     (3,3) circle (4pt);
\draw[thick, color=black]
     (2,-3) node{$\langle u,\rho\rangle>\min\langle\Delta,\rho\rangle$};
\end{tikzpicture}
\qquad
\begin{tikzpicture}[scale=\scaleB]
\draw[color=oliwkowy!40] (-1.3,-2.3) grid (7.3,3.3);
\fill[pattern color=green!30, pattern=north west lines]
  (5,-2) -- (7,2) -- (3,3) -- (2,2) -- (2,0) -- cycle;
\draw[line width=0.75mm, black] 
   (2,2) -- (2,0) -- (5,-2);
\draw[line width=0.45mm, green] 
  (5,-2) -- (7,2) -- (3,3) -- (2,2);
\draw[dashed, line width=0.45mm, red]
  (0,0) -- (2,2) (0,0) -- (5,-2);
\draw[thick, color=red]
     (-0.3,-0.3) node{$u$};
\draw[thick, color=green]
     (1.7,2.5) node{$\Sigma^\S(1)$}
     (5.8,2.7) node{$\Sigma^\S(1)$} 
     (6.4,-0.5) node{$\Sigma^\S(1)$} 
     (6.6,-1.7) node{$\DeltaP$};
\draw[thick, color=black]
     (2.4,1.0) node{$\Delta'$}
     (3.4,-0.5) node{$\Delta'$};
\draw[thick, color=red]
     (1.2,-1.3) node{$\DeltaP(u)$};
\fill[thick, color=black]
     (5,-2) circle (4pt)
     (2,2) circle (4pt)
     (2,0) circle (4pt);
\draw[thick, color=black]
     (2,-3) node{$\langle u,\rho\rangle=\min\langle\Delta,\rho\rangle$};
\end{tikzpicture}
\]
\caption{The subcomplexes $\Delta'\subset\partial\Delta$ and $\Sigma^\S\subseteq\Sigma$. The left-hand side displays the standard case $\langle u,\rho\rangle>\min\langle\Delta,\rho\rangle$, e.g., $u$ is an inner point of $\Delta$. The right-hand side corresponds to equality.}
\label{fig:SubComplDelta}
\end{figure}
As illustrated in Figure~\ref{fig:SubComplDelta}, $\Delta'$ consists of the ``lower boundary'' of $\CPvar^+(\S)$ considered as a subset of $\DeltaP(u)$. Depending on the position of $u$, cf.\ \eqref{eq:EqRa} and~\eqref{eq:EqRb}, this might also include the facets $\varphi(\rho)$ with $\min\langle\CPvar^+(\S),\rho\rangle=\langle u,\rho\rangle$. At any rate, $\Delta'$ can be contracted to $u$.

\medskip

This brings us back to the question of the exactness of the complex 
\[
\CC^{\S\bullet}\colon\quad 0 \to 0 \to
\bigoplus_{\sigma\in\Sigma^\S(\kdimX)}\hspace{-0.2em}\kk_\sigma
\to \hspace{-0.4em}\bigoplus_{\tau\in\Sigma^\S(\kdimX-1)}\hspace{-0.6em}
\kk_\tau\to\ldots\to\bigoplus_{\rho\in\Sigma^\S(1)}\hspace{-0.2em}\kk_\rho \to \kk \to 0
\]
which fits into the short exact sequence of complexes
\[
0\to\CC^{\S\bullet}\to\CC(\Sigma)^\bullet\to\CC(\Delta')^\bullet\to0.
\]
Here, $\CC(\Delta')^\bullet$ is the complex of the reduced cohomology of the complex $\Delta'$, that is, 
\[
\CC(\Delta')^\bullet\colon\quad0 \to \kk_\emptyset\to\bigoplus_{v\in\Delta'_0}\hspace{-0.2em}\kk_v\to\hspace{-0.2em}\bigoplus_{e\in\Delta'_1}\hspace{-0.2em}
\kk_e\to\ldots\to\hspace{-0.4em}\bigoplus_{F\in\Delta'_{\kdimX-1}}\hspace{-0.6em}\kk_F\to 0 \to 0.
\]
Unlike for the complex $\CC(\Delta)^\bullet$ in~\eqref{eq:DeltaComplex}, there is no $\kk_{\Delta}$-term in $\CC(\Delta')^\bullet$ since the dimension of $\Delta'$ drops by one. Exactness of $\CC(\Sigma)^\bullet$ and $\CC(\Delta')^\bullet$ finally implies exactness of $\CC^{\S\bullet}$.
\end{proof}

\subsubsection{The strata double complex}
For a pair $(\S,\T)$ of strata with $\S\leq\T$, that is, $\S\subseteq\overline \T$, we consider the twisted subcomplex
\begin{equation}
\label{eq:Hom}
\CC^\bullet(\S,\T):=\overline\S\otimes_\kk\CC^{\T\bullet}
\end{equation}
of $E\otimes_\kk\CC(\Sigma)^\bullet$, which is exact by Lemma \ref{lem:Subcomplex}. This definition is similar in vein to~\eqref{eq:rem:CanSeq} of Remark~\ref{rem:CanSeq}. In particular,
$\CC^\bullet(\S,\T)$ behaves like the $\gHom$-functor -- it is covariant in the first and contravariant in the second argument. Indeed, for $\S\leq\S'$ and $\T'\leq\T$ we obtain natural inclusions $\overline\S\subseteq\overline\S'$ and $\Sigma^{\T}\subseteq\Sigma^{\T'}$ which induce commuting inclusion maps of complexes denoted
\[
\iota^I\colon\CC^\bullet(\S,\T)\into\CC^\bullet(\S',\T)\quad\text{and}\quad\iota^{I\!I}\colon\CC^\bullet(\S,\T)\into\CC^\bullet(\S,\T').
\]

\medskip

In order to build a \v Cech-like complex out of the strata, we let $\S_1,\ldots,\S_N$ be the minimal nontrivial strata of the lattice $\Strat$. For every pair of subsets $S\subseteq T\subseteq\{1,\ldots,N\}$ we define
\[
\S_S:=\bigvee\{\S_i\mid i\in S\}\in\Strat\quad\text{and}\quad\opn{C}^\bullet(S,T):=\opn{C}^\bullet(\S_S,\S_T).
\]
Usually, we will replace $\S_S$ by $S$ and $\S_T$ by $T$, respectively, and represent $S$ and $T$ by ordered tuples $(i_1,\ldots,i_a)$ and $(j_1,\ldots,j_b)$ with $1\leq i_k<i_{k+1}\leq N$ and $1\leq j_\ell<j_{\ell+1}\leq N$. We then construct a double complex $\CC^{\bullet,\bullet}$ of (exact) complexes over $\Sigma$ as follows. We set
\[
\opn{C}(a,b):=\bigoplus_{\substack{\scalebox{0.75}{$\sharp S=a,\,\sharp T=b$}\\\scalebox{0.75}{$S\subseteq T$}}}\opn{C}^\bullet(S,T).
\]
for $a\leq b$. Given a pair $S\subseteq T$ we sum for $p=0,\ldots,a$ the maps
\[
\begin{tikzcd}
\opn{C}^\bullet\big((i_0,\ldots,i_{p-1},i_{p+1},\ldots,i_a),(j_1,\ldots,j_b)\big)\ar[r,"(-1)^p\cdot\iota^I"]&\opn{C}^\bullet\big((i_0,\ldots,i_a),(j_1,\ldots,j_b)\big)
\end{tikzcd}
\]
to define the differential $d^I_{a,b}\colon\opn{C}(a,b)\to\opn{C}(a+1,b)$; similarly for $d^{I\!I}_{a,b}\colon\opn{C}(a,b)\to\opn{C}(a,b-1)$. This yields the commutative square
\[
\begin{tikzcd}
\opn{C}(a,b)\ar[r,"d^{I\!I}"]\ar[d,"d^I"']&\opn{C}(a,b-1)\ar[d,"d^I"]\\
\opn{C}(a+1,b)\ar[r]\ar[r,"d^{I\!I}"]&\opn{C}(a+1,b-1)
\end{tikzcd}
\]
Next we reduce this triple complex over the $(a,b)$-variables to the 
total complex $\mc C_\boxx$ defined by 
$\mc C_k:=\bigoplus_b\opn{C}(b,b+k)$, namely
\[
\begin{tikzcd}
\mc C_\boxx\,\colon&0\ar[r]&\mc C_{N-1}\ar[r]&\ldots\ar[r]&\mc C_1\ar[r]&\fbox{$\mc C_0=\bigoplus_b\opn{C}(b,b)$}\ar[r]&0.
\end{tikzcd} 
\]
Recall that  $\mc C_0=\bigoplus_\S\overline\S\otimes\CC^{\S\bullet}$ is itself a complex, cf.~\eqref{eq:Subcomplex}. In fact,~\eqref{eq:FanChar} implies that its direct summands are subcomplexes of  
\[
\opn{K}(F)^\bullet\colon\;0 \to\hspace{-0.2em}\bigcap_{\rho\in\Sigma(1)}\hspace{-0.4em} F(\rho)\to\bigoplus_{\sigma\in\Sigma(\kdimX)}\hspace{-0.2em}F(\sigma)\to\ldots\to\bigoplus_{\rho\in\Sigma(1)}\hspace{-0.2em}F(\rho)\to E\to0, 
\]
cf.~\eqref{eq:FComp} for its definition.

\begin{lemma}
\label{lem:FinalLem}
The extended complex $\mc C_\boxx\to\big[\mc C_{-1}:=\opn{K}(F)^\bullet]$ is exact, that is, $\mc C_\boxx$ is quasi-isomorphic to $\opn{K}(F)^\bullet$. 
\end{lemma}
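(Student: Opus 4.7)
The plan is to verify that the extended double complex (with $\mc C_{-1}:=\opn{K}(F)^\bullet$ sitting at row $k=-1$) has acyclic total complex by filtering along the internal ``$q$-direction'' carried by $\CC^\bullet$ and $\opn{K}(F)^\bullet$, and checking that the resulting horizontal complex at each fixed $q$ is exact. Recall that $\CC^{\S\bullet}$ lives in internal degrees $q\in[0,n]$, indexed by cones $\sigma\in\Sigma^\S$ of dimension $n-q$, together with a $\kk$-augmentation at $q=-1$ precisely when $\Sigma^\S=\Sigma$. Hence the $q$-piece of $\opn{C}^\bullet(S,T)=\overline{\S_S}\otimes\CC^{\S_T\bullet}$ decomposes cone-wise: at a cone $\sigma$ of dimension $n-q$, the contribution is $\overline{\S_S}$ if $\sigma\in\Sigma^{\S_T}$, and $0$ otherwise. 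Combining \eqref{eq:FanChar} with $\overline{\S_T}=\sum_{i\in T}\overline{\S_i}$ (valid for the canonical stratification since $\overline{\S\vee\S'}=\overline\S+\overline{\S'}$), one obtains
\[
\sigma\in\Sigma^{\S_T}\iff T\subseteq S(\sigma)\quad\text{with}\quad S(\sigma):=\{i\mid\sigma\in\Sigma^{\S_i}\}.
\]

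Granting the identities $F(\sigma)=\overline{\S_{S(\sigma)}}$ for each cone $\sigma$ and $\bigcap_{\rho}F(\rho)=\overline{\S_{S_\infty}}$ at the augmentation (with $S_\infty:=\{i:\overline{\S_i}\subseteq\bigcap_\rho F(\rho)\}$), every horizontal row of the extended complex acquires the uniform shape
\[
\ldots\to\bigoplus_{\substack{S\subseteq T\subseteq\widetilde S\\|T|-|S|=k}}\overline{\S_S}\to\ldots\to\bigoplus_{S\subseteq\widetilde S}\overline{\S_S}\to\overline{\S_{\widetilde S}}\to 0
\]
for some finite $\widetilde S\subseteq\{1,\ldots,N\}$ (namely $\widetilde S=S(\sigma)$ at the $q$-level of $\sigma$, and $\widetilde S=S_\infty$ at $q=-1$).

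The main task is therefore to prove exactness of this combinatorial complex for an arbitrary finite $\widetilde S$. For this I would run the internal spectral sequence coming from the $(|S|,|T|)$-bigrading, computing the $|T|$-direction first: for fixed $S$, the $T$-column identifies with $\overline{\S_S}$ tensored with the augmented chain complex of the simplex on vertex set $\widetilde S\setminus S$. This column is acyclic when $S\subsetneq\widetilde S$ (the simplex on a nonempty vertex set is contractible) and is concentrated in a single bidegree with value $\overline{\S_{\widetilde S}}$ when $S=\widetilde S$. The $E_1$-page therefore has exactly one surviving entry, which the augmentation $\mc C_0\to\mc C_{-1}$ identifies with $\overline{\S_{\widetilde S}}$ via the natural identity, yielding exactness. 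With all horizontal rows exact, the first-page spectral sequence of the extended complex degenerates at $E_1=0$, so the total complex is acyclic and $\mc C_\boxx\simeq\opn{K}(F)^\bullet$ follows.

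The main obstacle is the identification $F(\sigma)=\overline{\S_{S(\sigma)}}$. Any $e\in F(\sigma)$ lies in a unique stratum $\S(e)$; since $u\in\innt_\Delta\CPvar^+_\sigma(-)$ depends only on this stratum, one gets $\overline{\S(e)}\subseteq F(\sigma)$ and hence $\sigma\in\Sigma^{\S(e)}$. Writing $\S(e)=\bigvee_{j\in J}\S_j$ and using monotonicity $\S_j\leq\S(e)\Rightarrow\Sigma^{\S(e)}\subseteq\Sigma^{\S_j}$, we obtain $J\subseteq S(\sigma)$, whence $\overline{\S(e)}\subseteq\overline{\S_{S(\sigma)}}$ by the closure/join compatibility; the reverse inclusion is immediate since $\overline{\S_i}\subseteq F(\sigma)$ for every $i\in S(\sigma)$. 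The corresponding statement for the augmentation is obtained verbatim by letting $\sigma$ range over all cones. Once this and the simplicial acyclicity are in place, the rest is routine double-complex bookkeeping.
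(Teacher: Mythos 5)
Your route is essentially the paper's: decompose cone-wise (each $\sigma$-entry of $\opn{C}^\bullet(S,T)$ is $\overline{\S_S}$ or $0$ depending on whether $\sigma\in\Sigma^{\S_T}$), recognise the resulting $T$-complexes as simplex chain complexes on $S(\sigma)\setminus S$, and reduce everything to the identity $F(\sigma)=\overline{\S_{S(\sigma)}}$ — your $S(\sigma)$ is the paper's $T_\sigma$. The reorganisation (filter by $q$ first, then run an inner spectral sequence on the $(|S|,|T|)$-bigrading computing the $T$-direction first) is only a cosmetic variation on the paper's ``fix $a$, vary $T$''; both isolate the same combinatorics.

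However, there is a genuine gap at exactly the spot you label ``the main obstacle''. In proving $F(\sigma)=\overline{\S_{S(\sigma)}}$ you write ``$\S(e)=\bigvee_{j\in J}\S_j$'', implicitly assuming that every stratum of the canonical stratification is the join of the minimal nontrivial strata below it. That fails in general. Take $\CE=\CO_X(D_1)\oplus\CO_X(D_2)$ with $D_1>D_2$: the canonical strata form a chain $\{0\}<\kk^\times e_1<E\setminus\kk e_1$, so $\S_1=\kk^\times e_1$ is the only minimal nontrivial stratum and the generic stratum $\eta=E\setminus\kk e_1$ is not a join of minimal ones. For $\sigma\in\Sigma^\eta$ one has $F(\sigma)=E$, yet $S(\sigma)\subseteq\{1\}$ forces $\overline{\S_{S(\sigma)}}=\kk e_1\subsetneq E$, so the claimed identity fails and the surviving $E_1$-entry of your inner spectral sequence does \emph{not} absorb into the augmentation $F(\sigma)$. (The auxiliary claim ``$\overline{\S\vee\S'}=\overline\S+\overline{\S'}$'' is also false in general — try $\CO(D_1)\oplus\CO(D_2)\oplus\CO(D_3)$ with pairwise incomparable $D_i$ — although the equivalence $\sigma\in\Sigma^{\S_T}\iff T\subseteq S(\sigma)$ that you extract from it does hold, via $\CD(\S\vee\S')=\CD(\S)\wedge\CD(\S')$ and the coordinate-wise meet on $\Div(\Sigma)$, so that particular derailment is harmless.)

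To be fair, the paper's own Case~2 is terse at the same point (``From~\eqref{eq:F}, we have $F(\sigma)=\overline{\T_\sigma}$'') and the same reservation applies. The repair — which neither you nor the paper spells out — is to pass to an admissible refinement of the canonical stratification (Proposition~\ref{prop:HomSatz}) in which every stratum is the join of the minimal nontrivial strata below it; for the example above, split $\eta$ into $\kk^\times e_2$ and $E\setminus(\kk e_1\cup\kk e_2)$. The complex $\opn{K}(F)^\bullet$ depends only on $\CD$, not on the stratification, so after recomputing $S(\sigma)$ and $\widetilde S$ relative to the refined stratification your argument goes through as written.
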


\begin{proof}
Up to position $-1$, it is sufficient to show that for fixed $a$, the complex given by $d^{I\!I}$, namely
\[
\fbox{$\displaystyle\CC(a,a+r)=\hspace{-5pt}\bigoplus_{\substack{\scalebox{0.75}{$\sharp S=a,\,\sharp T=a+r$}\\\scalebox{0.75}{$S\subseteq T$}}}\hspace{-15pt}\CC^\bullet(S,T)$}\stackrel{d^{I\!I}}{\longrightarrow}\fbox{$\displaystyle\CC(a,a+r-1)=\hspace{-5pt}\bigoplus_{\substack{\scalebox{0.75}{$\sharp S=a,\,\sharp T'=a+r-1$}\\\scalebox{0.75}{$S\subseteq T'$}}}\hspace{-15pt}\CC^\bullet(S,T')$}
\]
is exact. As we mentioned earlier, $d^{I\!I}$ maps the $\sigma$-entries in $\CC^\bullet(S,T)$ to the $\sigma$-entries in $\CC^\bullet(S,T')$ where $T'\subseteq T$. Now the $\sigma$-entry in a given $\CC^\bullet(S,T)$-complex is either $0$ or $\overline \S\otimes\kk_\sigma$ with $\S = \S_S$, where the latter occurs if and only if
\begin{equation}
\label{eq:F}
\sigma\in\Sigma^{\S_T}\iff\S_T\subseteq F(\sigma)\iff u\in\innt_\Delta(\CPvar^+(\S_T)+\sigma^\vee).
\end{equation}
If we let 
\begin{equation}
\label{eq:LocVer}
T_\sigma:=\{i\in\{1,\ldots,N\}\mid u\in\innt_\Delta(\CPvar^+(\S_i)+\sigma^\vee)\},
\end{equation}
where we recall that the $\S_i$ are the minimal nontrivial strata, then $T_\sigma$ is the maximal subset $T\subseteq\{1,\ldots,k\}$ for which $u\in\innt_\Delta(\CPvar^+(\S_T)+\sigma^\vee)$. It follows that $d^{I\!I}$ is the direct sum of the complex maps induced by  
\begin{equation}
\label{eq:FinalComp}
\bigoplus_{\substack{\scalebox{0.75}{$\sharp T=a+r$}\\\scalebox{0.75}{$S\subseteq T\subseteq T_\sigma$}}}\overline \S\otimes_\kk\kk_\sigma\longrightarrow\bigoplus_{\substack{\scalebox{0.75}{$\sharp T'=a+r-1$}\\\scalebox{0.75}{$S\subseteq T'\subseteq T_\sigma$}}}\overline\S\otimes_\kk\kk_\sigma
\end{equation}
for each fixed $S$ and $\sigma$. Hence we have to repeat $\overline\S\otimes\kk_\sigma$ as many times as there are $T$ comprised between $S$ and $T_\sigma$ with $\sharp T=a+r$. To show exactness of this complex we distinguish two cases. 

\medskip

{\em Case 1: $S\subsetneq T_\sigma$.} The complex~\eqref{eq:FinalComp} reduces to
\[
0\to\overline\S\otimes\kk_\sigma\to\ldots\to\overline\S\otimes\kk_\sigma^{\sharp T_\sigma-\sharp S}\to\overline\S\otimes\kk_\sigma\to0,
\]
where the first $\kk_\sigma$ corresponds to $T=T_\sigma$, the penultimate entry comes from the strata $T\supseteq S$ with $\sharp T=\sharp S+1$, and the last 
entry is induced by $T=S$. As a complex, this is just the reduced cohomology complex of the standard simplex defined by $\sharp T_\sigma-\sharp S+1$ points tensored with $\overline\S$. In particular, it is exact.

\medskip

{\em Case 2: $S=T_\sigma$.} Now, $\opn{K}(F)^\bullet=\mc C_{-1}$ finally enters the stage. From~\eqref{eq:F}, we have $F(\sigma)=\overline\T_\sigma:= \S_{T_{\sigma}}$. Since $S=T_\sigma$, the complex $\mc C_\boxx$ reduces to $\overline\T_\sigma\otimes\kk_\sigma$, which is absorbed by $F(\sigma)$. 
\end{proof}

\subsubsection{Conclusion}
Since the complexes $\opn{C}^\bullet(a,b)$ were exact as direct sums of exact complexes $\opn{C}^\bullet(\S,\T)$ from the very beginning , cf.\ Lemma~\ref{lem:Subcomplex}, the cohomology of $\mc C_\boxx$ vanishes, too. In conjunction with Lemma~\ref{lem:FinalLem}, this shows exactness of the complex $\opn{K}(F)^\bullet$ from~\eqref{eq:FComp} and thus proves finally Proposition~\ref{prop:DerLimLoc}.

\section{Computing cohomology}
\label{sec:CompCohom}
In view of computing the cohomology of a given toric sheaf $\CE$, we 
build an analogue of the exact sequence~\eqref{eq:CanRes} for the 
constructible sheaf $\CF(\CE)$. This gives rise to the spectral sequence of \ssect{subsec:ExamCoho} which we apply in Subsections~\ref{subsec:EulerGlobSec},~\ref{subsec:HeightOne} and~\ref{subsec:TangBundle}.

\subsection{A complex for $\CF(\CE)$}
\label{subsec:CompForF}
Recall that the associated constructible sheaf $\CF(\CE)$ from Section~\ref{sec:CohomTS} was defined by
\[
\CF(\CE)(U)=\{e\in E\mid U\subseteq\CPvar^+(e)\}\subseteq E
\]
with stalks
\[
\CF(\CE)_u=\{e\in E\mid u\in\innt_\Delta\CPvar^+(e)\}
\]
at $u\in\Delta\subseteq M_\R$. As before, we shall freely switch between $e$ and the stratum $\S$ containing it, that is, $\CPvar^+(e)=\CPvar^+(\S)$. 

\medskip

For $\S\leq\T$ and the open embedding $j_\T\colon\innt_\Delta\CPvar^+(\T)\into\Delta$, we then consider the $\kk$-sheaves 
\[
\spectF_{\S,\T}=(j_\T)_!\,\underline{\overline\S}=\overline\S\otimes(j_\T)_!\,\underline\kk
\]
with stalks
\[
\renewcommand{\arraystretch}{1.3}
(\spectF_{\S,\T})_u= 
\begin{cases}
\overline\S &\text{if }u\in\innt_\Delta\CPvar^+(\T),\\
0 &\text{if otherwise}.
\end{cases}
\]
For $\S=\T$ we simply write
\[
\spectF_\S=\spectF_{\S,\S}.
\]
As before, this assignment is covariant in $\S$ and contravariant in $\T$, cf.\ Equation~\eqref{eq:Hom}.

\begin{remark}
\label{rem:BackPolytope}
In a sense, $\spectF_{\S,\T}$ is the simplest sheaf reflecting the 
polyhedral structure of $\CE$. Analogously to \ssect{subsubsec:cohomSheafL}, its cohomology is
\[
\gH^\bullet(\Delta, \spectF_{\S,\T}) = 
\overline\S\otimes\gH^\bullet\big(\Delta,(j_{\T})_!\underline\kk\big)= 
\overline\S\otimes\widetilde{\opn H}^{\raisebox{-3pt}{\scriptsize$\bullet\!-\!1$}}\big(\Delta\setminus\innt_\Delta\CPvar^+(\T)\big)
\]
which is exactly the cohomology of the invertible sheaf $\CO_X(\CPvar^+(\T)-\Delta)$ tensored with $\overline\S$.
\end{remark}

Recall that given two strata $\S\leq\T$ we let
$\opn{ch}_\ell(\S,\T)$ be the set of strict chains of length $\ell$ in $\Strat$ starting at $\S_0=\S$ and terminating at $\S_\ell=\T$, cf.~\eqref{eq:DefCH} on page~\pageref{eq:DefCH}. For a toric sheaf $\CE$ of rank $r$, we define a complex
\[
\spectF^\kbb(\CE)\colon\spectF^{-r}\to\ldots\to\spectF^{-1}\to\spectF^0 
\]
by
\[
\spectF^{-\ell}(\CE):=\spectF^{-\ell}:=\bigoplus_{\S\leq\T}\bigoplus_{\opn{ch}_\ell(\S,\T)}\spectF_{\S,\T},\quad 0\leq\ell\leq r.
\]
In particular, $\spectF^{-1}=\bigoplus_{\S<\T}\spectF_{\S,\T}$ and $\spectF^0=\bigoplus_{\S}\spectF_\S$. If $\spectF_{\S_0,\S_\ell}\subseteq\spectF^{-\ell}$ represents the $\ell$-chain $\S_0<\ldots<\S_\ell$, we also write $\spectF_{\S_0,\ldots,\S_\ell}$ for $\spectF_{\S_0,\S_\ell}$. The differentials $\spectF^{-\ell}\to\spectF^{-\ell+1}$, $\ell\ge1$, are induced by the direct sum of the sheaf maps
\[
\begin{tikzcd}[ampersand replacement=\&]
\spectF_{\S_0,\S_\ell}=\spectF_{\S_0,\ldots,\S_\ell}\ar[rr,
"{\left(\begin{smallmatrix}1 & -1 & \ldots & \pm
1\end{smallmatrix}\right)}"]
\&\&\displaystyle\bigoplus_{i=0}^\ell\ \spectF_{\S_0,\ldots
\widehat\S_i\ldots,\S_\ell}=
\spectF_{\S_1,\S_\ell}\oplus\spectF_{\S_0,\S_\ell}^{\oplus(\ell-1)}\oplus
\spectF_{\S_0,\S_{\ell-1}}.
\end{tikzcd}
\]

\begin{theorem} 
\label{thm:FRes}
Let $\CF(\CE)$ be the $\kk$-linear sheaf associated with the toric sheaf $\CE$
of rank $r$. Then the map
\begin{tikzcd}[ampersand replacement=\&]
\bigoplus_{\S} \spectF_\S 
\ar[rr, "{\left(\begin{smallmatrix}1 & \ldots & 1 \end{smallmatrix}\right)}"]
\&\&\CF(\CE)
\end{tikzcd}
induces a quasi-isomorphism between $\spectF^\kbb(\CE)$ and $\CF(\CE)$.
\end{theorem}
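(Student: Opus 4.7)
The plan is to verify the quasi-isomorphism stalkwise at every $u\in\Delta$. The stalk of $\spectF_{\S,\T}$ at $u$ equals $\overline\S$ if $u\in\innt_\Delta\CPvar^+(\T)$ and vanishes otherwise, while $\CF(\CE)_u=\{e\in E\mid u\in\innt_\Delta\CPvar^+(e)\}$. So everything reduces to a finite combinatorial statement about the image of $\Strat$ in the fibre at $u$.

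First I would introduce the subposet
\[
\Strat^u:=\{\T\in\Strat\mid u\in\innt_\Delta\CPvar^+(\T)\}
\]
and establish that it admits a unique maximal element $\T(u)$. The key point is that $\Strat^u$ is closed under joins: given $\T_1,\T_2\in\Strat^u$, the identity $\CPvar^+(\T_1\vee\T_2)=\CPvar^+(\T_1)\cap\CPvar^+(\T_2)$ -- a consequence of $\opn{P}$ being a semilattice morphism (Section~\ref{sec:lattice-structure}) applied to the amply decorated $\CE^+$ -- together with a common open neighbourhood argument forces $u\in\innt_\Delta\CPvar^+(\T_1\vee\T_2)$. The same reasoning shows $\T\leq\T(u)$ for every $\T\in\Strat^u$, so in fact $\Strat^u=\{\T\in\Strat\mid\T\leq\T(u)\}$. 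Consequently $\CF(\CE)_u=\overline{\T(u)}$, and the degree-zero map of the statement factors stalkwise as the natural surjection $\bigoplus_{\S\leq\T(u)}\overline\S\twoheadrightarrow\overline{\T(u)}$.

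Second, the stalk complex $\spectF^\kbb(\CE)_u$ then reads
\[
(\spectF^{-\ell})_u=\bigoplus_{\S\leq\T\leq\T(u)}\;\bigoplus_{\opn{ch}_\ell(\S,\T)}\overline\S,
\]
equipped with the standard alternating-sum differential. This is formally identical to the complex studied in the proof of Lemma~\ref{lem:CanRes}, with $\T(u)$ playing the role of the local maximum $\S(m)$ there. I would therefore apply verbatim the contraction argument of that proof: split each term into a ``top $=\T(u)$-part'' and a ``top $<\T(u)$-part'' according to whether the terminal element of the chain equals $\T(u)$, observe that the differential restricts to an isomorphism identifying the ``$=$''-summand in degree $-(\ell+1)$ with the ``$<$''-summand in degree $-\ell$, and successively collapse these acyclic pairs starting from the longest chains. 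What survives is the single term $\overline{\T(u)}$ in degree zero, mapping isomorphically to $\CF(\CE)_u=\overline{\T(u)}$ via the chain map of the statement.

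The main obstacle is the first step: showing that $\Strat^u$ is closed under joins and therefore admits the maximum $\T(u)$. The subtle point is that relative interior in $\Delta$ behaves well with intersections only because the polyhedra $\CPvar^+(\T)$ come from a materialisation, so that the relevant intersections are honest and not merely virtual; once this is in place, the rest is a purely combinatorial transcription of the collapsing procedure already carried out in Lemma~\ref{lem:CanRes}.
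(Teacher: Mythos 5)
Your proof is correct and follows essentially the same route as the paper: both arguments check exactness stalkwise, identify the maximal stratum $\T(u)=\S(u)$ with $u\in\innt_\Delta\CPvar^+(\S)$, recast the stalk complex as the one from Lemma~\ref{lem:CanRes}, and apply the same collapsing of acyclic $C^=$/$C^<$ pairs. The only difference is cosmetic: you spell out explicitly (via the semilattice morphism property of $\CD^+$ and the open-neighbourhood argument) why $\Strat^u$ is closed under joins, a point the paper asserts without elaboration when it writes $\S(u)$ as a join and calls it the maximal stratum.
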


\begin{proof}
We briefly sketch the proof which goes along the lines of Lemma~\ref{lem:CanRes}. 

\medskip

Clearly, $\spectF^\kbb$ defines a complex; we let
\[
\spectF^1:=\CF(\CE)
\]
and check exactness of the augmented complex stalkwise. For every $u\in\Delta$, consider the global version of~\eqref{eq:LocVer}, namely
\[
\S(u):=\bigvee\{\S\mid u\in\innt_\Delta\CPvar^+(\S)\} 
\]
which is the maximal stratum $\S$ with $u\in\innt_\Delta\CPvar^+(\S)$. 
The non-zero stalks of the complex are indexed by chains $\S_0<\ldots<\S_\ell$ with $u\in\innt_\Delta\CPvar^+(\S_\ell)$, or equivalently, with $\S_\ell\leq\S(u)$. The complex of stalks takes the form
\[
\hspace{-15pt}0\to\displaystyle\bigoplus_{\mathclap{\S_0<\ldots<\S_{m}\leq\S(u)}}\ (\spectF_{\S_0,\ldots,\S_{m}})_u\to\ldots\to\displaystyle\bigoplus_{\S_0<\S_1\leq\S(u)}\!(\spectF_{\S_0,\S_1})_u\to\displaystyle\bigoplus_{\S\leq\S(u)}(\spectF_\S)_u\to\CF(\CE)_u\mathrlap{\to 0}
\]
where $m$ denotes the maximal length of a chain such that there is $(\spectF_{\S_0,\ldots,\S_{m}})_u\neq0$. By definition, we have
\[
(\spectF_{\S_0,\ldots,\S_\ell})_u=\overline\S_0
\]
for $\S_0<\ldots<\S_\ell\leq\S(u)$ and
\[
\CF(\CE)_u=\overline{\S(u)}. 
\]

\medskip

For $1\leq\ell\leq m$, we decompose the terms of the complex into
\[
C_\ell:=\quad\bigoplus_{\mathclap{\S_0<\ldots<\S_\ell\leq\S(u)}}\;(\spectF_{\S_0,\ldots,\S_\ell})_u=C^=_\ell \oplus C^<_\ell,
\]
where the first sum is over $\S_\ell=\S(u)$ and the second sum is over $\S_\ell<\S(u)$. The differential restricts to an isomorphism $C^=_{\ell+1}\xrightarrow{\sim} C^<_\ell$ being the direct sum of
\[
\big(\spectF_{\S_0,\ldots,\S_\ell,\S(u)}\big)_u \xrightarrow{\pm1}
\big(\spectF_{\S_0,\ldots,\S_\ell}\big)_u
\]
with $\S_\ell<\S(u)$. So starting with the chains of maximal length $m$, we may split off the direct summand $C^=_m=C^<_{m-1}$ twice and consider the homotopic complex:
\[
\begin{tikzcd}[row sep=0ex]
& C^=_{m-1} & C^=_{m-2} \\
0 \to \cancel{C^=_m} \ar[rd, "\sim"'] \ar[ru]
& \oplus \ar[r] & \oplus \ar[r] & \ldots\\
& \cancel{C^<_{m-1}} & C^<_{m-2}
\end{tikzcd}
\]
By repeating this procedure $(m-1)$-times, we are left with the terms
\[
\begin{tikzcd}
0 \to\fbox{$C^=_1 = (\spectF_{\S(u)})_u = \overline{\S(u)}$}\ar[r, "\sim"] &\CF(\CE)_u \to 0.
\end{tikzcd}
\]
As a consequence, the complex is exact.
\end{proof}

\subsection{A spectral sequence for 
$\gH^\kbb(X,\CE)_0=\gH^\kbb(\Delta,\CF(\CE))$}
\label{subsec:ExamCoho}
We let $h=h(\CE)$ be the {\em height} of the poset $\Strat_\CE$ which by 
definition is the maximal length of chains $\S_0<\ldots<\S_h$ of 
nontrivial strata in $E$. Then $h\leq r$, and by Theorem~\ref{thm:FRes},
the sheaf $\CF(\CE)$ on $\Delta$ is quasi-isomorphic to the complex
\[
\spectF^\kbb=\spectF^\kbb(\CE)=[\spectF^{-h}\to\ldots\to\spectF^0] 
\]
with $\spectF^{-\ell}=\bigoplus_{\S<\T}\bigoplus_{\opn{ch}_\ell(\S,\T)}\spectF_{\S_0,\S_\ell}$, $0\leq\ell\leq h$. If we let $\H(X,\cdot)=\R\Gamma_X$ and 
$\H(\Delta,\cdot)=\R\Gamma^\Delta$ denote the 
right-derived global section functors, this means that
\[
\H(X,\CE)_0=\H(\Delta,\CF(\CE))=\H(\Delta,\spectF^\kbb).
\]

\medskip

We shall employ the usual spectral sequence for hypercohomology. For a functor $G$ we consider
\begin{equation}
\label{eq:SpecSeqA}
E_1^{p,q}=(R^qG)(\spectF^p)\then(\R^{p+q}G)(\spectF^\kbb)
\end{equation}
with differential
\[
d_s\colon E_s^{p,q}\to E_s^{p+s,\,q-s+1}.
\]
We have isomorphisms 
$E_{s+1}^{p,q}\stackrel{\sim}{\to}\gH^{p,q}(E_s^{\kbb,\kbb})$, 
and the sequence $E_s^{p,q}$, $E_{s+1}^{p,q},\dots$ becomes 
eventually stationary with limit $E_\infty^{p,q}$. 
Then~\eqref{eq:SpecSeqA} means that $(\R^nG)(\spectF^\kbb)$ admits a descending filtration with factors $\gr^p(\R^nG)(\spectF^\kbb)=E_\infty^{p,n-p}$.

\medskip

In our concrete situation where $G=\kG^\Delta$, we have
\[
E_1^{p,q}=\gH^q(\Delta,\spectF^p) \then \H^{p+q}(\Delta,\spectF^\kbb)=
\gH^{p+q}(\Delta,\CF(\CE)),
\]
with $-r\leq-h(\CE)\leq p \leq 0\leq q\leq n=\rk N$. For $p=-\ell$ we obtain
\[
\spectF^p=\bigoplus_{\S<\T}\bigoplus_{\opn{ch}_p(\S,\T)}\spectF_{\S,\T},
\]
that is,
\[
E_1^{-\ell,q}=\bigoplus_{\S<\T}\bigoplus_{\opn{ch}_\ell(\S,\T)}\gH^q(\Delta,\spectF_{\S_0,\S_\ell}) 
\then 
\gH^{q-\ell}\big(\Delta,\,\CF(\CE)\big),
\]
see Figure~\ref{fig:SpecSeq}.
\begin{figure}[ht]
\newcommand{\scaleA}{-4.9} 
\newcommand{\scaleB}{0.7}
\newcommand{\spaceA}{\hspace*{5em}}
\begin{tikzpicture}[scale=\scaleB]
\draw[color=oliwkowy!40] (-0.3,-2.3) grid (5.3,0.3);
\foreach \x in {0,...,4} \foreach \y in {-2,-1,0} {
  \fill[thick, color=gray] (\x,\y) circle (2pt); }
\draw[thick, black]
  (6,0) node{$E_1^{0,q}$} (6,-2) node{$E_1^{-2,q}$}
  (0,0.5) node{$E_1^{-\ell,0}$} (4,0.5) node{$E_1^{-\ell,4}$};
\draw[thick, red, ->]
  (0,-0.8) -- (0,-0.2); 
\draw[thick, red, ->]
  (0,-1.8) -- (0,-1.2);
\draw[thick, red, ->]
  (1,-0.8) -- (1,-0.2);
\draw[thick, red, ->]
  (1,-1.8) -- (1,-1.2);
\draw[thick, red, ->]
  (2,-0.8) -- (2,-0.2); 
\draw[thick, red, ->]
  (2,-1.8) -- (2,-1.2);
\draw[thick, red, ->]
  (3,-0.8) -- (3,-0.2); 
\draw[thick, red, ->]
  (3,-1.8) -- (3,-1.2);
\draw[thick, red, ->]
  (4,-0.8) -- (4,-0.2); 
\draw[thick, red, ->]
  (4,-1.8) -- (4,-1.2);
\end{tikzpicture}
\spaceA
\begin{tikzpicture}[scale=\scaleB]
\draw[color=oliwkowy!40] (-0.3,-2.3) grid (5.3,0.3);
\foreach \x in {0,...,4} \foreach \y in {-2,-1,0} {
  \fill[thick, color=gray] (\x,\y) circle (2pt); }
\draw[thick, black]
  (6,0) node{$E_2^{0,q}$} (6,-2) node{$E_2^{-2,q}$}
  (0,0.5) node{$E_2^{-\ell,0}$} (4,0.5) node{$E_2^{-\ell,4}$};
\draw[thick, red,->]
  (0.9,-1.9) -- (0.1,-0.1);
\draw[thick, red,->]
  (1.9,-1.9) -- (1.1,-0.1);
\draw[thick, red,->]
  (2.9,-1.9) -- (2.1,-0.1); 
\draw[thick, red,->]
  (3.9,-1.9) -- (3.1,-0.1);
\end{tikzpicture}
\spaceA
\begin{tikzpicture}[scale=\scaleB]
\draw[color=oliwkowy!40] (-0.3,-2.3) grid (5.3,0.3);
\foreach \x in {0,...,4} \foreach \y in {-2,-1,0} {
  \fill[thick, color=gray] (\x,\y) circle (2pt); }
\draw[thick, black]
  (6,0) node{$E_\infty^{0,q}$} (6,-2) node{$E_\infty^{-2,q}$}
  (0,0.5) node{$E_\infty^{-\ell,0}$} (4,0.5) node{$E_\infty^{-\ell,4}$};
\draw[thick, blue]
  (0.1,-0.1) -- (2.2,-2.2)
  (0.8,0.2) -- (3.2,-2.2)
  (1.8,0.2) -- (4.2,-2.2)
  (2.8,0.2) -- (4.2,-1.2)
  (3.9,0.1) -- (4.2,-0.2);
\draw[thick, blue]
  (2.6,-2.6) node{$\gH^0$}
  (3.6,-2.6) node{$\gH^1$}
  (4.6,-2.6) node{$\gH^2$}
  (4.6,-1.5) node{$\gH^3$}
  (4.6,-0.5) node{$\gH^4$};
\draw[thick, green]
  (0.6,-2.6) node{$0$}
  (1.6,-2.6) node{$0$}
  (0.1,-1.1) -- (1.2,-2.2)
  (0.1,-2.1) -- (0.2,-2.2);
\end{tikzpicture}
\caption{The spectral sequence for $h=2$}
\label{fig:SpecSeq}
\end{figure}
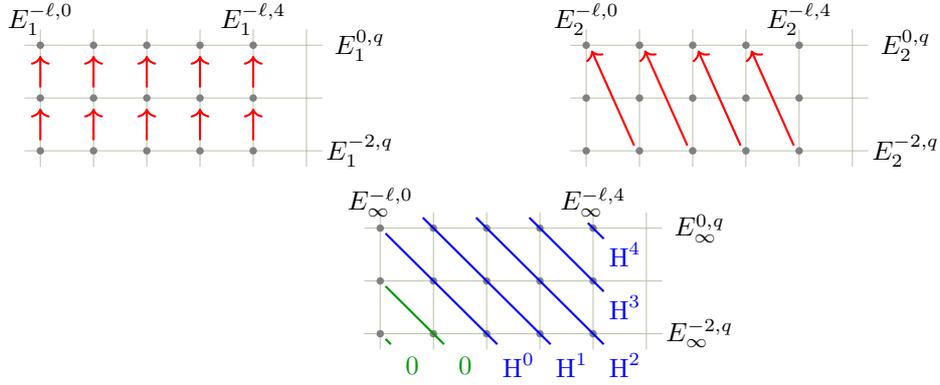

\medskip

Next, we define the closed subsets
\begin{equation}
\label{eq:PDef}
P(\S):=\Delta\setminus\innt_{\Delta}\CPvar^+(\S)
\end{equation}
of $\Delta$. Let
\[
j_\S\colon\innt_\Delta\CPvar^+(\S)\hookrightarrow\Delta\quad\text{and}\quad
\iota_\S\colon P(\S)\hookrightarrow\Delta
\]
be the open and closed embedding of $\innt_\Delta\CPvar^+(\S)$ and $P(\S)$, respectively, with associated Gysin
sequence
\[
\begin{tikzcd}
0\ar[r]&(j_\S)_!\underline\kk\ar[r]&\underline\kk\ar[r]&
(i_\S)_*\underline\kk\ar[r]&0.
\end{tikzcd}
\]
Applying $\H(\Delta,\cdot)$ yields the distinguished triangle
\[
\begin{tikzcd}
\H\big(\Delta,(j_\S)_!\underline\kk\big)\ar[r]&\H(\Delta,\kk)
\ar[r]&\fbox{$\H\big(\Delta,(i_\S)_*\underline\kk\big)
= \H\big(\hspace{-1pt}P(\S),\kk\big)$}
\end{tikzcd}
\]
in the bounded derived category. The mapping cone $\big[\H(\Delta,\kk)\to \H\big(\hspace{-1pt}P(\S),\kk\big)\big]$ is isomorphic to the reduced cohomology which we write $\widetilde{\H}\big(P(\S),\kk\big)$. We thus obtain
\begin{equation}
\label{eq:RedCohom}
\begin{tikzcd}
\H\big(\Delta,(j_\S)_!\underline\kk\,\big)
\ar[r,"\sim"]&
\Big[\H(\Delta,\kk)\to \H\big(\hspace{-1pt}P(\S),\kk\big)\Big][-1]
\ar[r,"\sim"]&
\widetilde{\H}\big(P(\S),\kk\big)[-1]
\end{tikzcd}
\end{equation}
which ultimately leads to the following

\begin{theorem}
\label{thm:SpecSeq}
There exists a spectral sequence
\begin{equation}
\label{eq:SpecSeq}
E_1^{-\ell,q}=\bigoplus_{\S<\T}\bigoplus_{\opn{ch}_\ell(\S,\T)}\overline\S\otimes\widetilde{\opn H}^{\raisebox{-3pt}{\scriptsize$q\!-\!1$}}\big(P(\T),\,\kk\big)\then\gH^{q-\ell}\big(\Delta,\,\CF(\CE)\big)=\gH^{q-\ell}(X,\CE)_0
\end{equation}
with $0\leq\ell\leq h(\Strat)$, $0\leq q\leq n=\dim X$.
\end{theorem}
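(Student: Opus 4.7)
The plan is to apply the standard hypercohomology spectral sequence to the complex $\spectF^\bullet(\CE)$ and combine the outputs of Theorems~\ref{thm:CohomSheafE}, \ref{thm:FRes}, together with the Gysin identification~\eqref{eq:RedCohom}. By Theorem~\ref{thm:FRes}, there is a quasi-isomorphism $\spectF^\bullet(\CE)\simeq\CF(\CE)$ in $\CD^b(\mathrm{Sh}_\kk(\Delta))$, and by Theorem~\ref{thm:CohomSheafE} we have $\H(X,\CE)_0\cong\H(\Delta,\CF(\CE))$. Therefore
\[
\H(X,\CE)_0\cong\H(\Delta,\spectF^\bullet(\CE)).
\]

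First, I would apply the first hypercohomology spectral sequence for the functor $\Gamma^\Delta=\Gamma(\Delta,\cdot)$ to the bounded complex $\spectF^\bullet(\CE)$. Writing $p=-\ell$ with $0\leq\ell\leq h(\Strat_\CE)$, this takes the form
\[
E_1^{-\ell,q}=\gH^q\big(\Delta,\spectF^{-\ell}\big)\;\then\;\gH^{q-\ell}\big(\Delta,\spectF^\bullet(\CE)\big)=\gH^{q-\ell}(X,\CE)_0,
\]
with the differential $d_s\colon E_s^{p,q}\to E_s^{p+s,q-s+1}$ inherited from the complex structure on $\spectF^\bullet(\CE)$. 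The vertical range $0\leq q\leq n$ comes from the fact that $\Delta$ is an $n$-dimensional compact polytope and all the sheaves $\spectF_{\S,\T}$ are constructible, so sheaf cohomology vanishes above degree $n$.

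Second, I would unpack the $E_1$-terms. Since $\spectF^{-\ell}=\bigoplus_{\S<\T}\bigoplus_{\opn{ch}_\ell(\S,\T)}\spectF_{\S,\T}$ is a finite direct sum (for $\ell\geq 1$; and $\spectF^0=\bigoplus_\S\spectF_\S$ falls into the same pattern with the convention from~\eqref{eq:ExamCh}), and since cohomology commutes with finite direct sums, we obtain
\[
E_1^{-\ell,q}=\bigoplus_{\S<\T}\bigoplus_{\opn{ch}_\ell(\S,\T)}\gH^q(\Delta,\spectF_{\S,\T}).
\]
By definition $\spectF_{\S,\T}=\overline\S\otimes_\kk(j_\T)_!\,\underline\kk$, and tensoring with the finite-dimensional constant vector space $\overline\S$ pulls out of cohomology, giving $\gH^q(\Delta,\spectF_{\S,\T})=\overline\S\otimes\gH^q\bigl(\Delta,(j_\T)_!\underline\kk\bigr)$.

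Finally, the Gysin-type computation leading to~\eqref{eq:RedCohom} identifies $\H\bigl(\Delta,(j_\T)_!\underline\kk\bigr)\simeq\widetilde\H\bigl(P(\T),\kk\bigr)[-1]$ in the derived category, hence $\gH^q\bigl(\Delta,(j_\T)_!\underline\kk\bigr)\cong\widetilde{\gH}^{q-1}\bigl(P(\T),\kk\bigr)$. Plugging this into the previous display yields exactly the $E_1$-page announced in~\eqref{eq:SpecSeq}. There is no real obstacle here: all the serious work has been carried out in Theorems~\ref{thm:CohomSheafE} and~\ref{thm:FRes}, so this is essentially a bookkeeping exercise assembling the hypercohomology spectral sequence with the polyhedral description of each $\spectF_{\S,\T}$. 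The only point deserving a line of justification is that $\spectF^\bullet(\CE)$ is a bounded complex of constructible sheaves, so the hypercohomology spectral sequence converges strongly and the direct-sum manipulations inside $E_1$ are legitimate.
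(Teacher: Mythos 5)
Your argument coincides with the paper's own derivation: both apply the standard hypercohomology spectral sequence for $\Gamma(\Delta,\cdot)$ to the complex $\spectF^\bullet(\CE)$ of Theorem~\ref{thm:FRes}, unpack $E_1^{-\ell,q}$ over the chains using finiteness of the direct sums and the $\kk$-linearity of $\overline\S\otimes(-)$, and then identify $\gH^q\bigl(\Delta,(j_\T)_!\underline\kk\bigr)\cong\widetilde{\opn H}^{q-1}\bigl(P(\T),\kk\bigr)$ via the Gysin triangle~\eqref{eq:RedCohom}. This is correct and is the same route the paper takes.
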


\begin{remark}
From $q<\ell$ we deduce $\widetilde{\opn H}^{\raisebox{-3pt}{\scriptsize$q\!-\!\ell$}}\big(X,\CE\big)_0=0$. Consequently, as displayed in Figure~\ref{fig:SpecSeq}, all $E_\infty^{-\ell,q}$ must vanish, which also implies constraints on the $E_1$-terms. For instance, in the case of height $h(\CE)=1$ 
we have $E_\infty=E_2$ and thus $E_2^{-1,0}=0$, 
that is, $d_1:E_1^{-1,0}\to E_1^{0,0}$ is injective.
Using the terminology of Subsection~\ref{subsec:HeightOne}, this is also a consequence of $P(\S)\cup P(\T)=P(\eta)$ for $\S\not=\T$.
\end{remark}

\begin{remark}
\label{rem:backToLB}
The terms $\widetilde{\opn H}^{\raisebox{-3pt}{\scriptsize$q\!-\!1$}}\big(P(\T),\kk\big)$ of the spectral sequence~\eqref{eq:SpecSeq} are just the cohomology groups of line bundles $\gH^q\!\big(X,\CD_\CE(\T)\big)_0$. Since for $\S<\T$, the sheaves $\overline\S\otimes\CO_X\big(\CD_\CE(\T)\big)$ define a system of subsheaves of $\CE$, we obtain~\eqref{eq:SpecSeq} equally well from the coherent sheaves on $X$ from Corollary~\ref{coro:CanSeq} instead of the constructible sheaves on $\Delta$. 
\end{remark}

\subsection{Euler characteristic and global sections}
\label{subsec:EulerGlobSec}
Next we turn to the relationship between $\gH^0(X,\CE)$ and the Euler characteristic of $\CE$, as equality is a necessary condition for $\CE$ to be acyclic. In particular, $\gH^0(X,\CE)=\chi(X,\CE)=0$ is necessary for $\CE$ to be immaculate. 

\medskip

As Figure~\ref{fig:SpecSeq} reveals, the spectral sequence is not well-suited for the computation of $\gH^0(X,\CE)$. However, from Remark~\ref{rem:VS} we already know how to recover the global sections from the Weil decoration. Indeed, 
\begin{equation}
\label{eq:GlobSecE}
\gH^0(X,\CE)_0=\overline\S_{\mr{glob}},
\end{equation}
where $\S_{\mr{glob}}$ is the maximal stratum such that $\CPvar^+(\S_{\mr{glob}})\supseteq\Delta$. Note that for general degree $m\in M$, the corresponding stratum $\S_{\mr{glob},m}$ is characterised by $\CPvar^+(\S_{\mr{glob},m})\supseteq\Delta+m$.

\begin{example}
We revisit the tangent bundle $\CT_{\PP^2}$, cf.\ \ssect{subsubsec:cohomSheafTang}. We twisted by the standard simplex $\Delta=\Delta_1$ to obtain the ample Weil decoration
\[
\CPvar^+(\rho_0)=2\Delta,\quad
\CPvar^+(\rho_1)=2\Delta-[1,0],\quad
\CPvar^+(\rho_2)=2\Delta-[0,1],\quad
\CPvar^+(\eta)=\Delta
\]
of $\CT_{\PP^2}(\Delta)$. It follows that $\gH^0(X,\CT_{\PP^2})_0=N_\kk=\overline{\eta}$. We have further contributions from $\kk\cdot\rho_0=\overline{\S(\rho_0)}$ in degrees $[1,0]$ and $[0,1]$, from $\kk\cdot\rho_1=\overline{\S(\rho_1)}$ in degrees $[-1,0]$ and $[-1,1]$ and from $\kk\cdot\rho_2=\overline{\S(\rho_2)}$ in degrees $[0,-1]$ and $[1,-1]$. 
\end{example}

On the other hand, the Euler characteristic in degree $0$ is given by
\begin{align}
\chi(X,\CE)_0&=\sum_n(-1)^n\dim \gH^{n}\big(X,\CE\big)_0=\sum_{\ell,q} (-1)^{q-\ell} \dim E_1^{-\ell,q}\nonumber\\
&=\sum_{\ell\geq0}(-1)^{\ell+1}\!\sum_{\S<\T}\sum_{\opn{ch}_\ell(\S,\T)}\dim(\S)\cdot\widetilde\chi(P(\T))
\label{eq:FirstEC},
\end{align}
where 
we used the reduced Euler characteristic
\[
\widetilde\chi(P(\T))=\sum_q(-1)^q\dim
\widetilde{\opn H}^{\raisebox{-3pt}{\scriptsize$q$}}\big(P(\T),\kk\big)=\chi(P(\T))-1. 
\]

To simplify expression~\eqref{eq:FirstEC} further, we recall that for a finite poset $P$ the {\em associated M\"obius function} $\mu_P\colon P\times P\to\Z$ is defined recursively by 

\begin{align*}
\mu_P(a,a)&=1\quad\text{for all }a\in P,\\
\mu_P(a,b)&=-\sum_{a\leq p<b}\mu_P(a,p)\text{ for all }a<b\text{ in }P. 
\end{align*}
We may regard $\mu_P$ as a matrix in $\Z^{\sharp P\times\sharp P}$ with entries
\[
\renewcommand{\arraystretch}{1.3}
(\mu_P)_{a,b}:=\begin{cases}\mu_P(a,b)&\text{if }a\leq b,\\
0 &\text{if otherwise}.\end{cases}
\]
The celebrated M\"obius inversion formula states that $\mu_P$ is the inverse of the poset's incidence matrix $\Inc_P\in\Mat(P,P;\Z)$, namely
\[
\renewcommand{\arraystretch}{1.3}
(\Inc_P)_{a,b}:=\begin{cases}1 &\text{if }a\leq b,\\
0 &\text{if otherwise;}\end{cases}
\]
see~\cite[Sec.\ 3.7]{stanley} for this and further results.

\medskip

Next we apply this to $\mu_\CE$, the M\"obius function associated with the stratification $\Strat$ of the toric sheaf $\CE$.

\begin{lemma}
\label{lem:MobStrat}
Let $\S$ and $\T\in\Strat$ be two strata with $\S\leq\T$. Then
\[
\mu_\CE(\S,\T)=\sum_{\ell\geq0}\sum_{\opn{ch}_\ell(\S,\T)}(-1)^\ell.
\]
\end{lemma}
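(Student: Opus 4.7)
The identity to be proved is the classical Philip Hall theorem expressing the Möbius function of a finite poset as the alternating sum of strict chain counts. My plan is to verify that the right-hand side satisfies the defining recursion of $\mu_\CE$, so the uniqueness of the recursive definition forces equality.

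More concretely, let me denote
\[
f(\S,\T) := \sum_{\ell \geq 0}\,\sum_{\opn{ch}_\ell(\S,\T)} (-1)^\ell
\]
for $\S \leq \T$ in $\Strat$, and let $f(\S,\T) = 0$ otherwise. First I would check the base case: by \eqref{eq:ExamCh}, when $\S = \T$ we have $\opn{ch}_0(\S,\S) = \{\S\}$ and $\opn{ch}_\ell(\S,\S) = \varnothing$ for $\ell \geq 1$, so $f(\S,\S) = 1 = \mu_\CE(\S,\S)$.

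For the inductive step with $\S < \T$, I would partition the chains $\S = \S_0 < \S_1 < \cdots < \S_\ell = \T$ according to the penultimate element $p := \S_{\ell-1}$. Since $\ell \geq 1$ (there is no chain of length $0$ from $\S$ to $\T$ when $\S \neq \T$) and $p$ ranges over all strata with $\S \leq p < \T$, removing the last step $p < \T$ gives a bijection between strict chains from $\S$ to $\T$ of length $\ell$ and pairs $(p, \gamma)$ where $p \in [\S,\T)$ and $\gamma \in \opn{ch}_{\ell-1}(\S,p)$. This yields
\[
f(\S,\T) = \sum_{\ell \geq 1} (-1)^\ell \sum_{\S \leq p < \T}\ \sum_{\opn{ch}_{\ell-1}(\S,p)} 1 = -\sum_{\S \leq p < \T} f(\S,p),
\]
after swapping the order of summation and reindexing $\ell \mapsto \ell - 1$. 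This is precisely the defining recursion $\mu_\CE(\S,\T) = -\sum_{\S \leq p < \T} \mu_\CE(\S,p)$, so by induction on the height of the interval $[\S,\T]$ we conclude $f(\S,\T) = \mu_\CE(\S,\T)$.

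I do not anticipate a genuine obstacle here; the argument is purely combinatorial and the only points requiring minor care are the edge cases \eqref{eq:ExamCh} and the sign bookkeeping when reindexing $\ell$. The lemma itself is independent of the toric geometry and relies solely on the poset structure of $(\Strat,\leq)$.
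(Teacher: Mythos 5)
Your proposal is correct and matches the paper's proof essentially step for step: same definition of $f$, same base case via~\eqref{eq:ExamCh}, and the same decomposition of $\opn{ch}_\ell(\S,\T)$ by removing the final element to reduce to $\opn{ch}_{\ell-1}(\S,p)$ over $\S\leq p<\T$, recovering the defining recursion of the M\"obius function.
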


\begin{proof}
Set temporarily $f(\S,\T):=\sum_{\ell\geq0}\sum_{\opn{ch}_\ell(\S,\T)}(-1)^\ell$. If $\S=\T$, then $\opn{ch}_\ell(\S,\S)=\varnothing$ unless $\ell=0$ where it equals $\{\S\}$. Hence $f(\S,\S)=1=\mu_\CE(\S,\S)$.

\medskip

Next assume $\S<\T$. By~\eqref{eq:ExamCh}, $f(\S,\T)=\sum_{\ell\geq1}\sum_{\opn{ch}_\ell(\S,\T)}(-1)^\ell$, and
\[
\opn{ch}_\ell(\S,\T)=\bigsqcup_{\S<\T'<\T}\opn{ch}_{\ell-1}(\S,\T').
\]
for $\ell\geq2$. It follows that
\begin{align*}
f(\S,\T)&=\sum_{\ell\geq1}\sum_{\opn{ch}_\ell(\S,\T)}(-1)^\ell=\sum_{\ell\geq1}\sum_{\S\leq\T'<\T}\sum_{\opn{ch}_{\ell-1}(\S,\T')}(-1)^\ell\\
&=-\sum_{\S\leq\T'<\T}\sum_{\ell\geq0}\sum_{\opn{ch}_\ell(\S,\T')}(-1)^\ell=-\sum_{\S\leq\T'<\T}f(\S,\T'),
\end{align*}
which is precisely the recursion formula for the M\"obius function. Hence $f(\S,\T)=\mu_\CE(\S,\T)$ for all $\S\leq\T$. 
\end{proof}

From~\eqref{eq:FirstEC} and Lemma~\ref{lem:MobStrat} we immediately draw the

\begin{corollary}
\label{coro:EulerChar}
The Euler characteristic in degree $0$ of a toric sheaf $\CE$ is given by
\[
\chi(X,\CE)_0=-\sum_{\S\leq\T}\dim(\S)\cdot\mu_\CE(\S,\T)\cdot\widetilde\chi(P(\T)).
\]
\end{corollary}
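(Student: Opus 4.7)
The plan is straightforward: combine the expansion of the Euler characteristic given in~\eqref{eq:FirstEC} with the combinatorial identity of Lemma~\ref{lem:MobStrat}. Since both ingredients are already available from the preceding discussion, the proof amounts to an interchange of summations and a substitution.

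First, I would rewrite~\eqref{eq:FirstEC} by factoring out the quantities that depend only on the pair of strata $(\S,\T)$, reserving the alternating sum over chain lengths for last. Using that $\opn{ch}_0(\S,\T) = \{\S\}$ iff $\S = \T$ (and is empty otherwise), I would absorb the diagonal contribution into the general summation range $\S \leq \T$, so that
$$
\chi(X,\CE)_0 \;=\; -\sum_{\S\leq\T}\dim(\S)\cdot\widetilde\chi\bigl(P(\T)\bigr)\cdot\Bigl(\sum_{\ell\geq 0}(-1)^\ell\,\#\opn{ch}_\ell(\S,\T)\Bigr).
$$
This rearrangement is legitimate because $\Strat_\CE$ is finite by Proposition~\ref{prop:DE}, so every chain has bounded length and only finitely many $\ell$ contribute.

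Next, I would invoke Lemma~\ref{lem:MobStrat}, which identifies the parenthesised alternating sum with the M\"obius value $\mu_\CE(\S,\T)$ of the poset $(\Strat_\CE,\leq)$. Substituting this into the preceding display yields the claimed formula
$$
\chi(X,\CE)_0 \;=\; -\sum_{\S\leq\T}\dim(\S)\cdot\mu_\CE(\S,\T)\cdot\widetilde\chi\bigl(P(\T)\bigr).
$$
No real obstacle remains at this point: the combinatorial core has been isolated in Lemma~\ref{lem:MobStrat}, which is established by verifying the defining recursion of $\mu_\CE$ directly against the alternating chain count, and the cohomological input is provided by~\eqref{eq:FirstEC} via the spectral sequence of Theorem~\ref{thm:SpecSeq}. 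The only mild subtlety is the bookkeeping that unites the $\ell=0$ contribution (which forces $\S=\T$) with the $\ell\geq 1$ contributions (strict chains) into a single sum over $\S\leq\T$.
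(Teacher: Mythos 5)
Your argument is exactly the one the paper has in mind: the paper records the corollary with the remark that it follows ``immediately'' from~\eqref{eq:FirstEC} and Lemma~\ref{lem:MobStrat}, and your write-up just fleshes out the summation interchange, the sign bookkeeping, and the merging of the $\ell=0$ and $\ell\geq1$ contributions that make that one-liner precise. Correct and the same approach.
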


We note in passing that the M\"obius function also appears in the Euler characteristic of the Grassmannian with finitely many hyperplane sections removed, see~\cite{eulerchar}.

\begin{example}
\label{exam:EulerCharLB}
Consider the line bundle $\CL=\CO_X(D)=\CO(\nablap-\nablam)$ with $\nabla_\pm$ nef. The only strata are $0$ and $\eta$; since $\dim(0)=0$ and $\dim(\eta)=1$ we find indeed
\[
\chi(X,\mc L)_0=\widetilde\chi(P(\eta)) 
\]
where $P(\eta) = \nablam \setminus \innt_{\nablam}(\nablap)$ for the generic stratum $\eta$. In particular, if $D$ is already nef so that we can choose $\nablam=\{0\}$, then
\[
\renewcommand{\arraystretch}{1.3}
\chi\big(X,\CO_X(D)\big)_0=-\widetilde\chi(P(\eta))=\dim\widetilde{\opn H}^{\raisebox{-3pt}{\scriptsize$-1$}}(P(\eta))=
\begin{cases}
1 & \text{if } 0 \in D,\\
0 & \text{if otherwise}.
\end{cases}
\]
\end{example}

\begin{remark}
\label{rem:EquivEC}
Regarding the M\"obius form $\mu_\CE=\Inc_\Strat^{-1}$ as a bilinear form on $\Z^\Strat$ and the data $\dim=\big(\dim(\S)\big)$ and $\widetilde\chi=\big(\widetilde\chi(P(\T))\big)$ as vectors in $\Z^\Strat$, respectively, we can write this more succinctly as  
\[
\chi(X,\CE)_0=-\dim^\top\hspace{-3pt}\cdot\,\mu_\CE\cdot\widetilde\chi=-\dim^\top\hspace{-3pt}\cdot\,\Inc_\CE^{-1}\cdot\widetilde\chi.
\] 
In this form, $\chi(X,\CE)_0$ can be directly read off from the Weil decoration. More generally, we can consider the {\em equivariant Euler characteristic} 
\[
\chi^\TX(X,\CE):=\sum_{m\in M}\chi(X,\CE)_m\cdot x^m\in\kk[M];
\]
only finitely many terms are nontrivial for $X$ is complete. We then recover the usual Euler characteristic by $\chi(X,\CE)=\chi^\TX(X,\CE)(1)$. From Example~\ref{exam:EulerCharLB} we conclude 
\begin{equation}
\label{eq:ECForm}
\chi^\TX(X,\CE)=\sum_{\S\leq\T}\dim(\S)\cdot\mu_\CE(\S,\T)\cdot\chi^\TX\big(X,\CO_X(\CD_\CE(\T))\big)=\dim^\top\cdot\Inc_\CE^{-1}\cdot\chi^\TX
\end{equation}
where $\chi^\TX$ denotes the vector $\big(\chi^\TX(\CD_\CE(\T))\big)$.
\end{remark}

\subsection{Posets of height one}
\label{subsec:HeightOne}
Theorems~\ref{thm:CohomSheafE} and~\ref{thm:FRes} and Equation~\ref{eq:RedCohom} give 
\begin{align*}
&\gH(X,\CE)_0=\\
&\Big[\ldots\to\bigoplus_{\S<\T}\bigoplus_{\opn{ch}_\ell(\S,\T)}
\!\big(\overline\S\otimes\widetilde\H(P(\T),\kk)\big)\to
\bigoplus_{\S<\T}\bigoplus_{\opn{ch}_{\ell-1}(\S,\T)}\!\!\big(\overline\S\otimes\widetilde\H(P(\T),\kk)\big)\to\ldots\Big][-1]
\end{align*}
where the big complex is induced by $\spectF^\kbb=\spectF^\kbb(\CE)$, that is, the displayed terms sit at positions $-\ell$ and $-(\ell-1)=1-\ell$.

\medskip

We illustrate this formula for the case of toric sheaves $\CE$ with stratifications $\Strat_\CE$ of height one, that is, their Hasse diagram of strata is given by
\[
\begin{tikzcd}
& \eta  & & 
\\
\S_1 \ar[ru, no head] & \ldots & \S_N \ar[lu, no head] & 
\end{tikzcd}
.
\]
As there are no chains of length greater than two, the exact sequence 
of Theorem~\ref{thm:FRes} boils down to the short exact sequence 
\begin{equation}
\label{eq:Heightseq}
\begin{tikzcd}
0\ar[r]&\bigoplus_{\indStrat=1}^N\spectF_{\S_\indStrat,\eta}\ar[r]&
\spectF_\eta\oplus
\bigoplus_{\indStrat=1}^N\spectF_{\S_\indStrat}\ar[r]&\CF(\CE)\ar[r]&0.
\end{tikzcd}
\end{equation}
Applying the functor $\H(\Delta,\cdot)=\R\Gamma(\Delta,\cdot)$, we get
\begin{align*}
&\H(X,\CE)_0=\H(\Delta,\CF(\CE))=
\Big[\bigoplus_\indStrat\H(\Delta,\spectF_{\S_\indStrat,\eta})\to
\H(\Delta,\spectF_\eta)\oplus\bigoplus_\indStrat\H(\Delta,\spectF_{\S_\indStrat})\Big]\\
&=\Big[\bigoplus_\indStrat\big(\overline\S_\indStrat\otimes\widetilde\H(P(\eta),\kk)\big)\to 
\big(E\otimes\widetilde\H(P(\eta),\kk)\big)\oplus\bigoplus_\indStrat\big(\overline\S_\indStrat\otimes\widetilde\H(P(\S_\indStrat),\kk)\big)
\Big][-1],
\end{align*}
where $[\cdot]$ denotes the mapping cone
and $P(\S)=\Delta\setminus\innt_{\Delta}\CPvar^+(\S)$, cf.~\eqref{eq:PDef}.

\medskip

Alternatively, this is also a consequence of our spectral sequence from~\ssect{subsec:EulerGlobSec}. The sequence consists of two rows for $\ell=0$ and $1$, and we obtain $E_2^{-1,q}=E_\infty^{-1,q}$ and $E_2^{0,q}=E_\infty^{0,q}$ as kernel and cokernel of $d_1\colon E_1^{-1,q}\to E_1^{0,q}$, respectively. In particular, the short exact sequences
\[
0\to E_\infty^{0,q}\to\gH^{q}\big(X,\CE\big)_0\to E_\infty^{-1,q+1}\to0
\]
lead to the long exact sequence
\begin{equation}
\label{eq:SpectHeightOne}
\ldots\to E_1^{-1,q}\to E_1^{0,q}\to\gH^{q}\big(X,\CE\big)_0\to
E_1^{-1,q+1}\to E_1^{0,q+1}\to\ldots
\end{equation}

\medskip

Next, we assume in addition that the natural linear map
\begin{equation}
\label{eq:ALin}
\begin{tikzcd}
A\colon\bigoplus_{\indStrat=1}^N\overline{\S}_\indStrat
\ar[r,twoheadrightarrow]&E 
\end{tikzcd}
\end{equation}
is surjective. This applies, for instance, to arbitrary toric sheaves of rank $2$ with $N\geq2$ (the bundles with $N\leq1$ are split), or the universal extensions discussed in Section~\ref{sec:ExtNLB}. 

\medskip

In~\eqref{eq:Heightseq}, each $\spectF_{\S_\indStrat,\eta}$ maps naturally
into $\spectF_{\S_\indStrat}$ and, with opposite sign, into $\spectF_\eta$.
Hence we may pass to the subcomplex induced by~\eqref{eq:ALin}
and finally obtain
\begin{equation}
\label{eq:finalHeightOne}
\H(X,\CE)_0=\Big[\ker A\otimes\widetilde{\H}(P(\eta),\kk\big)
\to\bigoplus\limits_{\indStrat=1}^N\overline\S_\indStrat\otimes\widetilde{\H}\big(P(\S_k),\kk\big)\Big][-1]
\end{equation}
in the bounded derived category. Note that by the $[-1]$-shift, the left complex sits at position $0$, while the right one is at position $1$.

\subsection{The tangent bundle continued}
\label{subsec:TangBundle}
We continue the example from \ssect{subsubsec:cohomSheafTang}
and turn to the case $\CT_{\PP^2}(-\ell)=\CT_{\PP^2}(-\ell D_{\rho_0})$ 
with $\ell\geq1$. Here, the Weil decoration consists of the following virtual polytopes
\[
\CPvar(\rho_0)=(-\ell+1)\cdot\Delta_1,\quad
\CPvar(\rho_1)=(-\ell+1)\cdot\Delta_1-[1,0],\quad
\CPvar(\rho_2)=(-\ell+1)\cdot\Delta_1-[0,1].
\]
Since 
\[
\CPvar(\eta)=\CPvar(\rho_0)\cap\CPvar(\rho_1)\cap\CPvar(\rho_2)=
-\ell\cdot\Delta_1,
\]
we have to twist with $\Delta=(\ell+1)\cdot\Delta_1$ in order to get an amply decorated sheaf $\CT_{\PP^2}(-\ell)^+$. In particular, the materialised Weil decoration $\CD^+$ is the same as for $\ell=0$, namely
\[
\CPvar^+(\rho_0)=2\Delta_1,\quad
\CPvar^+(\rho_1)=2\Delta_1-[1,0],\quad
\CPvar^+(\rho_2)=2\Delta_1-[0,1],
\]
see~\eqref{eq:WDzero} on page~\pageref{eq:WDzero}. However, the domain of definition of the associated $\kk$-linear sheaves $\CF_m$ is now $\Delta=(\ell+1)\Delta_1+m$.

\medskip

Consider first the case $\ell=1$. For any shift $\Delta=2\Delta_1+m$, the higher reduced cohomology $\widetilde{\opn H}^{\raisebox{-3pt}{\scriptsize$\geq\!1$}}(P(\S),\kk)$ of the associated polyhedral sets $P(\rho_\indStrat)=
\Delta\setminus\innt_\Delta\CD^+(\rho_\indStrat)$ and $P(\eta)=\bigcup_\indStrat P(\rho_\indStrat)$ vanishes. Thus, the long exact sequence~\eqref{eq:SpectHeightOne} becomes
\[
0 \to E_1^{-1,0}\to E_1^{0,0} \to \gH^{0}\!\big(\PP^2,\CT(-1)\big)_m
\to E_1^{-1,1}\to E_1^{0,1} \to \gH^{1}\!\big(\PP^2,\CT(-1)\big)_m
\to 0.
\]
Since all the $P(\S)$ are connected, we also obtain
\[
\renewcommand{\arraystretch}{1.3}
\widetilde{\opn H}^{\raisebox{-3pt}{\scriptsize$-1$}}(P(\S),\kk) = \begin{cases}
\kk & \mbox{if } P(\S)=\emptyset\\
0 & \mbox{if otherwise}
\end{cases}
.
\]
For each $\indStrat=0$, $1$, $2$, there exists a unique $m_\indStrat\in M=\Z^2$
such that 
$P(\rho_\indStrat)=\emptyset$, i.e., $\Delta_2+m_\indStrat\subseteq\CD^+(\rho_\indStrat)$, namely
\[
m_0=[0,0],\quad m_1=[-1,0],\quad\text{and}\quad m_2=[0,-1].
\]
Here, the previous exact sequence simply yields
\[
\kk\cdot\rho_\indStrat\stackrel{\sim}{\to}\gH^{0}\!\big(\PP^2,\CT_{\PP^2}(-1)\big)_{m_\indStrat}.
\]
In the remaining $m\in M$, all the polyhedral sets $P(\S)$ are non-empty. Therefore, all the $E_1$-terms vanish, and so does the cohomology of $\CT_{\PP^2}(-1)$ in these degrees.

\medskip

Finally, we assume $\ell\geq 2$. 
The most interesting case $\ell=3$ is illustrated in 
Figure~\ref{fig:TangP2Twist}. 
It leads to a non-trivial $\gH^1$ 
and thus to a negative Euler characteristic, cf.~\eqref{eq:EulerChar}.
\begin{figure}[ht]
\newcommand{\scaleA}{0.3} 
\newcommand{\scaleB}{0.9}
\newcommand{\spaceA}{\hspace*{5em}}
\begin{tikzpicture}[scale=\scaleB]
\draw[color=oliwkowy!40] (-1.3,-1.3) grid (3.3,3.3);
\draw[thin, black] 
  (0,0) -- (2,0) -- (0,2) -- (0,0);
\fill[pattern color=green!30, pattern=north east lines]
  (0,0) -- (2,0) -- (0,2) -- (0,0);
\draw[thin, black] 
  (1,0) -- (-1,0) -- (-1,2) -- (1,0);
\fill[pattern color=blue!30, pattern=horizontal lines]
  (1,0) -- (-1,0) -- (-1,2) -- (1,0);
\draw[thin, black]
  (0,1) -- (2,-1) -- (0,-1) -- (0,1);
\fill[pattern color=red!30, pattern=north west lines] 
  (0,1) -- (2,-1) -- (0,-1) -- (0,1);
\draw[thick, color=green]
  (1.5,1.5) node{$\CPvar^+(\rho_0)$};
\draw[thick, color=blue]
  (-2.0,1.0) node{$\CPvar^+(\rho_1)$};
\draw[thick, color=red] 
  (0.7,-0.5) node{$\CPvar^+(\rho_2)$};
\draw[line width=0.75mm, black]
  (-1,-1) -- (3,-1) -- (-1,3) -- (-1,-1);
\draw[thick, color=black] 
  (1.1,2.7) node{$4\Delta_1-[1,1]$};
\fill[thick, red] 
  (0,0) circle (3pt);
\end{tikzpicture}
\caption{The Weil decoration of the twisted tangent bundle $\CT_{\PP^2}(-3)=\Omega_{\PP^2}$}
\label{fig:TangP2Twist}
\end{figure}
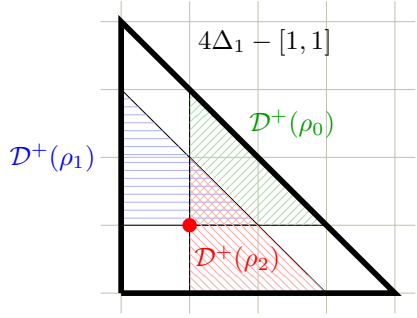
For all $\ell$
the polyhedral sets $P(\rho_\indStrat)$ and $P(\eta)=\bigcup_{\indStrat=0}^2P(\rho_\indStrat)$ are connected and non-empty. As a result, only $\widetilde{\opn H}^{\raisebox{-3pt}{\scriptsize$1$}}$ might be
non-trivial. Moreover, for any stratum $\S$ we have
\[
\widetilde{\opn H}^{\raisebox{-3pt}{\scriptsize$1$}}(P(\S),\kk)=\kk \iff 
\widetilde{\opn H}^{\raisebox{-3pt}{\scriptsize$1$}}(P(\S),\kk)\neq 0 \iff
\CD^+(\S)\subseteq\innt\Delta
\]
with $\Delta=\Delta_{\ell+1}+m$ and $\innt\Delta$ the actual interior of $\Delta$. This yields the exact sequence
\begin{equation}
\label{eq:FinalTang}
0\to \gH^{1}\!\big(\PP^2,\CT_{\PP^2}(-\ell)\big)_m \to
E_1^{-1,2} \to E_1^{0,2} \to 
\gH^{2}\!\big(\PP^2,\CT_{\PP^2}(-\ell)\big)_m \to 0
\end{equation}
with 
\[
E_1^{-1,2} = \bigoplus_{\indStrat=0}^2\big(\kk\cdot\rho_\indStrat\otimes 
\widetilde{\opn H}^{\raisebox{-3pt}{\scriptsize$1$}}(P(\eta),\kk)\big)
\]
and
\[
E_1^{0,2} = \big(N_\kk\otimes\widetilde{\opn H}^{\raisebox{-3pt}{\scriptsize$1$}}(P(\eta),\kk)\big) \;\oplus\;
\bigoplus_{\indStrat=0}^2\big(\kk\cdot\rho_\indStrat\otimes\widetilde{\opn H}^{\raisebox{-3pt}{\scriptsize$1$}}(P(\rho_\indStrat),\kk)\big).
\]
If any of the polytopes $\CD^+(\rho_\indStrat)$ is contained in $\innt\Delta=\innt\Delta_{\ell+1}+m$, then so is $\CD^+(\eta)$. Let 
\[
\gH^k_m=\gH^k\!\big(\PP^2,\CT_{\PP^2}(-\ell)\big)_m,\quad k=1,\,2.
\]
We now distinguish the following cases.

\medskip

{\em Case 1: $\CD^+(\rho_\indStrat)\subseteq\innt\Delta$ for $\indStrat=0$, $1$, $2$.}
Then the exact sequence \eqref{eq:FinalTang} is
\[
0\to\gH^{1}_m\to\big(\oplus_{\indStrat=0}^2\kk\cdot\rho_\indStrat\big) \to N_\kk\oplus\big(\oplus_{\indStrat=0}^2\kk\cdot\rho_\indStrat\big)\to\gH^{2}_m\to 0,
\]
that is, $\gH^{1}_m=0$ and $\gH^{2}_m=N_\kk$.

\medskip

{\em Case 2: $\CD^+(\rho_\indStrat)\subseteq\innt\Delta$ for precisely two indices, say $\indStrat=0$ and $1$.} 
The central map $E_1^{-1,2}\to E_1^{0,2}$ becomes
\[
\big(\oplus_{\indStrat=0}^2\kk\cdot\rho_\indStrat\big)
\to N_\kk \oplus \big(\kk\cdot\rho_0 \oplus \kk\cdot\rho_1\big).
\] 
As this is injective, its kernel $\gH^{1}_m$ must vanish and its cokernel $\gH^{2}_m\cong\kk$ is one-dimensional.

\medskip

{\em Case 3: $\CD^+(\rho_\indStrat)\subseteq\innt\Delta$ happens exactly 
once, say for $\indStrat=0$.} 
Here, the central map $\oplus_{\indStrat=0}^2\kk\cdot\rho_\indStrat\to N_\kk \oplus\kk\cdot\rho_0$ is an isomorphism and yields $\gH^1_m=\gH^2_m=0$. 

\medskip

{\em Case 4. $\CD^+(\eta)\not\subseteq\innt\Delta$.}
The polyhedral sets $\CD^+(\rho_\indStrat)$ cannot be contained in
$\innt\Delta$. Hence the whole complex vanishes and we obtain again $\gH^1_m=\gH^2_m=0$.

\medskip

{\em Case 5. $\CD^+(\eta)\subseteq\innt\Delta$, but $\CD^+(\rho_\indStrat)\not\subseteq\innt\Delta$ for $\indStrat=0$, $1$, $2$.}
This is the most interesting case. It happens only for $\ell=3$ and
$m=[-1,-1]$, see Figure~\ref{fig:TangP2Twist} for illustration. The central map is the canonical surjection $A\colon\oplus_{\indStrat=0}^2\kk\cdot\rho_\indStrat\twoheadrightarrow N_\kk$. Hence $\gH^{1}_m=\ker A=\kk\cdot(1,1,1)$ and $\gH^2_m$ vanishes.

\medskip

Finally, we compute the equivariant Euler characteristic via the Formula~\eqref{eq:ECForm} of Remark~\ref{rem:EquivEC}. Since $\dim^\top\cdot\Inc^{-1}_{\Tang_{\PP^2}(-\ell D_0)}=[0,1,1,1,-1]$ we find
\begin{align*}
\chi^\TX(\Tang_{\PP^2}(-\ell D_0))&=\sum_{i=0}^2\chi^\TX(\CO_{\PP^2}(\CD(\rho_i))-\chi^\TX(\CO_{\PP^2}(\CD(\eta))\\
&=3\chi^\TX(\CO_{\P^2}(-\ell+1))-\chi^\TX(\CO_{\P^2}(-\ell)),
\end{align*}
which is precisely the result we also obtain by using the (twisted) dual Euler sequence from~\eqref{eq:DES} on page~\pageref{eq:DES}. In particular, it follows that

\section{Final remarks}
\label{sec:FinRem}
In~\eqref{eq:GlobSecE} on page~\pageref{eq:GlobSecE} we noticed that
\[
\gH^0(X,\CE)_m=\overline\S_{\mr{glob},m},
\]
where $\S_{\mr{glob},m}$ is the maximal stratum with 
$\CPvar^+(\S_{\mr{glob},m})\supseteq\Delta+m$ for an ample Weil 
decoration $\CD^+$. 
This result continues to hold if $\CE^+$ is merely positively decorated. 
In this case, we could replace the positive Weil decoration $\CD^+=\CD+\Delta$ by $\CD^{++}:=\CD^++\Delta=\CD+2\Delta$ and use
\[
\CPvar^+(\S)\supseteq\Delta+m\iff\CPvar^+(\S)+\Delta\supseteq2\Delta+m
\]
for all strata $\S\in\Strat$.

\medskip

Of course, this cannot work for Theorem~\ref{thm:CohomSheafE}, cf.\ Remark~\ref{rem:AmpleNec} or Figure \ref{fig:PosAmpWDeco} below, where the left hand side represents the positive Weil decoration of the line bundle $\CE=\CO_X(\CD^+(1)-\Delta)$.
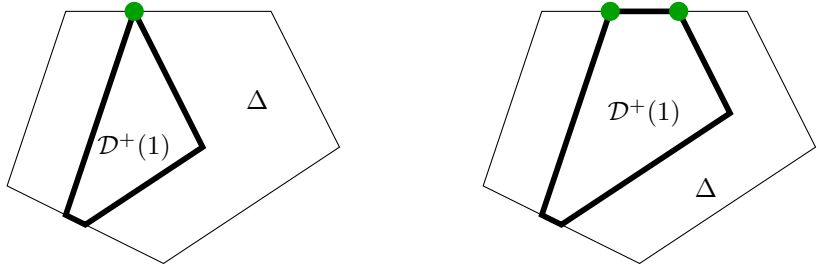
\begin{figure}[ht]
\newcommand{\scaleA}{0.3}
\newcommand{\scaleB}{0.9}
\newcommand{\spaceA}{\hspace*{5em}}
\begin{tikzpicture}[scale=\scaleB]
\draw[thin, black]
  (1/7,3/7) -- (17/7, -5/7) -- (5,1) -- (4,3) -- (1,3) -- cycle;
\draw[thick, color=black]
  (2.0,1.0) node{$\CPvar^+(1)$} (3.8,1.7) node{$\Delta$};
\draw[line width=0.75mm, black]
  (2,3) -- (1,0)-- (9/7,-1/7) -- (3,1) -- cycle;
\fill[thick, darkgreen]
  (2,3) circle (4pt);
\end{tikzpicture}
\spaceA
\begin{tikzpicture}[scale=\scaleB]
\draw[thin, black]
  (1/7,3/7) -- (17/7, -5/7) -- (5,1) -- (4,3) -- (1,3) -- cycle;
\draw[thick, color=black]
  (2.5,1.5) node{$\CPvar^+(1)$} (3.4,0.4) node{$\Delta$};
\draw[line width=0.75mm, black]
  (3,3) -- (2,3) -- (1,0)-- (9/7,-1/7) -- (15/4,3/2) -- cycle;
\fill[thick, darkgreen]
  (2,3) circle (4pt) (3,3) circle (4pt);
\end{tikzpicture}
\caption{Positive and ample Weil decorations}
\label{fig:PosAmpWDeco}
\end{figure}
Here, $\gH^1(\CE)=\widetilde{\opn H}^{\raisebox{-3pt}{\scriptsize$0$}}\big(\Delta\setminus\CD^+(1)\big)=\kk$ for the difference of the two polytopes has two components. However, $\widetilde{\opn H}^{\raisebox{-3pt}{\scriptsize$0$}}\big(\Delta\setminus\innt_\Delta\CD^+(1)\big)=0$ as the green point of the upper tip on the left hand side does not belong to $\innt_\Delta \CD^+(1)$, that is, $\Delta\setminus\innt_\Delta\CD^+(1)$ is connected.

\medskip

Instead of looking
at the closed set $\Delta\setminus\innt_\Delta\CD^+(1)$, we could equally well consider the difference $\innt\Delta\setminus\innt\CD^+(1)$ by taking the 
honest interior of $\Delta$ and $\CD^+(1)$ with respect to $M_\R$. In the example of the left hand side of Figure~\ref{fig:PosAmpWDeco}, this causes the difference to be disconnected, thus leading to the correct result.

\medskip

In general, we can ask whether or not Theorem~\ref{thm:CohomSheafE} continues to hold if we restrict the sheaf $\CF$ to $\innt\Delta$. Technically, this offers the advantage to define $\CF$ once and for all over the whole of $M_\R$
before restricting it to $\innt\Delta+m$, $m\in M$, in order to compute $\gH^\ell(X,\CE)_m$, the cohomology in degree $m$.

\begin{proposition}
\label{prop:CohomSheafEInt}
Let $X$ be a smooth projective toric variety. If $\CE^+=\CE(\Delta)$ is amply
decorated with $\Delta\in\Pol^+(\Sigma)$, then
\[
\gH^\ell(X,\CE)_0\cong\gH^\ell\hspace{-0.5ex}\big(\innt\Delta,\CF(\CE)\big).
\]
\end{proposition}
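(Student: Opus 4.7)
The plan is to retrace the proof of Theorem~\ref{thm:CohomSheafE} step by step, replacing $\Delta$ with $\innt\Delta$ throughout. A key elementary observation is that for every $u\in\innt\Delta$, a neighbourhood base of $u$ in $\innt\Delta$ is also a neighbourhood base in $M_\R$, so $u\in\innt_\Delta A\iff u\in\innt A$ for every $A\subseteq M_\R$. Consequently $\CF(\CE)|_{\innt\Delta}$ has stalks $\{e\in E\mid u\in\innt\CPvar^+(e)\}$ with interior taken in $M_\R$.

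First I would install the local sheaves $\CF_\sigma|_{\innt\Delta}$ for $\sigma\in\Sigma$ and verify the two identities of Section~\ref{subsec:LocalSheaf}. The equality $\Gamma(\innt\Delta,\CF_\sigma)=\gH^0(U_\sigma,\CE)_0$ reduces to the condition $\innt\Delta\subseteq\CPvar^+_\sigma(e)\iff\Delta\subseteq\CPvar^+_\sigma(e)$, which holds because $\CPvar^+_\sigma(e)=r^+_\sigma(e)+\sigma^\vee$ is closed and $\innt\Delta$ is dense in $\Delta$. The stalkwise identity $\CF(\CE)|_{\innt\Delta}=\varprojlim\CF_\sigma|_{\innt\Delta}$ then follows exactly as in~\eqref{eq:InvLim}.

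The main obstacle is the local vanishing analogue of Proposition~\ref{prop:Acyclic}:
\[
\gH^\ell(\innt\Delta,\CF_\sigma)=0\quad\text{for}\quad\ell\geq 1.
\]
Lemma~\ref{lem:ExacFunc} is proved stalkwise and survives the restriction to $\innt\Delta$, so the canonical resolution of Corollary~\ref{coro:CanSeq} reduces the problem to the line-bundle version of Lemma~\ref{lem:LocalVan}. The Gysin sequence on the contractible open set $\innt\Delta$ yields $\gH^\ell(\innt\Delta,\CF_\sigma)\cong\widetilde{\gH}^{\ell-1}(Z^\circ(\sigma),\kk)$ for $\ell\geq 1$, where $Z^\circ(\sigma)=\innt\Delta\setminus\innt\CPvar^+_\sigma(e)=\innt\Delta\setminus(r^+_\sigma+\innt\sigma^\vee)$. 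The radial homotopy $H_t(u)=u+t(r_\sigma-u)$ with $t\in[0,1)$, where $r_\sigma=\Delta(\sigma)$ is the vertex of $\Delta$ associated with $\sigma$, keeps points inside $\innt\Delta$ by convexity of $\Delta$ and outside $r^+_\sigma+\innt\sigma^\vee$ because $\langle r_\sigma-u,b\rangle\leq 0$ for every $b\in\sigma$. This contracts $Z^\circ(\sigma)$ towards $r_\sigma\in\partial\Delta$ and shows that $Z^\circ(\sigma)$ is either empty or has trivial reduced cohomology, completing the analogue of Lemma~\ref{lem:LocalVan}.

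Once local vanishing is secured, the remainder is a routine adaptation of Sections~\ref{subsec:SheavesFanSpaces} and~\ref{subsec:CCT}. The Klyachko--\v Cech complex~\eqref{eq:BL} together with Lemma~\ref{lem:LocalGlobalCommutes} give $\H(X,\CE)_0=\H(\innt\Delta,\R\varprojlim\CF_\bullet|_{\innt\Delta})$, and the strata double complex of \ssect{subsec:CCT} identifies $\R\varprojlim\CF_\bullet|_{\innt\Delta}=\CF(\CE)|_{\innt\Delta}$. A welcome simplification arises here: every $u\in\innt\Delta$ satisfies $\langle u,\rho\rangle>\min\langle\Delta,\rho\rangle$ for all $\rho\in\Sigma(1)$, so we are always in the standard case of~\eqref{eq:EqRa} and~\eqref{eq:EqRb}, and $\gneqR$, $\geqR$ collapse to the ordinary strict and non-strict inequalities. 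The constructions of \ssect{subsec:CCT} therefore carry over without needing to track the non-standard boundary behaviour, and the proof concludes exactly as for Theorem~\ref{thm:CohomSheafE}.
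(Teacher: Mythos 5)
Your overall strategy---rerunning the proof of Theorem~\ref{thm:CohomSheafE} with $\innt\Delta$ in place of $\Delta$---is a legitimate alternative to the paper's route, which instead derives the proposition as a corollary of Theorem~\ref{thm:CohomSheafE} by checking stalkwise that the adjunction map $\CF\to(\R j_*)j^{-1}\CF$ for $j\colon\innt\Delta\hookrightarrow\Delta$ is an isomorphism and reducing to the line-bundle case via the spectral sequence. Your identity $\Gamma(\innt\Delta,\CF_\sigma)=\gH^0(U_\sigma,\CE)_0$ and your remark that every $u\in\innt\Delta$ falls into the standard case of~\eqref{eq:EqRa} and~\eqref{eq:EqRb} are both correct and simplify the bookkeeping. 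However, the local vanishing step contains a genuine gap.

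The radial homotopy $H_t(u)=u+t(r_\sigma-u)$, $t\in[0,1)$, does \emph{not} give a contraction of $Z^\circ(\sigma)=\innt\Delta\setminus\innt\CPvar^+_\sigma(e)$, because $r_\sigma=\Delta(\sigma)$ is a vertex of $\Delta$ and hence $r_\sigma\in\partial\Delta$, so $r_\sigma\notin Z^\circ(\sigma)$. Contrast this with the paper's Lemma~\ref{lem:LocalVan}, where the set $S(\sigma)=\Delta\setminus(\nablap+\sigma^\vee)$ does contain $r_\sigma$ whenever it is nonempty, so that the retraction there is legitimate. The ``quasi-star'' property you actually verify---every half-open segment $[u,r_\sigma)$ with $u\in Z^\circ(\sigma)$ lies in $Z^\circ(\sigma)$---does not by itself force the reduced cohomology to vanish: the punctured ball $B(0,1)\setminus\{0\}$ with apex $0$ enjoys the same property yet has $\widetilde{\opn H}^{\raisebox{-3pt}{\scriptsize$n\!-\!1$}}\neq 0$. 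To close the gap one must exhibit an actual point of $Z^\circ(\sigma)$ with respect to which it is star-shaped. This is possible but uses an extra observation: if $\rho\in\sigma(1)$ witnesses $u\notin\innt\CPvar^+_\sigma(e)$, i.e.\ $\langle u-r^+_\sigma(e),\rho\rangle\leq0$, then $u\in\innt\Delta$ gives $\langle u,\rho\rangle>\langle r_\sigma,\rho\rangle$ and hence $\langle r_\sigma-r^+_\sigma(e),\rho\rangle<0$ \emph{strictly}. Since $\sigma(1)$ is finite, one may therefore fix $u_0\in Z^\circ(\sigma)$ and take $p_\epsilon:=\epsilon u_0+(1-\epsilon)r_\sigma$ for $\epsilon>0$ small enough that $\langle p_\epsilon-r^+_\sigma(e),\rho\rangle\leq0$ for every such $\rho$; then $Z^\circ(\sigma)$ is star-shaped with respect to $p_\epsilon$ and hence contractible. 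Without some argument of this kind your local vanishing claim, and with it the whole proof, is incomplete.
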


\begin{proof}
This proposition is in fact a corollary to Theorem~\ref{thm:CohomSheafE}. Indeed, we can compare both statements by considering the open embedding $j\colon\innt\Delta\hookrightarrow\Delta$ together with the map $\CF\to (\R j_*)j^{-1}\CF$ induced by adjunction. We obtain maps
\[
\H(\Delta,\CF)\to \H(\Delta,(\R
j_*)j^{-1}\CF)=\H(\innt\Delta,\CF|_{\innt\Delta})
\] 
as well as between the respective $E_1^{-\ell,q}$, see \ssect{subsec:ExamCoho}. Therefore, we may assume without loss of generality that $\CF$ is induced by an invertible sheaf $\CE=\CO_X(\DeltaP-\DeltaM)$. We thus need to prove that if $\DeltaP$ ample and $j_{\DeltaP}\colon\innt_\Delta\DeltaP\hookrightarrow\Delta$ denotes the open embedding of the interior of $\DeltaP$ relative to $\Delta$, then $\id\to(\R j_*)j^{-1}$ is an isomorphism on
\[
\CF:=(j_{\DeltaP})_!\kk_{\DeltaP}. 
\]
We proceed stalkwise over $\Delta$. Since the isomorphism property is clear for interior points $x\in\innt\Delta$, we pick a boundary point $x\in\partial\Delta=\Delta\setminus\innt\Delta$. For an open neighbourhood $U\subseteq\Delta$ of $x$ we consider first
\[
\kG(U,\CF)\to\H\big(U\cap\innt\Delta, \,
\CF\big).
\]
The left hand side is $\kk$ (instead of $0$) if and only if $U\subseteq \innt_\Delta\DeltaP$, which is equivalent to $U\subseteq \DeltaP$. For the right hand side, this holds precisely if $U\cap\innt\Delta\subseteq \innt_\Delta\DeltaP$, or equivalently, if $U\cap\innt\Delta\subseteq\DeltaP$. Both conditions are obviously equivalent for $\DeltaP$ is closed in $M_\R$. It remains to show that
\[
\varinjlim_{U\ni x}\gH^k\!\big(U\cap\innt\Delta, 
\CF\big)
=(R^k j_*)j^{-1}\big(\CF\big)_x=0
\]
for $k\geq 1$. The Gysin sequence implies
\[
\gH^k\!\big(U\cap\innt\Delta,\CF\big)=
\widetilde{\opn H}^{\raisebox{-3pt}{\scriptsize$k\!-\!1$}}\big((U\cap \innt\Delta)\setminus\DeltaP\big).
\]
If $x\notin\DeltaP$, we can chose $U$ disjoint to $\DeltaP$ so that these groups vanish. 
On the other hand, ampleness of the Weil decoration entails that every $x\in\DeltaP\cap\partial\Delta$ is in the closure of the set $\innt_{\partial\Delta}(\DeltaP\cap\partial\Delta)$, cf.\ the sketch on the right hand side of Figure~\ref{fig:PosAmpWDeco}. For a sufficiently small neighbourhood $U$ of $x$, $(U\cap \innt\Delta)\setminus\DeltaP$ is contractible, whence the claim.
\end{proof}

\bibliographystyle{alpha}
\bibliography{ctvb}
\end{document}